\documentclass[12pt,letterpaper]{amsart}

\usepackage{datetime2}
\usepackage{enumitem}
\usepackage{fancyhdr}
\usepackage[margin=1.35in]{geometry}
\usepackage{graphicx}
\usepackage{hyperref}
\usepackage{mathtools}
\usepackage{verbatim}

\usepackage{tikz}
\usetikzlibrary{cd}

\newcommand{\fancymarker}[1]{
\fcolorbox{black}{black!05}{#1}
}

\newcommand{\ph}{\varphi}
\newcommand{\eps}{\varepsilon}
\newcommand{\ulim}{\varlimsup}

\newcommand{\EE}{\mathbb{E}}

\newcommand{\NN}{\mathbb{N}}
\newcommand{\RR}{\mathbb{R}}
\newcommand{\bbS}{\mathbb{S}}
\newcommand{\bbI}{\mathbb{I}}
\newcommand{\TT}{\mathbb{T}}
\newcommand{\ZZ}{\mathbb{Z}}

\newcommand{\AAA}{\mathcal{A}}
\newcommand{\BBB}{\mathcal{B}}
\newcommand{\CCC}{\mathcal{C}}
\newcommand{\DDD}{\mathcal{D}}
\newcommand{\EEE}{\mathcal{E}}
\newcommand{\FFF}{\mathcal{F}}
\newcommand{\GGG}{\mathcal{G}}
\newcommand{\HHH}{\mathcal{H}}
\newcommand{\III}{\mathcal{I}}
\newcommand{\LLL}{\mathcal{L}}
\newcommand{\MMM}{\mathcal{M}}
\newcommand{\OOO}{\mathcal{O}}
\newcommand{\PPP}{\mathcal{P}}
\newcommand{\QQQ}{\mathcal{Q}}
\newcommand{\RRR}{\mathcal{R}}
\newcommand{\SSS}{\mathcal{S}}
\newcommand{\UUU}{\mathcal{U}}
\newcommand{\VVV}{\mathcal{V}}

\newcommand{\tX}{\widetilde{X}}
\newcommand{\htop}{h_{\mathrm{top}}}
\newcommand{\one}{\mathbf{1}}
\newcommand{\Ws}{W^\mathrm{s}}
\newcommand{\Wu}{W^\mathrm{u}}
\newcommand{\Wsl}{\Ws_\mathrm{loc}}
\newcommand{\Wul}{\Wu_\mathrm{loc}}
\newcommand{\Vu}{\VVV^\mathrm{u}}
\newcommand{\Vuq}{\VVV^{1,\mathrm{u}}}
\newcommand{\Vs}{\VVV^{\mathrm{s}}}
\newcommand{\Vud}{\Vu_\delta}
\newcommand{\Vuqd}{\Vuq_\delta}
\newcommand{\Vsd}{\Vs_\delta}

\newcommand{\Kmin}{\mathcal{K}_{\mathrm{min}}}
\newcommand{\Kmax}{\mathcal{K}_{\mathrm{max}}}
\newcommand{\tmin}{\tau_{\mathrm{min}}}

\newcommand{\Breg}{\BBB^{\mathrm{reg}}}
\newcommand{\Mreg}{M^{\mathrm{reg}}}
\newcommand{\Xreg}{X^{\mathrm{reg}}}
\newcommand{\Yreg}{Y^{\mathrm{reg}}}

\newcommand{\pip}{\tau^{+}}
\newcommand{\pim}{\tau^{-}}

\newcommand{\cyl}[2]{ [{#1}_\bullet{#2}] }
\newcommand{\cylu}[1]{[{#1}]}
\newcommand{\bill}[1]{\overline{#1}}
\newcommand{\pbl}{[} 
\newcommand{\pbr}{]^+} 
\newcommand{\mbl}{[} 
\newcommand{\mbr}{]^-}
\newcommand{\markzero}[1]{\boxed{#1}\,}

\newcommand{\mf}{m^+}
\newcommand{\mb}{m^-}
\newcommand{\mt}{\widetilde{m}}
\newcommand{\mtf}{\mt^+}

\newcommand{\tnu}{\widetilde{\nu}}

\newcommand{\bx}{\bill{x}}

\newcommand{\trunc}{\phi}
\newcommand{\uf}{\overline{f}}

\newtheorem{theorem}{Theorem}[section]
\newtheorem{lemma}[theorem]{Lemma}
\newtheorem{proposition}[theorem]{Proposition}
\newtheorem{corollary}[theorem]{Corollary}
\newtheorem{thma}{Theorem}

\newtheorem*{thma*}{Theorem}
\theoremstyle{remark}
\newtheorem{remark}[theorem]{Remark}
\theoremstyle{definition}
\newtheorem{definition}[theorem]{Definition}

\numberwithin{equation}{section}
\numberwithin{figure}{section}

\title[Sinai billiard maps have unique MMEs]
{Every finite horizon Sinai billiard map has a unique measure of maximal entropy}

\author{Vaughn Climenhaga}
\address{Dept. of Mathematics, University of Houston, Houston, TX 77204, USA}
\email{climenha@math.uh.edu}

\author{Jason Day}
\address{Dept. of Mathematics, Hiroshima University, Higashihiroshima, 739-8526, Japan}
\email{jjday@hiroshima-u.ac.jp}

\date{April 28, 2026}
\subjclass{Primary: 37C83, 37D35. Secondary: 37B40, 37B10.} 
\keywords{Sinai billiards; measure of maximal entropy}
\thanks{This material is based upon work supported by the National Science Foundation under Award No.\ DMS-2154378 and DMS-2453314.}

\begin{document}
\begin{abstract}
Finite horizon Sinai billiard maps are examples of uniformly hyperbolic systems with singularities. These discontinuities make it more difficult to develop the classical theory of thermodynamic formalism. 
Nevertheless, Baladi and Demers established a variational principle for these systems, and proved that if the billiard table satisfies a certain sparse recurrence condition, then there is a unique measure of maximal entropy. We extend this existence and uniqueness result to all finite horizon Sinai billiard maps by giving a new proof that does not rely on the sparse recurrence condition.
Our construction is very concrete: the unique MME is obtained as the product of the Hausdorff measures on the one-sided subshifts associated to the billiard map. 
\end{abstract}

\maketitle

	
\section{Introduction}\label{sec:intro}

\subsection{Main results}

Sinai billiards are a class of billiards introduced in \cite{yS70}.  The billiard table is a flat torus with a finite collection of disjoint convex scatterers, and the system models the flight of a massless particle on the table.  The particle travels in a straight line until it encounters a scatterer; its post-collision trajectory is governed by elastic collision, as shown in Figure \ref{fig:billiard-flow}. Our main result is:

\begin{thma}\label{thm:mme-unique}
Every finite horizon Sinai billiard map has a unique measure of maximal entropy. This measure is multiply mixing, has full support (is positive on every open set), and has local product structure in the sense of Definition \ref{def:lps}.
\end{thma}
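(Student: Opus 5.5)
Our plan is to follow the abstract: code the billiard map symbolically, build the MME as a product of Hausdorff-type measures on the one-sided subshifts, and obtain uniqueness through a Bowen-type argument that replaces the sparse recurrence condition with a direct counting estimate.

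\textbf{Coding and counting.} Let $\htop$ denote the topological entropy given by the Baladi--Demers variational principle, with $h=\htop$. We code the finite horizon Sinai billiard map $T$ by the two-sided sequence of scatterers hit. The set $\Sigma$ of admissible codings is a closed shift-invariant subset. We compare it with its forward and backward one-sided projections $\Sigma^+$ and $\Sigma^-$. Finite horizon and uniform hyperbolicity give a coding map $\pi\colon\Sigma\to M$ that is injective off the preimages of singularity curves. We then prove the counting estimate
\[
\#\LLL_n\asymp e^{nh},
\]
for the set $\LLL_n$ of admissible words of length $n$, with constants uniform in $n$. The lower bound is submultiplicativity together with a gluing property. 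The upper bound is the step we expect to be the main obstacle. We would try to obtain it from a growth lemma for unstable curves: a curve of length at least $\delta$ is cut by $T^n$ into pieces, and a positive proportion of these again have length at least $\delta$. This gives a uniform ``specification on a large set'' that does not rely on sparse recurrence. The complexity bound of Bowen type for singularities then controls how many cylinders can be short.

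\textbf{Construction.} On $\Sigma^+$ we take $\mf$ to be the $h$-dimensional Hausdorff measure with respect to the metric $d(x,y)=e^{-hn(x,y)}$, where $n(x,y)$ is the first disagreement time. The two-sided bound $\#\LLL_n\asymp e^{nh}$ gives $\mf[w]\asymp e^{-h|w|}$ for every cylinder $[w]$. This is finite and positive, and it is conformal with Jacobian $e^{h}$ under the shift. We define $\mb$ on $\Sigma^-$ in the same way. We then glue $\mf$ and $\mb$ via the local product structure of Definition~\ref{def:lps}: on each rectangle $\cyl{a}{b}$ we set $\mu\approx\mb\times\mf$, normalised, with a density bounded above and below. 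Invariance follows from the conformality of the two factors. The bound $\mu[w]\asymp e^{-h|w|}$, the Gibbs property, then gives $h_\mu(T)=h$ by Shannon--McMillan--Breiman.

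\textbf{Properties of $\mu$.} We check that the singular set has $\mu$-measure zero: its $\epsilon$-neighbourhoods have measure decaying polynomially in $\epsilon$, by the Gibbs property and the growth lemma. Hence $\pi_*\mu$ is an MME on $M$. Full support follows from gluing to any cylinder. Multiple mixing uses a Hopf argument along stable and unstable sets together with the product structure, giving Kolmogorov-type behaviour. Alternatively, one can build a tower with aperiodic return times and deduce Bernoulli and hence multiple mixing.

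\textbf{Uniqueness.} We follow Bowen's argument. Let $\nu\perp\mu$ be an ergodic MME and choose a union of cylinders $E$ with $\nu(E)\approx 1$ and $\mu(E)\approx 0$. The Gibbs upper bound and $\#\LLL_n\le Ce^{nh}$ then give $h_\nu<h$, a contradiction. This last step needs only the upper Gibbs bound and counting, so it goes through once the hard counting estimate is established.
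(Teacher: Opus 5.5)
Your uniqueness step fails as written. Bowen's argument needs a \emph{lower} Gibbs bound for $\mu$: to turn $\nu(E_n)\to1$, $\mu(E_n)\to0$ into $\#E_n=o(e^{nh})$ one uses $\#E_n\cdot\min_{w\in E_n}\mu(\cyl{}{w})\le\mu(E_n)$. The upper Gibbs bound together with $\#\LLL_n\le Ce^{nh}$ cannot replace this. On the disjoint union of two full $2$-shifts, both hold for the uniform Bernoulli measure on one component, yet the uniform Bernoulli measure on the other component is singular to it and has entropy $h$. Your claim $\mf(\pbl w\pbr)\asymp e^{-h|w|}$ does not help either, because counting gives only the upper half (Corollary~\ref{cor:Gibbs}). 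Since $\mf(\pbl w\pbr)=e^{-|w|h}\mf(\sigma^{|w|}\pbl w\pbr)$, a uniform lower bound would require every follower set $\sigma^{|w|}\pbl w\pbr$ to have $\mf$-measure bounded below. Roughly, every cell $Y_w^-$ would have to contain a $u$-curve of definite length, which fails for the thin cells near multiple singularity points. The paper states that no uniform lower Gibbs bound holds in the billiard setting. Its substitute is leafwise and nonuniform: $\mf_x(\cyl{}{w})\ge c_\delta e^{-|w|h}$ only when the image $T^{|w|}V_w$ of the unstable leaf is long (Corollary~\ref{cor:nonuniform-lower-gibbs-leaf}). The paper reduces $\nu\perp\bar\mu$ to $\nu_x\perp\mf_x$ on unstable leaves via Birkhoff-generic points (Lemma~\ref{lem:singular-leaf}). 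It then counts $2n$-words extending a $\nu_x$-large, $\mf_x$-small set of $n$-words by truncating at the first good time in $(n,2n]$. Slow fragmentation shows that words with no good time there number at most $C_{11}e^{7nh/4}$ (Lemma~\ref{lem:counting-bad-end}). The proof concludes with the conditional entropy formula (Lemma~\ref{lem:h-int}) and Fatou's lemma.

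The construction and the passage back to $M$ also have gaps. Since $X$ is not Markov, $1$-cylinders of $X$ need not be rectangles. The product $\mb\otimes\mf$ lives on $\tX$, which strictly contains $X$ and is not $\sigma$-invariant, so there is no bounded density to normalise, and $\mu=\mt|_X$ could a priori be zero (Remark~\ref{rmk:need-nonzero}). Your invariance and maximal-entropy arguments are fine once $\mu\neq0$, but proving $\mu(\cyl{}{w})>0$ is a main task. The paper first shows that every $u$-curve of length at least $\delta$ realises a fixed fraction of all $n$-words (Proposition~\ref{prop:LV-geq-L}); with Proposition~\ref{prop:Z-compact-lower} this gives $\mf_x(\Wul(x))\ge c_\delta$ on long leaves. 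It also shows that points with trivial local stable manifold are $\mf$-null (Lemma~\ref{lem:m-X0-again}). Proposition~\ref{prop:LV-geq-L} also drives the upper counting bound, via quasi-supermultiplicativity and Fekete (Proposition~\ref{prop:qsm}). Gluing is what the upper bound needs; the lower bound is plain submultiplicativity. Proposition~\ref{prop:LV-geq-L} rests on two inputs beyond the growth lemma. First, a positive fraction of all cells of $\MMM_{-n}^0$ contain long $u$-curves (Proposition~\ref{prop:Bn-Gn}). Second, there is a finite family of Cantor rectangles that images of every long $u$-curve cross after a fixed time (Proposition~\ref{prop:suff-rect}), which comes from mixing of the Liouville measure. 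Your polynomial decay of $\mu$ on $\epsilon$-neighbourhoods of the singular set would make $\mu$ adapted, which is known only under sparse recurrence. The paper instead proves that every positive-entropy ergodic measure on $X$ is carried by $\Xreg$ (Proposition~\ref{prop:reg-meas}). That proof uses non-atomic leaf conditionals, linear complexity and countability of $\Wul(\bx)\setminus M'$. The tower/Bernoulli route to mixing depends on the same unknown adaptedness, so the Hopf argument is the one to use. Finally, coding by the sequence of scatterers hit is not injective: with one scatterer the code is constant. You should code by the partition $\PPP$ into maximal continuity domains of $T^{\pm1}$.
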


A more detailed version of Theorem \ref{thm:mme-unique} is given as Theorem \ref{thm:strategy} below. The proof includes the following bound (see \S\ref{sec:qsm}), which may be of independent interest.

\begin{thma}\label{thm:length-growth}
Let $T \colon M\to M$ be the collision map for a finite horizon Sinai billiard. Given any $\delta>0$, there exists $Q>0$ such that for every $C^1$ curve $V \subset M$ with length $|V|  \geq \delta$ whose tangent vectors lie in the unstable cone, we have
\begin{equation}\label{eqn:TnV}
Q^{-1} e^{n\htop(T)} \leq |T^n V| \leq Q e^{n\htop(T)},
\end{equation}
where $|T^n V|$ denotes the sum of the lengths of the connected components of $T^n V$.
\end{thma}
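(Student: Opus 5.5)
The plan is to combine the growth lemmas for unstable curves (Chernov–Markarian, Baladi–Demers) with submultiplicativity and supermultiplicativity arguments for the growth of curve lengths. For $\delta>0$ and $n\ge0$, set
\[
L_n(\delta) = \sup\{|T^n V| : V \text{ unstable curve}, |V|\le \delta\}, \qquad \ell_n(\delta)=\inf\{|T^nV| : V\text{ unstable}, |V|\ge\delta\}.
\]
Step one is to show these quantities have the exponential growth rate $\htop(T)$. Using the Baladi–Demers characterization, $\htop(T)=\lim \frac1n\log \#\MMM_0^n$, where $\MMM_0^n$ is the partition into maximal connected components of smoothness of $T^n$. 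Uniform expansion along unstable curves gives $|T^nV|\approx \sum_{A} |T^n(V\cap A)|$. Each component of $T^nV$ in a cell of $\MMM_0^n$ has length at most a uniform constant, so $|T^nV| \lesssim \#\{A\in\MMM_0^n: A\cap V\ne\emptyset\}\le \#\MMM_0^n$. This yields the upper rate.

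The lower rate follows from the Baladi–Demers fact that a large fraction of elements of $\MMM_0^n$ have long unstable images. Finite horizon also gives a bounded number of cuts per iterate, which allows a one-step expansion argument.

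Step two upgrades these rates to uniform two-sided bounds. A curve of length $\ge\delta$ is cut into pieces of length in $[\delta,2\delta]$. The image $T^{n+m}V$ is decomposed into components of $T^nV$, each of which is pushed forward $m$ more steps. The key point is the growth lemma: for suitable $\delta_0$, a definite proportion of $|T^nV|$ lies in components of length $\ge\delta_0$, uniformly in $n$. This handles short pieces, which otherwise fail to grow. It gives almost-supermultiplicativity $\ell_{n+m}\ge c\,\ell_n\ell_m$. The trivial subdivision gives almost-submultiplicativity $L_{n+m}\le C L_nL_m$.

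Fekete's lemma then yields $c^{-1}\ell_n \le e^{n\htop}$ in one direction and $L_n\ge C^{-1}e^{n\htop}$ in the other. To close the loop, I compare $L_n$ and $\ell_n$. Any curve of length $\ge\delta$ contains a piece of length $\delta$, so $\ell_n\le L_n$ up to a constant. Conversely, by mixing and hyperbolicity, any $\delta$-curve after a bounded number $N$ of steps crosses every element of a fixed finite family of Markov-like rectangles. This gives $L_n\le C'\ell_{n+N}$. Together these produce $Q^{-1}e^{n\htop}\le \ell_n\le L_n\le Qe^{n\htop}$.

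I expect the main obstacle to be the uniform supermultiplicativity, meaning the lower bound on a proportion of long components independent of $n$. Singularities can chop $T^nV$ into many tiny pieces, whose subsequent growth is uncontrolled. I would first try to prove it via a one-step expansion estimate with a weight exponent $q<1$, $\sum_i |T V_i|^{-q}\cdot\Lambda^{-q}<1$, valid under finite horizon. This controls $\sum |V_i|^{q}$-type sums and hence the short-piece mass. If that fails, I would iterate the growth lemma on short pieces at time $n-k$ for bounded $k$, as Baladi–Demers do.
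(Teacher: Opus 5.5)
There is a genuine gap. Fekete applied to $\ell_n$ and $L_n$ in the easy directions, closed by $\ell_n\le L_n\le C'\ell_{n+N}$, is a valid scheme. But the closing inequality $L_n\le C'\ell_{n+N}$ does not follow from mixing and rectangle crossing, and it carries most of the content of the theorem.

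Knowing that $T^NW$ crosses every rectangle of a finite family only tells you that $T^NW$ meets each $n$-cell that is long in the stable direction, since such a cell fully crosses some rectangle. It says nothing about the pieces of $T^nV$ lying in cells that are short in the stable direction. Those cells cross no rectangle, and a priori they could carry almost all of $|T^nV|$.

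The missing ingredient is the global statement that a uniformly positive fraction of \emph{all} $n$-cells are long, $\#\GGG_n^{u,\delta}\ge b_4\#\LLL_n$ (Proposition~\ref{prop:Bn-Gn}). With it, the comparison must go through counts:
\[
|T^nV|\le L\,\#\LLL_n\le C\,\#\GGG_n^{u,\delta}\le C'\,\#\LLL_{n+N}^W\le C''\,|T^{n+N}W| .
\]
The third step is the pigeonhole-plus-crossing argument of Proposition~\ref{prop:LV-geq-L}, and the last is the average-length bound (Corollary~\ref{cor:ave-length}).

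You do cite the ``large fraction of long cells'' fact in Step one, but only for the lower rate and as a black box. In the paper it is the hardest combinatorial input. Its proof uses noncrossing and continuation of singularity curves, the uniform bounds on $\bbS_k'$ and $\bbI_k$, and the growth of $|T^n\SSS_0|$. The paper proves it from scratch precisely so that no sparse recurrence hypothesis enters. Likewise, your ``trivial'' submultiplicativity $L_{n+m}\le CL_nL_m$ needs the number of components of $T^nV$ to be $\lesssim |T^nV|$. That is again the average-length bound, not a triviality.

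Second, the mechanism you propose for the growth lemma would not work in general. A bounded number of cuts per iterate only gives exponential fragmentation, which can outpace the minimal expansion rate, so no single-iterate estimate of the kind you write is available. What is needed is linear complexity (Lemma~\ref{lem:lin-comp}). For each $m$ there is $\rho_m>0$ such that $T^m$ cuts a $u$-curve of length $<\rho_m$ into at most $Km+1$ pieces. Fragmentation is therefore subexponential and is dominated by uniform expansion (Lemmas~\ref{lem:tempered} and~\ref{lem:grow-cut}).

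Combining this with the good-prefix/ungrowing-suffix decomposition (Lemma~\ref{lem:decomp}) gives $|T^nV|\ge b_1\#\LLL_n^V$ for $|V|\ge\delta$ and $n\ge n_1$ (Proposition~\ref{prop:average-length}). Your length-weighted growth lemma, and hence $\ell_{n+m}\ge c\,\ell_n\ell_m$, follow from that. With these two ingredients supplied, your Fekete-on-lengths argument does close.

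The paper's route is shorter once the same ingredients are in hand. It applies Fekete to $\#\LLL_n$ via quasisupermultiplicativity (Proposition~\ref{prop:qsm}). It then reads off both bounds from $b_1\#\LLL_n^V\le |T^nV|\le L\,\#\LLL_n^V$ together with $\#\LLL_n^V\ge b_0\#\LLL_n$.
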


Theorems \ref{thm:mme-unique} and \ref{thm:length-growth} extend results of Baladi and Demers \cite{BD20}; see \S\ref{sec:related-literature}.

\subsection{Sinai billiards}\label{sec:background}

The Sinai billiard flow described informally in the previous section is illustrated in Figure \ref{fig:billiard-flow}.
Formally, we make the following definitions.  
\begin{itemize}
\item A \emph{scatterer} is a closed subset of $\TT^2 = \RR^2 / \ZZ^2$ whose boundary is $C^3$ and has strictly positive curvature.
\item Given a finite collection of pairwise disjoint scatterers $B_1,\dots, B_D \subset \TT^2$, the corresponding \emph{billiard table} is $\Omega := \TT^2 \setminus (\bigcup_{i=1}^D B_i)$.
\item A billiard table has \emph{finite horizon} if every infinite ray on $\TT^2$ eventually intersects the interior of some scatterer.
\end{itemize}  

\begin{remark}
The type of table we are considering is referred to as a ``category A'' table in \cite{CM06}.  Tables with infinite horizon or intersecting scatterers have different properties; in particular, infinite horizon Sinai billiards can have multiple measures of maximal (infinite) entropy \cite{CT96}.  More details about Sinai billiards and the different types of tables one can study can be found in \cite{CM06}.
\end{remark}

\begin{figure}
	\centering
\hfill\includegraphics[width=0.33\textwidth]{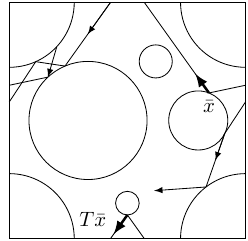}
\hfill
\includegraphics[width=0.33\textwidth]{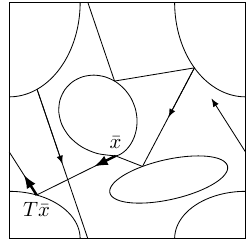}\hfill{}
	\caption{Trajectories of a particle on a billiard table.}\label{fig:billiard-flow}
\end{figure}

We will study the billiard map associated to $\Omega$, which only records the position and direction of the particle as it leaves each of the collisions it undergoes.\footnote{One could also study the \emph{billiard flow}, but we do not do so in this paper. See \cite{BCD24} for the extension of the Baladi--Demers approach to this continuous-time setting.}
Thus, the phase space of the system is
\[
M := \partial \Omega \times \left[-\frac{\pi}{2},\frac{\pi}{2}\right],
\]
where a point $\bar{x} = (r,\ph) \in M$ encodes the post-collision information as follows:
\begin{itemize}
\item $r\in \partial \Omega = \bigcup_{i=1}^D \partial B_i$ represents the location of the collision;
\item $\ph \in [-\frac\pi2,\frac\pi2]$ represents the outgoing angle with respect to the outward-pointing normal vector $\mathbf{n}_r$ to the scatterer (see Figure \ref{fig:phase-space}).
\end{itemize}
We denote the connected components of $M$ by $M_i = \partial B_i \times \left[-\frac{\pi}{2},\frac{\pi}{2}\right]$, and parametrize $\partial B_i$ by arclength, so $M_i$, which is topologically a cylinder (annulus), is represented as a rectangle whose vertical sides are identified.

\begin{figure}[htbp]
\includegraphics[width=.9\textwidth]{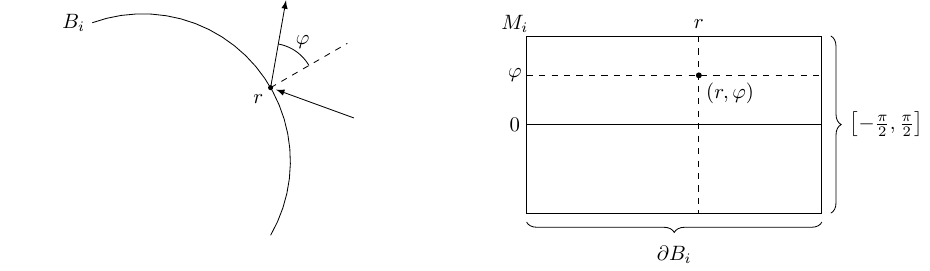}
\caption{Phase space for the billiard map.}
\label{fig:phase-space}
\end{figure}

The billiard map $T \colon M \to M$ is illustrated in Figure \ref{fig:billiard-map}. Formally, it can be defined as follows: given $(r,\ph) \in M$, let $\mathbf{v} \in \RR^2$ be the unique unit vector with the property that the signed angle from the normal vector $\mathbf{n}_r$ to $\mathbf{v}$ is equal to $\ph$. Let $\tau > 0$ be minimal such that $r' := r + \tau\mathbf{v} \in \partial \Omega$; this is the \emph{flight time}. Let $\ph'$ be the signed angle from $-\mathbf{v}$ to $\mathbf{n}_{r'}$ (note the order). This represents the outgoing angle after the collision at the point $r'$ (as measured from $\mathbf{n}_{r'}$ to the outgoing vector), and we write $T(r,\ph) = (r',\ph')$.
 
\begin{figure}[htbp]
\hfill
\includegraphics[width=.4\textwidth]{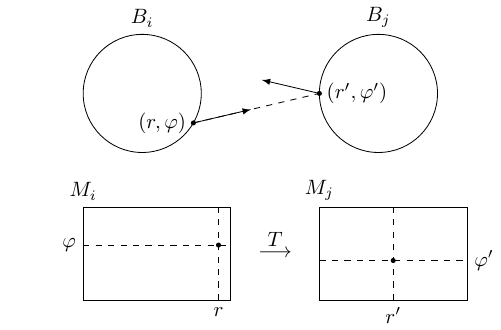}\hfill
\includegraphics[width=.3\textwidth]{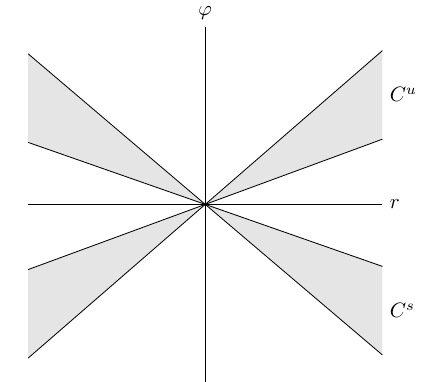}\hfill{}
\caption{The billiard map and its cones.}
\label{fig:billiard-map}
\end{figure}

Throughout the arguments, we will use the hyperbolicity properties of the billiard map, especially the invariant cones for $DT \colon TM \to TM$.
Since elements of $M$ are represented by pairs $(r,\ph)$, it is common to represent elements of $T_{\bx}M$ by pairs $(dr,d\ph) \in \RR^2$. In these coordinates, one can present invariant cones for $DT$. Start by letting $\Kmin$ and $\Kmax$ denote the minimum and maximum curvatures of $\partial \Omega$, and write $\tmin$ for the minimum flight time. Then consider the cones
\begin{equation}\label{eqn:cones}
\begin{aligned}
C^u &:= \{ (dr,d\ph) \in \RR^2 : \Kmin \leq d\ph/dr \leq \Kmax + 1/\tmin \}, \\
C^s &:= \{ (dr,d\ph) \in \RR^2 : -\Kmin \geq d\ph/dr \geq -\Kmax - 1/\tmin \}.
\end{aligned}
\end{equation}
The cones are invariant under $T$ in the sense that
\[
D_{\bx} TC^u) \subset C^{u}
\quad\text{and}\quad
D_{\bx} T^{-1}C^s \subset C^{s}.
\]
(In fact, the images lie in the interiors of the cones, but we will not need this fact.)
More details about the cones can be found in \cite[\S4.4]{CM06}; see also \cite[\S3]{BD20}.

\subsection{Measures of maximal entropy}\label{sec:mme}

Let $(X,\AAA)$ be a measurable space and $T\colon X\to X$ a measurable transformation. 
Let $\MMM(X)$ denote the set of all probability measures on $(X,\AAA)$, and write
\begin{align*}
\MMM_T(X) &:= \{ \mu \in \MMM(X) : \mu \text{ is $T$-invariant} \}, \\
\MMM_T^e(X) &:= \{ \mu \in \MMM_T(X) : \mu \text{ is ergodic} \},
\end{align*}
where we recall that $T$-invariance means that $T_* \mu = \mu$, and ergodicity means that every $E\in \AAA$ with $T^{-1}E = E$ must satisfy either $\mu(E) = 0$ or $\mu(E) = 1$.

We now review some basic definitions concerning entropy; see \cite{pW82} for more details and background.
Given $\nu \in \MMM_T(X)$ and a finite partition $\PPP$ of $X$ into measurable sets, the \emph{entropy} of $\nu$ with respect to $\PPP$ is
\begin{equation}\label{eqn:hnu}
h_\nu(T,\PPP) := \lim_{n\to\infty} \frac 1n \sum_{P\in \bigvee_{k=0}^{n-1} T^{-k}\PPP} -\nu(P) \log \nu(P).
\end{equation}
Here $\bigvee_{k=0}^{n-1} T^{-k}\PPP$ is the partition of $X$ into sets of the form $P = \bigcap_{k=0}^{n-1} T^{-k} P_k$, where $P_k$ is an element of $\PPP$.
The entropy of $\nu$ is
\[
h_\nu(T) := \sup \{ h_\nu(T,\PPP) : \PPP \text{ a finite partition of $X$ into measurable sets} \}.
\]

In later parts of our arguments, we will use the fact that the entropy $h_\nu(T,\PPP)$ can also be described in terms of the conditional measures of $\nu$ with respect to the partition $\bigvee_{k=0}^\infty T^k \PPP$; see  \S\ref{sec:cond-ent}.

When $X$ is a compact metric space and $T\colon X\to X$ is continuous, the \emph{topological entropy} of $T$ is
\begin{equation}\label{eqn:h-sep}
\htop(T) := \lim_{\eps\to 0} \ulim_{n\to\infty} \frac 1n \log \max \{ \# E : E \subset X \text{ is $(n,\eps)$-separated} \},
\end{equation}
where \emph{$(n,\eps)$-separated} means that given any $x,y\in E$ with $x\neq y$, there exists $k \in \{0,1,\dots, n-1\}$ such that $d(T^k x, T^k y) \geq \eps$. In this setting, the \emph{variational principle} shows that
\begin{equation}\label{eqn:var-princ}
\htop(T) = \sup \{ h_\nu(T) : \nu \in \MMM_T(X) \}
= \sup \{ h_\nu(T) : \nu \in \MMM_T^e(X) \}.
\end{equation}
A measure $\nu \in \MMM_T(X)$ that achieves the supremum in \eqref{eqn:var-princ} is called a \emph{measure of maximal entropy} (MME) for $T$.

When $(X,T)$ is a smooth uniformly hyperbolic system such as a transitive Anosov diffeomorphism, or a symbolic model of such a system, namely an irreducible subshift of finite type (SFT), there is a unique MME \cite{wP64,Bow08}.

A substantial amount of work has gone into extending this result beyond the classical setting. In particular, 
the standard proof of the variational principle \eqref{eqn:var-princ} does not apply to systems with singularities, such as Sinai billiards, and the usual arguments for existence and uniqueness of the MME also fail, so new techniques are required. 
In \S\ref{sec:coding-bill}, we will describe our approach to studying MMEs for Sinai billiards. In \S\ref{sec:related-literature}, we will describe some related results in the literature, especially the work of Baladi and Demers \cite{BD20}.

\subsection{Coding the billiard map}\label{sec:coding-bill}

Our proofs of Theorems \ref{thm:mme-unique} and \ref{thm:length-growth} will use a natural coding of the billiard map $T$ by a shift space on a finite alphabet. Here we give a brief description of this shift space, which will \emph{not} be a subshift of finite type. More complete details are given in \S\ref{sec:bill} below.

\begin{figure}[htbp]
\centering
\includegraphics{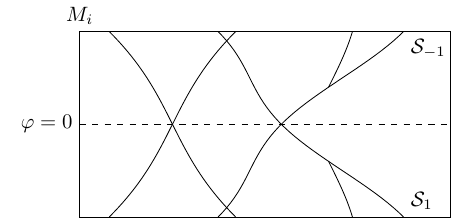}
\caption{The partition $\PPP$ on a connected component $M_i \subset M$.}
\label{fig:M-partition}
\end{figure}

Let $\mathcal{P}$ be the finite partition of $M$ into the maximal connected sets on which both $T$ and $T^{-1}$ are continuous; part of this partition is shown in Figure \ref{fig:M-partition}.
Writing 
\begin{equation}\label{eqn:S0}
\SSS_0 := \{ (r,\ph) \in M : |\ph| = \pi / 2 \}
\end{equation} 
for the set of points in $M$ that represent grazing collisions, which we call the \emph{singularity set}, 
elements of $\mathcal{P}$ are bounded by curves in $T^{\pm 1}\SSS_0$. As we will see in \S\ref{sec:bill}, these have tangent vectors lying in the cones $C^{u,s}$.

Index the elements of $\PPP$ by a set $A$, and let $c_0 \colon M\to A$ assign to each $\bx \in M$ the index of the partition element containing it. 
Writing $A^\ZZ$ for the space of bi-infinite sequences of symbols in $A$,
define a \emph{coding map}
\begin{equation}\label{eqn:coding}
c \colon M\to A^\ZZ
\end{equation}
by the condition that $(c(\bx))_n = c_0 (T^n \bx)$ for every $n\in \ZZ$. 
Equipping $A^\ZZ$ with the product topology, let $X = \overline{c(M)} \subset A^\ZZ$.
The diagram
\begin{equation}\label{eqn:comm-c}
\begin{tikzcd}
M \arrow[r, "T"] \arrow[d, "c"] & M \arrow[d, "c"] \\
X \arrow[r, "\sigma"] & X
\end{tikzcd}
\end{equation}
commutes, where $\sigma$ is the shift map on $X$ (see \S\ref{sec:shift-spaces-and-languages}). From this it follows that the pushforward map $c_* \colon \MMM(M) \to \MMM(X)$ has the property that invariant measures map to invariant measures:
\begin{equation}\label{eqn:c*}
c_*(\MMM_T(M)) \subset \MMM_\sigma(X).
\end{equation}
A vital fact, which we will prove in \S\ref{sec:coding}, is that:

\begin{proposition}\label{prop:1-1}
The coding map $c\colon M\to X$ is injective.
\end{proposition}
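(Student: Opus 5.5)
Suppose $\bx \neq \bar y$ in $M$ have $c(\bx) = c(\bar y)$. We will derive a contradiction from uniform hyperbolicity of $T$ together with the fact that elements of $\PPP$ are bounded by curves with tangents in the cones $C^{u}$ and $C^{s}$.

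\emph{Step 1: the cylinders are connected.} Since $c(\bx)=c(\bar y)$, for every $n\ge 0$ both points lie in the cylinder
\[
P_n := \bigcap_{k=-n}^{n} T^{-k}\bigl(\text{element of }\PPP\text{ with index } c_0(T^k\bx)\bigr).
\]
The first task is to show that $P_n$ is connected. For this we use that $T$ and $T^{-1}$ are continuous on each element of $\PPP$, and we argue by induction on $n$. On $\bigcap_{k=0}^{n} T^{-k}\PPP$, the boundaries are preimages of curves in $T^{\pm1}\SSS_0$. These preimages are stable curves, with tangents in $C^s$, and each one crosses a partition element fully. Similarly, the boundaries on $\bigcap_{k=-n}^{0}T^{-k}\PPP$ are unstable curves. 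Each $P_n$ is therefore a curvilinear quadrilateral bounded by two stable and two unstable curves, possibly degenerate.

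\emph{Step 2: the cylinders shrink.} Next we show that $\operatorname{diam} P_n \to 0$. Take any $C^1$ curve $W\subset P_n$ whose tangent vectors lie in $C^u$. Then $T^k W$ lies in a single element of $\PPP$ for $0\le k\le n$, so $T^nW$ is a connected $C^u$-curve. By uniform expansion on unstable curves, $|T^nW|\ge C\Lambda^n|W|$ with $\Lambda>1$. Because $|T^nW|$ is bounded by the diameter of $M$, this gives $|W|\le C^{-1}\Lambda^{-n}\operatorname{diam}M$. The same argument with $T^{-1}$ applies to stable curves.

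\emph{Step 3: contradiction.} Any two points of the quadrilateral $P_n$ can be joined by a path made of at most one stable piece and one unstable piece, using a local product structure near $\bx$. Hence $d(\bx,\bar y)\le C'\Lambda^{-n}$ for every $n$, which forces $\bx=\bar y$.

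The main obstacle is the geometry in Steps 1 and 3, namely showing that $P_n$ really is such a quadrilateral. Cells near $\SSS_0$ can be very thin, and stable and unstable curves are only transverse up to the angle between the cones $C^s$ and $C^u$. Since the cones are uniformly transverse and intersect only at the origin, I would proceed as follows:

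1. Take a cone-adapted path from $\bx$ to $\bar y$ inside the convex-like cell $P_n$, which is possible since a cell with $C^u$ and $C^s$ boundaries is "cone-convex".
2. Decompose a straight segment from $\bx$ to $\bar y$ according to whether its direction lies in $C^u$, in $C^s$, or in neither.
3. In the third case, observe that the segment direction lies in the complementary cones, so it crosses the boundary of $P_n$ transversally; this would contradict connectedness unless the segment is short.

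If this convexity argument proves delicate near the grazing set, I would instead use the expansion of $|dr|$ along unstable vectors. In the $(r,\ph)$ coordinates the two cones have opposite sign slopes, and both $r$- and $\ph$-differences must decay. This yields a direct metric estimate.
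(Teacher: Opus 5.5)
Your Steps 2 and 3 are the routine part; the paper disposes of the diameter bound in one line via Lemma \ref{lem:s-contr}. Step 1, however, has a genuine gap, and it is exactly where the content of injectivity lies. Expansion only controls distances along curves on which $T^n$ is continuous. So if $\bx$ and $\bar y$ lay in different connected components of the cylinder $P_n$, Steps 2--3 would give no bound at all on $d(\bx,\bar y)$. Your fallback about $r$- and $\ph$-differences decaying has the same problem.

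Your induction asserts connectedness from the fact that the new boundaries are stable or unstable curves, but that information cannot suffice. Each $M_i$ is an annulus. If $Y_a$ and $T^{-1}Y_w^+$ are regions between monotone graphs over arcs $I_a, I_w \subset \partial B_i$, those arcs can intersect in two disjoint pieces, and then so do the cells. Remark \ref{rmk:non-interval} shows this actually happens for a cell of $\MMM_{-1}^0$ and a cell of $\MMM_0^1$, whose boundaries are also unstable and stable curves. An argument using only cone geometry therefore proves too much. The ``curvilinear quadrilateral'' picture is also inaccurate: cells have the form $\{r\in I,\ \max(\psi^0,\ph^0)(r)<\ph<\min(\psi^1,\ph^1)(r)\}$, with many corners and possibly pieces of $\SSS_0$ on the boundary. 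This is also why your path made of one stable piece and one unstable piece in Step 3 is not automatic.

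The missing idea is a geometric input from the table, not from the cones. The paper proceeds as follows.
\begin{enumerate}
\item It writes $Y_a$ and $T^{-1}Y_w^+$ as regions between monotone graphs.
\item The $\SSS_1$-graphs bounding $Y_a$ cannot enter $T^{-1}Y_w^+ \subset M_0^{n+1}$. Hence the intersection is the region between $\max(\psi_a^0,\ph_w^0)$ and $\min(\psi_a^1,\ph_w^1)$ over $I_a\cap I_w$.
\item This reduces everything to showing that $I_a \cap I_w$ is an interval.
\item Both sets lie in one component of $M_i \cap T^{-1}M_j$, i.e.\ they consist of trajectories from $\partial B_i$ to a single lift $\tilde B_j$. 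This is where coding by cells of $\MMM_{-1}^1$, which record the next scatterer, matters.
\item By convexity, a line separates $\tilde B_i$ from $\tilde B_j$. The point $r_0 \in \partial B_i$ whose tangent is parallel to that line is the foot of no such trajectory.
\item Hence $I_a, I_w \subset \partial B_i \setminus\{r_0\}$, which is an interval, so $I_a \cap I_w$ is an interval.
\end{enumerate}

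You identified the main obstacle as thin cells near $\SSS_0$ and cone transversality. That is the metric side of the problem, and it is not where the difficulty lies.
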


\begin{corollary}\label{cor:ent-pres}
The pushforward map $c_* \colon \MMM_T(M) \to \MMM_\sigma(X)$ preserves entropy: $h_{c_*\nu}(\sigma) = h_\nu(T)$ for every $\nu \in \MMM_T(M)$.
\end{corollary}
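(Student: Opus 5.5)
The plan is to realize $c\colon M\to c(M)$ as a measurable isomorphism that conjugates $T$ to $\sigma$, and then use the fact that Kolmogorov--Sinai entropy is an invariant of measure-theoretic isomorphism. Both $M$ and $A^\ZZ$ are standard Borel spaces. The map $c$ is Borel, because each coordinate $\bx\mapsto c_0(T^n\bx)$ is Borel: each $T^n$ is Borel (piecewise continuous) and each element of $\PPP$ is a Borel set. By Proposition \ref{prop:1-1}, $c$ is injective. The Lusin--Souslin theorem then says that $c(M)$ is a Borel subset of $X$ and that $c^{-1}\colon c(M)\to M$ is Borel. Given $\nu\in\MMM_T(M)$, set $\mu = c_*\nu$. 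Then $\mu(c(M))=1$, and \eqref{eqn:comm-c} gives $\sigma\circ c = c\circ T$. So $c$ is an isomorphism of the measure-preserving systems $(M,\nu,T)$ and $(c(M),\mu,\sigma)$, and hence $h_{c_*\nu}(\sigma)=h_\nu(T)$.

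To avoid invoking the isomorphism invariance as a black box, one can instead argue directly with generating partitions. On the shift side, let $\QQQ$ be the partition of $X$ by the zeroth coordinate. It is a two-sided generator, since $\bigvee_{k=-n}^{n}\sigma^{-k}\QQQ$ consists of cylinders that generate the product $\sigma$-algebra. The Kolmogorov--Sinai theorem then gives $h_\mu(\sigma)=h_\mu(\sigma,\QQQ)$. By construction $c^{-1}\QQQ=\PPP$, and more generally $c^{-1}\bigl(\bigvee_{k=0}^{n-1}\sigma^{-k}\QQQ\bigr)=\bigvee_{k=0}^{n-1}T^{-k}\PPP$. Comparing with \eqref{eqn:hnu} term by term yields $h_\mu(\sigma,\QQQ)=h_\nu(T,\PPP)$.

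It remains to show $h_\nu(T)=h_\nu(T,\PPP)$, that is, that $\PPP$ is a two-sided generator for $(M,\nu)$ modulo $\nu$-null sets. This follows from the Borel isomorphism above. The $\sigma$-algebra generated by $\bigvee_{k\in\ZZ}T^{-k}\PPP$ equals $c^{-1}(\text{Borel}(X))$. Because $c^{-1}$ is Borel on the Borel set $c(M)$, every Borel $E\subset M$ can be written as $E=c^{-1}(c(E))$ with $c(E)$ Borel. So $\PPP$ generates the full Borel $\sigma$-algebra, and the two computations combine to give the corollary.

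The main point requiring care is measurability: we need $c^{-1}$ to be measurable, or equivalently $c(M)$ to be Borel, and injectivity alone does not provide this without the Lusin--Souslin theorem. A minor related point is that $T$ is defined everywhere on $M$, including the grazing set $\SSS_0$, but is only piecewise continuous. I would check that the convention the paper fixes on the boundaries of elements of $\PPP$ keeps $c_0$ Borel, which it does since each element of $\PPP$ is Borel. Everything else is routine.
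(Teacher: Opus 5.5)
Your proposal is correct and follows the paper's reasoning: the paper gives no separate proof and presents this corollary as an immediate consequence of the injectivity in Proposition \ref{prop:1-1} together with the commuting diagram \eqref{eqn:comm-c}, so $c$ is a measure-theoretic isomorphism onto its image and entropy is an isomorphism invariant. Your appeal to the Lusin--Souslin theorem makes explicit the measurability of $c^{-1}$ on $c(M)$, which the paper leaves implicit; your generator-based second route is also valid but optional, since it relies on the same Borel-isomorphism fact.
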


In \S\ref{sec:construction}, we will construct a ``candidate MME'' $\mu$ on the shift space $X$: this will be a $\sigma$-invariant measure that could \emph{a priori} be $0$ or infinite, but whose normalization must be an MME if it is nonzero and finite.
In \S\S\ref{sec:bill}--\ref{sec:bill-construction}, we will prove the following result, which implies Theorem \ref{thm:mme-unique}:

\begin{theorem}\label{thm:strategy}
Every ergodic $\nu \in \MMM_\sigma^e(X)$ with $h_\nu(\sigma)>0$ satisfies $\nu(c(M))=1$, so $c_*$ gives an entropy-preserving bijection between the sets of positive entropy ergodic measures for $(M,T)$ and $(X,\sigma)$.

Moreover, writing $\mu$ for the invariant measure on $X$ constructed in \S\ref{sec:construction}, the following are true:
\begin{enumerate}[label=\upshape{(\alph{*})}]
\item\label{mu-fin-pos}
The measure $\mu$ is nonzero and finite. 
\item\label{mu-lps-supp}
The normalized measure $\bar\mu = \mu/\mu(X)$ is an MME for $X$ that has local product structure in the sense of Definition \ref{def:lps}, and satisfies $\mu(c(U))>0$ for every open set $U\subset M$.
\item\label{mu-erg}
The measure-preserving system $(X,\sigma,\bar\mu)$ is multiply mixing.
\item\label{mu-uniq} 
Given any ergodic
$\nu \in \MMM_\sigma^e(X)$ such that $\nu \perp \bar\mu$, we have $h_\nu(\sigma) < h_{\bar\mu}(\sigma)$.
\end{enumerate}
\end{theorem}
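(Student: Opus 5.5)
The plan is to work entirely on the symbolic side $(X,\sigma)$ and transfer back along $c$ using Proposition~\ref{prop:1-1} and Corollary~\ref{cor:ent-pres}. First I would prove the full-measure statement. The set $X\setminus c(M)$ consists of sequences that are limits of codings but are not themselves codings. Concretely, these are sequences whose associated cylinder sets in $M$ shrink to points on $\bigcup_{n\in\ZZ} T^n\SSS_0$ that admit several codings, for example grazing orbits. I would show that any ergodic $\nu$ giving positive mass to $X\setminus c(M)$ is supported on sequences whose orbits return to a neighbourhood of the singularity set with positive frequency.

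Along such sequences the number of admissible continuations is subexponential. This follows from the complexity bound for $T^n$ near $\SSS_0$: at most $Kn$ singularity curves of $T^n$ meet at any point, which is the standard "one-step expansion/complexity" fact. Hence such a $\nu$ has $h_\nu=0$. With injectivity, this gives the claimed entropy-preserving bijection between positive-entropy ergodic measures.

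For \ref{mu-fin-pos}, I would construct $\mu$ as in \S\ref{sec:construction}. On each one-sided shift $X^\pm$ I take the $h$-dimensional Hausdorff-type measure $m^\pm$ associated to the metric $e^{-h n}$ on cylinders, with $h=\htop$. Then $\mu$ is the product $m^-\times m^+$ glued along the zeroth symbol. Invariance is built in, so nonzero-and-finiteness reduces to uniform two-sided bounds $m^+(\cylu{w})\asymp e^{-h|w|}$ on cylinders that are not too short. Geometrically, the cylinder $\cylu{w}$ corresponds to an unstable curve of controlled length, and its $T^n$-images are counted exactly by Theorem~\ref{thm:length-growth}. So I would first prove Theorem~\ref{thm:length-growth}. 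Submultiplicativity of $|T^nV|$ gives the lower bound. The upper bound comes from a growth lemma: short pieces created by cutting at singularities are dominated by the expansion, so the proportion of $T^nV$ in short components is controlled. A Fekete/supermultiplicativity argument then shows that the exponent is exactly $h$ with bounded constants. The two-sided bounds on $m^+$ follow by Frostman-type mass distribution, and likewise for $m^-$ with stable curves.

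For \ref{mu-lps-supp}, maximal entropy follows from the cylinder bounds via a Gibbs-type estimate $\bar\mu[w]\le Ce^{-h|w|}$ and Shannon–McMillan–Breiman. Local product structure comes directly from the product construction. Full support holds because every open $U\subset M$ contains a small rectangle bounded by a stable and an unstable curve of positive length, and both factors give that rectangle positive mass. For \ref{mu-erg}, I would run a Hopf argument on the product structure: Birkhoff averages are constant along local stable and unstable sets, and absolute continuity of holonomies is automatic for product measures. This gives ergodicity, and since the same argument works for all powers it gives K-property, hence multiple mixing.

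For \ref{mu-uniq}, I would follow Bowen's argument. If $\nu\perp\bar\mu$, choose a set $E$ with $\nu(E)=1$ and $\bar\mu(E)=0$, approximated by unions of $n$-cylinders. Counting cylinders with the Gibbs upper bound and the lower bound $\bar\mu[w]\ge C^{-1}e^{-hn}$, valid for cylinders whose endpoint curves are long, gives $h_\nu<h$.

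The main obstacle is that lower Gibbs bound. Short cylinders cut by singularities lack it, so I would restrict to $n$ with long unstable and stable components at times $0$ and $n$. A growth lemma shows that $\nu$-typical points have a positive frequency of such times; the paper's avoidance of the sparse recurrence condition hinges on proving this with constants uniform in $\nu$.
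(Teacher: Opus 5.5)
\textbf{The first assertion: the argument fails.} Returning with positive frequency to a neighbourhood of the singularity set cannot force $h_\nu=0$: the Liouville measure does exactly this and has positive entropy. Linear complexity (Lemma \ref{lem:lin-comp}) also does not make continuations subexponential along a positive-measure set of sequences. It only bounds the number of $n$-codings of a \emph{single} point by $Kn+1$ (Corollary \ref{cor:local-lin}).

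The missing idea is to localize to unstable leaves and use the Rokhlin conditionals $\nu_x$:
\begin{enumerate}
\item An unstable manifold meets each singularity curve at most once, so $\Wul(\bx)\setminus M'$ is countable (Lemma \ref{lem:ctbl-sing-0}).
\item For each such $\bill{y}$, the fibre $\pi^{-1}(\bill{y})\cap\Wul(x)$ is covered by at most $Kn+1$ cylinders of length $n$.
\item Off a set of small $\nu_x$-measure, the conditional Shannon--McMillan--Breiman theorem \eqref{eqn:SMB} gives each such cylinder $\nu_x$-mass at most $e^{-nh'}$, for a fixed $h'<h_\nu$.
\end{enumerate}
Hence $\nu_x(\Wul(x)\setminus X')=0$ (Lemma \ref{lem:nonatomic-0}). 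Non-atomicity of $\nu_x$ then disposes of manifold endpoints, giving $\nu(\Xreg)=1$. You need this stronger statement later: local product structure in (b) does not come ``directly from the product construction'', because $X$ is not locally a product and the rectangles of Proposition \ref{prop:suff-rect} only cover $\Xreg$.

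\textbf{Part (a): nonzeroness is the crux, and you skip it.} You treat the glued product as an invariant measure on $X$. Gluing $\mb$ and $\mf$ along the zeroth symbol gives $\mt$ on $\tX\supsetneq X$, and $\tX$ is not even $\sigma$-invariant unless $X$ is one-step Markov. So $\mu=\mt|_X$, and two-sided cylinder bounds for $m^\pm$ say nothing about whether $X$ is $\mt$-null (Remark \ref{rmk:need-nonzero}). The paper writes $\mu(\cyl{}{w})=\int_{\pbl w\pbr}\mb_x(\Wsl(x))\,d\mf(x)$ and shows the integrand is positive $\mf$-a.e., using two inputs:
\begin{itemize}
\item stable leaves of length at least $\delta$ carry $\mb$-mass at least $c_\delta$ (Lemma \ref{lem:positive-leaf}, via Propositions \ref{prop:Z-compact-lower} and \ref{prop:LV-geq-L});
\item futures coding trivial stable manifolds are $\mf$-null, since scaling gives $\mf(\pip X_0^s)\le(Kn+1)e^{-nh}\mf(\pip X_0^s)$ (Lemma \ref{lem:m-X0-again}).
\end{itemize}
The same issue breaks your full-support argument. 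The projections of a Cantor rectangle to $X^\pm$ are compact and nowhere dense, so cylinder bounds give them no mass. The paper instead proves positivity on every cylinder (Proposition \ref{prop:full-support}).

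\textbf{Parts (c) and (d) also fail as stated.}
\begin{itemize}
\item In (c), ergodicity of all powers does not give the K-property; irrational rotations are totally ergodic. A Hopf argument using only local product structure gives only local constancy. The paper chains arbitrary leaves through one fixed rectangle via Proposition \ref{prop:suff-rect}, Lemma \ref{lem:A-inv} and Lemma \ref{lem:holonomy} to obtain joint ergodicity, and then applies Theorem \ref{thm:Hopf}.
\item In (d), a global lower bound on $\bar\mu(\cyl{}{w})$ would need control of $\mb_y(\Wsl(y))$ on a positive $\mf$-proportion of $\pbl w\pbr$, which you do not have. The paper works leafwise. It transfers singularity to the conditionals (Lemma \ref{lem:singular-leaf}). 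It uses only the leaf bound $\mf_x(\cyl{}{u})\ge c_{\delta_0}e^{-|u|h}$ at the first good time in $(n,2n]$ (Corollary \ref{cor:nonuniform-lower-gibbs-leaf}). It discards the remaining words via the $\nu$-independent count $\#\LLL_{2n}^B(x)\le C_{11}e^{7nh/4}$ (Lemma \ref{lem:counting-bad-end}). No frequency estimate uniform in $\nu$ is involved.
\end{itemize}
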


The claim in the first paragraph of Theorem \ref{thm:strategy} is proved in \S\ref{sec:bill}. The remaining claims are proved in \S\ref{sec:bill-construction}, using estimates established in \S\ref{sec:counting}. The multiple mixing property of $\bar\mu$ is proved using the Hopf argument-based results from \cite{CHT16}. It seems reasonable to expect that $\bar\mu$ satisfies the stronger Kolmogorov and Bernoulli properties as well; but we do not address this question. (These properties are indeed satisfied under the sparse recurrence condition used by Baladi and Demers \cite{BD20}, which we discuss next.)

\subsection{Related literature}\label{sec:related-literature}

In \cite{BD20}, Baladi and Demers studied the measure of maximal entropy for Sinai billiard maps. They proved that for any finite horizon Sinai billiard map, the definition of topological entropy in \eqref{eqn:h-sep} is equivalent to
\begin{equation}\label{eqn:BD-ent}
\htop(T) = \lim_{n\to\infty} \frac 1n \log \# \MMM_0^n,
\end{equation}
where $\MMM_0^n$ is the partition of $M$ into maximal open connected sets on which $T^n$ is continuous. They also proved that $h_\nu(T) \leq \htop(T)$ for every $\nu \in \MMM_T(M)$, establishing half of the variational principle.

For Sinai billiards satisfying a certain ``sparse recurrence condition'', Baladi and Demers went further, establishing the full variational principle, existence of a unique MME, which satisfies the Bernoulli property,
and the uniform growth estimates stated in \eqref{eqn:TnV}; see \cite[Theorem 2.4 and Corollary 5.10]{BD20}.

The sparse recurrence condition can be described as follows. Given a fixed angle $\ph_0$ close to $\pi/2$ and a fixed $n_0 \in \NN$, let $s_0 \in (0,1]$ be such that any orbit of length $n_0$ has at most $s_0n_0$ collisions at which the absolute value of the post-collision angle with respect to the outward normal is greater than $\ph_0$.  The sparse recurrence condition in \cite{BD20} requires the existence of such $n_0, \ph_0, s_0$ such that $\htop(T) > s_0 \log(2)$. It appears to be an open problem to construct a billiard table that violates this sparse recurrence condition.

The approach used by Baladi and Demers involves constructing a certain Banach space of anisotropic distributions, on which they are able to prove a Ruelle--Perron--Frobenius-type theorem for the transfer operator, obtaining the unique MME as the product of the left and right eigenvectors. 
We follow \cite{BD20} for some of our arguments, most notably the uniform counting bounds in \S\ref{sec:counting}. However, our approach to the MME itself is very different. 
The heart of our construction is in \S\ref{sec:one-sided} and \S\ref{sec:exterior-product}, which describe a $\sigma$-invariant measure $\mu$ in terms of the Hausdorff measures on the one-sided shift spaces associated to $X$.

The construction in \S\ref{sec:construction} builds on ideas from \cite{CPZ}, in which leaf measures for MMEs (and other equilibrium measures) of uniformly hyperbolic systems were constructed as analogues of Hausdorff measure. We previously developed this approach in a (non-Markov) symbolic setting \cite{CD25}, and an overview of many of these ideas can be found in \cite{vC24}. An important innovation of the construction in this paper is that our definition of $\mu$ does \emph{not} rely on any ``pushforward and average'' procedure, or any local product structure within the shift space $X$, as was the case in \cite{CPZ,vC24,CD25}. Rather, our product construction is in some sense extrinsic, and our approach bears certain similarities to Patterson--Sullivan constructions on manifolds with some degree of hyperbolicity.

\begin{remark}
Beyond the fact that we are able to recover a uniqueness result with no sparse recurrence condition, another motivation for this approach is the hope that it may eventually be extended to a broader setting, such as non-uniformly hyperbolic billiards including the Bunimovich stadium, where the anisotropic Banach space machinery has not been developed.
\end{remark}

Under the sparse recurrence condition, \cite{BD20} also guarantees that the unique MME $\mu$ is \emph{adapted}, in the sense that it does not give too much weight to neighborhoods of the singularity set: $\int_M |\log d(\bx,T\SSS_0 \cup \SSS_0 \cup T^{-1}\SSS_0)| \,d\mu(\bx) < \infty$.
As shown by Lima and Matheus \cite{LM18}, this implies that there exists a countable Markov partition that codes $\mu$-a.e.\ point in $M$; the crucial ingredient is that adaptedness allows the Pesin theory developed in \cite{KSLP86} to be applied so that the ideas from Sarig's work on surface diffeomorphisms \cite{oS13} can be used.

Unlike \cite{BD20}, our approach does not provide any information about adaptedness: in particular, we do not know whether the MME would continue to be adapted on a (hypothetical) Sinai billiard that violates the sparse recurrence condition. In other settings where there is also a natural notion of adaptedness, nonadapted MMEs have been constructed for certain birational maps \cite[Example 4.6]{DDG11} and for piecewise expanding Markov interval maps where the analogue of the sparse recurrence condition fails \cite{lK25}.
Moreover, certain Sinai billiards have been shown to possess non-adapted measures with positive entropy \cite{CDLZ24}, raising the natural question of which entropies can be achieved by non-adapted measures. Recent results for interval maps \cite{lK26} suggest a conjecture that for every $0\leq \alpha < \htop(T)$, there exists an ergodic non-adapted $\nu \in \MMM_T^e(M)$ such that $h_\nu(T) = \alpha$.
This, however, lies beyond the scope of this paper.

\subsection{Outline of the paper}\label{sec:outline}

The remainder of the paper is organized as follows.

\subsubsection*{Section \ref{sec:construction}:}
Building on ideas from \cite{CD25}, we give a new method to construct an MME for a two-sided shift space $X$ via an ``external'' product construction: writing $\mb$ and $\mf$ for the Hausdorff measures on the one-sided shift spaces $X^-$ and $X^+$ (whose Hausdorff dimension is equal to the log of the topological entropy), we let $\mu$ be the restriction of $\mb\otimes\mf$ to $X \subset X^-\times X^+$.
We give general conditions -- \emph{uniform global counting bounds} -- under which $\mu$ is finite, and prove that if it is nonzero (which is harder to verify), then its normalization is an MME.

\subsubsection*{Section \ref{sec:bill}:}
We obtain further information about the natural coding of a Sinai billiard described in \S\ref{sec:coding-bill}, recalling various known billiard results from the literature, and establishing the existence of enough local product structure within $M$ that \emph{if} we can show that the measure $\mu$ from \S\ref{sec:construction} is finite and positive for the billiard shift space, then it will have local product structure. In \S\ref{sec:pos-ent}, we prove 
the first part of Theorem \ref{thm:strategy}, showing that every positive entropy ergodic measure on the billiard shift space is supported on $c(M)$.

\subsubsection*{Section \ref{sec:counting}:}
We establish the counting bounds on the billiard coding space that are necessary in order to prove that $\mu$ is positive and finite.
These arguments follow those used in \cite{BD20}, with some simplifications.
Along the way, we  establish the estimate on $|T^n V|$ in Theorem \ref{thm:length-growth}; see \S\ref{sec:qsm}.

\subsubsection*{Section \ref{sec:bill-construction}:}
First we prove that $\mu$ is positive and fully supported, completing the proof of Theorem \ref{thm:strategy}\ref{mu-fin-pos}--\ref{mu-lps-supp}. 
Then we use the local product structure of $\mu$ to show that
the normalized measure $\bar\mu$ is multiply mixing by applying results from \cite{CHT16}, which proves Theorem \ref{thm:strategy}\ref{mu-erg}. Finally, we show that the leaf measures satisfy a nonuniform lower Gibbs property, which allows us to show uniqueness by combining ideas from the arguments in \cite{CPZ2} and \cite{BD20}, completing the proof of Theorem \ref{thm:strategy}\ref{mu-uniq}, and thus the proof of Theorem \ref{thm:mme-unique} as well.

\section{General construction for shift spaces}\label{sec:construction}

In this section, we give a general procedure for constructing a measure of maximal entropy on a shift space, under fairly mild assumptions. This builds on our previous work in \cite{CD25}, which followed the general approach from \cite{CPZ} of using a dynamical analogue of Hausdorff measure to construct leaf measures with appropriate scaling properties under the dynamics; see also \cite{vC24}.

\subsection{Shift spaces and languages}\label{sec:shift-spaces-and-languages}

See \cite{LM21,BH86} for a detailed treatment of shift spaces and their associated languages; here we recall only those concepts that we will need.
For a finite set $A$, which we call the \emph{alphabet}, we consider the set $A^\ZZ$ of all bi-infinite sequences. The \emph{shift map} $\sigma \colon A^\ZZ \to A^\ZZ$ is defined by $(\sigma x)_n = x_{n+1}$ for all $n \in \ZZ$.  This is a homeomorphism in the product topology, which is induced by the metric
\begin{equation}\label{eqn:metric}
d(x,y) = 2^{-\min\{|n|: x_n\neq y_n\}}.
\end{equation}
The choice of $2$ as the base in \eqref{eqn:metric} is somewhat arbitrary; one can use any base $\theta > 1$ and produce the same topology. 
The only change would be that we replace $\log_2$ by $\log_\theta$ in the relationship between Hausdorff dimension and topological entropy at the beginning of \S\ref{sec:one-sided}.

A closed, $\sigma$-invariant set $X \subset A^\ZZ$ is called a (two-sided) \emph{subshift}, or a \emph{shift space}.  We will also work with \emph{one-sided} shift spaces defined by starting with $A^{\NN_0}$ or $A^{-\NN_0}$ and making appropriate modifications to the dynamics and the metric.\footnote{We write $\NN = \{1,2,3,\dots\}$ and $\NN_0 = \{0\} \cup \NN = \{0,1,2,3,\dots\}$.}

Given $n \in \NN_0$, we call each $w\in A^n$ a \emph{word} of \emph{length} $|w| = n$. The collection of all finite words over $A$ is denoted $A^* := \bigcup_{n=0}^\infty A^n$.

Given $x\in A^\ZZ$ and $i,j\in \ZZ$ with $i\leq j$, we write
\[
x_{[i,j]} := x_i x_{i+1} \cdots x_j \in A^{j-i+1}.
\]
We define $x_{[i,j)}$, $x_{(i,j]}$, and $x_{(i,j)}$ similarly, making the obvious changes. The \emph{language} of a shift space $X \subset A^\ZZ$ is
\begin{equation}\label{eqn:L}
\LLL := \{ x_{[i,j]} : x\in X,\ i,j\in \ZZ,\ i\leq j \} \subset A^*.
\end{equation}
The language $\LLL$ defined in \eqref{eqn:L} has the following properties.
\begin{itemize}
\item It is \emph{factorial}: for all $w\in \LLL$ and $i,j \in \{1,\dots, |w|\}$ with $i\leq j$, we have $w_{[i,j]} \in \LLL$.
\item It is \emph{extendable}: for all $w\in \LLL$, there exist $a,b\in A$ such that $aw\in \LLL$ and $wb\in \LLL$.
\end{itemize}
Conversely, a shift space can be defined in terms of its language. Indeed, given any factorial and extendable $\LLL \subset A^*$, the following set is a shift space whose language is $\LLL$:
\begin{equation}\label{eqn:X-from-L}
X = \{ x\in A^\ZZ : x_{[i,j]} \in \LLL \text{ for all } i,j\in \ZZ, i\leq j \}.
\end{equation}
For each $n\in \NN$, we write $\LLL_n := \LLL \cap A^n = \{ w\in \LLL : |w| = n \}$. More generally, given any $\DDD \subset \LLL$, we will write
\[
\DDD_n := \DDD \cap A^n = \{ w \in \DDD : |w| = n \}.
\]
We will also write
\[
\LLL_{\geq N} := \bigcup_{n\geq N} \LLL_n = \{ w\in \LLL : |w| \geq N \}.
\]
Given $n\in \NN$ and $w\in \LLL_n$, the \emph{forward and backward cylinders} associated to $w$ are
\begin{equation}\label{eqn:cyl}
\cyl{}{w} := \{ x\in X : x_{[0,n)} = w \}
\quad\text{and}\quad
\cyl{w}{} := \{ x\in X : x_{(-n,0]} = w \}.
\end{equation}
When $|w| = 1$, so that $w=a$ for some $a\in A$, we will simply write
\[
[a] = \cyl{}{a} = \cyl{a}{} = \{ x \in X : x_0 = a\}.
\]
Given $v,w\in \LLL$, we will write
\begin{equation}\label{eqn:vw-cyl}
[v{}_\bullet{} w] := \cyl{v}{} \cap \cyl{}{w}.
\end{equation}
Observe that this can only be nonempty if we have $v_{|v|} = w_1$.
It will sometimes be convenient to use the following notation:
\begin{equation}\label{eqn:Lkn}
\begin{aligned}
\LLL_{k,n} &:= \{ (v,w) \in \LLL_k\times \LLL_n : \cyl{v}{w} \neq \emptyset \} \\
&= \{ (v,w) \in \LLL_k \times \LLL_n : v_k = w_1 \text{ and } v'w\in \LLL \},
\quad\text{where } v' := v_{[1,|v|)}.
\end{aligned}
\end{equation}
For shift spaces, the definition of topological entropy in \eqref{eqn:h-sep} admits a simpler description, as the exponential growth rate of the number of words in the language:
\begin{equation}\label{eqn:htop}
\htop(X,\sigma) =  \lim_{n\to\infty} \frac 1n\log(\#\LLL_n).
\end{equation}
For convenience, we write $h = \htop(X,\sigma)$ throughout the paper.  The sequence $\#\LLL_n$ is  submultiplicative, so by Fekete's lemma, the limit in \eqref{eqn:htop} exists and is equal to $\inf_n \frac 1n \log (\#\LLL_n)$. In particular, we have
\begin{equation}\label{eqn:lcb}
\#\LLL_n \geq e^{nh}
\end{equation}
for all $n$.  
The corresponding upper bound is more subtle: for a general shift space, it is possible to have $\sup_n \#\LLL_n e^{-nh} = \infty$, although this quantity must be subexponential in $n$. An important step in our arguments below will be to prove that for the shift space modeling a Sinai billiard, this quantity is in fact bounded: there exists $C_1>0$ such that $\# \LLL_n \leq C_1 e^{nh}$ holds for all $n$. See Proposition \ref{prop:nondegen} for the importance of this condition in our general construction, and Proposition \ref{prop:ucb} for the result establishing this condition for the billiard shift space.

The partition into $1$-cylinders is generating for a shift space, so the measure-theoretic entropy of a $\sigma$-invariant measure also admits the simpler version
\begin{equation}\label{eqn:hnu-shift}
h_\nu(\sigma) = \lim_{n\to\infty} \frac 1n 
\sum_{w\in \LLL_n}
-\nu\cyl{}{w} \log \nu\cyl{}{w}.
\end{equation}
In \S\ref{sec:cond-ent}, we see how this entropy can be expressed in terms of the conditional measures of $\nu$, which plays an important role in the proof of uniqueness in \S\ref{sec:bill-construction}.

\subsection{One-sided shifts and stable/unstable sets}
Let $X \subset A^{\ZZ}$ be a two-sided shift space.  Define $\pip \colon X \to A^{\NN_0}$ and $\pim \colon X \to A^{-\NN_0}$ by
\[
\pip(x) = x_0 x_1 x_2\dots
\quad\text{and}\quad
\pim(x) = \dots x_{-2} x_{-1}x_0.
\]
The one-sided shift spaces associated to $X$ are
\[
X^+ := \pip(X) \subset A^{\NN_0}
\quad\text{and}\quad
X^- := \pim(X) \subset A^{-\NN_0},
\]
which comprise all forward and backward one-sided sequences associated to points in $X$.
We will denote cylinders in $X^\pm$ by
\[
\pbl w \pbr = \{x \in X^+ : x_{[0,|w|)} = w\}
\quad\text{and}\quad
\mbl w \mbr = \{x \in X^- : x_{(-|w|,0]} = w\}.
\]
Given $x\in X$, the local stable and unstable sets of $x$ are
\begin{align*}
\Wsl(x) & := \{y \in X : y_n = x_n \text{ for all } n \geq 0\}, \\
\Wul(x) & := \{y \in X : y_n = x_n \text{ for all } n \leq 0\}.
\end{align*}
Observe that if $y \in \Wsl(x)$, then 
\begin{equation}\label{eqn:d-to-0}
\lim_{n\to\infty} d(\sigma^n y,\sigma^n x) = 0.
\end{equation}
Similarly, for every $y\in\Wul(x)$, we have $d(\sigma^{-n}y, \sigma^{-n}x) \to 0$ as $n\to\infty$.

Given $x\in X$, the set $\Wsl(x)$ is completely determined by $\pip(x) \in X^+$. Thus without fear of ambiguity, we can also use the notation $\Wsl(x)$ when $x\in X^+$, and $\Wul(x)$ when $x\in X^-$.

Fixing $x\in X$ or $x\in X^+$, the map $\pim|_{\Wsl(x)} \colon \Wsl(x) \to X^-$ is injective, since each $y\in \Wsl(x)$ is completely determined by $y_{(-\infty,0]} = \pim(y)$ and $y_{[0,\infty)} = x_{[0,\infty)}$. Similarly, given $x\in X$ or $x\in X^-$, the map $\pip|_{\Wul(x)} \colon \Wul(x) \to X^+$ is injective. This will become important in \S\ref{sec:exterior-product}, when we produce measures on $\Wsl(x)$ and $\Wul(x)$ from measures on $X^-$ and $X^+$.

\subsection{One-sided measures}\label{sec:one-sided}

The notation and results given here for one-sided shifts have analogues in \cite{CD25}, which is written in the context of two-sided shifts.

Let $X$ be a two-sided shift space over a finite alphabet $A$ with language $\LLL$. Assume that $X$ has positive topological entropy $h>0$.  The one-sided shift spaces $X^\pm$ both have Hausdorff dimension 
$d = \log_2(h)$ in the metric \eqref{eqn:metric}; see \cite[Theorem 4.1]{SS15}.\footnote{The proof in \cite{SS15} is based on  \cite[Proposition III.1]{hF67}, which uses a different metric. The result can also be deduced by combining \cite[Proposition 1]{rB73} and \cite[Example 4.1]{vC11}.}

Let $m^{\pm}$ be the associated Hausdorff measures on $X^{\pm}$.
Every ball of radius $r = 2^{-n}$ in $X^+$ is of the form $\pbl w \pbr$ for some $w\in \LLL_n$, 
so that the weight $r^d$ in the definition of Hausdorff measure satisfies $r^d = 2^{-n\log_2 h} = e^{-nh}$, and we see that an equivalent definition of $\mf$ is as follows: given $Z \subset X^+$, we define
\begin{align}
\EE^+(Z,N) & := \left\{\EEE \subset \LLL_{\geq N} : Z \subset \bigcup_{w \in \EEE} \pbl w \pbr \right\}, \\
\mf(Z) & := \lim_{N\to\infty} \inf \left\{ \sum_{w \in \EEE} e^{-|w|h} : \EEE \in \EE^+(Z,N)\right\}.
\end{align}
We will refer to each $\EEE \in \EE^+(Z,N)$ as a \emph{cover} of $Z$.
Similarly, given $Z \subset X^-$, let
\begin{align}
\EE^-(Z,N) & := \left\{\EEE \subset \LLL_{\geq N} : Z \subset \bigcup_{w \in \EEE} \mbl w \mbr \right\}, \\
\mb(Z)  &:= \lim_{N\to\infty} \inf \left\{ \sum_{w \in \EEE} e^{-|w|h} : \EEE \in \EE^-(Z,N)\right\}.
\end{align}
A priori, there is no guarantee that the measures $m^\pm$ are nonzero and/or finite. However, these properties will hold under a fairly general condition on $\LLL$.  We will state some of the fundamental results here and reserve the proofs for \S\ref{sec:sym-pfs}.

\begin{proposition}\label{prop:nondegen}
Suppose that there exists $C_1 > 0$ such that
\begin{equation}\label{eqn:ucb}
\#\LLL_n \leq C_1 e^{nh} \quad\text{for all } n \in \NN.
\end{equation}
Then $C_1^{-1} \leq \mf(X^+) \leq C_1$ and $C_1^{-1} \leq  \mb(X^-) \leq C_1$.
\end{proposition}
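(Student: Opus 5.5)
The upper bound follows by testing the definition on the obvious covers, and the lower bound by a counting argument that compares any cover with a single generation of words. I give the argument for $\mf$ on $X^+$; the one for $\mb$ on $X^-$ is identical with backward cylinders in place of forward ones.

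\emph{Upper bound.} For each $N$, the collection $\EEE = \LLL_N$ is a cover of $X^+$, since every $x\in X^+$ lies in $\pbl x_{[0,N)}\pbr$. Hence the infimum in the definition of $\mf(X^+)$ is at most $\sum_{w\in\LLL_N} e^{-Nh} = \#\LLL_N e^{-Nh} \le C_1$ by \eqref{eqn:ucb}. Letting $N\to\infty$ gives $\mf(X^+)\le C_1$.

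\emph{Lower bound.} Fix $N$ and a cover $\EEE\in\EE^+(X^+,N)$. Since $X^+$ is compact and each cylinder $\pbl w\pbr$ is open, we may pass to a finite subcover and assume $\EEE$ is finite; discarding words only lowers the sum. Let $L = \max\{|w| : w\in\EEE\}$. Every $u\in\LLL_L$ determines a nonempty cylinder $\pbl u\pbr$, which contains some point $x\in X^+$. That point lies in $\pbl w\pbr$ for some $w\in\EEE$, so $w$ is a prefix of $u$. Conversely, a fixed $w\in\EEE$ has at most $\#\LLL_{L-|w|}$ extensions $u\in\LLL_L$, because $u = wv$ with $v\in\LLL_{L-|w|}$ by factoriality. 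Combining these observations with \eqref{eqn:ucb} and \eqref{eqn:lcb},
\[
e^{Lh} \le \#\LLL_L \le \sum_{w\in\EEE} \#\LLL_{L-|w|} \le C_1\sum_{w\in\EEE} e^{(L-|w|)h}.
\]
(When $|w| = L$, the count is $1\le C_1$, which is consistent with setting $\#\LLL_0 = 1$ and assuming $C_1\ge 1$. This assumption is harmless, since \eqref{eqn:lcb} already forces $C_1\ge 1$.) Dividing by $e^{Lh}$ gives $\sum_{w\in\EEE} e^{-|w|h}\ge C_1^{-1}$ for every cover, so $\mf(X^+)\ge C_1^{-1}$.

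The argument has no real obstacle. The only points needing care are the compactness reduction to finite covers and the bookkeeping for words of length $0$.
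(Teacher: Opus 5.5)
Your proof is correct and follows the paper's argument essentially step for step: the upper bound uses the cover $\LLL_N$, and the lower bound passes to a finite subcover and bounds the number of length-$L$ extensions of each word by $C_1 e^{(L-|w|)h}$, then uses $\#\LLL_L \ge e^{Lh}$. Your version is slightly more careful than the paper's in two places: you use only the covering inclusion rather than the paper's disjoint-union claim, and you explicitly handle the case $|w| = L$ by noting that $C_1 \ge 1$.
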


The measures $m^{\pm}$ are \emph{not} generally shift-invariant, but do scale nicely:

\begin{proposition}\label{prop:scaling}
For every $a\in A$ and $Z\subset \pbl a\pbr$, we have
\begin{equation}\label{eqn:+scale}
\mf(\sigma Z) = e^h \mf(Z).
\end{equation}
Equivalently, for every nonnegative measurable $\psi \colon \pbl a \pbr \to [0,\infty)$, we have
\begin{equation}\label{eqn:+scale2}
\int_{\sigma\pbl a \pbr} \psi(ay) \,d\mf(y)
= e^h \int_{\pbl a \pbr} \psi(x) \,d\mf(x).
\end{equation}
Similarly, given any $Z \subset \mbl a\mbr$, we have
\begin{equation}\label{eqn:-scale}
\mb(\sigma^{-1}Z) = e^{h} \mb(Z),
\end{equation}
and for every nonnegative measurable $\psi \colon \mbl a\mbr \to [0,\infty)$, we have
\begin{equation}\label{eqn:-scale2}
\int_{\sigma^{-1}\mbl a\mbr} \psi(ya) \,d\mb(y)
= e^h \int_{\mbl a\mbr} \psi(x) \,d\mb(x).
\end{equation}
\end{proposition}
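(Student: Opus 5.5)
The plan is to compare covers of $Z$ and of $\sigma Z$ directly. Two facts drive everything. For $Z\subset \pbl a\pbr$ and a word $w$ with $w_1=a$, the cylinder $\pbl w\pbr$ is sent by $\sigma$ onto $\pbl w'\pbr$ with $w'=w_{[2,|w|]}$, and $\sigma$ is injective on $\pbl a\pbr$. Since the weights are $e^{-|w|h}$, removing one letter multiplies a word's weight by exactly $e^{h}$.

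\textbf{Upper bound $\mf(\sigma Z)\le e^h\mf(Z)$.} Take $\EEE\in\EE^+(Z,N)$ with $N\ge 2$. Discard every $w\in\EEE$ with $w_1\ne a$; these are irrelevant because $Z\subset\pbl a\pbr$. Let $\EEE'=\{w' : w\in\EEE\}$. Then $\EEE'\subset\LLL_{\ge N-1}$ by factoriality, and $\EEE'$ covers $\sigma Z$. Its weight satisfies
\[
\sum_{w'\in\EEE'} e^{-|w'|h}\le e^h\sum_{w\in\EEE}e^{-|w|h}.
\]
Taking the infimum over covers and letting $N\to\infty$ gives the upper bound.

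\textbf{Lower bound $\mf(\sigma Z)\ge e^h\mf(Z)$.} Take a cover $\EEE'\in\EE^+(\sigma Z,N)$ of $\sigma Z$, and let $\EEE=\{aw' : w'\in\EEE',\ aw'\in\LLL\}\subset\LLL_{\ge N+1}$. This is a cover of $Z$: if $x\in Z$, then $\sigma x\in\pbl w'\pbr$ for some $w'\in\EEE'$, so $x\in\pbl aw'\pbr$, and $aw'\in\LLL$ automatically because it is a prefix of $x\in X^+$. Its weight is at most $e^{-h}$ times the weight of $\EEE'$. This gives $\mf(Z)\le e^{-h}\mf(\sigma Z)$.

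Together the two bounds prove \eqref{eqn:+scale} for arbitrary subsets, as an equality of outer measures. The one subtle point is that the cover $\EEE$ built from $\EEE'$ must consist of words in $\LLL$. I expect this to be the only place needing care, and it is handled by dropping the $w'$ for which $aw'\notin\LLL$: those cylinders contain no point of $\sigma Z$, so losing them does not break the cover.

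\textbf{Integral form \eqref{eqn:+scale2}.} Observe that $\sigma|_{\pbl a\pbr}\colon\pbl a\pbr\to\sigma\pbl a\pbr$ is a bijection with inverse $y\mapsto ay$. It is a homeomorphism, so it preserves Borel sets, and \eqref{eqn:+scale} says that $(\sigma|_{\pbl a\pbr})_*(\mf|_{\pbl a\pbr})=e^{-h}\mf|_{\sigma\pbl a\pbr}$. Consequently \eqref{eqn:+scale2} holds for indicator functions $\psi=\one_Z$, since $\one_Z(ay)=\one_{\sigma Z}(y)$. It then extends to simple functions by linearity and to nonnegative measurable $\psi$ by monotone convergence. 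Conversely, \eqref{eqn:+scale2} applied to indicators recovers \eqref{eqn:+scale} for measurable $Z$, which gives the stated equivalence.

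\textbf{Statements for $\mb$.} The statements \eqref{eqn:-scale} and \eqref{eqn:-scale2} follow by the symmetric argument. Now $\sigma^{-1}$ on $X^-$ deletes the last symbol $x_0$ and shifts, so $\mbl w\mbr\cap\mbl a\mbr$ maps onto $\mbl w_{[1,|w|)}\mbr$, and the inverse map is $y\mapsto ya$.
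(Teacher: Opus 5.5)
Your proof is correct and is the same cover-comparison argument the paper uses: strip or prepend the letter $a$, so that word weights change by exactly $e^{\pm h}$, and then pass to the integral form via indicators, simple functions and monotone convergence. Your explicit handling of words with $w_1\ne a$ and of $aw'\notin\LLL$ is in fact more careful than the paper's claim that $\sigma$ is a bijection between cover families. One small correction: in a non-Markov shift $\sigma$ maps $\pbl w\pbr$ only \emph{into} $\pbl w'\pbr$, and likewise $\sigma^{-1}\mbl w\mbr\subset\mbl w_{[1,|w|)}\mbr$, not onto; since your argument only uses the inclusion, nothing breaks.
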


Observe that \eqref{eqn:+scale} can be deduced from \eqref{eqn:+scale2} by putting $\psi = \one_Z$. Conversely, \eqref{eqn:+scale2} can be deduced from \eqref{eqn:+scale} 
via the standard procedure of passing from characteristic functions to simple functions by linearity and then taking monotone limits. A similar argument shows the equivalence of \eqref{eqn:-scale} and \eqref{eqn:-scale2}.

Iterating \eqref{eqn:+scale}, we see that given any $n\in \NN$, $w\in \LLL_n$, and $Z \subset \pbl w\pbr$, we have
\begin{equation}\label{eqn:w+scale}
\mf(\sigma^n Z) = e^{nh} \mf(Z).
\end{equation}
In particular, taking $Z = [w]^{\pm}$ and recalling
Proposition \ref{prop:nondegen}, we deduce:

\begin{corollary}\label{cor:Gibbs}
Suppose there exists $C_1>0$ such that \eqref{eqn:ucb} holds. Then for every $n\in \NN$ and $w\in \LLL_n$, we have the upper Gibbs estimates
\begin{equation}\label{eqn:up-Gibbs}
\mf(\pbl w\pbr) \leq C_1 e^{-nh}
\quad\text{and}\quad
\mb(\mbl w \mbr) \leq C_1 e^{-nh}.
\end{equation}
Since we assumed that $h>0$, this implies that $\mf$ and $\mb$ are nonatomic.
\end{corollary}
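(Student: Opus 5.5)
The plan is to obtain the upper Gibbs estimates directly from the iterated scaling relation \eqref{eqn:w+scale} together with the upper bound in Proposition \ref{prop:nondegen}.

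\textbf{Step 1: apply the scaling relation.} Fix $n\in\NN$ and $w\in\LLL_n$, and take $Z=\pbl w\pbr$. Since $\sigma^n\pbl w\pbr\subset X^+$, \eqref{eqn:w+scale} and monotonicity of $\mf$ give
\[
e^{nh}\mf(\pbl w\pbr)=\mf(\sigma^n\pbl w\pbr)\le \mf(X^+).
\]
Proposition \ref{prop:nondegen} bounds the right-hand side by $C_1$, so $\mf(\pbl w\pbr)\le C_1e^{-nh}$.

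\textbf{Step 2: the backward cylinders.} For $\mb$ the argument is identical. Iterating \eqref{eqn:-scale} $n$ times along the symbols of $w$, read from right to left, gives $\mb(\sigma^{-n}\mbl w\mbr)=e^{nh}\mb(\mbl w\mbr)$. The left-hand side is at most $\mb(X^-)\le C_1$.

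The one point needing care in both steps is the iteration itself. After applying $\sigma$ once to $\pbl w\pbr$, we obtain a subset of $\pbl w_{[2,n]}\pbr$, which lies in the one-symbol cylinder $\pbl w_2\pbr$. So \eqref{eqn:+scale} applies again at each stage. The analogous statement holds for $\sigma^{-1}$ on $X^-$. This is exactly how \eqref{eqn:w+scale} was derived, so no further work is needed there.

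\textbf{Step 3: nonatomicity.} Let $x\in X^+$ and $n\in\NN$. Then $\{x\}\subset\pbl x_{[0,n)}\pbr$, so $\mf(\{x\})\le C_1e^{-nh}$. Since $h>0$, letting $n\to\infty$ gives $\mf(\{x\})=0$. The same argument applies to $\mb$ on $X^-$. Hence both measures are nonatomic.

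There is no genuine obstacle in this argument; it reduces entirely to results already stated.
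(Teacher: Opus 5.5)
Your proof is correct and takes essentially the same approach as the paper: the paper also takes $Z=\pbl w\pbr$ (resp.\ $\mbl w\mbr$) in the iterated scaling identity \eqref{eqn:w+scale}, bounds $\mf(\sigma^n\pbl w\pbr)\le\mf(X^+)\le C_1$ (and likewise for $\mb$) using Proposition \ref{prop:nondegen}, and gets nonatomicity from $h>0$. Your explicit check of the iteration step and of the singleton bound $\{x\}\subset\pbl x_{[0,n)}\pbr$ fills in details that the paper leaves implicit.
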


Eventually, it will be important to have a lower bound that complements the upper bound in \eqref{eqn:up-Gibbs}. This is a more subtle issue: Corollary \ref{cor:Gibbs} follows from \eqref{eqn:w+scale} because $\sigma^n \pbl w \pbr \subset X^+$ implies that $\mf(\sigma^n \pbl w \pbr) \leq \mf(X^+)$, but to obtain a lower bound requires more information on $\sigma^n \pbl w \pbr$, since in general we may have $\sigma^n \pbl w \pbr \neq X^+$. For now, we content ourselves with the following result.

\begin{proposition}\label{prop:Z-compact-lower}
Suppose there exists $C_1>0$ satisfying \eqref{eqn:ucb}.
Let $Z \subset X^+$ be compact, and suppose there exist $b_Z>0$ and 
an infinite set $J \subset \NN$ such that
\begin{equation}\label{eqn:Z-lcb}
\#\{w \in \LLL_n : \pbl w \pbr \cap Z \neq \emptyset\} \geq b_Z e^{nh}
\quad\text{for every } n\in J.
\end{equation}
Then we have
\begin{equation}\label{eqn:Z-compact-lower}
\mf(Z) \geq b_Z /C_1.
\end{equation} 
An analogous result holds for $\mb$.
\end{proposition}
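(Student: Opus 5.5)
Since $Z$ is compact and cylinders $\pbl w\pbr$ are open, any cover $\EEE \in \EE^+(Z,N)$ can be replaced by a finite subcover. So it suffices to show that every \emph{finite} $\EEE \subset \LLL_{\geq N}$ with $Z \subset \bigcup_{w\in\EEE}\pbl w\pbr$ satisfies $\sum_{w\in\EEE} e^{-|w|h} \geq b_Z/C_1$. Taking the infimum and then $N\to\infty$ gives \eqref{eqn:Z-compact-lower}.

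Fix such a finite cover $\EEE$. Let $L := \max\{|w| : w\in\EEE\}$ and choose $n\in J$ with $n\geq L$; this is possible because $J$ is infinite. Consider the set
\[
\mathcal{Z}_n := \{u \in \LLL_n : \pbl u\pbr \cap Z \neq \emptyset\}.
\]
By \eqref{eqn:Z-lcb}, $\#\mathcal{Z}_n \geq b_Z e^{nh}$. For each $u\in\mathcal{Z}_n$, pick $z\in \pbl u\pbr\cap Z$. Some $w\in\EEE$ has $z\in\pbl w\pbr$, and since $|w|\leq n$, the word $w$ is a prefix of $u$.

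Next I would count, for a fixed $w\in\EEE$, the words $u\in\LLL_n$ having $w$ as a prefix. Any such $u$ equals $wv$ with $v\in\LLL_{n-|w|}$, because the language is factorial. So the count is at most $\#\LLL_{n-|w|} \leq C_1 e^{(n-|w|)h}$ by \eqref{eqn:ucb}. When $|w|=n$ the count is at most $1 \leq C_1$, which is also consistent with the bound since $\#\LLL_n\ge e^{nh}$ forces $C_1\ge 1$.

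Summing over $\EEE$ gives
\[
b_Z e^{nh} \leq \#\mathcal{Z}_n \leq \sum_{w\in\EEE} C_1 e^{(n-|w|)h},
\]
and dividing by $C_1 e^{nh}$ yields $\sum_{w\in\EEE} e^{-|w|h}\geq b_Z/C_1$, as needed.

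The analogous result for $\mb$ follows by the same argument, using suffixes in place of prefixes and backward cylinders $\mbl w\mbr$.

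The only delicate point is the first step: reducing to finite covers via compactness. Without it there would be no uniform bound $L$ on the word lengths in the cover, and we could not choose a single $n\in J$ dominating all of them. The fact that $J$ is merely infinite, rather than all of $\NN$, is harmless once $L$ is fixed.
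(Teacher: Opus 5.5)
Your proof is correct and is essentially the paper's argument. You pass to a finite subcover by compactness, choose $n\in J$ at least as large as every word length in that subcover, and bound the number of length-$n$ words whose cylinders meet $Z$ by $\sum_{w} \#\LLL_{n-|w|} \leq \sum_{w} C_1 e^{(n-|w|)h}$. Your union bound (where the paper writes a disjoint union) and your remark that $C_1 \geq 1$ covers the case $|w| = n$ are small points of care, not a different route.
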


\begin{remark}\label{rmk:Z=X+}
When $Z = X^+$, we can take $b_Z=1$ and $J=\NN$ by \eqref{eqn:lcb}, so \eqref{eqn:Z-compact-lower} is a generalization of the lower bound in Proposition \ref{prop:nondegen}.
\end{remark}

\begin{remark}
Proposition \ref{prop:Z-compact-lower} fails for noncompact sets.  Indeed, if $Z$ is a countable dense subset of $X^+$, then we can verify \eqref{eqn:Z-lcb} just as in Remark \ref{rmk:Z=X+} by taking $b_Z=1$ and $J=\NN$, but since $\mf$ is nonatomic (Corollary \ref{cor:Gibbs}), we have $\mf(Z) = 0$.
\end{remark}

\subsection{Product construction and conditions for an MME}\label{sec:exterior-product}

Now the idea is to construct an MME on the two-sided shift $X$ as the product of the measures $\mf$ and $\mb$ on the one-sided shifts $X^{\pm}$, deducing invariance by combining \eqref{eqn:+scale} and \eqref{eqn:-scale}, and maximal entropy via the upper Gibbs bounds in \eqref{eqn:up-Gibbs}. 

This does not work out as simply as one might hope, because $X$ is not the product of $X^+$ and $X^-$. To deal with the resulting subtleties, start by writing
\[
\Delta := \{ (x,y) : x\in X^-, y\in X^+, x_0 = y_0 \} \subset X^- \times X^+,
\]
and define a concatenation map $\Pi \colon \Delta \to A^\ZZ$ by
\begin{equation}\label{eqn:Pi}
\Pi(x,y) := \cdots x_{-2}x_{-1} \markzero{y_0} y_1 y_2 \cdots,
\end{equation}
where the box indicates the position of the $0$ index.
The closed set
\[
\tX := \Pi(\Delta) \subset A^\ZZ,
\]
contains $X$, but is typically not a shift space itself. Indeed, one can see that:
\begin{itemize}
\item $\tX$ is invariant if and only if $X$ is a one-step Markov shift, in which case $X = \tX$;
\item in general, $X = \bigcap_{n\in \ZZ} \sigma^{-n}\tX$ is the largest invariant subset of $\tX$.
\end{itemize}
Now we push forward the product measure $\mb \otimes \mf$ on $X^- \times X^+$ to
\begin{equation}\label{eqn:mt-prod}
\mt := \Pi_*((\mb \otimes \mf)|_{\Delta})
\end{equation}
on $\tX$.
By Fubini's theorem, $\mt$ can also be described as
\begin{equation}\label{eqn:mt}
\begin{aligned}
\mt(E) &= 
\int_{X^-} \mf \{ y \in X^+ : (x,y) \in \Pi^{-1}(E) \} \, d\mb(x) \\
&= \int_{X^+} \mb \{ x\in X^- : (x,y) \in \Pi^{-1}(E) \} \, d \mf(y).
\end{aligned}
\end{equation}
In order to obtain a $\sigma$-invariant measure, we must restrict to $X \subset \tX$, and we put
\begin{equation}\label{eqn:mu}
\mu := \mt |_{X}.
\end{equation}

\begin{remark}\label{rmk:need-nonzero}
Even if \eqref{eqn:ucb} holds, so that Proposition \ref{prop:nondegen} gives $0 < \mt(\tX) < \infty$, there is no guarantee that $\mt(X) > 0$, so $\mu$ could be the $0$ measure. Proving that $\mu(X)>0$ will be an important part of our arguments for Sinai billiards. 
\end{remark}

\begin{proposition}\label{prop:inv-MME}
The measure $\mu$ given by \eqref{eqn:mu} is a $\sigma$-invariant Borel measure on $X$. If there exists $C_1$ such that the upper counting bound \eqref{eqn:ucb} holds, 
then the measure $\mu$ has the following additional properties.
\begin{enumerate}[leftmargin=*]
\item Finiteness: $\mu(X) < \infty$.
\item Upper Gibbs bound: $\mu(\cyl{}{w}) \leq C_1^2 e^{-|w| h}$ for all $w\in \LLL$.
\item If $\mu(X) > 0$, then the normalized measure $\bar\mu := \mu/\mu(X)$ is an MME for $X$. \label{item:shift-mme}
\end{enumerate}
\end{proposition}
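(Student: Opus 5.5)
We prove the four claims in the order invariance, finiteness, upper Gibbs bound, MME property.

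\emph{Invariance.} Since $\sigma$ is a homeomorphism of $X$, it suffices to show $\mu(\sigma E)=\mu(E)$ for Borel $E\subset X$. Decomposing $E=\bigsqcup_{a,b\in A}E\cap[a]\cap\sigma^{-1}[b]$, we may assume $E\subset [a]\cap\sigma^{-1}[b]$, so every $z\in E$ has $z_0=a$ and $z_1=b$. For $z=\Pi(x,y)\in E$ with $x\in X^-$, $y\in X^+$, the shifted point is $\sigma z=\Pi(x',y')$, where $x'=xb$ (append $b=y_1$ at position $0$) and $y'=\sigma y$. Thus $\Pi^{-1}(\sigma E)$ is the image of $\Pi^{-1}(E)$ under $\Phi(x,y)=(xy_1,\sigma y)$. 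We compute $\mt(\sigma E)$ via the Fubini formula \eqref{eqn:mt}, integrating over $y'\in X^+$. By \eqref{eqn:-scale}, the $\mb$-measure of the $x'$-fibre (sets of the form $\sigma^{-1}$ of a subset of $\mbl a\mbr$, after appending $b$) equals $e^{h}$ times the $\mb$-measure of the corresponding $x$-fibre. By \eqref{eqn:+scale2}, integrating over $y'\in\sigma(\pbl ab\pbr)$ against $\mf$ equals $e^h$ times integrating over $y\in\pbl ab\pbr$, with the reversed orientation of the factors $e^{\pm h}$. The two scaling factors then cancel.

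To make the cancellation explicit, we write
\[
\mt(\sigma E)=\int \one_{\sigma E}\circ\Pi\,d(\mb\otimes\mf),
\]
change variables with $\Phi$, and use that $\Phi$ acts on the $X^-$ coordinate by $x\mapsto \sigma^{-1}$-type appending, which multiplies $\mb$ by $e^h$. On the $X^+$ coordinate it acts by $\sigma$, which multiplies $\mf$ by $e^h$, and here we use \eqref{eqn:+scale} in its inverse form $\mf(\sigma^{-1}\cdot)$. The main obstacle is a bookkeeping subtlety: the scaling propositions are stated as $\mb(\sigma^{-1}Z)=e^h\mb(Z)$, which is an expansion on the negative side, and $\mf(\sigma Z)=e^h\mf(Z)$, also an expansion. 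Under $\Phi$ one coordinate expands and the other contracts in the Hausdorff sense, because we remove a symbol from $y$ and add one to $x$. Adding a symbol to $x$ is the inverse of $\sigma^{-1}$ on $X^-$, so it contributes a factor $e^{-h}$, while $y\mapsto\sigma y$ contributes $e^{h}$. I will check carefully that the factors are $e^{-h}$ and $e^{h}$ and therefore cancel. Invariance of $X$ under $\sigma$, together with $E\subset X$, guarantees that $\Phi$ maps $\Pi^{-1}(E)$ bijectively onto $\Pi^{-1}(\sigma E)$ inside $\Delta$. Borel measurability is clear because $\Pi$ is continuous, and $m^\pm$ are Borel measures, being Hausdorff measures.

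\emph{Finiteness and the Gibbs bound.} Finiteness follows from $\mu(X)\le\mt(\tX)\le\mb(X^-)\mf(X^+)\le C_1^2$ by Proposition \ref{prop:nondegen}. For $w\in\LLL_n$, we have $\Pi^{-1}\cyl{}{w}\subset X^-\times\pbl w\pbr$, so Corollary \ref{cor:Gibbs} gives
\[
\mu\cyl{}{w}\le \mb(X^-)\,\mf(\pbl w\pbr)\le C_1\cdot C_1e^{-nh}.
\]

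\emph{MME property.} Assume $\mu(X)>0$. Then $\bar\mu\cyl{}{w}\le K e^{-nh}$ with $K=C_1^2/\mu(X)$, so $-\log\bar\mu\cyl{}{w}\ge nh-\log K$. Summing $-\bar\mu\cyl{}{w}\log\bar\mu\cyl{}{w}$ over $w\in\LLL_n$ gives at least $nh-\log K$. By \eqref{eqn:hnu-shift}, dividing by $n$ and letting $n\to\infty$ yields $h_{\bar\mu}(\sigma)\ge h$. Combined with the variational inequality $h_{\bar\mu}\le h$ (or directly $\frac1n\log\#\LLL_n\to h$), this shows that $\bar\mu$ is an MME.
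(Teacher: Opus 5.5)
Your proposal is correct and is essentially the paper's proof. Invariance comes from the same cancellation between the factor $e^{-h}$ from appending a symbol on the $X^-$ side (via \eqref{eqn:-scale}) and the factor $e^{h}$ from $y\mapsto\sigma y$ on the $X^+$ side (via \eqref{eqn:+scale}); the paper organizes this through the leaf measures $\mf_x$ and \eqref{eqn:u-cond} rather than your product change of variables $\Phi$, while finiteness, the upper Gibbs bound, and the MME argument match the paper line by line. Do clean up your first invariance paragraph, where you initially assign the factor $e^{h}$ to the $x'$-fibre; the bookkeeping you give afterwards ($e^{-h}$ for the $x'$-fibre, $e^{h}$ for the $X^+$ coordinate) is the correct one.
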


Given $x\in X$, we define a measure $\mf_x$ on $X$, supported on $\Wul(x)$, by
\begin{equation}\label{eqn:mfx}
	\mf_x(Z) := \mf(\pip(Z \cap \Wul(x)))
\end{equation}
for every Borel $Z \subset X$. Since
$\Wul(x)$ only depends on $\pim(x)$, we can also write $\Wul(x)$ and $\mf_x(Z)$ when $x\in X^-$, without fear of ambiguity.
From \eqref{eqn:mt}, we have
\begin{equation}\label{eqn:u-cond}
\mu(Z) = \int_{X^-} \mf_x(Z) \,d\mb(x).
\end{equation}
Similarly, defining $\mb_x$ supported on $\Wsl(x)$ by
\begin{equation}\label{eqn:mbx}
\mb_x(Z) := \mb(\pim(Z \cap \Wsl(x))),
\end{equation}
where again we take either $x\in X$ or $x\in X^+$, we have
\begin{equation}\label{eqn:s-cond}
\mu(Z) = \int_{X^+} \mb_x(Z) \,d\mf(x).
\end{equation}

\begin{lemma}\label{lem:nonzero-mu-ae}
The measures $\mf_x$ and $\mb_x$ are nonzero for $\mu$-a.e.\ $x\in X$. In fact, the sets
\begin{equation}\label{eqn:X*su}
\begin{gathered}
X_*^u := \{ x \in X : \mf_x(\Wul(x)) = 0 \}, \\
X_*^s := \{ x\in X : \mb_x(\Wsl(x)) = 0\}
\end{gathered}
\end{equation}
satisfy the following:
\begin{align}
\label{eqn:m-X*}
\mf_x(X_*^u) &= \mb_x(X_*^s) = 0
\quad\text{ for every } x\in X, \text{ and}\\
\label{eqn:mu-X*}
\mu(X_*^u) &= \mu(X_*^s) = 0.
\end{align}
\end{lemma}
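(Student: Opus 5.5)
The plan is to prove \eqref{eqn:m-X*} directly from the definitions, and then deduce \eqref{eqn:mu-X*} from the disintegrations \eqref{eqn:u-cond} and \eqref{eqn:s-cond}. The key observation is that membership in $X_*^u$ depends only on $\pim(x)$, since $\Wul(x)$ and hence $\mf_x$ depend only on $\pim(x)$. Likewise, membership in $X_*^s$ depends only on $\pip(x)$. Consequently $X_*^u$ is a union of local unstable sets: if $y\in \Wul(x)$, then $\Wul(y)=\Wul(x)$ and $\mf_y=\mf_x$.

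Fix $x\in X$ and consider $\mf_x(X_*^u)=\mf(\pip(X_*^u\cap \Wul(x)))$. There are two cases.
\begin{itemize}
\item If $x\notin X_*^u$, then no $y\in \Wul(x)$ lies in $X_*^u$, because $y\in X_*^u$ would give $\mf_x(\Wul(x))=\mf_y(\Wul(y))=0$. So $X_*^u\cap \Wul(x)=\emptyset$ and $\mf_x(X_*^u)=0$.
\item If $x\in X_*^u$, then $X_*^u\cap\Wul(x)=\Wul(x)$, and $\mf_x(X_*^u)=\mf_x(\Wul(x))=0$ by the definition of $X_*^u$.
\end{itemize}
The same argument with the roles of past and future exchanged gives $\mb_x(X_*^s)=0$ for every $x$. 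Strictly speaking, \eqref{eqn:mfx} uses an outer Hausdorff measure, so no measurability of $X_*^u$ is needed for \eqref{eqn:m-X*} itself.

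For \eqref{eqn:mu-X*}, I would plug \eqref{eqn:m-X*} into \eqref{eqn:u-cond} to get $\mu(X_*^u)=\int_{X^-}\mf_x(X_*^u)\,d\mb(x)=0$, and similarly use \eqref{eqn:s-cond} for $X_*^s$. Here I am using that $\mf_x$ for $x\in X^-$ agrees with $\mf_{x'}$ for any $x'\in X$ with $\pim(x')=x$, and that $\mf_x\equiv 0$ when $\Wul(x)=\emptyset$.

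The main obstacle is justifying the use of \eqref{eqn:u-cond} for $X_*^u$, which requires $X_*^u$ to be Borel, or at least $\mu$-measurable. My first approach is to write $X_*^u=(\pim)^{-1}(B)$ with $B=\{x\in X^-:\mf(\pip(\Wul(x)))=0\}$, and to show that $x\mapsto \mf(\pip\Wul(x))=\mf(\{y\in X^+:(x,y)\in\Pi^{-1}(X)\})$ is Borel. This follows from Fubini/Tonelli applied to the closed set $\Pi^{-1}(X)\subset\Delta$ with the product measure. The Fubini step is valid because $\mb$ and $\mf$ are $\sigma$-finite, since they are finite when \eqref{eqn:ucb} holds.

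If I want to avoid any finiteness assumption, I would instead note that $\mu$ can be treated as an outer measure. In that case the measurability issue is bypassed by covering $X_*^u$ with the Borel set $\Pi^{-1}$-fibre description above; this is where I would be most careful.
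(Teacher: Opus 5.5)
Your proposal is correct and follows the paper's proof. Both use the same two-case argument, based on $X_*^u$ being a union of local unstable sets, to get $\mf_x(X_*^u)=0$ for every $x$, then integrate against $\mb$ via \eqref{eqn:u-cond} (symmetrically for $X_*^s$). Your Tonelli check that $X_*^u=(\pim)^{-1}(B)$ is Borel is a detail the paper leaves implicit, and it is valid once \eqref{eqn:ucb} makes $\mf,\mb$ finite, which the paper's Fubini description \eqref{eqn:mt} tacitly requires anyway.
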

\begin{proof}
We claim that $\mf_x(X_*^u) = 0$ for every $x\in X$.
If $x\in X_*^u$, then this follows since $\mf_x \equiv 0$ by \eqref{eqn:X*su}. If $x\in X \setminus X_*^u$, then for every $y\in \Wul(x)$, we have $\mf_y(\Wul(y)) = \mf_x(\Wul(x)) > 0$, so (again by \eqref{eqn:X*su}) $y\in X \setminus X_*^u$; we conclude that 
$\Wul(x) \cap X_*^u = 0$, and thus $\mf_x(X_*^u) = 0$, proving the claim.

Using the claim together with the disintegration of $\mu$ in  \eqref{eqn:u-cond}, we obtain
\[
\mu(X_*^u) = \int_{X^-} \mf_x(X_*^u) \,d\mb(x)
= \int_{X^-} 0 \,d\mb(x) = 0.
\]
A similar argument holds for $X_*^s$.
\end{proof}

\begin{remark}
Conclusion \eqref{eqn:mu-X*} of Lemma \ref{lem:nonzero-mu-ae} is vacuous if $\mu(X)=0$, so as pointed out in Remark \ref{rmk:need-nonzero}, it will be crucial to prove that $\mt(X)>0$.
\end{remark}

\begin{definition}
A set $R \subset X$ is a \emph{rectangle} if for every $x,y\in R$, the intersection $\Wsl(x) \cap \Wul(y)$ consists of exactly one point, and this point lies in $R$. Equivalently, $R$ is a rectangle if we have $R = \Pi(R^- \times R^+)$ for some $R^\pm \subset X^\pm$ with $R^- \times R^+ \subset \Delta$; in this case we must have $R^- = \pim(R)$ and $R^+ = \pip(R)$.
\end{definition}

\begin{definition}\label{def:lps}
A \emph{product measure} on a rectangle $R = \Pi(R^-\times R^+)$ is any measure of the form $\Pi_*(m_1\otimes m_2)$ for some measures $m_1$ on $R^-$ and $m_2$ on $R^+$.
A measure $\nu$ on $X$ has \emph{local product structure} if for $\nu$-a.e.\ $x \in X$, there is a rectangle $R_x$ and product measure $\nu_{R_x}$ on $R_x$ such that $\nu_{R_x}(R_x)>0$ and the measures $\nu|_{R_x}$ and $\nu_{R_x}$ are equivalent (mutually absolutely continuous).
\end{definition}

The following is a direct consequence of the definitions:

\begin{lemma}\label{lem:lps}
Let $\mu$ be the invariant measure on $X$ constructed in \eqref{eqn:mt-prod} and \eqref{eqn:mu}. If there exist rectangles $\{R_n \subset X : n\in \NN\}$ such that $\mu(X \setminus \bigcup_{n\in \NN} R_n) = 0$, then $\mu$ has local product structure.
\end{lemma}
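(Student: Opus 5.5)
The plan is to take, for $\mu$-a.e.\ $x\in X$, the rectangle $R_x := R_n$ for some $n$ with $x\in R_n$. The candidate product measure is the restriction of $\mt$ itself, and we will check that it coincides with $\mu|_{R_x}$.

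First, write each rectangle as $R_n = \Pi(R_n^-\times R_n^+)$ with $R_n^-\times R_n^+\subset\Delta$; the definition of rectangle gives $R_n^\pm = \tau^\pm(R_n)$. Since $R_n\subset X$ and $\mu=\mt|_X$, we have $\mu|_{R_n} = \mt|_{R_n}$. Moreover $\Pi$ is injective on $\Delta$, because $\Pi(x,y)$ determines $x$ and $y$. Hence $\Pi^{-1}(R_n)\cap\Delta = R_n^-\times R_n^+$, and
\[
\mt|_{R_n} = \Pi_*\bigl((\mb\otimes\mf)|_{R_n^-\times R_n^+}\bigr) = \Pi_*\bigl(\mb|_{R_n^-}\otimes \mf|_{R_n^+}\bigr).
\]
This is a product measure on $R_n$ in the sense of Definition \ref{def:lps}, and it equals $\mu|_{R_n}$, so the two are trivially equivalent.

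One measurability issue needs care: $R_n^\pm$ should be Borel (or at least $\mb$-, $\mf$-measurable) for the restrictions to make sense. If the rectangles are only given as sets, I would handle this by replacing $R_n^\pm$ with measurable hulls, or by assuming, as will be the case in applications, that the $R_n$ are Borel. In that setting $R_n^\pm$ are continuous images of compact sets or of Borel sets under an injective map, hence analytic and universally measurable. I expect this to be the main, if minor, point to address.

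It remains to check positivity. Let $N := \bigcup\{R_n : \mu(R_n)=0\}$, a countable union of null sets, so $\mu(N)=0$. By hypothesis $\mu(X\setminus\bigcup_n R_n)=0$, so for $\mu$-a.e.\ $x$ we have $x\in\bigcup_n R_n\setminus N$. For such $x$, choose $n$ with $x\in R_n$; then $\mu(R_n)>0$. Setting $R_x:=R_n$ and $\nu_{R_x}:=\mu|_{R_n}$ gives $\nu_{R_x}(R_x)>0$, which completes the verification of local product structure.
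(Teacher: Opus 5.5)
Your proof is correct and is the same unpacking of the definitions the paper intends; the paper gives no written proof and only calls the lemma ``a direct consequence of the definitions.'' Injectivity of $\Pi$ on $\Delta$ gives $\mu|_{R_n} = \Pi_*(\mb|_{R_n^-}\otimes\mf|_{R_n^+})$, and discarding the $\mu$-null rectangles handles positivity.

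One small correction to your measurability aside: $\tau^\pm|_{R_n}$ is not injective, so the ``injective map'' remark is off. You also don't need analytic sets. If $R_n$ is Borel, then $R_n^-\times R_n^+ = \Pi^{-1}(R_n)\cap\Delta$ is Borel, and so its sections $R_n^\pm$ are Borel.
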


Given a rectangle $R\subset X$ and two points $x,y\in R$, 
the \emph{stable holonomy map}
$\pi_{x,y}^s \colon R \cap \Wul(x) \to R \cap \Wul(y)$ is defined by $\pi_{x,y}^s(z) = \Pi(y,z)$.  The following result is also stated in \cite[Proposition 1.10]{CD25}.

\begin{lemma}\label{lem:holonomy}
Let $R \subset X$ be a set with product structure. Given any $x,y \in R$ and any $Z \subset R \cap \Wul(x)$, we have
\begin{equation}
\mf_x(Z) = \mf_y(\pi_{x,y}^s(Z)). 
\end{equation}
\end{lemma}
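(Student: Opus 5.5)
The plan is to reduce the statement to the definition of $\mf_x$ in \eqref{eqn:mfx}, by checking that the holonomy $\pi^s_{x,y}$ does not change the forward one-sided sequence. If this holds, both sides of the identity are $\mf$ of the same subset of $X^+$.

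\emph{Unpacking the holonomy.} Since $R$ has product structure, we can write $R = \Pi(R^-\times R^+)$ with $R^-\times R^+\subset\Delta$. Points of $R\cap\Wul(x)$ are exactly the points $\Pi(\pim(x),z^+)$ with $z^+\in R^+$. Reading $\Pi(y,z)$ as the concatenation $\Pi(\pim(y),\pip(z))$, the map is
\[
\pi^s_{x,y}\bigl(\Pi(\pim(x),z^+)\bigr)=\Pi(\pim(y),z^+).
\]
This point lies in $R$ because $R$ is a rectangle, and it lies in $\Wul(y)$ because its past agrees with that of $y$. The map is a bijection $R\cap\Wul(x)\to R\cap\Wul(y)$, and its inverse is $\pi^s_{y,x}$. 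The key identity is
\[
\pip\circ\pi^s_{x,y}=\pip \quad\text{on } R\cap\Wul(x).
\]

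\emph{Conclusion.} For $Z\subset R\cap\Wul(x)$ we have $\pi^s_{x,y}(Z)\subset\Wul(y)$. Applying \eqref{eqn:mfx} twice gives
\[
\mf_y(\pi^s_{x,y}(Z))=\mf\bigl(\pip(\pi^s_{x,y}(Z))\bigr)=\mf(\pip(Z))=\mf(\pip(Z\cap\Wul(x)))=\mf_x(Z).
\]
Since $\mf$ is an outer measure, this holds for arbitrary $Z$, and not only Borel ones.

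\emph{Measurability.} This is the only step with any content, and it matters only if one insists on Borel sets. The restriction $\pip|_{\Wul(x)}$ is an injective continuous map. On the compact set $\Wul(x)$ it is therefore a homeomorphism onto its image, so images of Borel sets are Borel. Likewise $\pi^s_{x,y}=(\pip|_{\Wul(y)})^{-1}\circ\pip|_{\Wul(x)}$ maps Borel sets to Borel sets.

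I expect no real obstacle beyond this bookkeeping. The whole point is that stable holonomy within a rectangle preserves the future coordinates, and $\mf_x$ depends only on those coordinates.
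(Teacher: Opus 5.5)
Your proof is correct and takes essentially the same approach as the paper: stable holonomy inside a rectangle leaves the forward coordinates unchanged, so $\pip(\pi^s_{x,y}(Z))=\pip(Z)$ and both sides are $\mf$ of the same set. The paper states this at the level of covers instead: every $\EEE\in\EE^+(Z,N)$ also covers $\pi^s_{x,y}(Z)$, and it gets the reverse inequality by symmetry, which amounts to your identity $\pip\circ\pi^s_{x,y}=\pip$. Your measurability paragraph is not needed, since $\mf$ is defined on all subsets.
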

\begin{proof}
Fix $R,x,y,Z$ be as in the statement.
Given  $N\in \NN$ and every $\EEE \in \EE^+(Z,N)$, 
observe that every $z\in Z$ has $z\in \cyl{}{w}$ for some $w\in \EEE$,
and since $\Wsl(z) \subset \cyl{}{w}$ by definition, we also have
$\pi_{x,y}^s(z) \in \cyl{}{w}$. We conclude that $\EEE \in \EE^+(\pi_{x,y}^s(Z),N)$, so
\[
\lim_{N\to\infty} \inf \left\{ \sum_{w \in \EEE} e^{-|w|h} : \EE^+(Z,N)\right\} \geq \lim_{N\to\infty} \inf \left\{ \sum_{w \in \EEE} e^{-|w|h} : \EE^+(\pi_{x,y}^s(Z),N)\right\},
\]
giving $\mf_x(Z) \geq \mf_y(\pi_{x,y}^s(Z))$. The reverse inequality follows by symmetry.
\end{proof}

\subsection{Conditional measures and entropy}\label{sec:cond-ent}

The disintegration formulas \eqref{eqn:u-cond} and \eqref{eqn:s-cond} show that up to scaling, the leaf measures $\mf_x$ and $\mb_x$ are the conditional measures of $\mu$ on unstable and stable leaves. The usual version of this disintegration formula for an arbitrary Borel probability measure involves conditional measures that are \emph{probabilities} on each individual leaf. We recall the details of this now, together with an interpretation of entropy in terms of these conditional probability measures.

We will use the following notation: given
a Borel probability measure $\nu$ on $X$, 
and words $v,w\in \LLL$, let
\begin{equation}\label{eqn:cond-prob}
P^\nu(w\mid v)
:= \frac{\nu( \cyl{v}{} \cap \sigma^{-1}\cyl{}{w} )}
{\nu(\cyl{v}{})}
\end{equation}
denote the conditional probability that we will see the word $w$ starting in position $1$, given that we have just seen the word $v$ ending in position $0$. The following result uses these probabilities to describe the disintegration of a measure into its conditional measures, which goes back to Rokhlin \cite{Roh}.

\begin{lemma}\label{lem:disint}
Given a Borel probability measure $\nu \in \MMM(X)$, let $\nu^- := (\pim)_*\nu$ be its pushforward measure on $X^-$. 
Then for $\nu^-$-a.e.\ $x\in X^-$, the following limit exists for every $w\in \LLL$:
\begin{equation}\label{eqn:lim-cond}
P^\nu(w \mid x) :=
\lim_{k\to\infty} P^\nu(w \mid x_{(-k,0]}).
\end{equation}
Moreover, \eqref{eqn:lim-cond} defines a probability measure on $\Wul(x)$: there exists a unique Borel probability measure $\nu_x$ on $\Wul(x)$ such that $\nu_x(\Wul(x) \cap \cyl{}{w}) = P^\nu(w\mid x)$ for all $w\in \LLL$. Finally, we have the following analogue of \eqref{eqn:u-cond}:
\begin{equation}\label{eqn:disint}
\nu(Z) = \int_{X^-} \nu_x(Z) \,d\nu^-(x)
\quad\text{for every Borel } Z\subset X.
\end{equation}
\end{lemma}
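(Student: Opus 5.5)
The plan is to deduce Lemma \ref{lem:disint} from the martingale convergence theorem, followed by a Carathéodory-type extension on the compact set $\Wul(x)$. Fix $w\in\LLL$ and let $\FFF_k$ be the $\sigma$-algebra on $X$ generated by the backward cylinders $\cyl{v}{}$ with $|v|=k$, i.e.\ by the coordinates $x_{(-k,0]}$. These increase to $\FFF_\infty := (\pim)^{-1}(\text{Borel}(X^-))$. On $\cyl{v}{}$ the conditional expectation $\EE_\nu[\one_{\sigma^{-1}\cyl{}{w}}\mid \FFF_k]$ is the constant $P^\nu(w\mid v)$, at least when $\nu(\cyl{v}{})>0$; the set where it is undefined is $\nu$-null. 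By Lévy's upward theorem (martingale convergence), $P^\nu(w\mid x_{(-k,0]})$ converges $\nu$-a.s.\ to $\EE_\nu[\one_{\sigma^{-1}\cyl{}{w}}\mid\FFF_\infty]$. Since this quantity depends only on $\pim(x)$, convergence holds for $\nu^-$-a.e.\ $x\in X^-$. Because $\LLL$ is countable, we can intersect the full-measure sets over all $w\in\LLL$ to obtain a single full-measure set $G\subset X^-$ on which the limit \eqref{eqn:lim-cond} exists for every $w$.

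Next I construct $\nu_x$. For each fixed $k$ and each $x\in X^-$ with $\nu^-(\mbl x_{(-k,0]}\mbr)>0$, the map $w\mapsto P^\nu(w\mid x_{(-k,0]})$ has two properties. First, it is finitely additive over extensions: $P(w\mid v)=\sum_{b} P(wb\mid v)$. Second, summing over $w\in\LLL_n$ gives $1$. Both properties pass to the limit, since only finitely many terms are involved. Since $\Wul(x)$ is identified with a compact subset of $X^+$ through $\pip$, and the forward cylinders form a semiring of clopen sets, a finitely additive nonnegative function on them is automatically countably additive: a countable disjoint union of clopen sets that is itself clopen must reduce to a finite union, by compactness. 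Carathéodory's theorem then gives a unique Borel probability measure on $X^+$. I will use the convention that the cylinder $\cyl{}{w}$ in the statement is read as the word $w$ starting in position $1$, matching \eqref{eqn:cond-prob}, with $x_0$ fixed.

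The main obstacle is to check that this measure lives on $\Wul(x)$, i.e.\ that it gives zero mass to words $w$ with $x_0w\notin\LLL$ or with $\Wul(x)\cap\sigma^{-1}\cyl{}{w}=\emptyset$. For the first condition it suffices that $P^\nu(w\mid v)=0$ whenever $[v]\cap\sigma^{-1}[w]=\emptyset$. For the second, compactness is the key: if $\Wul(x)$ misses $\sigma^{-1}\cyl{}{w}$, then some finite $k$ already gives $\cyl{x_{(-k,0]}}{}\cap\sigma^{-1}\cyl{}{w}=\emptyset$. Otherwise, choosing points in these nonempty sets and passing to a limit would produce a point of $\Wul(x)\cap\sigma^{-1}\cyl{}{w}$. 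So the probabilities vanish from that $k$ onward. After removing the null set where some $\nu(\cyl{x_{(-k,0]}}{})=0$, this shows $\nu_x(\Wul(x))=1$, with uniqueness coming from the $\pi$-$\lambda$ theorem.

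Finally, for \eqref{eqn:disint}, I apply the identity $\int \EE[\one_Z\mid\FFF_\infty]\,d\nu=\nu(Z)$ to $Z=\cyl{u}{}\cap\sigma^{-1}\cyl{}{w}$. For such $Z$, $\nu_x(Z)=\one_{\mbl u\mbr}(x)P^\nu(w\mid x)$, and the identity gives \eqref{eqn:disint} for these sets. These sets form a $\pi$-system generating the Borel $\sigma$-algebra of $X$, and both sides of \eqref{eqn:disint} are finite measures in $Z$. Measurability of $x\mapsto\nu_x(Z)$ follows by a monotone class argument, so Dynkin's $\pi$-$\lambda$ theorem extends \eqref{eqn:disint} to all Borel $Z$.
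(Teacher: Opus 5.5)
Your proof is correct. It is organized differently from what the paper does: the paper gives no argument of its own and simply cites \cite[\S5.3]{EW11} (Theorem 5.14 and Corollary 5.21) and \cite[Chapter 5]{VO16}.

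In the cited route, one first obtains the conditional measures of $\nu$ for the countably generated $\sigma$-algebra $\FFF_\infty=(\pim)^{-1}(\text{Borel}(X^-))$ abstractly. This uses conditional expectations of a countable dense family of continuous functions together with the Riesz representation theorem. Concentration on the atoms (here $\Wul(x)$) and the disintegration formula \eqref{eqn:disint} come built into that existence theorem. Only afterwards is the increasing martingale theorem used to identify $\nu_x(\sigma^{-1}\cyl{}{w})$ with the limit in \eqref{eqn:lim-cond}.

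You reverse the order. Lévy's upward theorem produces the limits first. You then build $\nu_x$ directly from them by Kolmogorov consistency, where countable additivity is automatic because cylinders are clopen in a compact space. You obtain $\nu_x(\Wul(x))=1$ from the nested-compact-sets argument, and you verify \eqref{eqn:disint} on the $\pi$-system of two-sided cylinders before extending by Dynkin.

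The cited approach is shorter and works for any countably generated sub-$\sigma$-algebra. Yours is self-contained, uses only martingale convergence plus compactness, and makes the support property and the cylinder formula for $\nu_x$ completely explicit.

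Your reading of $\cyl{}{w}$ in the statement as $\sigma^{-1}\cyl{}{w}$ (word starting at position $1$) is the right one. It is what makes the statement consistent with \eqref{eqn:cond-prob}, and it matches how the paper actually uses the lemma in \eqref{eqn:pnx} and in the remark after \eqref{eqn:h-int-sum}.

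One cosmetic point remains: define $\nu_x$ arbitrarily on the $\nu^-$-null set where some backward cylinder has zero mass or some limit fails. Then $x\mapsto\nu_x(Z)$ is defined everywhere and your monotone class argument for measurability goes through verbatim.
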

\begin{proof}
See \cite[\S5.3]{EW11}, especially Corollary 5.21 and Theorem 5.14, which provide \eqref{eqn:lim-cond} and \eqref{eqn:disint}, respectively. This result is also proved in \cite[Chapter 5]{VO16}; see in particular Definition 5.1.4, Theorem 5.1.11, and Lemma 5.2.1.
\end{proof}

Let $\CCC_\nu$ denote the set of $x\in X$ such that the limit in \eqref{eqn:lim-cond} exists for every $w\in \LLL$. 
We will abuse notation slightly by writing $\nu_x$ to mean the same thing as $\nu_{\pim(x)}$.
By Lemma \ref{lem:disint}, we have $\nu(\CCC_\nu)=1$, and we refer to the measures $\{ \nu_x : x\in \CCC_\nu\}$ as the \emph{conditional measures} of $\nu$ on unstable sets. 

A similar construction gives conditional measures on stable sets. The following discussion has a completely analogous version for these stable conditionals, but we will only write down (and use) the details for the unstable sets and measures.

Let $\nu$ be an \emph{invariant} Borel probability measure on $X$, and define a sequence of functions $p_n^\nu \colon X \to [0,1]$ by
\begin{equation}\label{eqn:pnx}
p_n^\nu(x) := \nu_x (\cyl{}{x_{[1,n]}}) = P^\nu( x_{[1,n]} \mid \pim(x) ).
\end{equation}
For each $n\in \NN$, we also consider the function
\begin{equation}\label{eqn:In}
I_n^\nu(x) := -\log p_n^\nu(x),
\end{equation}
which gives the information content of $x_{[1,n]}$, conditioned on $x_{(-\infty,0]}$. We will use the following characterization of $h_\nu$ in terms of these functions:

\begin{lemma}\label{lem:h-int}
Given any invariant measure $\nu \in \MMM_\sigma(X)$ and any $n\in \NN$, we have
\begin{equation}\label{eqn:h-int}
n h_\nu = \int_X I_n^\nu(x) \,d\nu(x).
\end{equation}
\end{lemma}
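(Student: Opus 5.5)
The plan is to prove the case $n=1$ first and then get general $n$ by the chain rule together with shift-invariance of $\nu$.

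\emph{Step 1: the integral of $I_1^\nu$.} By the disintegration in Lemma \ref{lem:disint}, the conditional measure $\nu_x$ gives mass $P^\nu(a\mid \pim(x))$ to each cylinder $[a]$ in position $1$. Therefore
\[
\int_X I_1^\nu \,d\nu = \int_{X^-} \sum_{a\in A} -P^\nu(a\mid x)\log P^\nu(a\mid x)\,d\nu^-(x).
\]
This is the conditional entropy $H_\nu(\xi_1 \mid \xi_{(-\infty,0]})$ of the symbol at position $1$ given the past. The martingale convergence in \eqref{eqn:lim-cond} identifies the integrand as the a.e.\ limit of $P^\nu(a\mid x_{(-k,0]})$.

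To pass the limit through the integral, I would use that the finite-past conditional entropies
\[
H_k := \sum_{v\in\LLL_k}\nu\cyl{v}{}\sum_a -P^\nu(a\mid v)\log P^\nu(a\mid v)
\]
decrease to $\int I_1^\nu\,d\nu$. This is the standard increasing-martingale result for conditional entropy. It can be justified via dominated convergence, since $\sup_k -\log P^\nu(x_1\mid x_{(-k,0]})$ is integrable by the standard maximal inequality (Chung's lemma). Alternatively, one can cite the corresponding theorem in \cite{EW11} or \cite{pW82}.

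\emph{Step 2: identify $H_k$ with entropy.} By invariance of $\nu$,
\[
H_k = H_\nu(\LLL_{k+1}) - H_\nu(\LLL_k),
\]
where $H_\nu(\LLL_m) := \sum_{w\in\LLL_m} -\nu\cyl{}{w}\log\nu\cyl{}{w}$. By \eqref{eqn:hnu-shift}, the Cesàro averages of these differences converge to $h_\nu$. Since $H_k$ is monotone, its limit equals its Cesàro limit, and hence $\lim_k H_k = h_\nu$. This gives $\int I_1^\nu\,d\nu = h_\nu$.

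\emph{Step 3: general $n$.} Write
\[
p_n^\nu(x) = \prod_{j=1}^{n} \frac{p_j^\nu(x)}{p_{j-1}^\nu(x)}, \qquad p_0^\nu := 1.
\]
I claim that $p_j^\nu(x)/p_{j-1}^\nu(x) = p_1^\nu(\sigma^{j-1}x)$ for $\nu$-a.e.\ $x$. To see this, compute both sides as limits of finite-past ratios:
\[
\frac{\nu(\cyl{x_{(-k,0]}}{} \cap \sigma^{-1}\cyl{}{x_{[1,j]}})}{\nu(\cyl{x_{(-k,0]}}{} \cap \sigma^{-1}\cyl{}{x_{[1,j-1]}})}.
\]
By invariance, this ratio equals $P^\nu(x_j \mid x_{(-k,j-1]})$ evaluated at the shifted point. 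Letting $k\to\infty$ along the full past gives the claim on a full-measure set, namely $\CCC_\nu \cap \bigcap_j \sigma^{-j}\CCC_\nu$.

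It follows that
\[
I_n^\nu = \sum_{j=0}^{n-1} I_1^\nu\circ\sigma^j .
\]
Integrating and using invariance once more gives $\int I_n^\nu\,d\nu = n h_\nu$.

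\emph{Main obstacle.} The main difficulty is the a.e.\ justification in Step 3, that the ratio of the limits equals the limit of the ratios. This requires the denominators to be positive, and $p_{j-1}^\nu>0$ holds $\nu$-a.e.\ because
\[
\nu\{p_{j-1}^\nu = 0\} = \int_{X^-} \nu_x\{p_{j-1}^\nu=0\}\,d\nu^-(x) = 0.
\]
The other delicate point is the limit interchange in Step 1.
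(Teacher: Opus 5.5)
Your argument is correct, but it is organized differently from the paper's. The paper handles every $n$ at once by citing the abstract identity $h_\nu(T,\QQQ)=H_\nu(\QQQ\mid\QQQ^-)$ from \cite{Glasner}. It applies this with $T=\sigma^n$ and $\QQQ$ the partition according to $x_{[1,n]}$. Then $\QQQ^-=\bigvee_{k\geq 1}\sigma^{nk}\QQQ$ is the partition into local unstable sets, the conditional measures are the $\nu_x$, and $h_\nu(\sigma^n,\QQQ)=h_\nu(\sigma^n)=nh_\nu$ because $\QQQ$ generates for $\sigma^n$.

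You instead reprove that identity by hand in the special case $n=1$. Your ingredients are the martingale convergence encoded in Lemma~\ref{lem:disint}, the chain rule $H_k=H_\nu(\LLL_{k+1})-H_\nu(\LLL_k)$, and a Ces\`aro comparison with \eqref{eqn:hnu-shift}. You then pass to general $n$ through the cocycle identity $I_n^\nu=\sum_{j=0}^{n-1}I_1^\nu\circ\sigma^j$.

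The trade-off is as follows.
\begin{itemize}
\item The paper's route is shorter and needs no cocycle. It derives \eqref{eqn:InSn} only afterwards, and rather tersely.
\item Your route is self-contained given Lemma~\ref{lem:disint} and \eqref{eqn:hnu-shift}. As a byproduct it gives a careful almost-everywhere proof of \eqref{eqn:InSn}, covering positivity of $p_{j-1}^\nu$ and the full-measure set $\CCC_\nu\cap\bigcap_j\sigma^{-j}\CCC_\nu$, which the paper glosses over.
\end{itemize}

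One simplification: the limit interchange in your Step~1 does not need Chung's lemma. Write
\[
H_k=\int_{X^-}\sum_{a\in A} -P^\nu(a\mid x_{(-k,0]})\log P^\nu(a\mid x_{(-k,0]})\,d\nu^-(x).
\]
This integrand is bounded by $\log\#A$ and converges $\nu^-$-a.e., so bounded convergence suffices. The maximal inequality is only needed if you integrate the unbounded information function $-\log P^\nu(x_1\mid x_{(-k,0]})$ over $X$.
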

\begin{proof}
A standard result from entropy theory (see \cite[Theorem 14.29]{Glasner}) says that given a probability space $(X,\nu)$, an invertible measure-preserving transformation $T\colon X\to X$, and a finite partition $\QQQ$ of $X$, we have
\begin{equation}\label{eqn:H-cond}
h_\nu(T,\QQQ) = H_\nu(\QQQ \mid \QQQ^-),
\qquad
\text{where } \QQQ^- := \bigvee_{k=1}^\infty T^k\QQQ.
\end{equation}
Here the conditional entropy on the right-hand side is defined by
\[
H_\nu(\QQQ \mid \QQQ^-) := -\sum_{A \in \QQQ} 
\int_X \one_A(x) \log \nu_x^{\QQQ^-}(A) \,d\nu(x),
\]
where $\{\nu_x^{\QQQ^-} : x\in X \}$ is the system of conditional measures of $\nu$ with respect to $\QQQ^-$ \cite[p.\ 254]{Glasner}.
Taking $X$ to be our shift space, $\QQQ$ to be the partition into cylinders of the form $\sigma^{-1}\cyl{}{x_{[1,n]}}$, and $T = \sigma^n$, we see that $\QQQ^-$ is the partition into unstable sets, and $\nu_x^{\QQQ^-} = \nu_x$ (the conditional measures on unstable sets introduced above), so recalling \eqref{eqn:pnx}, we get
\[
H_\nu(\QQQ \mid \QQQ^-) = \int_X \sum_{w\in \LLL_n} -\log \nu_x(\cyl{}{w}) \,d\nu(x)
= \int_X \int_{\Wul(x)} -\log p_n^\nu(y) \,d\nu_x(y) \,d\nu(x).
\]
Using \eqref{eqn:In}, we can rewrite this as
\[
H_\nu(\QQQ \mid \QQQ^-) = \int_X I_n^\nu(x) \,d\nu(x),
\]
and now \eqref{eqn:h-int} follows from \eqref{eqn:H-cond} and the fact that $h_\nu(T,\QQQ) = h_\nu(\sigma^n) = n h_\nu$.
\end{proof}

Using \eqref{eqn:lim-cond}, \eqref{eqn:pnx}, and invariance of $\nu$, we see that
\[
p_n^\nu(x) = \lim_{k\to-\infty} P^\nu(x_{[1,n]} \mid x_{(-k,0]})
= \lim_{k\to-\infty} \prod_{j=0}^{n-1}
P^\nu(x_{j+1} \mid x_{(-k,j]})
= \prod_{j=0}^{n-1} p_1^\nu(\sigma^j(x)),
\]
so the conditional information function can be written as a Birkhoff sum:
\begin{equation}\label{eqn:InSn}
I_n^\nu(x) = S_n I_1^\nu(x).
\end{equation}
By the Birkhoff ergodic theorem, we conclude that the \emph{pointwise unstable entropy} $h_\nu^u(x) := \lim_{n\to\infty} \frac 1n I_n^\nu(x)$ exists for $\nu$-a.e.\ $x\in X$, and
\begin{equation}\label{eqn:h-int=}
\int_X h_\nu^u(x) \,d\nu(x)
= \int_X I_1^\nu(x) \,d\nu(x)
= h_\nu,
\end{equation}
where the last equality uses Lemma \ref{lem:h-int}. When $\nu\in \MMM_\sigma^e(X)$ is an \emph{ergodic} invariant measure, we have moreover that $h_\nu^u$ is constant $\nu$-a.e., and obtain the following version of the Shannon--McMillan--Breiman Theorem for conditional measures:
\begin{equation}\label{eqn:SMB}
h_\nu^u(x) = \lim_{n\to\infty} \frac 1n I_n^\nu(x)
= \lim_{n\to\infty} -\frac 1n \log \nu_x(\cyl{}{x_{[1,n]}})
= h_\nu \quad\text{for $\nu$-a.e.\ $x\in X$.}
\end{equation}
In particular, for any ergodic measure $\nu$, the set
\begin{equation}\label{eqn:Snu}
\SSS_\nu := \Big\{ x\in X : \frac 1n I_n^\nu(x) \to h_\nu \Big\}
\end{equation}
satisfies $\nu(\SSS_\nu) = 1$, and using \eqref{eqn:disint}, we conclude that
\begin{equation}\label{eqn:S'nu}
\SSS'_\nu := \{ x \in \CCC_\nu : \nu_x(\SSS_\nu) = 1 \}
\end{equation}
also has $\nu(\SSS'_\nu)=1$.

\begin{lemma}\label{lem:non-atomic}
Let $\nu$ be an ergodic measure with $h_\nu > 0$. Then for every $x\in \SSS'_\nu$, the conditional measure $\nu_x$ is non-atomic: $\nu_x(\{y\}) = 0$ for every $y\in \Wul(x)$.
\end{lemma}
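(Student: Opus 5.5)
Fix $x\in \SSS'_\nu$ and suppose, for contradiction, that $\nu_x(\{y\})>0$ for some $y\in \Wul(x)$. The plan is to show that such a $y$ would have to lie in $\SSS_\nu$, and that an atom in $\SSS_\nu$ is incompatible with $h_\nu>0$.

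\emph{Step 1: the atom lies in $\SSS_\nu$.} By the definition \eqref{eqn:S'nu} we have $\nu_x(\SSS_\nu)=1$. If $y\notin \SSS_\nu$, then $\{y\}\subset X\setminus\SSS_\nu$, which is a $\nu_x$-null set, so $\nu_x(\{y\})=0$. Hence any atom $y$ satisfies $y\in \SSS_\nu$. Since $y\in\Wul(x)$, we have $\pim(y)=\pim(x)$, and therefore $\nu_y=\nu_x$ by the convention $\nu_y=\nu_{\pim(y)}$. (Note that $y\in\CCC_\nu$ automatically, because membership in $\CCC_\nu$ depends only on $\pim$.)

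\emph{Step 2: bounding the conditional information along $y$.} For every $n$, the point $y$ lies in $\cyl{}{y_{[1,n]}}$ shifted appropriately, that is, in $\sigma^{-1}\cyl{}{y_{[1,n]}}$ in the paper's convention. Using \eqref{eqn:pnx}, this gives
\[
p_n^\nu(y)=\nu_y(\sigma^{-1}\cyl{}{y_{[1,n]}})\ \ge\ \nu_x(\{y\})=:c>0 .
\]
Consequently $I_n^\nu(y)\le -\log c$ for all $n$, and so $\frac1n I_n^\nu(y)\to 0$.

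\emph{Step 3: the contradiction.} Because $y\in\SSS_\nu$, the definition \eqref{eqn:Snu} says that $\frac1n I_n^\nu(y)\to h_\nu$. Combined with Step 2 this forces $h_\nu=0$, contradicting the hypothesis $h_\nu>0$. Therefore $\nu_x$ has no atoms.

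The only step needing care is the bookkeeping in Step 2: we must check that the cylinder appearing in \eqref{eqn:pnx} really contains $y$ and is evaluated with the same conditional measure $\nu_x$. This holds because the cylinder is determined by $y_{[1,n]}$, and $\nu_y=\nu_x$ since $y$ and $x$ share the same past.
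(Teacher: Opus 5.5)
Your proposal is correct and follows the same route as the paper: you use $\nu_x(\SSS_\nu)=1$ to reduce to atoms $y\in\SSS_\nu$, note that $\nu_y=\nu_x$ since $y$ and $x$ share the same past, and then use the conditional SMB limit with $h_\nu>0$ to force the cylinder masses $\nu_y(\sigma^{-1}\cyl{}{y_{[1,n]}})$, which dominate $\nu_x(\{y\})$, to tend to zero. Your phrasing by contradiction and your explicit handling of the $\sigma^{-1}$ index shift simply spell out in more detail the paper's two-line argument.
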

\begin{proof}
By \eqref{eqn:S'nu}, it suffices to show that $\nu_x(\{y\}) = 0$ for every $x\in \SSS'_\nu$ and $y\in \SSS_\nu$. 
Each such $y$ satisfies $\nu_y = \nu_x$, and since $h_\nu>0$, \eqref{eqn:SMB} implies that $\nu_y(y_{[1,n]}) \to 0$, from which we conclude that $\nu_x(\{y\}) = \nu_y(\{y\}) = 0$.
\end{proof}

\subsection{Proofs of basic properties}\label{sec:sym-pfs}

Now we prove the remaining unproved results from the preceding sections.

\subsubsection{Proof of Proposition \ref{prop:nondegen}}

We prove that $C_1^{-1} \leq \mf(X^+) \leq C_1$; the proof for $\mb(X^-)$ is similar.
For the upper bound, observe that $\LLL_N \subset \EE^+(X^+,N)$, so
\[
m^+(X^+) \leq \lim_{N\to\infty} \sum_{w\in \LLL_N} e^{-|w|h}
\leq \lim_{N\to\infty} (\#\LLL_N) e^{-Nh} \leq C_1.
\]
For the lower bound, 
we use the following notation: given $w\in \LLL$ and $k\in \NN$, denote the set of words of length $k$ that can follow $w$ by
\[
\FFF_k(w) := \{ v\in \LLL_k : wv \in \LLL \}.
\]
Each $\EEE \in \EE^+(X^+,N)$ is an open cover of $X^+$, and has a finite subcover $\EEE' \subset \EEE$ by compactness. Let $n := \max\{ |w| : w\in \EEE' \}$. For every $w\in \EEE'$, we have
\[
\#\FFF_{n-|w|}(w) \leq \#\LLL_{n-|w|} \leq C_1 e^{(n-|w|)h},
\]
which implies that
\begin{equation}\label{eqn:Fnw}
e^{-|w|h} \geq C_1^{-1} e^{-nh} \#\FFF_{n-|w|}(w).
\end{equation}
Moreover, $\LLL_n = \bigsqcup_{w\in \EEE'} \{ wv : v\in \FFF_{n-|w|}(w) \}$, so
\begin{equation}\label{eqn:EE'}
\sum_{w\in \EEE} e^{-|w| h}
\geq \sum_{w\in \EEE'} e^{-|w| h}
\geq \sum_{w\in \EEE'} C_1^{-1} e^{-n h} \#\FFF_{n-|w|}(w)
= C_1^{-1} e^{-nh} \#\LLL_n.
\end{equation}
By \eqref{eqn:lcb}, this is at least $C_1^{-1}$. Taking an infimum over all $\EEE \in \EE^+(X^+,N)$ and sending $N\to\infty$ gives $\mf(X^+) \geq C_1^{-1}$, as claimed.

\subsubsection{Proof of Proposition \ref{prop:scaling}}

Let $a$ be a symbol in the alphabet, and consider a subset $Z \subset X^+ \cap \pbl a\pbr$.  If $\EEE \in \EE^+(Z,N)$, then $\sigma \EEE = \{\sigma w : w \in \EEE\}$ is a cover of $\sigma Z$, where we write $\sigma w = w_{[2,|w|]}$ for the word $w$ with its first symbol removed.  

Conversely, if $\FFF$ is a cover of $\sigma Z$, then the set $\{aw : w \in \FFF\}$ is a cover of $Z$.  Thus, $\sigma$ maps covers of $Z$ to covers of $\sigma Z$, and $\sigma|_{\pbl a\pbr}^{-1}$ maps covers of $\sigma Z$ to covers of $Z$. We conclude that $\sigma \colon \EE^+(Z,N) \to \EE^+(Z,N-1)$ is a bijection, and since
\begin{equation*}
\sum_{w\in\EEE} e^{-|w|h} = e^{-h} \sum_{w \in \EEE} e^{-(|w|-1)h} = e^{-h} \sum_{u \in \sigma \EEE} e^{-|u|h},
\end{equation*}
we can now deduce that
\begin{equation}
\inf\left\{\sum_{w \in \EEE} e^{-|w|h} : \EE^+(Z,N) \right\} = e^{-h} \inf\left\{ \sum_{u \in \FFF} e^{-|u|h} : \FFF \in \EE^+(\sigma Z, N-1)\right\}.
\end{equation}
Sending $N \to\infty$ gives $\mf(Z) = e^{-h} \mf(\sigma Z)$, and the proof for $\mb$ is similar.

\subsubsection{Proof of Proposition \ref{prop:Z-compact-lower}}

This proof follows the same idea as the lower bound for $\mf(X^+)$; we only need to modify \eqref{eqn:EE'} slightly. Given $n\in \NN$, let
\[
\LLL_n^Z := \{w \in \LLL_n : \pbl w \pbr \cap Z \neq \emptyset\},
\]
so that by the hypothesis, we have
\begin{equation}\label{eqn:LnZ-geq}
\#\LLL_n^Z \geq b_Z e^{nh} \quad\text{for all } n\in J.
\end{equation}
Now consider an arbitrary $N\in \NN$ and $\EEE \in \EE^+(Z,N)$.
Since $Z$ is compact, there exists a finite subcover $\EEE' \subset \EEE$.
Since $J \subset \NN$ is infinite, there exists $n\in J$ such that $n \geq |w|$ for all $w\in \EEE'$. Moreover, we have
\[
\LLL_n^Z \subset \bigsqcup_{w\in \EEE'} \{ wv : v\in \FFF_{n-|w|}(w) \},
\]
so we can use \eqref{eqn:Fnw} to obtain the following modification of \eqref{eqn:EE'}:
\[
\sum_{w\in \EEE} e^{-|w|h}
\geq \sum_{w\in \EEE'} e^{-|w| h}
\geq \sum_{w\in \EEE'} C_1^{-1} e^{-n h} \#\FFF_{n-|w|}(w)
= C_1^{-1} e^{-nh} \#\LLL_n^Z
\geq C_1^{-1} b_Z,
\]
where the last inequality uses \eqref{eqn:LnZ-geq}. Taking an infimum over all $\EEE \in \EE^+(Z,N)$ and sending $N\to\infty$ proves \eqref{eqn:Z-compact-lower}.

\subsubsection{Proof of Proposition \ref{prop:inv-MME}}

First, we prove that $\mu$ is $\sigma$-invariant by showing that $\mu(\sigma^{-1}Z) = \mu(Z)$ for every measurable $Z\subset X$. It suffices to consider the case when there exists $a\in A$ such that $Z \subset [a]$; the general case follows by linearity.

Recalling \eqref{eqn:u-cond}, we have
\begin{equation}\label{eqn:mu-sZ}
\mu(\sigma^{-1} Z) = \int_{X^-} \mf_y(\sigma^{-1} Z) \,d\mb(y).
\end{equation}
By the definition of $\mf_y$ in \eqref{eqn:mfx}, the integrand is
\begin{equation}\label{eqn:mfy}
\mf_y(\sigma^{-1} Z) = \mf(\pip( \Wul(y) \cap \sigma^{-1} Z)).
\end{equation}
Given $y\in X^-$, we see that 
\[
\pip( \Wul(y) \cap \sigma^{-1} Z) \subset \pbl y_0 \pbr,
\]
so we can use \eqref{eqn:+scale} from Proposition \ref{prop:scaling} together with \eqref{eqn:mfy} to obtain
\begin{equation}\label{eqn:mfy2}
\mf_y(\sigma^{-1} Z) = e^{-h} \mf(\sigma \pip(\Wul(y) \cap \sigma^{-1} Z)).
\end{equation}
Since $Z \subset [a]$, we have
\[
\sigma \pip(\Wul(y) \cap \sigma^{-1} Z)
= \pip(\sigma(\Wul(y) \cap \sigma^{-1} Z))
= \pip(\Wul(ya) \cap Z),
\]
which together with \eqref{eqn:mfy2} gives
\begin{equation}\label{eqn:mfy3}
\mf_y(\sigma^{-1} Z) = e^{-h} \mf(\pip(\Wul(ya) \cap Z))
= e^{-h} \mf_{ya}(Z).
\end{equation}
Substituting this in \eqref{eqn:mu-sZ}, we get
\begin{equation}\label{eqn:mu-sZ2}
\mu(\sigma^{-1} Z) = \int_{X^-} e^{-h} \mf_{ya}(Z) \,d\mb(y).
\end{equation}
We will show that $\mu(Z)$ is given by this same integral.
Define $\psi \colon X^- \to [0,\infty)$ by $\psi(x) = \mf_x(Z)$, and observe that $\psi(x) = 0$ unless $x \in \mbl a \mbr$,
so we can use the disintegration formula \eqref{eqn:mfx} and the scaling formula \eqref{eqn:-scale2} to obtain
\[
\mu(Z) = \int_{X^-} \mf_x(Z) \,d\mb(x)
= \int_{\mbl a \mbr} \psi(x) \,d\mb(x)
= e^{-h} \int_{\sigma^{-1}\mbl a\mbr} \psi(ya) \,d\mb(y).
\]
Since $\psi$ is supported on $\mbl a \mbr$, we see that
\[
\mu(Z) = e^{-h} \int_{X^-} \,\psi(ya) \,d\mb(y)
= e^{-h} \int_{X^-} \mf_{ya}(Z) \,d\mb(y),
\]
and comparing this with the right-hand side of \eqref{eqn:mu-sZ2} shows that $\mu(\sigma^{-1}Z) = \mu(Z)$, so $\mu$ is $\sigma$-invariant.

Finiteness of $\mu$, and the upper Gibbs bound, both follow quickly from the corresponding results in Proposition \ref{prop:nondegen} and Corollary \ref{cor:Gibbs}. Indeed, the first of these gives
\[
\mu(X) = \mt(X) \leq \mt(\tilde X) = \mb(X^-) \cdot \mf(X^+) \leq C_1^2,
\]
and more generally, for any $w\in \LLL_n$, 
we have $\cyl{}{w} = X \cap \Pi(X^-, \pbl w \pbr)$, so 
Corollary \ref{cor:Gibbs} gives
\[
\mu(\cyl{}{w}) = \mt(\cyl{}{w}) 
\leq \mb(X^-) \cdot \mf(\pbl w \pbr)
\leq C_1^2 e^{-nh}.
\]
Finally, to prove that $\bar\mu$ is a measure of maximal entropy when $\mu(X)>0$, we first observe that for every $n\in \NN$ and $w\in \LLL_n$, we have
\begin{equation}\label{eqn:bmu-Gibbs}
\bar\mu(\cyl{}{w}) = \mu(\cyl{}{w}) \mu(X)^{-1}
\leq \mu(X)^{-1} C_1^2 e^{-nh} = Q e^{-nh},
\end{equation}
where $Q := C_1^2 / \mu(X)$.
The function $t \mapsto -\log t$ is decreasing, so \eqref{eqn:bmu-Gibbs} gives
\[
\sum_{w\in \LLL_n} - \bar\mu\cyl{}{w} \log \bar\mu\cyl{}{w}
 \geq \sum_{w\in \LLL_n }-\bar\mu\cyl{}{w} \log (Q e^{-nh})
= nh -\log Q.
\]
Dividing both sides by $n$ and sending $n\to\infty$ proves that $h_{\bar\mu} \geq h$, so $\bar\mu$ is an MME.

\section{Application to dispersing billiards}\label{sec:bill}

Now we return to the billiard setting. Let $T\colon M\to M$ be the billiard map defined in \S\ref{sec:background}, 
let $\PPP = \{P_a : a\in A \}$ be the partition of $M = \partial\Omega\times[-\frac\pi2,\frac\pi2]$ into maximal connected sets on which $T$ and $T^{-1}$ are continuous, and let $c \colon M \to A^\ZZ$ be the associated coding map as in \S\ref{sec:coding-bill}.

In this section, we will consider the shift space $X = \overline{c(M)} \subset A^\ZZ$ and the measures $\mf$, $\mb$, and $\mu$ on $X$ defined in \S\S\ref{sec:one-sided}--\ref{sec:exterior-product}. We will prove the following.
\begin{itemize}
\item Proposition \ref{prop:1-1}: the coding map $c$ is injective; see Proposition \ref{prop:P-gen} in \S\ref{sec:coding}.
\item Theorem \ref{thm:strategy}, first part: every positive entropy ergodic measure on $X$ gives full weight to $c(M) \subset X$. We prove this in \S\ref{sec:pos-ent}, and actually establish a stronger statement: each such measure gives full weight to the subset $\Xreg \subset c(M)$ consisting of \emph{regular} points, defined in \eqref{eqn:Xreg-1}.
\end{itemize}

\subsection{Admissible curves and singularity curves}\label{sec:sing-sets}

Let $C^u$ and $C^s$ be the cones defined in \eqref{eqn:cones}. We will work with \emph{admissible curves} that are tangent to these cones. More precisely, an \emph{$s$-curve} is a continuous and piecewise $C^1$ curve $V \subset M$ whose tangent vectors all lie in the stable cone $C^s$. We write $\Vs$ for the set of all $s$-curves. Replacing ``stable'' with ``unstable'' gives $\Vu$, the set of all \emph{$u$-curves}.

In the $(r,\ph)$-coordinates on $M_i$, $s$-curves are graphs of decreasing functions, and $u$-curves are graphs of increasing functions.  Since our cones are bounded away from the horizontal, there is a uniform bound on the length of an $u/s$-curve: writing $L := \pi/\Kmin \in (0,\infty)$, we see that every $V\in \Vs \cup \Vu$ has length satisfying $|V| \leq L$.  Additionally, 
if $I \subset \partial \Omega$ is (homeomorphic to) an interval, then a $u$-curve and an $s$-curve in $I \times [-\frac\pi2,\frac\pi2] \subset M$ can intersect in at most one point.

Since $C^u$ is invariant under $DT$, the image $TV$ of a $u$-curve $V$ will again have tangent vectors in $C^u$, but it might consist of multiple $C^1$ curves if $V$ crosses $T^{-1}\SSS_0$, where we recall that $\SSS_0 = \partial\Omega \times \{\pm\frac\pi2\}$ is the set of grazing collisions from \eqref{eqn:S0}.

In general, given any $u$-curve $V\in \Vu$, the uniform length bound in the previous paragraph guarantees that for sufficiently large $n\geq 0$, the images $T^n V$ will eventually be ``cut" by singularities and fragment into a collection of $u$-curves, whose length can be arbitrarily small.  Likewise, the images $T^{-n}V$ of an $s$-curve $V \in \Vs$ will fragment.  We will need to guarantee that a proportion of these fragments are long.\footnote{This was important in \cite{BD20} as well, and we use many of the same tools and concepts.} 
We will denote the $u/s$-curves that have length at least $\delta$ by
\begin{equation}\label{eqn:Vud}
	\Vud := \{V \in \Vu : |V| \geq \delta\}
\quad\text{and}\quad
	\Vsd := \{V \in \Vs : |V| \geq \delta\}.
\end{equation}
The following result is standard \cite[\S4.4]{CM06}.

\begin{lemma}[Uniform expansion and contraction]\label{lem:s-contr}
There exist $C_2,\lambda>0$ such that given any $V \in \Vu$ and $n\geq 0$, we have
\begin{equation}\label{eqn:u-contr}
|V| \leq C_2 e^{-\lambda n} |T^n V|,
\end{equation}
where $|T^n V|$ denotes the sum of the lengths of the component curves of $T^n V$. An analogous bound holds for $|T^{-n} V|$ when $V\in \Vs$.
\end{lemma}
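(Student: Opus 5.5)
The plan is to prove the bound pointwise: show that every unit tangent vector in the unstable cone is expanded by at least a uniform exponential factor under $DT^n$. Then integrate this along each component of $T^nV$, using that $T^n$ restricted to each smooth piece of $V$ is a $C^1$ diffeomorphism onto its image.

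First, fix $\bx\in V$ at which $T^n$ is differentiable, so that $\bx\notin\bigcup_{k=0}^{n-1}T^{-k}\SSS_0$. Let $v$ be the unit tangent to $V$ at $\bx$; by assumption $v\in C^u$. By invariance of the cones, $D_{\bx}T^k v\in C^u$ for all $0\le k\le n$.

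The key input is the standard estimate of \cite[\S4.4]{CM06}. For the billiard map, vectors in $C^u$ are expanded in the adapted norm $\|(dr,d\ph)\|_* := \cos\ph\,|dr|$ (equivalently, the $p$-norm) by a factor of at least $\Lambda := 1+2\Kmin\tmin>1$ at each collision. This norm is uniformly equivalent to the Euclidean norm on $C^u$, except for the factor $\cos\ph$, which can degenerate near $\SSS_0$.

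To handle this degeneracy, compare the Euclidean norm with $\|\cdot\|_*$ only at the two endpoints of the orbit segment, and use the telescoping product in between. Concretely, I expect the identity
\[
\|D_{\bx}T^n v\|\,\cos\ph_n\ \ge\ c\,\Lambda^n\,\|v\|\,\cos\ph_0 .
\]
A cleaner way to avoid the $\cos$ factors is to use the Euclidean expansion directly, via the classical bound $\|DT^n v\|\ge C^{-1}\Lambda^n\|v\|$ for $v\in C^u$. This bound holds because each factor $\|DTw\|/\|w\|$ is at least $(\cos\ph)^{-1}$-large near grazing, and these large factors only help the lower bound. The only bounded losses come from the norm comparison at the two ends.

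With uniform Euclidean expansion $\|D_{\bx}T^nv\|\ge C_2^{-1}e^{\lambda n}$ for unit $v\in C^u$, I will write $V=\bigcup_j V_j$, where $V_j$ are the maximal subcurves on which $T^n$ is smooth. The complement of these pieces is a finite or countable set, since $V$ meets each curve of $T^{-k}\SSS_0$ transversally. This transversality is the point where I use that singularity curves of $T^{-k}\SSS_0$ lie in the stable cone.

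Then the change-of-variables formula for arclength gives
\[
|T^nV_j|=\int_{V_j}\|D_{\bx}T^n v(\bx)\|\,d\ell(\bx)\ \ge\ C_2^{-1}e^{\lambda n}|V_j|.
\]
Summing over $j$ yields $|T^nV|\ge C_2^{-1}e^{\lambda n}|V|$. If $V$ is only piecewise $C^1$, I will also split at its finitely many corners. The bound for $s$-curves under $T^{-n}$ follows by the time-reversal symmetry $(r,\ph)\mapsto(r,-\ph)$, which conjugates $T$ to $T^{-1}$ and exchanges $C^u$ and $C^s$.

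The main obstacle is the uniform Euclidean expansion along the whole orbit. The one-step Euclidean factor near grazing is large, but the $C^u$-vectors have slopes bounded in $[\Kmin,\Kmax+1/\tmin]$, so Euclidean and $dr$-norms are comparable. I will therefore track $|dr|$ through the standard formula $dr_{k+1}\cos\ph_{k+1} = -(\tau_k\mathcal{B}_k+1)\cos\ph_k\,dr_k$ and deal with the $\cos$ ratios. Along the orbit, $\prod_k\cos\ph_k/\cos\ph_{k+1}=\cos\ph_0/\cos\ph_n$ telescopes. This leaves $|dr_n|\ge\Lambda^n|dr_0|\cos\ph_0/\cos\ph_n\ge\Lambda^n|dr_0|\cos\ph_0$, and the remaining worry is the factor $\cos\ph_0$.

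To remove it, I would try the $p$-norm $|dp|=|d\ph+\mathcal{K}dr|$ instead, since on $C^u$ it is uniformly comparable to Euclidean length with no cosine loss, at least $\ge(\Kmin+\Kmin)|dr|$. I would then cite \cite[Lemma 4.50 and eq.\ (4.19)]{CM06}, which give expansion $\Lambda$ per step in the $p$-metric for unstable vectors.
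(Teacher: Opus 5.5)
Your argument is correct and is essentially the standard proof behind the paper's bare citation of \cite[\S4.4]{CM06}: uniform Euclidean expansion of vectors in $C^u$ under $DT^n$, integrated in arclength over the finitely many pieces into which $\SSS_n$ cuts $V$, with the involution $(r,\ph)\mapsto(r,-\ph)$ handling $s$-curves.

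The one thing to repair is the key step. The quantity $|d\ph+\mathcal{K}\,dr|$ is not the $p$-norm: as you said earlier, the $p$-norm is $\cos\ph\,|dr|$, and it does carry the $\cos\ph_0$ loss. So the $\Lambda$-expansion of $|d\ph+\mathcal{K}\,dr|$ cannot be quoted from the $p$-metric estimate. It does hold, however, directly from the matrix of $DT$, which for $(dr_0,d\ph_0)\in C^u$ gives
\[
|d\ph_1+\mathcal{K}_1\,dr_1| = \Big(1+\frac{2\tau_0\mathcal{K}_1}{\cos\ph_1}\Big)\,|d\ph_0+\mathcal{K}_0\,dr_0| + \frac{2\mathcal{K}_1\cos\ph_0}{\cos\ph_1}\,|dr_0| \geq (1+2\Kmin\tmin)\,|d\ph_0+\mathcal{K}_0\,dr_0|.
\]
Equivalently, in your $|dr|$ bookkeeping the factor $\cos\ph_0$ is not a bounded end loss; it is cancelled by the first step, because $\cos\ph_0\,(1+\tau_0\mathcal{B}_0)=\cos\ph_0+\tau_0(d\ph_0/dr_0+\mathcal{K}_0)\geq 2\Kmin\tmin$.
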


We will also use the fact that writing $\SSS_0$ for the singularity set as in \eqref{eqn:S0}, $\SSS_1 := T^{-1} \SSS_0$ is a union of $s$-curves, and $\SSS_{-1} := T \SSS_0$ is a union of $u$-curves \cite[Proposition 4.45, Exercise 4.46]{CM06}.

The complement of the singularity set represents the non-grazing collisions:
\begin{equation}
M^\circ := M \setminus \SSS_0 = \partial\Omega \times \Big( -\frac\pi2, \frac\pi2 \Big).
\end{equation}
Given $n\in \NN$, the singular sets of $T^n$ and of $T^{-n}$ are
\[
\SSS_n := \bigcup_{k=0}^n T^{-k} \SSS_0
\quad\text{and}\quad
\SSS_{-n} := \bigcup_{k=0}^n T^k \SSS_0,
\]
respectively. It will also be convenient to write
\begin{equation}\label{eqn:Sn'}
\SSS_n' := \bigcup_{k=1}^n T^{-k} \SSS_0
= \overline{\SSS_n \setminus \SSS_0}
\quad\text{and}\quad
\SSS_{-n}' := \bigcup_{k=1}^n T^{k} \SSS_0
= \overline{\SSS_{-n} \setminus \SSS_0}.
\end{equation}
The set $\SSS_n'$ consists of $s$-curves, and $\SSS_{-n}'$ consists of $u$-curves.

\begin{definition}
Given $k\in \NN$, we say that a (connected) $u$-curve $V \in \Vu$ is a \emph{component curve} of $T^{k}\SSS_0$ if it can be written as $V = \overline{T^{k}S}$ for some curve $S \subset \SSS_0$ on which $T^k$ is continuous,
and if it is not properly contained in any longer curve of this form. Let $\bbS_k$ denote the set of component curves of $T^{k}\SSS_0$; see Figure \ref{fig:Sk-components}. Equivalently, we have
\begin{equation}\label{eqn:Sk}
\bbS_k = \{ \overline{T^kS} : S \text{ is a connected component of } \SSS_0 \setminus \SSS_k' \}.
\end{equation}
By a ``component of $\SSS_{-n}$'', we will mean a component curve of $T^{k}\SSS_0$ for some $0\leq k \leq n$, and we will refer to $k$ as the \emph{order} of the curve.  Note that for $k=0$, the entire circular boundary is the component.
\end{definition}
Similar definitions can be formulated for components of $T^{-k}\SSS_0$, with the appropriate modifications.

Components of singularity curves play an important role in the counting estimates in \S\ref{sec:counting}. Here we will present some results about the singularity curves, starting with the following fact from \cite[Proposition 4.45 and Exercise 4.46]{CM06}:

\begin{lemma}\label{lem:fin}
For every $k\in \NN$, the sets $\bbS_k$ and $\bbS_{-k}$ are finite.
\end{lemma}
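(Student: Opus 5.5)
The plan is to show that $\SSS_0\setminus\SSS_k'$ has finitely many connected components. By \eqref{eqn:Sk}, $\bbS_k$ is in bijection with these components, so this gives finiteness of $\bbS_k$. The case of $\bbS_{-k}$ follows by time reversal, using $T^{-k}$ and the involution $(r,\ph)\mapsto(r,-\ph)$, which conjugates $T$ to $T^{-1}$. Since $\SSS_0$ is a finite union of circles $\partial B_i\times\{\pm\frac\pi2\}$, it suffices to show that $\SSS_0\cap\SSS_k'$ is a finite set. A circle with finitely many points removed has finitely many components.

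The argument is by induction on $k$. First treat $k=1$. The set $\SSS_1=T^{-1}\SSS_0$ consists of the points whose next collision is tangential. By finite horizon and strict convexity, for each pair of scatterers there are only finitely many smooth curves of such points, and each is an $s$-curve, as quoted above from \cite[Prop.~4.45]{CM06}. Each such curve is an $s$-curve, so in the $(r,\ph)$-coordinates it is the graph of a strictly decreasing function of $r$ over an interval. Such a graph meets the horizontal circle $\{\ph=\pm\frac\pi2\}$ in at most one point per component over an interval of $r$, hence in finitely many points overall. This uses the fact that $C^s$ is bounded away from horizontal, with $d\ph/dr\le-\Kmin<0$.

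For the inductive step, write $T^{-(k+1)}\SSS_0 = T^{-1}(T^{-k}\SSS_0)$. I take as given the standard structure, also from \cite[Prop.~4.45, Ex.~4.46]{CM06}, that $\SSS_{k}'$ is a finite union of compact $s$-curves. This follows because $T^{-1}$ is smooth on each of the finitely many cells of $\PPP$, so the image of finitely many $C^1$ curves, cut along the finitely many curves of $\SSS_{-1}$, gives finitely many pieces. Each of these finitely many $s$-curves meets $\SSS_0$ in finitely many points, by the same monotonic-graph argument as for $k=1$. Hence $\SSS_0\cap\SSS_k'$ is finite.

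The main obstacle is the finiteness of the number of smooth pieces in $\SSS_k'$. This amounts to controlling how many times curves get cut. I would handle it via the fact that an $s$-curve and a $u$-curve over an interval intersect in at most one point, which is noted above. Thus each smooth piece of $T^{-k}\SSS_0$ is cut by the finitely many $u$-curves of $\SSS_{-1}$ into finitely many subpieces before $T^{-1}$ is applied. Induction then keeps the count finite, though growing exponentially in $k$.
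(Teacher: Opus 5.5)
Your argument is correct and rests on the same input the paper uses. The paper gives no proof of its own and simply cites \cite[Proposition 4.45 and Exercise 4.46]{CM06}, which is the structural fact you also invoke: $\SSS_k'$ and $\SSS_{-k}'$ are finite unions of $s$- and $u$-curves. From there, your reduction is a valid way to extract the lemma: $\ph$ is strictly monotone along each $s$-curve, so each one meets $\SSS_0$ in at most two points; hence $\SSS_0\setminus\SSS_k'$ has finitely many components, \eqref{eqn:Sk} gives $\#\bbS_k<\infty$, and $\bbS_{-k}$ follows from the involution $(r,\ph)\mapsto(r,-\ph)$.
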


\begin{figure}[htbp]
\centering
\includegraphics{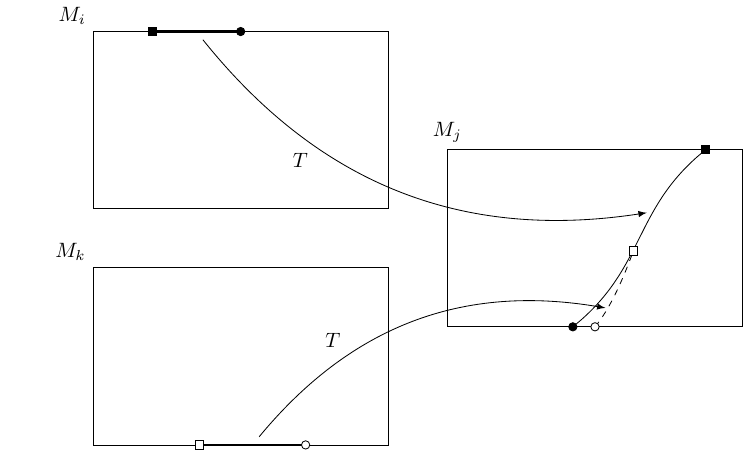}
\caption{Elements of $\bbS_1$ are component curves of $T\SSS_0$.}
\label{fig:Sk-components}
\end{figure}

The next facts about $\bbS_k$ are used in \cite{BD20}, but we highlight them more explicitly here. In what follows, given a curve $W$, we will write $\partial W$ for the set consisting of the endpoints of $W$, and $W^\circ := W \setminus \partial W$ for the `interior' of the curve.

The first lemma restricts the ways in which components of $\SSS_{-n}$ can intersect: informally, it says that
\begin{center}
\emph{higher-order curves terminate at intersections}.
\end{center}
Formally, we have:

\begin{lemma}[Noncrossing]\label{lem:noncrossing}\hfill
\begin{enumerate}[label=(\alph{*}),leftmargin=*]
\item If $V,W \in \bbS_n$ are components of $T^n\SSS_0$, then $V \cap W \subset \partial V \cup \partial W$: any intersection point $\bx$ must be an endpoint of either $V$ or $W$.
\item
If $W \in \bbS_n$ is a component of $T^{n}\SSS_0$ and $\bx \in W \cap T^{k} \SSS_0$ for some $0 \leq k < n$, then $\bx \in \partial W$.
\end{enumerate}
\end{lemma}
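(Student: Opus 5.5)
The proof will go by induction on $n$, using that $T$ is a homeomorphism away from singularities and that singularity curves are images of the grazing set under an injective map. The basic fact is the following. If $W \in \bbS_n$, then $W = \overline{T^n S}$ for some connected component $S$ of $\SSS_0 \setminus \SSS_n'$. On $S$ the map $T^n$ is continuous and injective, so its interior $W^\circ$ equals $T^n(S^\circ)$, up to the closure and endpoints. Moreover, every point of $W^\circ$ has a backward orbit $T^{-j}\bx$, for $0 \leq j \leq n$, that avoids $\SSS_0$ except at time $j = n$. Indeed, the points $T^{-j}\bx = T^{n-j}\by$ with $\by \in S$ avoid $\SSS_0$ for $0 \leq j < n$, because $\by \notin T^{-(n-j)}\SSS_0$ for $1 \leq n-j \leq n$.

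I would prove part (b) first. Suppose $\bx \in W^\circ \cap T^k\SSS_0$ with $0 \leq k < n$. Then $\bx = T^n \by$ with $\by \in S$, and also $\bx = T^k \bz$ with $\bz \in \SSS_0$.

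In the case $k = 0$, we have $\bx \in \SSS_0$ and $\bx = T^n \by$. This means $\by \in T^{-n}\SSS_0 \subset \SSS_n'$, contradicting $\by \in S$.

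In the case $k \geq 1$, apply $T^{-k}$ along the backward orbit. This is legitimate because $T^{-1}$ is continuous and invertible near points of $T^{n-j}(S)$ that are off $\SSS_{-1}$. We obtain $T^{n-k}\by = \bz \in \SSS_0$, so $\by \in T^{-(n-k)}\SSS_0 \subset \SSS_n'$ since $1 \leq n-k \leq n$. This again contradicts $\by \in S$.

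The one care point is that $T^{-1}$ is multivalued on $\SSS_{-1} = T\SSS_0$: a point of $T\SSS_0$ may have two preimages (grazing, and its limit from the other side). To handle this I would argue with $T$ applied forward from $\bz$ and $\by$, which is single-valued. I would use the fact that $T^k$ is injective on $M \setminus \SSS_k$, or more robustly that $T^k\bz = T^n\by$ with $\by$ off the singularities of $T^n$. If $T^k \bz$ is defined by continuity, I would pass to nearby points and use the continuity of $T^{-1}$ on the partition elements of $\PPP$ to conclude that $T^{n-k}\by = \bz$.

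For part (a), take $V, W \in \bbS_n$ with a common point $\bx$ interior to both. Write $V^\circ = T^n(S_1)$ and $W^\circ = T^n(S_2)$, with $S_1, S_2$ components of $\SSS_0 \setminus \SSS_n'$. If $S_1 = S_2$ then $V = W$, and the claim is trivial; so assume $S_1 \neq S_2$. The points $\by_i = T^{-n}\bx \in S_i$ satisfy $T^n\by_1 = T^n\by_2$. Both orbits avoid $\SSS_0$ at times $1, \dots, n$, so $T^n$ is a local diffeomorphism near each $\by_i$, and injectivity of $T$ on $M^\circ$ forces $\by_1 = \by_2$. Hence $S_1 \cap S_2 \neq \emptyset$, and since the $S_i$ are components, $S_1 = S_2$, a contradiction. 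The only subtlety is at times when an orbit point lies on $\SSS_0$ at time $0$, but $T$ is defined there, so injectivity of $T$ on all of $M$, which holds because the billiard map is invertible, suffices.

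I expect the main obstacle to be the rigor around the multivaluedness and boundary behaviour of $T^{\pm1}$ on the singularity sets, namely the convention for how $T$ acts on $\SSS_0$ and on the closures of components. I would handle this by working with the extensions of $T$ to the closures of elements of $\PPP$, as in \cite[\S4.4]{CM06}.
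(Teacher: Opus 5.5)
Your proposal is correct and is essentially the paper's argument. For (a) you combine injectivity of $T^n$ with disjointness of distinct components of $\SSS_0\setminus\SSS_n'$; the paper phrases this via a neighborhood $U$ of $T^{-n}\bx$ avoiding $\SSS_n'$, but the content is the same. Note that $V\neq W$ is a standing hypothesis of the lemma, not a case that is trivially true. For (b) you pull $\bx\in W^\circ=T^nS$ back $k$ steps to a point of $T^{n-k}S\cap\SSS_0$, which is impossible because $S\cap T^{-(n-k)}\SSS_0=\emptyset$ when $1\le n-k\le n$; this is the paper's argument too.

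One side remark is false but not load-bearing: $T^n$ is not a local diffeomorphism at points of $\SSS_0$, since $\det DT=\cos\ph_0/\cos\ph_1$ vanishes at grazing points. What actually carries both parts is global injectivity of $T$, which holds because the first-hit convention makes $T$ a bijection of $M$.
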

\begin{proof}
For the first claim, suppose that $\bx \in W^\circ \cap V$. Since $W^\circ = T^nS$ for some connected component $S$ of $\SSS_0 \setminus \SSS_n'$, there exists an open set $U\subset M$ containing $T^{-n}\bx$ such that $U \cap \SSS_n' = \emptyset$. At the same time, $V = \overline{T^nS'}$ for some other connected component $S'$ of $\SSS_0 \setminus \SSS_n'$, which must be disjoint from $U$, so $V^\circ = T^nS' \not\ni \bx$.

For the second claim, observe that $T^{-k}\bx \in \SSS_0$, and $T^{-k}W$ is an unstable curve.  This implies that $T^{-k} W$ terminates on $\SSS_0$, so $\bx \in \partial W$.
\end{proof}

The following result is a consequence of the fact that the scatterers are mutually disjoint: informally, it says that
\begin{center}
\emph{endpoints of a component curve are interior points of another component curve.}
\end{center}

\begin{lemma}[Continuation]\label{lem:continuation}
If $W\in \bbS_n$ is a component of $T^{n}\SSS_0$ and $\bx \in \partial W$, then $\bx \in V^\circ$ for some component $V$ of $\SSS_{-n}$.
In particular, for every $W\in \bbS_n$, there exists a $u$-curve $V \subset \SSS_{-n}$ such that $W\subset V$ and $\partial V \subset \SSS_0$.
\end{lemma}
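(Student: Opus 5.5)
We argue by pulling back along $T^{-k}$ for the appropriate order $k$ and using the geometry of $\SSS_0$ and of $T\SSS_0$ near grazing collisions. Let $W = \overline{T^n S}$ with $S$ a connected component of $\SSS_0\setminus \SSS_n'$, and let $\bx\in\partial W$. Then $\bx = \lim T^n y$ as $y$ tends to an endpoint $z$ of $S$. By \eqref{eqn:Sk}, $z\in \SSS_0\cap \SSS_n'$, so $z\in T^{-j}\SSS_0$ for some smallest $1\le j\le n$. Thus the orbit of $z$ has a second grazing collision at time $j$. The trajectory through $z$ is tangent to the scatterer at $z$ and also tangent (from one side) to some scatterer at $T^j z$. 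The endpoint $\bx$ is the limit of $T^n$ of the nearby points on $S$, and it corresponds to following the branch of the tangency at time $j$ in which the orbit either just misses or just clips that scatterer. We will continue along the other branch instead.

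The key step is to show that near $T^{j}z$ the curve continues past $T^j$ of the tangency on both sides. On the other side of the tangency the trajectory still grazes the scatterer at time $j$ in the limit, so the continuation comes from the curve $T^j(\SSS_0)$ passing through (the limit point) on $\SSS_0$. Equivalently, at $T^j z$ we are at a point of $\SSS_0$ together with a piece of $T\SSS_0$ emanating from it. I would use the standard local picture near a grazing collision. The set $T\SSS_0$ near a point of $\SSS_0$ consists of a $u$-curve ending on $\SSS_0$, and a trajectory grazing at $T^jz$ continues in straight-line flight. This uses disjointness of the scatterers and the finite horizon, so that the next collision point is well defined and lies in the interior of a flight. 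Hence $\bx$ lies on $T^{n-j}$ of a component of $T^{j}\SSS_0$ (or of $\SSS_0$ itself if $j=n$) of lower order, through its interior. More precisely, set $V' := $ the component of $T^{n-j'}\SSS_0$ through $\bx$, where $j'$ records the last grazing time along the limiting orbit; concretely, the candidate is the curve generated by the tangency at the grazing collision $T^{j}z$, pushed forward $n-j$ steps. This curve has order $n-j<n$, or order $0$ if $\bx\in\SSS_0$. In the latter case $\bx$ is in the interior of the circle component of $\SSS_0$, which has no endpoints.

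Next we verify that $\bx$ is interior rather than an endpoint of $V'$. By Lemma~\ref{lem:noncrossing}(b), applied to $W$ and $k=n-j$, the point $\bx$ is an endpoint of $W$. Suppose for contradiction that it were also an endpoint of $V'$. Then pulling back would produce a point of $\SSS_0$ from which two grazings arise at the same time with matching sides. This forces a trajectory tangent to two scatterers that touch, or a triple tangency on a single scatterer, contradicting disjointness and strict convexity. If this occurs, we iterate: replace $V'$ by a component of yet lower order through $\bx$. The order strictly decreases, and order $0$ components have empty boundary, so the process terminates.

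The ``in particular'' statement follows by concatenation. Starting from $W$, we extend at each endpoint by the continuation curve $V'$ given above. The union is again a $u$-curve, since all pieces are $u$-curves meeting transversally to $\SSS_0$ with the correct monotonicity. We repeat at the new endpoints, which are either on $\SSS_0$ or have lower order. By finiteness (Lemma~\ref{lem:fin}) and the uniform length bound $L$, this terminates with a $u$-curve $V\subset\SSS_{-n}$ whose endpoints lie on $\SSS_0$. The main obstacle is the local geometric analysis at a double tangency, namely showing that the continuation curve really passes through $\bx$ in its interior and is a $u$-curve extending $W$ monotonically. For this I would rely on the explicit description of $T\SSS_0$ near $\SSS_0$ in \cite[\S4]{CM06}.
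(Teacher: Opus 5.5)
There is a genuine gap: the continuation curve is misidentified. Your concrete candidate takes the grazings near the first tangency $T^jz$ and pushes them forward $n-j$ steps, giving order $n-j<n$. This can at best work on the side of $z$ where the orbits of $y\in S$ clip the scatterer at time $j$.

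On the other side, where the orbits just miss that scatterer, they skip the grazing collision at time $j$. So $T^ny$ converges to a point one collision further along the limiting trajectory. Absent further tangencies, $\bx=T^{n-j+1}(T^jz)$, and the curve through $\bx$ has order $n-j+1$, which equals $n$ when $j=1$. This is exactly the paper's base case (Figure~\ref{fig:multiple-singularity}). For $n=1$, $W$ ends at a transverse collision $\bx\in M_j^\circ$ because the grazing rays from $B_i$ are blocked by a scatterer $B_k$. Then $\bx$ lies in the interior of a component of $T\SSS_0$ made of grazings on $B_k$, which has the same order as $W$. Your order-$0$ candidate, a piece of $\SSS_0$ on $B_k$, does not contain $\bx$ at all. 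Your fallback of passing to components of strictly lower order can never reach the correct curve, so the termination argument based on decreasing order also fails.

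Your interiority argument is also false. A line can be tangent to several disjoint strictly convex scatterers; the paper explicitly allows this. In that case, the curve generated by grazings at any tangency other than the last one before $\bx$ ends at $\bx$, because the next tangent scatterer cuts it off. No touching scatterers or triple tangencies are needed, so there is no contradiction.

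The missing idea is the paper's choice of the tangent scatterer \emph{closest} to the one containing $\bx$, i.e.\ the last tangency along the limiting trajectory. After that tangency, every collision up to $\bx$ is transverse and the flight segments meet no other scatterer. Hence the relevant power of $T$ is continuous on a small arc of $\SSS_0$ around the tangency point, and $\bx$ is automatically interior to the image arc. Your phrase about ``the last grazing time along the limiting orbit'' points this way, but you then replace it with the first tangency and the wrong step count. The paper proves the case $n=1$ this way and reduces general $n$ to it by pulling back with $T^{-n}$ and applying the inductive hypothesis.

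Your concatenation argument for the ``in particular'' clause is fine once the first part is established. Termination should rest on the finiteness of $\bigcup_{k\le n}\bbS_k$ and the monotone progression of the graph, so that no component is reused, rather than on decreasing order.
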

\begin{proof}
Consider the case when $n=1$.  Suppose $\bx \in M_j^\circ$ for some $j$.  By definition, there is an $i$ such that $T^{-1}W^\circ \subset \partial M_i$.  If $W$ terminates in the interior of $M_j$, then there must be another scatterer $B_k$ lying between $B_i$ and $B_j$ that blocks some of the tangential collisions coming from $B_i$ (there are no cusps or corners), as shown in Figure \ref{fig:multiple-singularity}.  

There could be multiple scatterers that are tangent to the trajectory from $B_i$ to $B_j$ associated to $T^{-1}\bx$, but one of them must be closest to $B_j$; we will call this scatterer $B_k$. Because $\bx \in \partial W$, it follows that $T^{-1}\bx \in \partial M_k$.  There must be a $V \subset \SSS_0$ such that $T^{-1}\bx \in V^\circ$ and we get $TV \subset M_j$ and $\bx \in (TV)^\circ$.

We proceed by induction, and assume the statement holds for $n\geq 1$.  Suppose $W$ is a component of $T^{n+1}\SSS_0$ and $\bx \in \partial W$.  Thus, $T^{-n}\bx \in \partial T^{-n}W$.  If $T^{-n}\bx \in M^\circ$, then we are done, as the argument reduces to the $n=1$ case.

If $T^{-n}\bx \in \partial M$, then there is a component $V$ of $\SSS_{-n}$ such that either $\bx \in V^\circ$ or $\bx \in \partial V$.  If $\bx \in V^\circ$, then we are done.  If $\bx \in \partial V$, then by the inductive hypothesis, there must be some other component $V'$ such that $\bx$ is in the interior of $V'$.
\end{proof}

\begin{definition}\label{def:multiple}
A point in $M$ is a \emph{multiple singularity point} for $\SSS_n$ if it lies in multiple components of $\SSS_n$. We denote the set of such points by $\SSS^*_n$.
\end{definition}

By Lemmas \ref{lem:noncrossing} and \ref{lem:continuation}, the set of multiple singularity points for $\SSS_n$ is exactly the set of endpoints of component curves:
\begin{equation}\label{eqn:Sn*}
\SSS^*_n = \partial \SSS_n := \bigcup_{k=0}^n \bigcup_{W\in \bbS_k} \partial W.
\end{equation}
By Lemma \ref{lem:fin}, the set $\SSS^*_n$ is finite.

\begin{figure}
\includegraphics[width=0.95\textwidth]{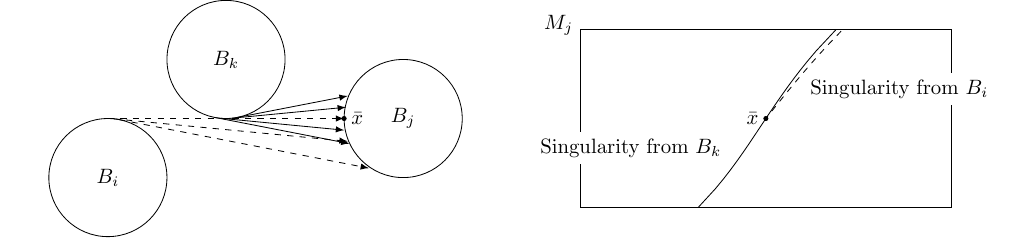}
\caption{Tangential collisions blocked by a scatterer.}\label{fig:multiple-singularity}
\end{figure}

Following \cite{BD20}, we will use the next result from \cite{BCS91,nC01} without repeating the proof here:

\begin{lemma}[Linear complexity]\label{lem:lin-comp}
There exists $K>0$ such that for every $n\geq 0$, the number of curves in $\SSS_n$ intersecting at a single point is at most $Kn$, and similarly for $\SSS_{-n}$.
\end{lemma}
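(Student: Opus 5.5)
The statement is the linear complexity bound of Bunimovich--Chernov--Sinai and Chernov, which the paper imports from \cite{BCS91,nC01}. Still, here is how I would prove it directly. By time-reversal symmetry it suffices to treat $\SSS_{-n}$. Fix a point $\bx$ and count the component curves of $\SSS_{-n}$ through $\bx$. For each such curve $W \in \bbS_k$ with $k \le n$, we have $T^{-k}\bx \in \SSS_0$, possibly after passing to one-sided limits, since $\bx$ may lie on a discontinuity. So each curve corresponds to a time $k$ at which a one-sided continuation of the backward orbit of $\bx$ becomes tangential. The plan is to bound the number of distinct ``branches'' of the backward orbit of $\bx$ that reach a grazing collision within $n$ steps.

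First, a local bound. Near any point, finite horizon and the disjointness of the scatterers imply that only boundedly many singularity curves of $\SSS_{-1}$ meet. Concretely, a single trajectory segment can be tangent to at most $K_0$ scatterers, where $K_0 \le D$. Hence the backward orbit of $\bx$ can split into at most $K_0+1$ one-sided continuations at each step.

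Naively this gives exponential growth $(K_0+1)^n$, so the key step is to show that the branching is essentially sequential rather than tree-like. Consider two different continuations. After they split at a multiple tangency, they separate in phase space at a linear rate along unstable directions for the backward map; the separation is transverse because the corresponding curves lie in $C^u$ and are distinct. Using Lemma \ref{lem:noncrossing}, where higher-order curves terminate at intersections, together with Lemma \ref{lem:continuation}, one shows that among the branches through $\bx$ all but one terminate, or become non-singular, within a bounded number $m_0$ of further steps. The reason is that a periodic-like alignment of repeated tangencies would be needed to sustain two branches, and in finite horizon this can only persist for a bounded time, with a bound uniform in $\bx$. This yields a recursion of the form $N(n) \le N(n-m_0) + K_0 m_0$, and therefore $N(n) \le Kn$.

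I expect the main obstacle to be the uniform bound $m_0$: showing that two distinct backward branches cannot both keep hitting grazing collisions for long. This is the geometric core of \cite{BCS91}, and it uses the strict convexity of the scatterers and finite horizon in an essential way. My first attempt would be a compactness argument. Suppose that for every $m$ there were a point with two branches both singular for $m$ steps. Passing to a limit would produce a trajectory along which two distinct continuations are tangent infinitely often. In a dispersing billiard, distinct nearby trajectories diverge unless they coincide, so this leads to a contradiction.
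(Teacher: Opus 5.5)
Your sketch has a genuine gap at exactly the step you flag, and the heuristic you give for it rests on a wrong picture. The one-sided continuations of the backward orbit of $\bx$ do \emph{not} separate. Because the scatterers are smooth and pairwise disjoint (no corners), the billiard \emph{flow} is continuous at grazing collisions. So all continuations trace one and the same trajectory in the table, and they differ only in whether grazing contacts are counted as collisions. In $M$, a continuation through a grazing point $g\in\SSS_0$ rejoins the other one after a single step, since $T^{-1}g$ is the preceding proper collision. Consequently two continuations can stay singular forever. For instance, if $\bx$ lies on a periodic orbit one of whose free flights is tangent to another scatterer, every continuation returns to that same tangency once per period and splits again there. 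So the claim that all but one branch terminate or become non-singular within a uniform $m_0$ steps is false in general. Your compactness argument also yields no contradiction: the limiting situation you describe is exactly this one, where the two continuations coincide rather than diverge. (Also, $T^{-1}$ contracts rather than expands vectors in $C^u$.) Any count organized by branches is intrinsically exponential, with $2^r$ codings after $r$ tangencies, so some further idea is needed.

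The missing idea is to index the curves through $\bx$ by tangency points of the single flow trajectory of $\bx$, not by branches.
\begin{enumerate}
\item[(i)] Suppose $W\in\bbS_k$ with $k\le n$ passes through $\bx$. Points of $W$ near $\bx$ have backward orbits tangent to some scatterer. By closedness of tangency and continuity of the flow, this tangency converges to a tangency point of the trajectory of $\bx$ lying within its first $n$ backward free flights.
\item[(ii)] By finite horizon, each free flight is tangent to at most $K_0$ scatterers, so there are at most $K_0 n$ such tangency points.
\item[(iii)] Fix a tangency point $p\in\partial B$. The set of points near $\bx$ whose trajectory is tangent to $\partial B$ near $p$ is the image, under a Poincar\'e map of the continuous flow (a local homeomorphism), of the smooth curve of lines tangent to $\partial B$ near $p$. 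It is therefore a single arc through $\bx$, so it carries at most two component curves, one on each side of $\bx$.
\end{enumerate}
This gives at most $2K_0 n$ curves, plus the curve of $\SSS_0$ itself if $\bx\in\SSS_0$, which is the linear bound. This flow-continuity argument is the one behind the Bunimovich--Chernov--Sinai and Chernov results that the paper cites without proof. It is also precisely where the absence of corner points is used.
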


\begin{corollary}\label{cor:local-lin}
Let $K$ be as in Lemma \ref{lem:lin-comp}.
For every $n\in \NN$, there exists $\delta>0$ such that for every $\bx \in M$, the set $B(\bx,\delta) \setminus \SSS_n$ has at most $Kn+1$ connected components, and similarly for $B(\bx,\delta)\setminus \SSS_{-n}$.
\end{corollary}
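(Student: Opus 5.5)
Fix $n\in\NN$. The plan is to use that $\SSS_n$ is a finite union of $C^1$ $s$-curves (together with the horizontal boundary circles $\SSS_0$), that only finitely many points are multiple singularity points, and to choose $\delta$ so small that every ball $B(\bx,\delta)$ sees essentially only the curves through a single point of $\SSS_n^*$. The argument is for $\SSS_n$; the case of $\SSS_{-n}$ follows by time reversal.

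\emph{Step 1: finiteness and separation.} By Lemma \ref{lem:fin}, $\SSS_n$ is a union of finitely many component curves. By \eqref{eqn:Sn*}, $\SSS_n^*$ is a finite set. Let $\delta_1>0$ be less than one third of the minimal distance between distinct points of $\SSS_n^*$. Now remove from each component curve the open $\delta_1$-neighbourhoods of its endpoints. The remaining compact pieces are pairwise disjoint: two components can meet only at points of $\SSS_n^*$, by Lemma \ref{lem:noncrossing} and Definition \ref{def:multiple}, and every such meeting point is an endpoint of at least one of them. These pieces therefore have some positive mutual distance $\delta_2$.

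\emph{Step 2: local picture.} Take $\delta<\min(\delta_1,\delta_2)/2$, smaller also than the scale at which each $s$-curve is a graph over a short interval and meets each small ball in at most one arc. Fix $\bx\in M$; there are two cases.
\begin{itemize}
\item If $B(\bx,\delta)$ contains no point of $\SSS_n^*$, then, using the choice of $\delta$, it meets at most one curve, or else several non-intersecting pieces. In the latter case I instead argue directly: the pieces are disjoint arcs crossing the ball, and $j$ disjoint crossing arcs cut a disk into $j+1$ pieces. The number $j$ must still be bounded, and this is where the separation in Step 1 enters. Since pieces at distance at least $\delta_2>2\delta$ cannot both meet the ball, $j\le 1$.
\item If $B(\bx,\delta)$ contains a point $\bar y\in\SSS_n^*$, it contains only one such point. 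Every curve meeting the ball then passes within $\delta_1$ of $\bar y$, and so by Step 1 it passes through $\bar y$. The number of such curves is at most $Kn$ by Lemma \ref{lem:lin-comp}. These curves are transverse graphs (or horizontal boundary lines) emanating from a common point. The ball minus their union, which is a star of at most $Kn$ arcs through $\bar y$ (counting a curve through $\bar y$ in its interior as contributing two half-arcs, which only helps since it still separates into at most one extra region), has at most $Kn+1$ components.
\end{itemize}

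\emph{Main obstacle.} The hard part is the topological count in the second case. A curve passing through $\bar y$ in its interior splits the ball into two, while a curve ending at $\bar y$ adds only one region, and we need the total to be at most $Kn+1$ rather than $2Kn$. By Lemma \ref{lem:continuation}, each point of $\SSS_n^*$ lies in the interior of some component. So I would count regions via Euler's formula for the planar graph formed by the star: with $a$ half-arcs emanating from $\bar y$ the ball splits into $a$ sectors (or one region if $a\le1$). I would then bound $a$ by $Kn+1$, interpreting the linear complexity bound appropriately (curves through a point counted with multiplicity of branches, as in \cite{BD20}). If this bookkeeping gives $Kn+2$ instead, I would adjust the constant; the statement's use only needs linear growth.
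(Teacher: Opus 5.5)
Your outline is the paper's: isolate the finitely many multiple points, note that near one of them only the curves through it are seen, and invoke Lemma \ref{lem:lin-comp}. Two steps, however, do not go through as written.

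\textbf{The case split in your first bullet.} You conclude $j\le 1$ for every ball containing no point of $\SSS_n^*$, using the $\delta_2$-separation. This fails for a ball whose center lies at distance slightly more than $\delta$ from a multiple point $\bar y$. Such a ball can cross both the curve passing through $\bar y$ and curves that terminate at $\bar y$. The portions of the terminating curves inside the ball lie within $\delta_1$ of their endpoint $\bar y$, which are exactly the parts you deleted in Step 1, so $\delta_2$ says nothing about them. The split must instead be by the distance from the center to $\SSS_n^*$, as in the paper. There the center lies in $B(\bar y,\delta_n)$ with $\delta_n<r_n/3$, where $r_n$ is the distance from multiple points to components not containing them. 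This choice also avoids your retained-piece construction, which tacitly needs Lemma \ref{lem:continuation} so that $\bar y$ lies in the retained piece of some curve. In the near case, every curve meeting the ball passes through $\bar y$. If $\bar y\notin B(\bx,\delta)$, the at most $m$ intersections are disjoint chords, giving at most $m+1$ components.

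\textbf{The count at a multiple point.} This is the more serious gap. You leave the decisive count open, and ``adjusting the constant'' would not prove the statement, which asserts exactly $Kn+1$. The missing observation is that exactly one component of $\SSS_n$ contains a given multiple point $\bar y$ in its interior.
\begin{itemize}
\item At least one such component exists by the time-reversed Lemma \ref{lem:continuation}.
\item At most one exists by Lemma \ref{lem:noncrossing}. Part (a) forbids two components of the same order from both having $\bar y$ as an interior point. Part (b) forces the higher-order one to terminate at $\bar y$ when the orders differ.
\end{itemize}
Hence if $m\le Kn$ curves meet at $\bar y$, one passes through and $m-1$ terminate. This gives exactly $m+1$ half-arcs emanating from $\bar y$, so a small ball containing $\bar y$ is cut into at most $m+1\le Kn+1$ sectors. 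If $\bar y\in\SSS_0$, the through-curve is the boundary circle and there are only $m$ sectors. No reinterpretation of Lemma \ref{lem:lin-comp} is needed. This also shows your parenthetical claim, that a through-curve ``still separates into at most one extra region'', is false in general; it is harmless only because there is a single such curve. The paper's proof dispatches this point in one sentence, so your concern was legitimate, but the resolution uses only the two lemmas already available.
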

\begin{proof}
Let $r_n$ be the shortest distance between a multiple singularity point for $\SSS_n$ and a component curve of $\SSS_n$ that does not contain it; observe that $r_n>0$ because $\SSS_n^*$ and $\bigcup_{k=0}^n \bbS_{-k}$ are finite by Lemma \ref{lem:fin}.

Let $\delta_n \in (0,r_n/3)$, so that a $\delta_n$-neighborhood of a multiple singularity point $\bx$ only intersects components of $\SSS_n$ that are coincident at $\bx$. 

There exists $\delta \in (0, \delta_n]$ such that for any
$\bill{y} \in M \setminus \bigcup_{\bx \in \SSS_n^*} B(\bx,\delta_n)$, the set
$B(\bill{y},\delta) \setminus \SSS_n$ has at most $2$ connected components.

Now suppose $\bill{y} \in B(\bx,\delta_n)$ for some $\bx \in \SSS_n^*$.  Observe that because $\delta \leq \delta_n < r_n/3$, the open ball $B(\bill{y},\delta)$ can only intersect the singularity curves that are coincident at $\bx$.  Since there are at most $Kn$ of these curves, $B(\bill{y},\delta)\setminus \SSS_n$ has at most $Kn+1$ connected components.
\end{proof}

\subsection{Injectivity of the coding map}\label{sec:coding}

Given $k,n \in \NN_0 \cup \{\infty\}$, consider the set
\begin{equation}\label{eqn:Mkn}
\begin{aligned}
M_{-k}^n &:= M \setminus (\SSS_{-k} \cup \SSS_n)
= M \setminus \bigcup_{j=-k}^n T^{-j} \SSS_0
= \bigcap_{j=-k}^n T^{-j} M^\circ
 \\
&= \{ \bx \in M : T^j \bx \notin \SSS_0 \text{ for all } j\in \ZZ \text{ such that } -k \leq j \leq n \}
\end{aligned}
\end{equation}
of all points that have no grazing collision between times $-k$ and $n$.  Equivalently, this is the set of points at which $T^{-k}$ and $T^n$ are continuous. 
Following \cite{BD20}, we write $\MMM_{-k}^n$ for the partition of $M_{-k}^n$ into its connected components, which we call \emph{cells}.

Recall from \S\ref{sec:coding-bill} that we code trajectories of the billiard system in terms of the partition $\PPP$ whose elements are the maximal connected sets on which $T$ and $T^{-1}$ are continuous; see Figure \ref{fig:M-partition}.  Elements of $\MMM_{-1}^1$ are the maximal \emph{open} connected sets on which $T$ and $T^{-1}$ are continuous.  Thus, a cell of $\MMM_{-1}^1$ is the interior of some cell in $\PPP$ \cite[Lemma 3.3]{BD20}.

We will show that the coding map $c \colon M \to X$ is injective by showing that the diameter of cells in $\MMM_{-k}^n \to 0$ as $k,n\to\infty$.  Note that $c$ may not be surjective, but it still preserves the entropy $\htop(\sigma) = \htop(T)$.

Recall that $A$ is the index set for $\PPP$, and let $A^\circ$ be the indices $a\in A$ for which the corresponding element of $\PPP$ has nonempty interior $Y_a$.
Observe that
\[
M_{-1}^1 = \bigsqcup_{a\in A^\circ} Y_a.
\]
Write $\QQQ = \MMM_{-1}^1$ for the partition of $M_{-1}^1$ into its open connected components $Y_a$.
The sets $Y_a$ are the maximal open connected sets on which $T$ and $T^{-1}$ are both continuous.

We claim that the connected components of $M_{-k}^n$ are exactly the cells of the partition $\bigvee_{j=-k}^n (T^{-j}\QQQ)|_{M_{-k}^n}$. First we establish some notation, which should be compared with the cylinder notation in \eqref{eqn:cyl} and \eqref{eqn:vw-cyl}. Given $n\in \NN$ and $w\in A^n$, consider the set
\begin{equation}\label{eqn:Yw+}
Y_w^+ := \bigcap_{j=1}^n T^{-(j-1)} Y_{w_j}.
\end{equation}
This set could be empty, so we write
\begin{equation}\label{eqn:bill-lang}
\LLL_n := \{ w\in A^n : Y_w^+ \neq \emptyset \}
\quad\text{and}\quad \LLL = \bigcup_{n=0}^\infty \LLL_n.
\end{equation}
We also write
\begin{equation}\label{eqn:Yw-}
Y_w^- := T^{n-1}Y_w^+ = \bigcap_{i=1}^n T^{n-i} Y_{w_i}.
\end{equation}
Given $v\in \LLL_k'$ and $w\in \LLL_n$, we write
\begin{equation}\label{eqn:Yvw}
Y_{v.w} := Y_v^- \cap Y_w^+,
\end{equation}
observing that this set is empty whenever $v_k \neq w_1$, just as with $\cyl{v}{w}$. 
Writing $v' := v_1 v_2 \cdots v_{k-1}$ as in \eqref{eqn:Lkn}, we see that whenever $v_k = w_1$, we have
\begin{equation}\label{eqn:Yvw-again}
Y_{v.w} = T^{n-1}Y_{v'w}^+ = T^{-(k-1)}Y_{v'w}^-.
\end{equation}

\begin{remark}\label{rmk:X-lang}
Note that $\LLL$ is not necessarily the entire language of $X$.  The language of the shift space $X = \overline{c(M)}$ may actually be larger than $\LLL$ due to the presence of isolated points, as discussed in \cite[Lemma 3.2]{BD20}. We will see in \S\ref{sec:pos-ent} that the set of such points is a null set for any positive entropy ergodic measure.  Indeed, if $\hat\LLL$ is the complete language of $X$, then $\#(\hat\LLL_n \setminus \LLL_n)$ grows at most linearly \cite[Lemma 3.2]{BD20},
which implies that 
\[
\htop(\sigma)=\lim_{n\to\infty} \frac 1n \log(\#\LLL_n).
\]
For the majority of our arguments, it is sufficient to use the partition $\QQQ$ and the language $\LLL$ from \eqref{eqn:bill-lang}.  In particular, the counting estimates in \S\ref{sec:counting}, and the arguments in 
Lemma \ref{lem:positive-leaf} and Proposition \ref{prop:full-support} that the measures $m^{\pm}$ and $\mu$ are fully supported, only apply to the collection $\LLL$.  However, we will use the partition $\PPP$ in \S\ref{sec:rectangles}, and in \S\ref{sec:pos-ent} when we consider arbitrary positive entropy ergodic measures for the billiard map.
\end{remark}

\begin{lemma}\label{lem:Y-open}
For every $w\in \LLL$, the sets $Y_w^{\pm}$ are open and nonempty, and for every $v,w\in \LLL$ such that $v_{|v|} = w_1$ and $v'w \in \LLL$, the set $Y_{v.w}$ is open and nonempty.
\end{lemma}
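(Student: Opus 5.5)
Openness and nonemptiness come from the definitions, together with the fact that $T$ and $T^{-1}$ are homeomorphisms between suitable open sets. Nonemptiness is the easy part. For $w\in\LLL$ we have $Y_w^+\neq\emptyset$ by \eqref{eqn:bill-lang}, and $Y_w^- = T^{n-1}Y_w^+$ is then nonempty as an image of a nonempty set. For $Y_{v.w}$, the identity \eqref{eqn:Yvw-again} gives $Y_{v.w} = T^{n-1}Y^+_{v'w}$, and $Y^+_{v'w}\neq\emptyset$ because $v'w\in\LLL$. So $Y_{v.w}$ is nonempty as well. I would verify \eqref{eqn:Yvw-again} by reindexing the intersections in \eqref{eqn:Yw+} and \eqref{eqn:Yw-}, using $v_k=w_1$ to merge the shared symbol.

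For openness, the key fact is that $T$ restricted to $M_{-1}^1 = M\setminus(\SSS_{-1}\cup\SSS_1)$ is a homeomorphism onto its image, and the image is open. More precisely, $T\colon M\setminus\SSS_1 \to M\setminus\SSS_{-1}$ is a homeomorphism between open sets, with inverse $T^{-1}$ continuous on $M\setminus\SSS_{-1}$. This is the standard fact that $T$ is a $C^2$ diffeomorphism away from the singularity curves (\cite[\S2]{CM06}). Each $Y_a$ is open by definition, as a connected component of the open set $M_{-1}^1$. Hence $T^{-j}Y_a$ is open for every $j\ge 0$: it equals $(T|_{M\setminus\SSS_1})^{-j}(Y_a)$, and preimages of open sets under continuous maps on open domains are open.

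The argument then proceeds by induction on $j$. We have $T^{-j}Y_a = \{\bx : T^j\bx \in Y_a\}$, and any point of this set has $T^i\bx\notin\SSS_0$ for $0\le i\le j$, since each iterate stays in the domain of continuity. So the set equals $(T^j|_{M^{j}_0})^{-1}(Y_a)$, where $T^j$ is continuous on the open set $M^j_0 = M\setminus \SSS_j$. Taking the finite intersection in \eqref{eqn:Yw+} shows that $Y_w^+$ is open.

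For $Y_w^-$ I would argue symmetrically. Rewriting it as $\bigcap_i T^{n-i}Y_{w_i}$, each set $T^{m}Y_a$ with $m\ge 0$ is the preimage of $Y_a$ under the continuous map $T^{-m}$ on the open set $M\setminus\SSS_{-m}$. Since $Y_a\subset M\setminus\SSS_{-1}$, the relevant iterates never hit singularities, so this set is open. Then $Y_{v.w}=Y_v^-\cap Y_w^+$ is an intersection of two open sets, hence open.

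The only step needing care is the identification of $T^{\pm j}Y_a$ with a preimage under a continuous map defined on an open set. The subtlety is that $T$ is not continuous on all of $M$, so I must check that no point of $T^{-j}Y_a$ ever lies on $\SSS_0$ along the way. This holds because $Y_a\cap(\SSS_1\cup\SSS_{-1})=\emptyset$, and because for $\bx\in T^{-j}Y_a$ one needs $T^{i}\bx$ defined and outside $\SSS_0$ inductively. I expect this bookkeeping, rather than any real obstacle, to be the main work.
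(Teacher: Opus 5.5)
Your nonemptiness argument is fine, and so is your final step: $Y_{v.w}=Y_v^-\cap Y_w^+$ is open once $Y_v^-$ and $Y_w^+$ are. The gap is in the openness of $Y_w^\pm$. You claim that each individual set $T^{-j}Y_a=\{\bx : T^j\bx\in Y_a\}$ is open, on the grounds that every point of it has $T^i\bx\notin\SSS_0$ for $0\le i\le j$. That claim is false. The condition $T^j\bx\in Y_a\subset M_{-1}^1$ only controls the three iterates $T^{j-1}\bx,T^j\bx,T^{j+1}\bx$. An earlier iterate may be a grazing collision, where $T$ is discontinuous, so in general $T^{-j}Y_a\neq (T|_{M\setminus\SSS_1})^{-j}(Y_a)$, and the full preimage need not be open.

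Concretely, take $\bx$ with $T\bx\in\SSS_0$, with $T^2\bx,T^3\bx,T^4\bx\notin\SSS_0$, and with $T^3\bx\in Y_a$. Points $\bill{y}$ arbitrarily close to $\bx$ whose trajectories just miss the grazed scatterer skip that collision. Their third iterate $T^3\bill{y}$ is then close to $T^4\bx$, not to $T^3\bx$; this is the phenomenon of Remark~\ref{rmk:not-conj}. Typically $T^4\bx\notin\overline{Y_a}$, so $T^{-3}Y_a$ contains $\bx$ but no neighborhood of it. The same objection applies to your treatment of $T^mY_a$ for $Y_w^-$. So ``a finite intersection of open sets'' is not available as you stated it.

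The missing idea is that only the \emph{intersection} is controlled, because the other letters of $w$ constrain the intermediate iterates. For $\bx\in Y_w^+$, each $T^{i}\bx$ with $0\le i\le n-1$ lies in $Y_{w_{i+1}}\subset M_{-1}^1$. Hence $Y_w^+\subset M_{-1}^n$, an open set on which $T^j$ is continuous for $j\le n$. It follows that $Y_w^+=\bigcap_{j=1}^n (T^{j-1}|_{M_{-1}^n})^{-1}(Y_{w_j})$ is open.

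The paper packages this as an induction on the word: $Y_{aw}^+=Y_a\cap T^{-1}Y_w^+=(T|_{Y_a})^{-1}Y_w^+$, which restricts to the domain of continuity at every step. It then obtains $Y_w^-$ by transporting $Y_w^+$ with the homeomorphism $T^{n-1}\colon M_{-1}^n\to M_{-n}^1$. Your restricted-preimage approach, using $Y_w^-\subset M_{-n}^1$, would work equally well once repaired. With that repair, writing $Y_{v.w}$ as an intersection of two open sets is a slightly simpler finish than the paper's transport argument.

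A minor point: when you reindex to check \eqref{eqn:Yvw-again}, you will find $Y_{v.w}=T^{k-1}Y^+_{v'w}$, with exponent $k-1$ rather than $n-1$. This is harmless for nonemptiness.
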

\begin{proof}
Each $Y_w^+$ is nonempty by definition. To see that they are open, observe that $Y_a$ is open by definition, and if $w\in \LLL_n$ is such that $Y_w^+$ is open, then for every $a\in A$ we have
\[
Y_{aw}^+ = Y_a \cap T^{-1}Y_w^+ = T|_{Y_a}^{-1}Y_w^+.
\]
This is open since $T|_{Y_a}$ is continuous. By induction in $n$, we conclude that $Y_w^+$ is nonempty and open for every $w\in \LLL$.

The result for $Y_w^-$ follows from \eqref{eqn:Yw-} and the fact that $T^{n-1}|_{M_{-1}^{n}}$ is a homeomorphism from $M_{-1}^n$ to $M_{-n}^1$. Similarly, the result for $Y_{v.w}$ follows from \eqref{eqn:Yvw} and the fact that $T^{n-1}|_{M_{-1}^{n+k-1}}$ is a homeomorphism from $M_{-1}^{n+k-1}$ to $M_{-k}^n$.
\end{proof}

Although $\LLL$ is not the entire language of the shift $X$, we still have the following:

\begin{lemma}\label{lem:lang}
The collection $\LLL \subset A^*$ is a factorial and extendable language.
\end{lemma}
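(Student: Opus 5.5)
Both properties come straight from the definition $\LLL_n = \{w\in A^n : Y_w^+\neq\emptyset\}$ in \eqref{eqn:bill-lang}, together with the openness and nonemptiness in Lemma \ref{lem:Y-open} and the fact that $T$ is invertible, so $T^{\pm1}$ map nonempty sets to nonempty sets.

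\emph{Factoriality.} Let $w\in\LLL_n$ and $1\le i\le j\le n$. By definition,
\[
Y_w^+ = \bigcap_{l=1}^n T^{-(l-1)}Y_{w_l}
\subset \bigcap_{l=i}^{j} T^{-(l-1)}Y_{w_l}
= T^{-(i-1)}Y^+_{w_{[i,j]}}.
\]
Since $Y_w^+\neq\emptyset$, the set $T^{-(i-1)}Y^+_{w_{[i,j]}}$ is nonempty, and hence so is $Y^+_{w_{[i,j]}}$. Therefore $w_{[i,j]}\in\LLL$.

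\emph{Right extension.} Let $w\in\LLL_n$. The set $T^{n-1}Y_w^+ = Y_w^-$ is open and nonempty by Lemma \ref{lem:Y-open}, and it lies in $M^1_{-n}\subset M^\circ$. Apply $T$ to get $TY_w^-$. This is a nonempty set, and it is open because $T$ restricted to $Y_{w_n}$ is a homeomorphism onto its image, which is open.

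Next I use that $M_{-1}^1=\bigsqcup_{a\in A^\circ}Y_a$ is the complement of $\SSS_{-1}\cup\SSS_1$. That set is a finite union of curves, so it has empty interior and $M_{-1}^1$ is open and dense. Hence the nonempty open set $TY_w^-$ meets $Y_b$ for some $b\in A^\circ$. Pulling back gives
\[
Y_{wb}^+ = Y_w^+\cap T^{-n}Y_b = T^{-(n-1)}\bigl(Y_w^-\cap T^{-1}Y_b\bigr)\neq\emptyset,
\]
so $wb\in\LLL$.

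\emph{Left extension.} This is symmetric. The set $T^{-1}Y_w^+$ is open and nonempty because $T^{-1}$ is a homeomorphism on $Y_{w_1}$. By density it meets some $Y_a$, and then
\[
Y_{aw}^+ = Y_a\cap T^{-1}Y_w^+\neq\emptyset,
\]
so $aw\in\LLL$.

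The one step needing care is that the images $T^{\pm1}$ of the open sets are still open and nonempty. This holds because $Y_w^\pm\subset Y_{w_n}$ (respectively $Y_{w_1}$), where $T$ and $T^{-1}$ are continuous with continuous inverses. Density of $M^1_{-1}$ then needs only that $\SSS_{\pm1}$ is a finite union of curves, by Lemma \ref{lem:fin}.
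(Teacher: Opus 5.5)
Your proof is correct and follows essentially the paper's argument. Extendability comes from openness of $T^{-1}Y_w^+$ (and, for the right extension the paper leaves as ``similar'', of $TY_w^-$) together with density of $M_{-1}^1$. Factoriality comes from the inclusion $T^{i-1}Y_w^+ \subset Y^+_{w_{[i,j]}}$, which you read off directly from the definition, whereas the paper obtains it by passing through $Y_{u.v}$ and \eqref{eqn:Yvw-again}.
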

\begin{proof}
First we show that $\LLL$ is factorial.
Given any $n\in \NN$, $w\in \LLL_n$, and $i,j \in \{1,\dots, n\}$ with $i\leq j$, let $u = w_{[1,i]}$ and $v = w_{[i,n]}$. Then $w = u'v$ and \eqref{eqn:Yvw} gives
\[
Y_{w_{[i,j]}}^+ \supset Y_{w_{[i,n]}}^+
= Y_v^+ \supset Y_{u.v} = T^{n-i}Y_{u'v}^+
= T^{n-i}Y_w^+ \neq \emptyset,
\]
so $w_{[i,j]} \in \LLL$, and we conclude that $\LLL$ is factorial.

To show that $\LLL$ is extendable, observe that $T^{-1}Y_w^+$ is an open subset of $M$, and $\MMM_{-1}^1 = \bigcup_{a\in A} Y_a$ is dense in $M$, so there exists $a\in A$ such that $Y_a \cap T^{-1}Y_w^+ \neq \emptyset$, hence $aw\in \LLL$. A similar argument shows that there is $b\in A$ such that $wb\in \LLL$.
\end{proof}

\begin{remark}
As in \eqref{eqn:X-from-L}, we could define a two-sided shift space $\check{X} \subset A^\ZZ$ by
\begin{equation}\label{eqn:X-bill}
	\check{X} := \Big\{ x\in A^\ZZ : Y_{x_{[-k,0]},x_{[0,n]}}
	= \bigcap_{i=-k}^n T^{-i}Y_{x_i} \neq \emptyset
	\text{ for all } k,n\in \NN \Big\}.
\end{equation}
Moreover, if we let $X' = c(M')$, then $\check{X} = \overline{X'}$.  However, $\check{X}$ may be properly contained in $X$, because $\check{X}$ does not contain any of the isolated points that $X$ has.
\end{remark}

To describe the relationship between the shift space $X$ and the billiard map, we need more information about the sets $Y_w^{\pm}$ and $Y_{v.w}$. Recall from \eqref{eqn:Lkn} that we write $\LLL_{k,n}$ for the set of pairs $(v,w) \in \LLL_k\times \LLL_n$ such that $Y_{v.w} \neq \emptyset$.

\begin{proposition}\label{prop:P-gen}
For every $w \in \LLL$, the sets $Y_w^\pm$ are connected. The collection $\{ Y_w^+ : w\in \LLL_n\}$ gives the connected components of $M_{-1}^n$, and $\{ Y_w^- : w\in \LLL_n \}$ gives the connected components of $M_{-n}^1$.

The set $Y_{v.w}$ is connected for all $(v,w) \in \LLL_{k,n}$, and such sets are exactly the connected components of $M_{-k}^n$.
Moreover, there exists a constant $C_3 > 0$ such that for every $k,n\in \NN$ and $(v,w) \in \LLL_{k,n}$, the diameter of $Y_{v.w}$ is at most $C_3 e^{-\lambda \min(k,n)}$.
\end{proposition}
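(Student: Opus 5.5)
The plan has three steps. First, show that every cell of $M_{-k}^n$ is an intersection of partition elements. Then prove the connectedness statements by induction on the word length. Finally, bound diameters using the fact that $Y_{v.w}$ is cut out by stable and unstable singularity curves.

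\emph{Step 1 (cells are cylinder sets).} Take any point $\bx \in M_{-k}^n$. For each $-k\leq j\leq n$, the point $T^j\bx$ avoids $\SSS_0$. We need $T^j\bx$ to lie in the open set $M_{-1}^1=\bigsqcup_{a\in A^\circ}Y_a$, which needs $T^{j\pm1}\bx\notin\SSS_0$. This holds for interior indices; at the boundary indices $j=-k$ and $j=n$ we should instead use the sets $Y_w^+$, which are defined only through $T^{-(j-1)}Y_{w_j}$ with $j\ge 1$. This suggests matching $M_{-1}^n$ with words of length $n$, exactly as the statement does. Since $T^{j}$ is continuous on $M_{-k}^n$ for $-k\le j\le n$, the symbol map $\bx\mapsto c_0(T^j\bx)$ is locally constant there. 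Hence each connected component of $M_{-1}^n$ lies inside a single $Y_w^+$. Conversely, the sets $Y_w^+$ for $w\in\LLL_n$ are open (Lemma \ref{lem:Y-open}), pairwise disjoint, and cover $M_{-1}^n$. So each component of $M_{-1}^n$ is a union of components, and it remains to show that every $Y_w^+$ is connected.

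\emph{Step 2 (connectedness by induction).} We have $Y_{aw}^+=T|_{Y_a}^{-1}(Y_w^+\cap TY_a)$, and $T|_{Y_a}$ is a homeomorphism onto its image. It therefore suffices to show that $Y_w^+\cap TY_a$ is connected for connected $Y_w^+$. The set $TY_a$ is a cell of $\MMM_{-1}^1$, bounded by $u$-curves of $T\SSS_0$ and $s$-curves of $T^{-1}\SSS_0$. Meanwhile $Y_w^+$ is bounded by $s$-curves in $\SSS_n'$ together with pieces of $T\SSS_0$. I would argue that intersecting a connected open set bounded by $s$- and $u$-curves with another such cell cannot disconnect it. The key input is that a $u$-curve and an $s$-curve over an interval meet in at most one point. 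The noncrossing and continuation lemmas (Lemmas \ref{lem:noncrossing}, \ref{lem:continuation}) ensure that the new $u$-curves of $T\SSS_0$ terminate on existing boundary rather than cutting off a separate pocket. This step is where I expect the main obstacle: making rigorous the claim that these ``curvilinear quadrilateral'' cells stay connected, since both components may carry several boundary curves meeting at multiple singularity points. If it proves too delicate, I would fall back on a transversality argument. Any two points of the cell can be joined by a path consisting of one $u$-segment and one $s$-segment inside the cell, and this structure is preserved by $T^{\pm1}$ on continuity domains.

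The claim for $Y_w^-$ follows by applying the homeomorphism $T^{n-1}\colon M_{-1}^n\to M_{-n}^1$. The claim for $Y_{v.w}=T^{n-1}Y^+_{v'w}$ follows in the same way, using \eqref{eqn:Yvw-again} and the homeomorphism $M_{-1}^{n+k-1}\to M_{-k}^n$.

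\emph{Step 3 (diameter).} Fix $(v,w)\in\LLL_{k,n}$ and let $\bx,\bill y\in Y_{v.w}$. By Step 2's path structure (or directly, since $Y_{v.w}$ is a connected domain bounded by $s$-curves from $\SSS_n$ and $u$-curves from $\SSS_{-k}$), we can join $\bx$ to $\bill y$ by a $u$-curve $V\subset Y_{v.w}$ followed by an $s$-curve $W\subset Y_{v.w}$. Since $T^n$ is continuous on $V$, the image $T^nV$ is a single $u$-curve, so $|T^nV|\le L$. Lemma \ref{lem:s-contr} then gives $|V|\le C_2Le^{-\lambda n}$. Similarly $T^{-k}W$ is a single $s$-curve, so $|W|\le C_2Le^{-\lambda k}$. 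Summing, the diameter is at most $2C_2L\,e^{-\lambda\min(k,n)}$, so we may take $C_3=2C_2L$.
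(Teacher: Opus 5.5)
\textbf{There is a genuine gap in Step 2.} Your Step 1, and the transfer from $Y_w^+$ to $Y_w^-$ and $Y_{v.w}$ by homeomorphisms, are fine. Step 2, however, is the real content of the proposition, and you do not prove it. Moreover, the principle you propose for it is false.

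Cells of $\MMM_{-1}^0$ are bounded by $u$-curves and pieces of $\SSS_0$, and cells of $\MMM_0^1$ by $s$-curves and pieces of $\SSS_0$. These are connected open sets of exactly the kind you describe, yet one of each can have disconnected intersection; see Remark \ref{rmk:non-interval} and Figure \ref{fig:not-generating}. Your intersection $Y_w^+\cap TY_a$ is of the same kind. Note that $TY_a$ is a cell of $\MMM_{-2}^0$, bounded only by $u$-curves from $T\SSS_0\cup T^2\SSS_0$ and pieces of $\SSS_0$; it is not a cell of $\MMM_{-1}^1$ as you wrote.

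The obstruction is global, not local. Each $M_i$ is an annulus, each cell projects to an arc of the circle $\partial B_i$, and two arcs of a circle can meet in two disjoint arcs. Your tools do not address this:
\begin{itemize}
\item The fact that a $u$-curve and an $s$-curve meet at most once holds only over an interval $I\subset\partial\Omega$, so invoking it assumes what is at stake.
\item The Noncrossing and Continuation Lemmas concern the singularity curves alone and hold equally for the bad codings, so they cannot rule this out.
\item Your fallback fails too. A ``$u$-segment then $s$-segment'' path structure is preserved neither by $T$ (which need not preserve $C^s$) nor by $T^{-1}$ (which need not preserve $C^u$). Having it on two sets also says nothing about their intersection.
\end{itemize}

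\textbf{The missing idea} is a geometric reason, specific to coding by $\MMM_{-1}^1$, why wrap-around cannot occur. Induct on the statement that $T^{-1}Y_w^+$ is the region between two nonincreasing graphs $\ph_w^0<\ph_w^1$ over an arc $I_w$. Then work with $Y_a\cap T^{-1}Y_w^+=T^{-1}(TY_a\cap Y_w^+)$. The map $T$ is continuous on both $Y_a$ and $T^{-1}Y_w^+$, and these sets intersect. So they lie in one component of $M_i\cap T^{-1}M_j$, and all their trajectories run from $B_i$ to a single lift $\tilde B_j$. By convexity and disjointness, a line separates $\tilde B_i$ from $\tilde B_j$. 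The point of $\partial\tilde B_i$ on the far side whose tangent is parallel to this line is never the base point of such a trajectory. Hence both base arcs $I_a,I_w$ lie in $\partial B_i$ minus one point, and $I_a\cap I_w$ is an interval. Over this interval, $Y_a\cap T^{-1}Y_w^+$ is the region between $\max(\psi_a^0,\ph_w^0)$ and $\min(\psi_a^1,\ph_w^1)$, with the $\psi$'s nondecreasing and the $\ph$'s nonincreasing, and such a region is connected.

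\textbf{Step 3 inherits the gap.} Your computation there is the right way to use Lemma \ref{lem:s-contr}. But you only assert that any two points of $Y_{v.w}$ can be joined by a single $u$-curve and a single $s$-curve inside the cell. This does not follow merely from the cell being a connected domain bounded by $u$- and $s$-curves; it fails, for instance, for an N-shaped zigzag of alternating $u$- and $s$-strips. It needs the quadrilateral description of the cells that comes out of the same analysis as Step 2.
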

\begin{proof}
We will prove that $T^{-1}Y_w^+$ is a connected component of $M_0^{n+1}$ for every $n\in \NN$ and $w\in \LLL_n$. The remaining claims concerning $Y_w^{\pm}$ follow immediately from this result and from the fact that $T^{-(n+1)}$ is continuous on $M_0^{n+1}$. This will also imply the claims concerning $Y_{v.w}$, 
where the diameter estimate uses Lemma \ref{lem:s-contr}.

Now we turn our attention to the proof that $T^{-1}Y_w^+$ is a connected component of $M_0^{n+1}$ for all $n\in \NN$ and $w\in \LLL_n$.
We start with the observation that $P \subset M_0^{n+1}$ is a connected component of $M_0^{n+1}$ if and only if there exists a closed interval\footnote{Throughout this proof, we use ``interval'' to mean ``a set homeomorphic to an interval''.} $I \subset \partial\Omega$ and two continuous nonincreasing functions $\ph^{0,1} \colon I\to [-\frac\pi2,\frac\pi2]$ such that the following are true:
\begin{itemize}
\item the graphs of $\ph^0$ and $\ph^1$ are subsets of $\SSS_{n+1}$;
\item for every $r\in I$, we have $\ph^0(r) \leq \ph^1(r)$, with equality if and only if $r$ is an endpoint of $I$;
\item the set $P$ is the region between the graphs of these functions:
\begin{equation}\label{eqn:P-graphs}
P = \{ (r,\ph) : r\in I,\
\ph^0(r) < \ph < \ph^1(r) \}.
\end{equation}
\end{itemize}
Armed with this characterization, we will prove 
the result for $T^{-1}Y_w^+$ by induction in $n$.

For the case $n=1$, it follows from the definition of $Y_a$ that $T^{-1}Y_a$ is a connected component of $M_0^2$ for every $a\in A$. For the inductive step, fix $w\in \LLL_n$ and suppose that $T^{-1}Y_w^+$ is a connected component of $M_0^{n+1}$. Let $I_w$ and $\ph_w^{0,1}$ be as described above, so that \eqref{eqn:P-graphs} becomes
\begin{equation}\label{eqn:Iw-graphs}
T^{-1}Y_w^+ = \{ (r,\ph) : r\in I_w,\
\ph_w^0(r) < \ph < \ph_w^1(r) \}.
\end{equation}
We will describe $Y_a \cap T^{-1}Y_w^+$ whenever $a\in A$ is such that $aw\in \LLL$.

Since $Y_a$ is a connected component of $M_{-1}^1$, there exists a closed interval $I_a \subset \partial\Omega$ and continuous functions $\ph_a^{0,1}, \psi_a^{0,1} \colon I_a \to [-\frac\pi2,\frac\pi2]$ such that
\begin{itemize}
\item the functions $\ph_a^{0,1}$ are nonincreasing, and their graphs are subsets of $\SSS_1$;
\item the functions $\psi_a^{0,1}$ are nondecreasing, and their graphs are subsets of $\SSS_{-1}$;
\item for every $r\in I_a$, we have $\max(\ph_a^0(r),\psi_a^0(r)) \leq \min(\ph_a^1(r),\psi_a^1(r))$, with equality if and only if $r$ is an endpoint of $I_a$;
\item the set $Y_a$ is given by
\begin{equation}\label{eqn:Ya}
Y_a = \{(r,\ph) : r\in I_a,\
\max(\ph_a^0(r),\psi_a^0(r)) < \ph < \min(\ph_a^1(r),\psi_a^1(r)) \}.
\end{equation}
\end{itemize}
Combining \eqref{eqn:Iw-graphs} and \eqref{eqn:Ya}, the intersection $Y_a \cap T^{-1}Y_w^+$ can be written as
\[
\{ (r,\ph) : r\in I_w \cap I_a,\
\max(\ph_a^0,\psi_a^0,\ph_w^0)(r) < \ph
< \min(\ph_a^1,\psi_a^1,\ph_a^1)(r) \}.
\]
The graphs of $\phi_a^{0,1}$ are subsets of $\SSS_{1}$, so they are disjoint from $T^{-1}Y_w^+ \subset M_0^{n+1}$. This implies that $\ph_a^0 \leq \ph_w^0$ and $\ph_a^1 \geq \ph_w^1$ on $I_w \cap I_a$, and we conclude that
\[
Y_a \cap T^{-1}Y_w^+
= \{ (r,\ph) : r\in I_w \cap I_a,\
\max(\psi_a^0,\ph_w^0)(r) < \ph < \min(\psi_a^1,\ph_w^1)(r) \}.
\]
If $I_w \cap I_a$ were to be disconnected, then $Y_a \cap T^{-1}Y_w^+$ would be disconnected as well; see Remark \ref{rmk:non-interval} below for a (different) choice of coding in which this can occur. For our present coding, we will show that $I_w \cap I_a$ is an interval in $\partial\Omega$, which will then imply that $Y_a \cap T^{-1}Y_w^+$ is a connected component of $M_{-1}^n$, completing the inductive step.

To prove that $I_w \cap I_a$ is an interval, we observe that there exist $i,j\in \{1,\dots, D\}$ such that $Y_a$ and $T^{-1}Y_w^+$ lie in the same connected component of $M_i \cap T^{-1}M_j$. 
Every $\bx$ in this connected component corresponds to a billiard trajectory that hits $B_i$ and then $B_j$. 
By connectedness, 
there are lifts $\tilde{B}_i,\tilde{B}_j \subset \RR^2$ such that this set of trajectories lifts to a set of line segments in $Z := \RR^2 \setminus (\tilde{B}_i^\circ \cup \tilde{B}_j^\circ)$ connecting a point on $\partial\tilde{B}_i$ to a point on $\partial\tilde{B}_j$.
By convexity of $\tilde{B}_i$ and $\tilde{B}_j$, 
there exists a line in $Z \subset \RR^2$ separating them.
Choosing $\tilde{r}_0 \in \partial\tilde{B}_i$ such that the tangent line to $\partial\tilde{B}_i$ at this point 
is parallel to the separating line, we see that no line segment in $Z$ connecting $\tilde{B}_i$ to $\tilde{B}_j$ has $\tilde{r}_0$ as its endpoint. 

In particular, we conclude that 
the connected component of $M_i \cap T^{-1}M_j$ that contains $Y_a$ and $T^{-1}Y_w^+$ is a subset of $(\partial B_i \setminus\{r_0\}) \times [-\frac\pi2,\frac\pi2]$.
This implies that $I_a,I_w \subset (\partial B_i) \setminus \{r_0\}$.  Since this latter set is an interval and so are $I_a,I_w$, it follows that $I_a \cap I_w$ is an interval as well.
\end{proof}

\begin{remark}\label{rmk:non-interval}
If we coded by cells of $\MMM_{-1}^0$ or of $\MMM_0^1$ 
instead of $\MMM_{-1}^1$, then the above proof would fail in the last paragraph, because $I_w \cap I_a$ might not be an interval. Figure \ref{fig:not-generating} shows
what could happen, illustrating a cell of $\MMM_{-1}^0$ and of $\MMM_0^1$ whose intersection is not connected.\footnote{This does not rule out the possibility that the partition into cells of $\MMM_0^1$ could be generating, but a different argument would be needed.}
\end{remark}

\begin{figure}[!tbp]
\centering
\includegraphics[width=0.4\textwidth]{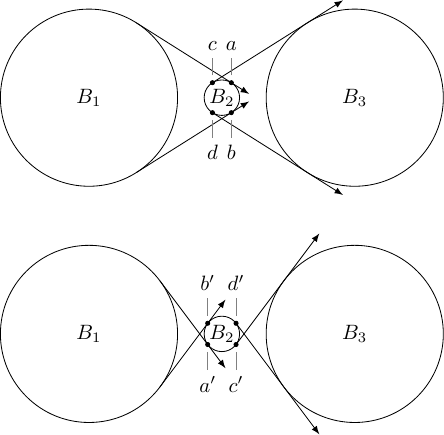}
\hfill
\includegraphics[width=0.48\textwidth]{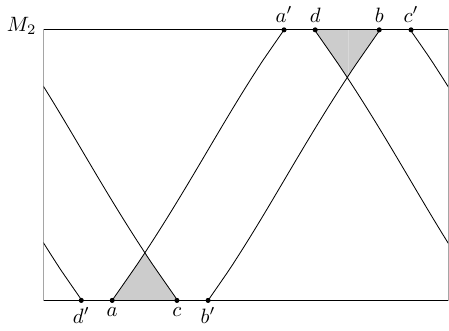}\caption{The shaded region is the intersection of a cell of $\MMM_{-1}^0$ and a cell of $\MMM_0^1$.}\label{fig:not-generating}
\end{figure}

Recalling the definition of the shift space $X\subset A^\ZZ$ in \S\ref{sec:coding-bill} as the closure of $c(M)$, we now define the map
\begin{equation}\label{eqn:piXM}
\pi \colon X \to M
\end{equation}
that we will work with throughout the rest of the paper:

\begin{definition}\label{def:coding}
Start by defining $\pi|_{c(M)} := c^{-1}$.
The diameter estimate in Proposition \ref{prop:P-gen} shows that $\pi$ is uniformly continuous, so it has a unique continuous extension to $X = \overline{c(M)}$.
In particular, given any $x \in X$,
there exists a sequence of points $x^n \in X'$, such that $x^n \to x$, and we have $\pi(x) = \lim_n c^{-1}(x^n)$, where the limit is independent of the choice of sequence.
Another equivalent description is that $\pi(x)$ is the unique point in
\[
\overline{\textstyle\bigcap_{j=-n}^n T^{-j} Y_{x_j}}
= \overline{Y_{x_{[-n,0]}.x_{[0,n]}}}.
\]
We will use the notational convention that unadorned symbols such as $x,y,z$ will denote elements of symbolic space $A^\ZZ$, while $\bx = \pi(x)$, $\bill{y}= \pi(y)$, $\bill{z} = \pi(z)$ will denote the points in the phase space whose trajectory they code.
\end{definition}

\begin{remark}\label{rmk:not-conj}
The map $\pi$ is \emph{not} a semiconjugacy between the shift map $\sigma \colon X\to X$ and the billiard map $T\colon M\to M$.
This can be seen by visualizing the boundary between two cells in $\MMM_{-1}^1$ in terms of the billiard table itself, and looking at the non-grazing trajectories that limit on a grazing trajectory as in Figure \ref{fig:limit-grazing}.  
The coding of the grazing trajectory by $abc$ involves two applications of both $\sigma$ and $T$, but the coding by $ac$ involves a single application of $\sigma$ that corresponds to two applications of $T$.
\end{remark}

\begin{figure}[htbp]
\includegraphics[scale=1]{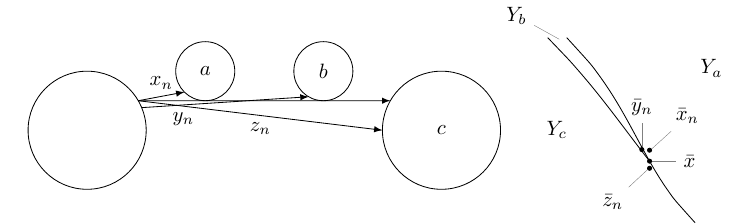}
\caption{Nongrazing collisions can limit on a grazing collision, resulting in tangential trajectories with multiple codings, some of which ``miss'' one or more symbols in the sequence.}
\label{fig:limit-grazing}
\end{figure}

In addition to the problem pointed out in Remark \ref{rmk:not-conj}, the map $\pi$ is not injective on $X$.
The second part of Figure \ref{fig:limit-grazing} illustrates how injectivity can fail: the three regions represent a situation in which three cells of $\MMM_{-1}^1$ have boundaries that intersect in a single point $\bx \in M$; denoting these cells by $Y_a$, $Y_b$, and $Y_c$, where $a,b,c\in A$, there are sequences $\bx_n \in Y_a$, $\bill{y}_n \in Y_b$, and $\bill{z}_n \in Y_c$ that all converge to $\bx$, so $\bx \in \pi[a] \cap \pi[b] \cap \pi[c]$.

Fortunately, both of the problems described above -- failure of semiconjugacy, and failure of injectivity -- can be avoided by passing to a subset that avoids grazing collisions.
Consider the set
\[
M' := M_{-\infty}^\infty 
= M \setminus \bigcup_{n \in \ZZ}\SSS_{n}
= \bigcap_{n\in \ZZ} T^{-n} M^\circ,
\]
which consists of all points in $M$ that never have a grazing collision in their past or future.  
Given a point $\bx\in M'$, we have $T^n \bx \in M_{-1}^1$ for every $n\in \ZZ$, and thus for each $n$ there exists a unique $x_n \in A$ such that $T^n \bx \in Y_{x_n}$. The sequence $x = (x_n)_{n\in \ZZ} \in X \subset A^\ZZ$ is the unique preimage of $\bx$ under $\pi$, and we refer to it as the \emph{coding} of $\bx$. Writing
\[
X' := \pi^{-1}(M') = \{ x \in X : \pi(x) \in M' \},
\]
we see that $\pi|_{X'}$ is injective, and that the diagram
\begin{equation}\label{eqn:comm}
\begin{tikzcd}
X' \arrow[r, "\sigma"] \arrow[d, "\pi"] & X' \arrow[d, "\pi"] \\
M' \arrow[r, "T"] & M'
\end{tikzcd}
\end{equation}
commutes. In particular, $\pi_*$ is an entropy-preserving bijection from the space of $\sigma$-invariant Borel probability measures on $X'$ to the space of $T$-invariant Borel probability measures on $M'$. It becomes important, then, to understand how `large' the sets $M'$ and $X'$ are.
The sets $\SSS_{\pm\infty}$ are dense in $M$, so $M'$ has no interior; on the other hand, we will see that every MME gives full weight to $M'$.

With the symbolic coding in place, our strategy for the remainder of the proof is as follows:
\begin{description}[align=left, leftmargin=0pt, itemsep=1ex]
\renewcommand{\makelabel}[1]{\textbf{#1}}
\item[\fancymarker{\S\ref{sec:rectangles}}]
Describe stable and unstable manifolds for the billiard map via the shift space, along with an important result on families of rectangles.
\item[\fancymarker{\S\ref{sec:pos-ent}}]
Prove the first part of Theorem \ref{thm:strategy} by showing that every positive entropy ergodic measure on $X$ gives full weight to $X' \subset c(M)$.
(In fact, we prove a stronger result involving the \emph{regular set}.)
\item[\fancymarker{\S\ref{sec:counting}}]
Verify the counting bounds necessary to apply Proposition \ref{prop:inv-MME}, concluding that the invariant measure $\mu$ constructed as in \S\ref{sec:construction} satisfies $0 < \mu(X) < \infty$ and that the normalization of $\mu$ is an MME. This proves part of Theorem \ref{thm:strategy}\ref{mu-fin-pos}.
\item[\fancymarker{\S\ref{sec:bill-construction}}]
Prove the rest of Theorem \ref{thm:strategy} by showing that $\pi_*\mu$ is fully supported on $M$, and its normalization is multiply mixing and is the unique MME on $M$.
\end{description}

\subsection{Stable manifolds, unstable manifolds, and rectangles}\label{sec:rectangles}

Consider once again the partition $\PPP$ of $M$ into maximal sets on which $T$ and $T^{-1}$ are continuous.
Given $k,n\in \NN_0$, we will consider the dynamically refined partitions
\begin{equation}\label{eqn:Pkn}
\PPP_{-k}^n := \bigvee_{j=-k}^n T^{-j}\PPP
\end{equation}
into maximal sets on which $T^{-k}$ and $T^n$ are continuous. 
Given any element $P \in \PPP_{-k}^n$, there exists an open connected set $Q \in \MMM_{-k}^n$ such that $Q \subset P \subset \overline{Q}$, so $P$ is connected as well; see also \cite[Lemma 3.1]{BD20}.

We will continue to use the notation \eqref{eqn:Pkn} when one of $k,n$ is infinite, writing
\begin{equation}\label{eqn:P-infty}
\PPP_0^\infty := \bigvee_{j=0}^\infty T^{-j} \PPP
\quad\text{and}\quad
\PPP_{-\infty}^0 := \bigvee_{j=-\infty}^0 T^{-j} \PPP.
\end{equation}
Given $\bx \in M$, let $\PPP(x)$ denote the cell of $\PPP$ that contains $\bx$, and consider the sets
\begin{equation}\label{eqn:Wusl}
\begin{aligned}
\Wsl(\bx) &:= \{ \bill{y} \in M : \PPP(T^n\bill{y}) = \PPP(T^n \bx) \text{ for all } n\geq 0 \}, \\
\Wul(\bx) &:= \{ \bill{y} \in M : \PPP(T^{-n}\bill{y}) = \PPP(T^{-n} \bx) \text{ for all } n\geq 0 \}.
\end{aligned}
\end{equation}
Observe that $\Wsl(\bx) = \PPP_0^\infty(\bx)$ and $\Wul(\bx) = \PPP_{-\infty}^0(\bx)$.
In particular, we have
\begin{equation}\label{eqn:cW}
c(\Wsl(\bx)) = \Wsl(c(\bx)) \cap c(M)
\quad\text{and}\quad
c(\Wul(\bx)) = \Wul(c(\bx)) \cap c(M).
\end{equation}
If $\bx \in M_{-1}^\infty$, then there exists a unique $x\in X^+$ such that $T^n\bx \in Y_{x_n}$ for every $n\geq 0$, so $\bx \in \bigcap_{n=0}^\infty Y_{x_{[0,n]}}^+$, and 
it follows from \cite[\S4.11]{CM06} (especially Lemma 4.57 and Corollary 4.61) that $\Wsl(\bx)$ is a $C^1$ smooth $s$-curve, which may or may not include one or both of its endpoints, and that
\[
(\Wsl(\bx))^\circ \subset \bigcap_{n=0}^\infty Y_{x_{[0,n]}}^+
\quad\text{and}\quad
\overline{\Wsl(\bx)} = \bigcap_{n=0}^\infty \overline{Y_{x_{[0,n]}}^+}.
\]
Thus we refer to $\Wsl(\bx)$ as the \emph{stable manifold} of $\bx$, even though when $\bx \in \SSS_{-1} \cup \SSS_\infty$ we could have $\Wsl(\bx) = \{\bx\}$. 

When $\bx \in M_{-\infty}^1$, similar statements hold for $\Wul(\bx)$, using negative indices, and we call this the \emph{unstable manifold} of $\bx$. 
Even though $\Wsl(\bx)$ or $\Wul(\bx)$ could be a single point for $\bx \in M \setminus M'$, we still refer to $\PPP_0^\infty$ as the \emph{partition into local stable manifolds}, and $\PPP_{-\infty}^0$ as the \emph{partition into local unstable manifolds}.

\begin{remark}
When $\bx \in M'$ so that $\pi^{-1}\bx$ is a single point $x\in X'$, we have
$\bigcap_{n=0}^\infty \overline{Y_{x_{[0,n]}}^+} = \pi(\Wul(x))$, and we see that $\Wsl(\bx)$ is equal to $\pi(\Wsl(x))$ up to possibly removing one or both endpoints. 
\end{remark}

We will work with the following set of \emph{regular} points:
\begin{equation}\label{eqn:Mreg}
\Mreg := \{ \bx \in M : \bx \in (\Wul(\bx))^\circ \text{ and } \bx \in (\Wsl(\bx))^\circ \}.
\end{equation}
That is, $\Mreg$ is the set of points that are not an endpoint of either their stable or unstable manifold.
Observe that stable manifolds cannot cross singularity curves in $\SSS_\infty$, and unstable manifolds cannot cross singularity curves in $\SSS_{-\infty}$, so
\begin{equation}\label{eqn:Mreg'}
\Mreg \subset M' = M \setminus (\SSS_\infty \cup \SSS_{-\infty}).
\end{equation}
This implies that
\begin{equation}\label{eqn:Xreg-1}
\Xreg := \pi^{-1}(\Mreg) = c(\Mreg) \subset X'.
\end{equation}

\begin{figure}[htbp]
\includegraphics{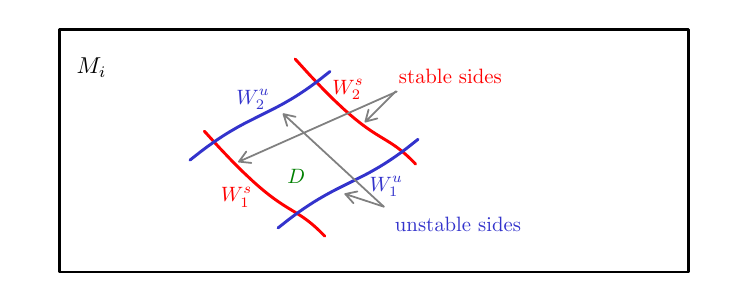}
\caption{A solid rectangle.}
\label{fig:rectangle}
\end{figure}

\begin{definition}\label{def:solid-rect}
A \emph{solid rectangle} is a closed and connected region $D \subset M$ that is bounded by two nontrivial stable manifolds $W_1^s, W_2^s$ and two nontrivial unstable manifolds $W_1^u, W_2^u$ with the property that each $W_i^s$ intersects each $W_j^u$ in a point which is interior to both curves; see Figure \ref{fig:rectangle}. We will refer to $W_i^s \cap D$ for $i=1,2$ as the \emph{stable sides} of $D$, and $W_i^u \cap D$ as the \emph{unstable sides} of $D$.
\end{definition}

An unstable curve $V$ is said to \emph{cross} a solid rectangle $D$ if $V^\circ$ intersects both of the stable sides of $D$.  Likewise, a stable curve $V$ is said to \emph{cross} $D$ if $V^\circ$ intersects both of the unstable sides of $D$.  The \emph{Cantor rectangle} associated to $D$ is the union of all points in $D$ whose stable and unstable manifolds cross $D$.  
In \cite{BD20}, these are referred to as \emph{locally maximal} Cantor rectangles; we will use the shorter term here for brevity, since we do not consider any other sort of Cantor rectangle in $M$.

Cantor rectangles are closed, nowhere dense subsets that naturally have product structure.
We say that a stable or unstable curve $V$ crosses a Cantor rectangle $R$ if it crosses the solid rectangle $D$ that generates $R$.

The next result from the literature plays a crucial role in our arguments. 

\begin{figure}[htbp]
\centering
\includegraphics{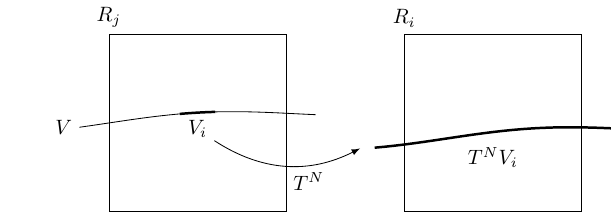}
\caption{Crossing of an image of a subcurve of $V$.}
\label{fig:crossing}
\end{figure}

\begin{proposition}[Sufficient rectangles]\label{prop:suff-rect}
There exists a countable cover $\mathcal{R}$ of $\Mreg$ by Cantor rectangles such that for every $\delta>0$ and every $R_0 \in \mathcal{R}$, there are $R_1,\dots, R_\ell \in \mathcal{R}$ that are \emph{$\delta$-sufficient} in the following sense: there exists $N\in \NN$ such that for every $V\in \Vud = \{V \in \Vu : |V| \geq \delta\}$, we have the behavior shown in Figure \ref{fig:crossing}, namely:
\begin{enumerate}
	\item there exists $j\in \{0,\dots, \ell\}$ such that $V$ crosses $R_j$, and
	\item for every $i\in \{0,\dots, \ell\}$, there exists $V_i \subset V$ such that $T^N V_i$ crosses $R_i$.
\end{enumerate}
\end{proposition}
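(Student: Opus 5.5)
This result comes from \cite{BD20} (building on \cite{CM06}); the plan below reproduces its structure rather than deriving anything new. There are three stages: construct the countable family $\mathcal{R}$, show that long $u$-curves cross members of finite subfamilies, and then use mixing to get the common time $N$.

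\emph{Construction of $\mathcal{R}$.} For $\bx\in\Mreg$, both $\Wsl(\bx)$ and $\Wul(\bx)$ are nontrivial $C^1$ curves with $\bx$ in their interiors. Absolute continuity and the local product structure of stable and unstable manifolds (\cite[\S4.11, \S5]{CM06}) let us choose nearby points $\bill{y}_1,\bill{y}_2$ on opposite sides along $\Wul(\bx)$ and $\bill{z}_1,\bill{z}_2$ along $\Wsl(\bx)$. Their long stable and unstable manifolds bound a solid rectangle $D_{\bx}$ whose interior contains $\bx$, and the associated Cantor rectangle contains $\bx$. We require the bounding points to lie in a countable dense set of points with long manifolds, for instance points of $\Mreg$ whose manifolds have rational-length lower bounds in a fixed chart grid. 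This makes $\mathcal{R}$ countable while still covering $\Mreg$.

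\emph{Property (1).} Fix $\delta>0$. The regular set has full Lebesgue (SRB) measure, and the stable manifolds of length at least $\delta'$ through a neighborhood occupy positive measure. Combining these with compactness of the space of $u$-curves of length at least $\delta$ (in the $C^0$ Hausdorff topology), we get finitely many rectangles $R_1,\dots,R_{\ell'}$, each with a solid rectangle of diameter less than $\delta/3$ and a positive-measure set of long stable manifolds. These are chosen so that every $V\in\Vud$ crosses one of them fully. The key point is that a $u$-curve of length at least $\delta$ passing through the "core" of a small rectangle must cross both of its stable sides. This is exactly the "finite family of maximal Cantor rectangles with uniformly long stable manifolds" lemma of \cite[Lemma 3.9 / Prop.~3.? ]{BD20}, and its original source is \cite{CM06}. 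We add $R_0$ to this list.

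\emph{Property (2), the main obstacle.} For each pair $(R_j,R_i)$ we need a single $N$ such that every $u$-curve crossing $R_j$ has an iterate $T^N$ of some subcurve crossing $R_i$. We would first use mixing of the SRB measure, together with the fact that a $u$-curve crossing $R_j$ meets a positive-measure set of stable manifolds, to get one $n$ for which the image of the unstable foliation of $R_j$ crosses $R_i$. Uniformity in $V$ then comes from the stable holonomy: any two $u$-curves crossing $R_j$ are related by holonomy along stable manifolds in $R_j$, and the forward images of those stable manifolds shrink, so crossing persists (the "Cantor rectangles are crossed by iterates" argument of \cite{BD20}). Passing from "some $n$ for each pair" to a common $N$ for all pairs requires crossings at all large times, and this is the delicate point. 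For that we would use the growth lemma, which gives a positive fraction of long pieces at every large time. An alternative is to take $N$ as a common multiple and iterate crossings $R_i\to R_i$ at an aperiodic set of return times, which exists by mixing.
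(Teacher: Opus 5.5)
Your outline (countable cover, a finite subfamily by compactness of $\Vud$, then mixing of $\mu_L$) has the same skeleton as the paper's sketch. However, the step you flag as the main obstacle is not an obstacle, and the ingredient that actually drives property (2) is missing.

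\textbf{The common $N$ comes directly from mixing.} Mixing gives $\mu_L(R_j\cap T^{-n}P_i)\to\mu_L(R_j)\mu_L(P_i)$. So once both factors are positive, the intersection is nonempty for \emph{every} sufficiently large $n$, not just for one. With finitely many pairs $(i,j)$, a common $N$ is immediate, and taking $N$ large is also what the contraction of stable manifolds wants. You extracted only what ergodicity would give. Your workarounds are unnecessary, and as stated they don't work. The growth lemma yields long pieces of $T^nV$ at all large times, but it says nothing about which rectangle they cross; combining it with property (1) just reintroduces a pair-dependent time shift. A common multiple of per-pair times is not a valid time either, unless you also prove that crossings $R_i\to R_i$ compose and occur along a set of times with gcd $1$. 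At that point you are re-deriving a consequence of mixing.

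\textbf{The genuine gap is density.} Your construction of $\mathcal R$ only arranges that each $\bx\in\Mreg$ lies in some Cantor rectangle. It does not make each $R\in\mathcal R$ large in Liouville measure relative to its solid rectangle, which the paper builds in via \cite[Lemma 7.10]{BD20}. Since $R_0\in\mathcal R$ is arbitrary, if $\mu_L(R_0)=0$ then $\mu_L(R_j\cap T^{-n}R_0)=0$ for all $n$, and your mixing step says nothing about reaching $R_0$.

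Density is also what makes the passage to an arbitrary $V$ uniform. Stable holonomy is defined only on the Cantor set $R_j\cap\Wul(\bar y)$, not on the whole curve. So ``crossing persists'' requires that the piece of $\Wul(\bar y)$ which $T^N$ carries across $R_i$ be anchored near both of its ends by points of $R_j$, whose stable manifolds cross the solid rectangle $D_j$ and hence meet $V$. Otherwise nothing forces the corresponding piece of $T^NV$ to reach both stable sides of $D_i$. The paper obtains this anchoring in three steps: it uses the Lebesgue density theorem to pick a uniformly dense $P_i\subset R_i$, it takes $\bar y\in R_j\cap T^{-N}P_i$, and it works with proper crossings of $R_j$. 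Your sketch needs exactly this input and does not supply it.
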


We will write $c(\RRR) := \{ c(R) : R\in \RRR \}$ for the set of \emph{symbolic rectangles} in $X$ that code Cantor rectangles in $\RRR$.

We omit the full proof of Proposition \ref{prop:suff-rect}, which involves a more careful and detailed use of results from billiard dynamics, and simply sketch the main ideas; see \cite[Lemmas 7.87 and 7.90]{CM06} and \cite[Proof of Proposition 5.5]{BD20} for further details.\footnote{We caution that the notation in the references differs from ours, and they do not use the terminology ``$\delta$-sufficient'', which we introduce for the sake of convenience in later arguments.}

\begin{proof}[Sketch of proof of Proposition \ref{prop:suff-rect}]
The existence of a countable cover of $\Mreg$ by Cantor rectangles follows from Lemma 7.10 of \cite{BD20}.  These rectangles can be chosen such that each is large (in the sense of Liouville measure) in its solid rectangle.  
	
Define a notion of ``proper crossing'' of a Cantor rectangle $R$, which requires not only that $V\in \Vu$ fully crosses $R$, but that it does so near the center of $R$ in a certain sense. Then argue that every $V\in \Vu$ properly crosses some rectangle. Moreover, for a given rectangle, the set of $V\in \Vud$ that properly cross it is open in an appropriate topology, in which $\Vud$ is compact, and thus the finite set of rectangles $R_1,\dots, R_\ell$ can be obtained as a finite subcover of $\Vud$.

Now use the Lebesgue density theorem to identify a ``uniformly dense'' subset $P_i \subset R_i$, and use the mixing property of the Liouville measure $\mu_L$ to deduce that there exists $N$ such that for every $i,j$, we have $\mu_L(R_j \cap T^{-N}P_i) > 0$. Then argue that when $V$ properly crosses $R_j$, we can use the intersection point $\bar{y} \in R_j \cap T^{-N}P_i$ to construct $V_i \subset V$ such that $T^N V$ crosses $R_i$.
\end{proof}

\begin{remark}\label{rem:dense}
Although Cantor rectangles are nowhere dense, they can be ``large"  in the sense of Lebesgue measure.  In particular, they can be constructed to have high density with respect to Liouville (SRB) measure $\mu_L$, which is absolutely continuous with respect to Lebesgue measure $dr\, d\ph$.  The measure $\mu_L$ has been well studied, and many properties about it have been established.  The fact that $\mu_L$ is mixing \cite{yS70} is an important property that is needed to prove Proposition \ref{prop:suff-rect}.  Moreover, the Cantor rectangles described in Proposition \ref{prop:suff-rect} satisfy this density condition; see \cite[Definition 5.8]{BD20} or \cite[\S7.11]{CM06} for more details about this notion of density.
\end{remark}

\begin{remark}\label{rmk:tools-for-lps}
In the next section, we will prove that every MME $\nu$ for $X$ gives full weight to $\Xreg$; see Proposition \ref{prop:reg-meas}. 
Suppose we can show that the measure
$\mu$ constructed in \S\ref{sec:construction} satisfies  
$\mu(X) > 0$. Then by Proposition \ref{prop:inv-MME}, the normalized measure $\bar\mu = \mu/\mu(X)$ is an MME for $X$, and thus satisfies $\bar\mu(\Xreg)=1$.

Writing $\RRR$ for the countable cover of $\Mreg$ by Cantor rectangles provided by Proposition \ref{prop:suff-rect}, we see
that the (symbolic) rectangles $\{ c(R) : R\in \RRR \}$ cover $\Xreg$, and thus by Lemma \ref{lem:lps},
it will immediately follow that $\bar\mu$ has local product structure in the sense of Definition \ref{def:lps}.
Thus once we prove Proposition \ref{prop:reg-meas} below, we will have shown the following: 
\begin{quote}
\emph{If the measure $\mu$ from \S\ref{sec:construction} satisfies $\mu(X) > 0$, then $\bar\mu = \mu/\mu(X)$ is an MME for $X$, and has local product structure.}
\end{quote}
\end{remark}

\subsection{The regular set sees every MME}\label{sec:pos-ent}

Now we prove that:

\begin{proposition}\label{prop:reg-meas}
For each $\nu\in \MMM_\sigma^e(X)$ with $h_\nu(\sigma)>0$, we have $\nu(X \setminus \Xreg) = 0$. In particular, every MME for $X$ gives full weight to $\Xreg$.
\end{proposition}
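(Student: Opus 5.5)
We need to show that a positive entropy ergodic $\nu$ on $X$ gives zero weight to $X\setminus\Xreg$. The complement splits into three pieces, which I handle in turn.

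\emph{First piece: isolated points outside the closure of $c(M')$.} The first is the set of points of $X$ whose coding uses words outside $\LLL$, i.e.\ those not lying in $\check X$. By \cite[Lemma 3.2]{BD20} (Remark \ref{rmk:X-lang}), $\#(\hat\LLL_n\setminus\LLL_n)$ grows at most linearly. For ergodic $\nu$, if $\nu$ gave positive weight to the invariant set of sequences containing a word of $\hat\LLL\setminus\LLL$, then by ergodicity it would give full weight to it. Each such sequence contains at most one "bad spot" in a bounded window, so the support lives on a set whose $n$-word count is polynomial. That forces $h_\nu=0$, a contradiction.

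\emph{Second piece: codings of points in $\SSS_{\pm\infty}$.} Inside $\check X$ I must exclude sequences $x$ with $\pi(x)\notin M'$, and the non-injective or non-conjugate codings near grazing collisions. Such $x$ satisfy, for some $j$, $\pi(\sigma^j x)\in\SSS_0\cup \SSS_{\pm1}$ in a "degenerate" way. By invariance and ergodicity it suffices to show $\nu(E)=0$ for $E=\{x: \pi(x)\in \SSS_0\}$ (or its one-step variants). Here I would use the conditional measures on unstable sets: for $\nu$-a.e.\ $x$, $\nu_x$ is non-atomic by Lemma \ref{lem:non-atomic}. The set $E\cap\Wul(x)$ is controlled by the Noncrossing and Continuation Lemmas \ref{lem:noncrossing}--\ref{lem:continuation}. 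The key geometric fact is that a symbolic unstable set $\Wul(x)$ projects to (a closure of) an unstable curve, and it meets $\SSS_0\cup\SSS_1$ (stable-type curves) in finitely many points, with at most one point per curve. Hence $E\cap \Wul(x)$ is countable, so $\nu_x(E)=0$, and integrating via \eqref{eqn:disint} gives $\nu(E)=0$. A union over $j\in\ZZ$ then handles $\SSS_{\pm\infty}$, using the stable-set disintegration for the backward singularities.

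\emph{Third piece: points of $M'$ lying at an endpoint of their stable or unstable manifold.} Suppose $\bx\in M'$ is an endpoint of $\Wul(\bx)$. Since $\bx\notin\SSS_{-\infty}$, the unstable manifold is not terminated by a singularity curve through $\bx$, so $\bx$ is the endpoint of its cell in $\PPP^0_{-\infty}$. Then $\Wul(\bx)$ coded as $\Wul(x)$ has $x$ as an "extremal" point in its leaf, and the set of such endpoints within one leaf has at most two elements. Again $\nu_x$ is non-atomic, so $\nu_x$ of this set is zero, and \eqref{eqn:disint} gives measure zero. The stable endpoint case is symmetric, using stable conditionals.

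\emph{Main obstacle.} The hardest step is the second one: correctly identifying $\pi(\Wul(x))$ with a single unstable curve even for non-generic $x$, so that its intersection with singularity curves is countable. This needs the regularity statements from \cite[\S4.11]{CM06} together with Proposition \ref{prop:P-gen}. One must also deal with $\pi$ failing to be injective there, by working with the closure description $\overline{Y_{x_{[-n,0]}.x_{[0,n]}}}$.

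Finally, the MME statement follows: by Proposition \ref{prop:inv-MME} and \eqref{eqn:lcb} we have $h>0$, and passing to an ergodic decomposition, every ergodic component of an MME has positive entropy, so the conclusion applies to each component.
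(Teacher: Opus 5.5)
Your second step has a genuine gap. You use the fact that the unstable curve $\pi(\Wul(x))$ meets each singularity curve in at most one point to conclude that $E\cap\Wul(x)$ is countable. That fact only shows that $\pi(E\cap\Wul(x))$ is countable.

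Off $X'$ the map $\pi$ is not injective. Over a singular point $\bill{y}$, the fibre $\pi^{-1}(\bill{y})\cap\Wul(x)$ contains one sequence for each admissible way of taking one-sided limits at the successive times the forward orbit of $\bill{y}$ meets singularities. Nothing in your argument makes this fibre countable. Corollary \ref{cor:local-lin} only bounds the number of length-$n$ prefixes of the fibre by $Kn+1$. Linearly many prefixes is compatible with an uncountable Cantor set of sequences (e.g.\ binary sequences that are free only at the positions $2^k$). So non-atomicity of $\nu_x$ (Lemma \ref{lem:non-atomic}) does not give $\nu_x(E)=0$. Working with the closures $\overline{Y_{x_{[-n,0]}.x_{[0,n]}}}$, as you suggest at the end, does not address this.

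The missing idea is the quantitative estimate of Lemma \ref{lem:nonatomic-0}. There positive entropy is used to beat linear complexity, not merely to get non-atomicity. Given $h'<h_\nu$ and $\zeta>0$, the conditional Shannon--McMillan--Breiman theorem gives a set of $\nu_x$-measure $>1-\zeta$ on which $\nu_x(\cyl{}{y_{[1,n]}})<e^{-nh'}$ for all large $n$. Meanwhile the fibre over $\bill{y}$ is covered by at most $Kn+1$ such cylinders. Hence $\nu_x(\pi^{-1}(\bill{y}))\le\zeta+(Kn+1)e^{-nh'}\to\zeta$.

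Combined with the countability of singular points on an unstable manifold (Lemma \ref{lem:ctbl-sing-0}), this is how the paper gets $\nu_x(\Wul(x)\setminus X')=0$ (Proposition \ref{prop:leaf-supp-nu}). After that, your third step is exactly the paper's Lemma \ref{lem:null-endpts}.

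This route also makes your first step unnecessary. As written, its justification is not right, since a sequence may contain a forbidden word and otherwise be arbitrary. The conclusion is recoverable, though. If $\nu(\cyl{}{u})>0$ for some $u\notin\LLL$, then by ergodicity $\nu$-typical words of length $n$ eventually contain $u$. They therefore lie in $\hat\LLL_n\setminus\LLL_n$, which grows only linearly, and this forces $h_\nu=0$.
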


Before proving the proposition, we note an important consequence: the pushforward map $\pi_*$ gives an entropy-preserving bijection between the sets of positive entropy ergodic measures on $X$ and on $M$. In particular, to prove existence of a unique MME for $T\colon M\to M$, it suffices to prove existence of a unique MME for $\sigma \colon X\to X$.

To prove Proposition \ref{prop:reg-meas}, first recall the following two facts from \S\ref{sec:cond-ent}:
\begin{itemize}
\item \emph{Rokhlin disintegration, Lemma \ref{lem:disint}:} given any Borel probability measure $\nu$ on $X$, 
there exists a full $\nu$-measure set $\CCC_\nu \subset X$ and a family of Borel probability measures $\{\nu_x : x\in \CCC_\nu \}$ such that 
\begin{equation}\label{eqn:nux}
\text{$\nu_x(\Wul(x)) = 1$ for every $x\in \CCC_\nu$, and $\nu = \int_{X} \nu_x \,d\nu$.}
\end{equation}
\item \emph{Conditional Shannon--McMillan--Breiman theorem, \eqref{eqn:SMB}--\eqref{eqn:S'nu}:} for every $\nu \in \MMM_\sigma^e(X)$,
there is a set $\SSS_\nu\subset X$ such that $\nu(\SSS_\nu)=1$ and
\begin{equation}\label{eqn:snu-0}
\lim_{n\to\infty} -\frac 1n \log \nu_x(\cyl{}{x_{[1,n]}}) = h_\nu(\sigma)
\quad\text{for all } x\in \SSS_\nu.
\end{equation}
Then the set $\SSS'_\nu := \{x \in \CCC_\nu : \nu_x(\SSS_\nu) = 1 \}$ also has $\nu(\SSS'_\nu)=1$.
\end{itemize}

Most of the work in proving Proposition \ref{prop:reg-meas} goes into establishing the following:

\begin{proposition}\label{prop:leaf-supp-nu}
For every $\nu \in \MMM_\sigma^e(X)$ with $h_\nu(\sigma)>0$, and every $x\in \SSS'_\nu$, we have $\nu_x(\Wul(x) \setminus X') = 0$.
\end{proposition}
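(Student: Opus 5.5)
The plan is to show that, for $x\in\SSS'_\nu$, the set $\Wul(x)\setminus X'$ is \emph{countable}. Lemma \ref{lem:non-atomic} then gives the conclusion: since $h_\nu>0$, the conditional measure $\nu_x$ is non-atomic, so $\nu_x$ of any countable set is zero. Since $X\setminus X' = \bigcup_{k\in\ZZ}\{z\in X : T^k\pi(z)\in\SSS_0\}$, I would split $\Wul(x)\setminus X'$ into the pieces
\[
E_k(x) := \{z\in\Wul(x) : T^k\pi(z)\in \SSS_0\},\qquad k\in\ZZ,
\]
and show that each $E_k(x)$ is at most countable (in fact finite).

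\emph{Forward times $k\ge 0$.} Every $z\in\Wul(x)$ shares the past $z_{(-\infty,0]}=x_{(-\infty,0]}$. Hence
\[
\pi(\Wul(x))\subset \bigcap_{m}\overline{Y^-_{x_{(-m,0]}}},
\]
which by Proposition \ref{prop:P-gen} and the discussion in \S\ref{sec:rectangles} is either a single point or a closed $u$-curve, namely the closure of an unstable manifold. The set $T^{-k}\SSS_0\subset\SSS_k$ is a finite union of $s$-curves (Lemma \ref{lem:fin}). An $s$-curve and a $u$-curve over an interval of $\partial\Omega$ meet in at most one point. Therefore $\pi(E_k(x))$ is finite.

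It remains to bound the fibres of $\pi$ over $\Wul(x)$. Since $\pi$ is not injective off $X'$, I need that each $\bill{y}\in M$ has only countably many codings $z\in X$. I would obtain this from the structure of $\pi^{-1}(\bill{y})$: a coding is determined by choosing, at each time, one of the cells of $\MMM_{-1}^1$ adjacent to the current point. By Lemma \ref{lem:lin-comp} and Corollary \ref{cor:local-lin}, only finitely many cells are adjacent at each time. Moreover, after a grazing collision the choices are governed by the finitely many limiting trajectories, as in Figure \ref{fig:limit-grazing}. I expect this to yield countably many codings in total; this is the step I expect to be the main obstacle. If it turns out to be delicate, I would instead restrict to a smaller set: use the local product structure of \S\ref{sec:rectangles} together with Lemma \ref{lem:holonomy}-type reasoning to show that $\nu_x$-a.e.\ $z$ has $\pi(z)$ off a finite set. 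That alternative only needs $\nu_x(\pi^{-1}(F)\cap\Wul(x))=0$ for finite sets $F$. This follows from the conditional SMB statement \eqref{eqn:SMB}: if $\nu_x$ gave positive weight to $\pi^{-1}(\bill{y})$, then the cylinders $\cyl{}{z_{[1,n]}}$ containing those codings would have to carry mass at least subexponentially, contradicting exponential decay at rate $h_\nu>0$ along $\nu_x$-typical points. The key input here is that only subexponentially many length-$n$ words code a single point, which again comes from linear complexity (Lemma \ref{lem:lin-comp}).

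\emph{Backward times $k<0$.} Here the grazing is shared by the whole leaf, so the previous argument does not apply directly. Instead I would shift: $\sigma^{k}$ maps $E_k(x)$ into $E_0(\sigma^{k}x)\subset\Wul(\sigma^{k}x)$. By invariance of $\nu$ and uniqueness of Rokhlin disintegration, $(\sigma^{k})_*(\nu_x)$ is absolutely continuous with respect to $\nu_{\sigma^{k}x}$, indeed proportional to its restriction to $\sigma^k\Wul(x)$. Hence
\[
\nu_x(E_k(x))=0 \quad\text{whenever}\quad \nu_{\sigma^k x}(E_0(\sigma^kx))=0,
\]
and the latter holds by the forward case applied at $\sigma^{k}x$. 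To make this apply to \emph{every} $x\in\SSS'_\nu$, I would note that $\nu_x=\nu_y$ for $y\in\SSS_\nu\cap\Wul(x)$, and that $\SSS'_\nu$ may be replaced by its invariant full-measure core $\bigcap_{k}\sigma^{-k}\SSS'_\nu$ (adjusting $\SSS'_\nu$ accordingly if needed). Finally, summing over $k\in\ZZ$ gives $\nu_x(\Wul(x)\setminus X')=0$.
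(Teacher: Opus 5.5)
\textbf{Where your proposal matches the paper.} For $k\ge 0$, your argument together with your fallback for the fibres is essentially the paper's proof. Lemma \ref{lem:ctbl-sing-0} shows that the leaf meets each component curve of $\SSS_{\pm n}$ in at most one point. Lemma \ref{lem:nonatomic-0} shows $\nu_x(\pi^{-1}(\bill{y}))=0$: it makes the conditional SMB limit uniform on a set of $\nu_x$-measure $>1-\zeta$, then covers the fibre by at most $Kn+1$ cylinders of length $n$.

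Treat that fallback as the actual argument, not as an alternative. Having at most $Kn+1$ admissible $n$-prefixes does not give countably many codings. A binary split at each time $2^i$ yields linearly many prefixes but a Cantor set of infinite paths. So your first plan, which relies on countability plus Lemma \ref{lem:non-atomic}, is unsupported.

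\textbf{The gap: negative times.} The inclusion $\sigma^k E_k(x)\subset E_0(\sigma^k x)$ needs $\pi(\sigma^k z)=T^k\pi(z)$. The paper only has this identity on $X'$ (diagram \eqref{eqn:comm}). Every $z\in E_k(x)$ lies outside $X'$, which is exactly where codings may omit grazing collisions (Remark \ref{rmk:not-conj}). So $\pi(\sigma^k z)$ need not be the grazing point $T^k\pi(z)$, and your reduction to the forward case at $\sigma^k x$ does not go through. Separately, passing to an invariant core of $\SSS'_\nu$ would prove less than the stated ``for every $x\in\SSS'_\nu$''.

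\textbf{The fix.} No shift is needed; the backward case yields to the same finiteness argument after a geometric pullback. Let $V$ be a component curve of $T^n\SSS_0$. The map $T^{-n}$ is continuous along the unstable manifold and carries it to a $u$-curve. It carries $V$ into the horizontal set $\SSS_0$. A $u$-curve meets a horizontal line in at most one point, so the unstable manifold meets $V$ in at most one point. Hence $\pi(E_k(x))$ is finite for $k<0$ as well, trivially so if the image of the leaf is a single point. The same null-fibre lemma then finishes the proof. This is how the paper treats both signs of $k$ at once, without ever invoking the semiconjugacy off $X'$.
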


Before proving Proposition \ref{prop:leaf-supp-nu}, we show how to use it to deduce Proposition \ref{prop:reg-meas}.
Consider the following sets:
\begin{equation}\label{eqn:X-endpt}
	\begin{aligned}
		X_\partial^u &:= \{ x\in X : \bx \text{ is an endpoint of } \Wul(\bx) \}, \\
		X_\partial^s &:= \{ x\in X : \bx \text{ is an endpoint of } \Wsl(\bx) \}.
	\end{aligned}
\end{equation}

\begin{lemma}\label{lem:null-endpts}
For every $\nu \in \MMM_\sigma^e(X)$ with $h_\nu(\sigma)>0$, we have $\nu(X_\partial^u) = \nu(X_\partial^s) = 0$.
\end{lemma}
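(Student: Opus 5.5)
We prove $\nu(X_\partial^u)=0$; the claim for $X_\partial^s$ follows by the symmetric argument with stable conditionals (or by applying the unstable argument to the time-reversed billiard, which is again a finite horizon Sinai billiard). The plan is to combine Proposition \ref{prop:leaf-supp-nu} with the disintegration \eqref{eqn:nux} and the non-atomicity of conditional measures from Lemma \ref{lem:non-atomic}.

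By \eqref{eqn:nux}, $\nu(X_\partial^u) = \int_X \nu_x(X_\partial^u)\,d\nu(x)$, and since $\nu(\SSS'_\nu)=1$ it suffices to show $\nu_x(X_\partial^u)=0$ for every $x\in \SSS'_\nu$. Fix such an $x$. By Proposition \ref{prop:leaf-supp-nu}, $\nu_x$ gives full weight to $\Wul(x)\cap X'$, so we only need to control $\Wul(x)\cap X'\cap X_\partial^u$.

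For $y \in \Wul(x)\cap X'$, the point $\bill{y}$ lies in $M'$, so $\pi^{-1}\bill{y}=\{y\}$. Moreover, $y$ and $x$ share the same past coding, which gives $\PPP(T^{-n}\bill{y}) = \PPP(T^{-n}\bill{z})$ for all $n\geq 0$ and any two such points $\bill{y},\bill{z}$. Hence all points $\bill{y}$ with $y\in \Wul(x)\cap X'$ lie in a single geometric local unstable manifold $W := \Wul(\bill{y})$, which is the same curve for every such $y$. This step uses \eqref{eqn:cW} and the description of $\Wul$ as the cell $\PPP_{-\infty}^0(\bill{y})$.

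The curve $W$ is a single curve with at most two endpoints, so at most two points $\bill{y}\in W$ are endpoints of $W$. By injectivity of $\pi$ on $X'$, the set $\Wul(x)\cap X'\cap X_\partial^u$ therefore has at most two elements. Lemma \ref{lem:non-atomic} applies because $h_\nu>0$ and $x\in\SSS'_\nu$, so $\nu_x$ is non-atomic, and hence this set is $\nu_x$-null. Integrating over $x$ then gives $\nu(X_\partial^u)=0$.

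The main obstacle is the identification step: checking that points of $\Wul(x)\cap X'$ really project into one geometric unstable curve whose endpoints are intrinsic, i.e. independent of the base point. This should follow because $\Wul(\bill{y})$ is defined as a partition element, so it is the same set for all $\bill{y}$ in it. A degenerate case, where $\Wul$ is a single point, is harmless: that point is then one atom, which is again $\nu_x$-null.
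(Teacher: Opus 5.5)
Your proposal is correct and follows the paper's proof: disintegrate $\nu$ along unstable sets, use Proposition~\ref{prop:leaf-supp-nu} to discard $\Wul(x)\setminus X'$, note that $\Wul(x)\cap X'\cap X_\partial^u$ has at most two points by injectivity of $\pi$ on $X'$, and remove those points using the non-atomicity of $\nu_x$ from Lemma~\ref{lem:non-atomic}. Your extra argument that every point of $\Wul(x)\cap X'$ projects into the single curve $\Wul(\pi(y))$, via equal past codings and \eqref{eqn:cW}, spells out a step the paper only asserts.
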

\begin{proof}
Note that  $\Wul(x) \cap X'$ contains at most $2$ points in $X_\partial^u$
since $\pi$ is injective on $X'$. By Lemma \ref{lem:non-atomic}, $\nu_x$ is nonatomic, so Proposition \ref{prop:leaf-supp-nu} implies that
	\[
	\nu_x(X_\partial^u) \leq \nu_x (\Wul(x) \setminus X') + \nu_x(\Wul(x) \cap X' \cap X_\partial^u)
	= 0.
	\]
Since $\nu(\SSS'_\nu)=1$, integrating over $x$ and using the disintegration formula in  \eqref{eqn:u-cond} gives $\nu(X_\partial^u) = 0$. The argument for $X_\partial^s$ is similar.
\end{proof}

Proposition \ref{prop:reg-meas} follows from  Lemma \ref{lem:null-endpts} and the observation that $\Xreg = X \setminus (X_\partial^u \cup X_\partial^s)$.
Before proceeding to the proof of Proposition \ref{prop:leaf-supp-nu}, we note the following consequence of Proposition \ref{prop:reg-meas}:

\begin{corollary}\label{cor:Yreg}
Given any $\nu\in \MMM_\sigma^e(X)$ with $h_\nu(\sigma)>0$, the set
	\begin{equation}\label{eqn:Yreg}
		\Yreg := \{ x\in \Xreg \cap \CCC_\nu : \nu_x(\Wul(x) \setminus \Xreg) = 0 \}
	\end{equation}
	has the property that $\nu(X\setminus \Yreg) = 0$.
\end{corollary}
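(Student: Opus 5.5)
By Proposition \ref{prop:reg-meas}, $\nu(X\setminus\Xreg)=0$. We want to move this from $\nu$ to almost every conditional measure $\nu_x$, and we do it with the disintegration \eqref{eqn:nux}. Apply \eqref{eqn:nux} to the Borel set $Z := X\setminus\Xreg$:
\[
0 = \nu(X\setminus \Xreg) = \int_X \nu_x(X\setminus \Xreg)\,d\nu(x).
\]
The integrand is nonnegative, so $\nu_x(X\setminus\Xreg)=0$ for $\nu$-a.e.\ $x\in\CCC_\nu$. Since $\nu_x$ is supported on $\Wul(x)$, this is the same as $\nu_x(\Wul(x)\setminus\Xreg)=0$.

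Let $G\subset \CCC_\nu$ be the set where this holds, so that $\nu(G)=1$. By construction $\Yreg = G\cap \Xreg$. Since $\nu(\CCC_\nu)=1$ and $\nu(\Xreg)=1$, we get $\nu(\Yreg)=1$, i.e.\ $\nu(X\setminus\Yreg)=0$.

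The one technical point is measurability. We need $\Xreg$ to be Borel, and we need $x\mapsto \nu_x(Z)$ to be measurable. The second follows from Lemma \ref{lem:disint}, since the conditional measures are given as limits of ratios of cylinder measures. For the first, $\Xreg = X\setminus(X^u_\partial\cup X^s_\partial)$, and each endpoint condition can be written through countable operations on the cells $Y_{v.w}$. If this is awkward to verify directly, we can instead take a Borel superset of $X\setminus\Xreg$ of $\nu$-measure zero, which exists by Proposition \ref{prop:reg-meas} applied to the completion. I expect this measurability bookkeeping to be the only real obstacle; the rest is an immediate Fubini-type argument.
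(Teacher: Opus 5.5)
Your argument is correct and matches the paper's reasoning. The paper states this corollary without proof as a consequence of Proposition \ref{prop:reg-meas}, and it writes out the same disintegration step, $0=\mu(X\setminus G)=\int\mf_x(X\setminus G)\,d\mb(x)$, explicitly for the set $\AAA$ in \eqref{eqn:AAA}. Your measurability remarks (for instance, passing to a Borel $\nu$-null superset of $X\setminus\Xreg$ if needed) are a harmless refinement that the paper leaves implicit.
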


Proposition \ref{prop:leaf-supp-nu} is an immediate consequence of the following two lemmas, since 
\[
\Wul(x) \setminus X' = \bigcup_{\bill{y} \in \Wul(\pi(x)) \setminus M'} \Wul(x) \cap \pi^{-1}(\bill{y}).
\]

\begin{lemma}\label{lem:ctbl-sing-0}
	For every $\bx\in M'$, the set $\Wul(\bx) \setminus M'$ is countable.
\end{lemma}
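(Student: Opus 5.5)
The plan is to show that $\Wul(\bx)\setminus M'$ is contained in a countable union of finite sets. Since $\bx \in M'$, the unstable manifold $\Wul(\bx)$ is a $C^1$ $u$-curve (\S\ref{sec:rectangles}). A point $\bill{y} \in \Wul(\bx)$ fails to lie in $M'$ exactly when $\bill{y} \in \SSS_n$ or $\bill{y} \in \SSS_{-n}$ for some $n \in \NN_0$. So it suffices to show that, for each fixed $n$, both $\Wul(\bx)\cap \SSS_n$ and $\Wul(\bx)\cap\SSS_{-n}$ are finite (or at least countable), and then take the union over $n$.

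For the forward singularity sets $\SSS_n = \SSS_0 \cup \SSS_n'$, I use the following facts.
\begin{itemize}
\item Since $\bx \in M^\circ$ and $\Wul(\bx)$ is a $u$-curve inside a single cell of $\PPP$, it meets $\SSS_0$ at most at its endpoints. Indeed, its interior lies in the open set $\bigcap_k T^{k}Y_{x_{-k}}$ up to endpoints, as recorded in \S\ref{sec:rectangles}.
\item The set $\SSS_n' = \bigcup_{k=1}^n T^{-k}\SSS_0$ is a union of $s$-curves. By Lemma \ref{lem:fin} applied to the backward analogues $\bbS_{-k}$, it is a finite union of component $s$-curves.
\item A $u$-curve and an $s$-curve lying over an interval of $\partial\Omega$ meet in at most one point, as noted at the start of \S\ref{sec:sing-sets}.
\end{itemize}
Together these give $\#(\Wul(\bx)\cap\SSS_n) \le 2 + \#\{\text{components of } \SSS_n'\}<\infty$. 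One detail needs checking: a component $s$-curve might wrap around the cylinder $M_i$ and so not lie over a single interval. I will handle this by splitting each component into finitely many subcurves over intervals, or by using the bound $|V|\le L$ on admissible curves.

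The backward singularity sets $\SSS_{-n}$ need a different argument, and this is the main obstacle, since they consist of $u$-curves and could in principle overlap $\Wul(\bx)$ along a segment. The key observation is that unstable manifolds cannot cross curves of $\SSS_{-\infty}$, and the interior of $\Wul(\bx)$ lies in $\bigcap_{k\ge0}T^{k}Y_{x_{-k}}$, which is disjoint from $\SSS_{-n}$ for all $n$. Hence $\Wul(\bx)\cap\SSS_{-n}$ is contained in the two endpoints of $\Wul(\bx)$. To confirm this rigorously I will use the characterization $(\Wul(\bx))^\circ \subset \bigcap_n Y^-_{x_{[-n,0]}}$, the unstable analogue of the stable statement from \cite[\S4.11]{CM06}, together with the fact that $Y^-_w \subset M_{-n}^1$ avoids $\SSS_{-n}$.

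Combining the two parts, $\Wul(\bx)\setminus M' \subset \bigcup_{n}\big((\Wul(\bx)\cap\SSS_n)\cup\partial\Wul(\bx)\big)$. This is a countable union of finite sets, so it is countable, which proves the lemma.
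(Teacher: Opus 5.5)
Your proof is correct, and its skeleton is the paper's: write $\Wul(\bx)\setminus M'$ as $\bigcup_n \Wul(\bx)\cap(\SSS_n\cup\SSS_{-n})$, and show each piece is finite using the finiteness of components from Lemma \ref{lem:fin}. Where you differ is in how a single component is controlled.

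The paper uses one mechanism throughout. For $V\in\bbS_n$ it pulls back by $T^{-n}$, so that $T^{-n}V\subset\SSS_0$ is horizontal while $T^{-n}\Wul(\bx)$ is still a $u$-curve. The two therefore meet at most once.

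You split by family. For the $s$-curves making up $\SSS_n$, you use direct $u$/$s$ transversality. This is also exactly what is needed for components in $\bbS_{-n}$: the paper's pull-back, read literally, only covers $\bbS_n$, since pulling an $s$-curve of $T^{-n}\SSS_0$ back by $T^{-n}$ does not make it horizontal. For the $u$-curves making up $\SSS_{-n}$, you use the inclusion $(\Wul(\bx))^\circ\subset\bigcap_n Y^-_{x_{[-n,0]}}$. This gives the sharper statement $\Wul(\bx)\cap\SSS_{-\infty}\subset\partial\Wul(\bx)$.

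The price of your route is reliance on the unstable analogue of the \cite{CM06} structure result quoted in \S\ref{sec:rectangles}. The paper's pull-back needs only cone invariance and the fact that a $u$-curve, being a strictly increasing graph, reaches $\{\ph=\pm\frac\pi2\}$ only at an endpoint. That same fact is the underlying reason your inclusion holds.
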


\begin{lemma}\label{lem:nonatomic-0}
Given any $\nu \in \MMM_\sigma^e(X)$ with $h_\nu(\sigma)>0$, any $x\in \SSS'_\nu$, and any $y\in \Wul(x) \cap \SSS_\nu$, we have $\nu_x(\pi^{-1}(\pi(y))) = 0$.
\end{lemma}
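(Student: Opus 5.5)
The plan is to show that the fibre $F := \Wul(x) \cap \pi^{-1}(\bill{y})$ has only subexponentially many distinct forward words, and then to play this against the exponential decay of $\nu_x$ on cylinders given by the conditional Shannon--McMillan--Breiman theorem \eqref{eqn:SMB}. Non-atomicity (Lemma \ref{lem:non-atomic}) is not enough by itself, because when $\bill{y}$ has infinitely many grazing collisions the fibre $\pi^{-1}(\bill{y})$ may be uncountable. The hypothesis $y \in \Wul(x)\cap \SSS_\nu$ only ensures that $F$ is the relevant fibre on the leaf; the estimate below works for any point $\bill{y}$.

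\emph{Step 1: counting.} Fix $n$. If $z \in \pi^{-1}(\bill{y})$, then by Definition \ref{def:coding} $\bill{y}$ lies in $\overline{Y^+_{z_{[0,n]}}}$, the closure of a connected component of $M_{-1}^{n}$ (Proposition \ref{prop:P-gen}). Such components are the complementary regions of $\SSS_{-1}\cup \SSS_n$. Near $\bill{y}$, the number of components whose closure contains $\bill{y}$ is controlled by the number of singularity curves through $\bill{y}$. By Lemma \ref{lem:lin-comp} this is at most $Kn$ for $\SSS_n$, plus a bounded number coming from the finitely many curves of $\SSS_{-1}$. Arguing as in Corollary \ref{cor:local-lin}, I expect a bound
\[
\#\{ z_{[1,n]} : z \in F \} \leq K' n
\]
for some constant $K'$ and all $n$. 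This is the step I expect to be the main obstacle. One has to check carefully that every coding of $\bill{y}$ really corresponds to a cell adjacent to $\bill{y}$, including the isolated symbols of $X$ not in $\LLL$ (Remark \ref{rmk:X-lang}). For those I would use the linear bound on $\#(\hat\LLL_n\setminus\LLL_n)$ from \cite[Lemma 3.2]{BD20}, which only adds a linear term.

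\emph{Step 2: decay of conditional measures.} Since $x \in \SSS'_\nu$, we have $\nu_x(\SSS_\nu) = 1$, so it suffices to show $\nu_x(F \cap \SSS_\nu) = 0$. Fix $\eps \in (0, h_\nu)$. For $z \in \SSS_\nu \cap \Wul(x)$ we have $\nu_z = \nu_x$, so \eqref{eqn:SMB} gives $\nu_x(\cyl{}{z_{[1,n]}}) \leq e^{-n(h_\nu - \eps)}$ for all $n \geq N(z)$. Set
\[
F_N := \{ z \in F\cap \SSS_\nu : N(z) \leq N \}.
\]
For every $n \geq N$, the set $F_N$ is covered by at most $K'n$ cylinders $\cyl{}{w}$ with $w$ of length $n$. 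Each of these cylinders that meets $F_N$ has $\nu_x$-measure at most $e^{-n(h_\nu-\eps)}$. Hence
\[
\nu_x(F_N) \leq K' n e^{-n(h_\nu - \eps)},
\]
and letting $n \to \infty$ gives $\nu_x(F_N) = 0$.

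\emph{Step 3: conclusion.} Since $F\cap\SSS_\nu = \bigcup_N F_N$ is a countable union, $\nu_x(F\cap \SSS_\nu)=0$, and therefore $\nu_x(\pi^{-1}(\pi(y))) = 0$.
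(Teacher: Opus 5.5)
Your proposal is correct and follows essentially the same argument as the paper: you bound the number of forward words coding the fibre of $\bar y$ on the leaf linearly in $n$ (via the linear-complexity count in Corollary \ref{cor:local-lin}), and play this against the exponential decay of $\nu_x$ on cylinders given by the conditional Shannon--McMillan--Breiman theorem. The differences are cosmetic. The paper uses Egorov's theorem to get uniform decay on a set of $\nu_x$-measure greater than $1-\zeta$, whereas you use the countable exhaustion $F\cap\SSS_\nu=\bigcup_N F_N$. You are also more careful than the paper about the extra cuts from $\SSS_{-1}$ and about words outside $\LLL$, both of which contribute only linear terms.
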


\begin{proof}[Proof of Lemma \ref{lem:ctbl-sing-0}]
	For every $n\in \NN$ and every $V\in \bbS_n \cup \bbS_{-n}$, 
	we claim that $\Wul(\bx) \cap V$ contains at most one point. 
	Indeed, given any $\bill{y},\bill{z} \in \Wul(\bx)$, the displacement $T^{-n}\bill{y} - T^{-n}\bill{z}$ lies in the unstable cone since $T^{-n}\Wul(\bx)$ is a $u$-curve, but $T^{-n}V$ is a horizontal line, so we can only have $\bill{y},\bill{z} \in V$ if $\bill{y} = \bill{z}$.
	We conclude that $\Wul(\bx) \cap (\SSS_{-n} \cup \SSS_n)$ is finite, and taking a union over all $n$ proves the lemma.
\end{proof}

\begin{proof}[Proof of Lemma \ref{lem:nonatomic-0}]
Fix $x\in \SSS_\nu'$. Then $\nu_x(\SSS_\nu)=1$.
Fix $\zeta>0$. There exists $Z = Z_\zeta \subset \SSS_\nu \cap \Wul(x)$ such that $\nu_x(Z) > 1-\zeta$ and the convergence in \eqref{eqn:snu-0} is uniform on $Z$. In particular, for every $h' \in (0,h_\nu)$, there exists $N\in \NN$ such that for all $n\geq N$ and $y\in Z$, we have
\[
\nu_x(\cyl{}{y_{[1,n]}}) < e^{-nh'}.
\]
Fix $n\in \NN$, let $\delta>0$ be given by Corollary \ref{cor:local-lin}, 
	and let
	\[
	\EEE_n := \{ w\in \LLL_n : \cyl{}{w} \cap \pi^{-1}(\bill{y}) \cap \Wul(x) \neq \emptyset \}
	\]
	be the set of words of length $n$ that begin a forward coding of $\bill{y} := \pi(y)$. 
	
	For each $w\in \EEE_n$, the cell $Y_w^+ \in \MMM_{-1}^{n}$ must intersect $B(\bill{y},\delta) \setminus \SSS_n$, since every $y'\in \pi^{-1}(\bill{y})$ has $y' = \lim_{k\to\infty} \pi^{-1}(\bx_k)$ for some sequence $\bx_k \in M'$ converging to $\bill{y}$.  By Corollary \ref{cor:local-lin}, the number of such cells is at most $Kn+1$, so we have
	\begin{equation}\label{eqn:Kn}
		\#\EEE_n \leq Kn+1,
		\quad\text{and}\quad
		\pi^{-1}(\bill{y}) \cap \Wul(x) \subset \bigcup_{w\in \EEE_n} \pbl w \pbr.
	\end{equation}
From this we deduce that
\begin{align*}
\nu_x(\pi^{-1}(\bill{y}))
\leq \sum_{w\in \EEE_n} \nu_x (\pbl w \pbr)
\leq \zeta + (\#\EEE_n) e^{-nh'}
\leq \zeta + (Kn+1) e^{-nh'}.
\end{align*}
Sending $n\to\infty$ shows that $\nu_x(\pi^{-1}(\pi(y))) \leq \zeta$, and since $\zeta>0$ was arbitrary, this proves the lemma.
\end{proof}

\section{Counting bounds}\label{sec:counting}

\subsection{Overview}\label{sec:overview}

Let $X$ be the shift space constructed in \S\ref{sec:coding-bill} to code the billiard map, and let $\LLL$ be the subset of the language described in \eqref{eqn:bill-lang}.
As in \S\ref{sec:construction}, we denote the topological entropy of $X$ by $h = \htop(X,\sigma) = \lim \frac 1n \log \#\LLL_n$.

In this section we establish the counting bounds necessary for the construction in \S\ref{sec:construction} to produce an MME on $X$.  These bounds will also play a role in showing that the MME is unique.
Recall from \eqref{eqn:lcb} that submultiplicativity of $\#\LLL_n$ automatically gives the lower counting bound $\#\LLL_n \geq e^{nh}$ using Fekete's lemma. 
In this section, we will prove the following results.

\begin{proposition}[Global upper bound]\label{prop:ucb}
There exists $C_4>0$ such that for every $n\in \NN$, we have $\#\LLL_n \leq C_4 e^{nh}$.
\end{proposition}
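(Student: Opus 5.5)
Since $\#\LLL_n = \#\MMM_{-1}^n$ by Proposition \ref{prop:P-gen}, I would prove the bound by following the Baladi--Demers strategy: get a \emph{supermultiplicative}-type lower bound that complements the submultiplicativity $\#\LLL_{m+n}\leq \#\LLL_m\#\LLL_n$. Concretely, the target is a constant $c>0$ with
\[
\#\LLL_{m+n} \geq c\,\#\LLL_m\,\#\LLL_n \quad\text{for all } m,n.
\]
Fekete's lemma applied to $\log(c\,\#\LLL_n)$ then gives $c\,\#\LLL_n\leq e^{nh}$, which is the claim with $C_4=c^{-1}$.

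To get supermultiplicativity I would pass through unstable curves.

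\textbf{Step 1 (a positive fraction of cells have long unstable pieces).} Fix $\delta>0$ small. For each cell $Y_w^-$, $w\in\LLL_m$, of $M_{-m}^1$, take an unstable curve $V_w$ inside it. I want to show that a fixed proportion $c_1>0$ of these cells contain a $u$-curve of length at least $\delta$.

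The tool is a growth lemma derived from the linear complexity bound (Lemma \ref{lem:lin-comp}) and uniform expansion (Lemma \ref{lem:s-contr}). Choose $n_0$ with $C_2^{-1}e^{\lambda n_0} > 2(Kn_0+1)$. By Corollary \ref{cor:local-lin}, a short $u$-curve is cut by $\SSS_{-n_0}$ into at most $Kn_0+1$ pieces, while its length is multiplied by more than $2(Kn_0+1)$. Iterating in blocks of $n_0$ shows that the cells whose unstable pieces remain shorter than $\delta$ for a long time are exponentially rare compared to the total count. This is the standard "one-step expansion beats complexity" argument.

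I then convert "few short pieces" into "many long cells" by counting. Each cell of $M_{-(m+n_0k)}^1$ arises by refining a cell of $M_{-m}^1$. The refinements that stay short number at most $(Kn_0+1)^k$ per ancestor, while the total number of cells grows like $e^{n_0kh}$.

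\textbf{Step 2 (a long curve sees almost all words).} Show that if $V\in\Vud$, then the number of components of $T^nV$, which correspond to words in $\LLL_n$ whose cylinders meet $V$, is at least $c_2\#\LLL_n$. For this I would use Proposition \ref{prop:suff-rect}. Each $V\in\Vud$ has a subcurve $V_i$ with $T^NV_i$ crossing any prescribed rectangle $R_i$. Covering $M$ by finitely many rectangles $R_1,\dots,R_\ell$ whose crossing curves "see" a fixed fraction of every $M_{-1}^n$-cell, each cell $Y_u^+$, $u\in\LLL_n$, which contains a long stable curve, gets hit by $T^{N}V$. Running Step 1 in reverse time, the cells containing long stable curves are a fixed fraction of all cells.

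\textbf{Step 3 (combine).} Given $w\in\LLL_m$ with a long unstable curve in $Y_w^-$, Step 2 yields at least $c_2\#\LLL_{n}$ words $u$ with $Y_{w.u}\neq\emptyset$, up to a fixed shift $N$. Summing over the $c_1\#\LLL_m$ good $w$ gives
\[
\#\LLL_{m+n+N}\geq c_1c_2\#\LLL_m\#\LLL_n .
\]
The loss of $N$ is absorbed using $\#\LLL_{k+N}\leq \#\LLL_k\#\LLL_N$.

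I expect Step 1 to be the main obstacle. Ruling out that most cells are thin requires care with the complexity constant near multiple singularity points, and this is exactly where \cite{BD20} needed sparse recurrence. I would first try to avoid it by using only the crude bound $Kn_0+1$ for a fixed $n_0$. That bound is subexponential relative to the expansion $e^{\lambda n_0}$, so it should suffice without any assumption relating $h$ to $s_0\log 2$.
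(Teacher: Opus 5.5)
Your overall architecture is the paper's. Step 1 is Proposition~\ref{prop:Bn-Gn}, Step 2 is Proposition~\ref{prop:LV-geq-L}, Step 3 is Proposition~\ref{prop:qsm}, and your Fekete step is the same. The gap is in Step 1, where you pass from curve-level control (few pieces of $T^nV$ stay short) to the cell-level claim that a fixed proportion of all words in $\LLL_n$ are long. You bound the cells at time $m+n_0k$ that have stayed short since time $m$ by $\#\LLL_m(Kn_0+1)^k$, and compare this with ``the total number of cells grows like $e^{n_0kh}$''.

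Nothing available gives $\#\LLL_{m+n_0k}\ge c\,e^{n_0kh}\,\#\LLL_m$. Fekete only gives $\#\LLL_n\ge e^{nh}$, submultiplicativity goes the wrong way, and a uniform relative growth bound of this kind is essentially the upper bound you are trying to prove. Per ancestor it is simply false, since a short ancestor may have only $(Kn_0+1)^k$ descendants. Your count also omits short cells whose last long ancestor occurs at an intermediate time and which fragment afterwards. These must be charged to long cells at earlier times $j<n$, which again requires comparing $\#\GGG^{u,\delta}_j$ with $\#\LLL_n$. (Minor: under $T^{n_0}$ a $u$-curve is cut by $\SSS_{n_0}$, not $\SSS_{-n_0}$.) As written, Step 1 is circular.

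What is missing is a comparison of short and long cells that does not presuppose any growth rate for $\#\LLL_n$. The paper gets this from singularity curves:
\begin{itemize}
\item Apart from at most $C_7$ exceptional cells ($\bbI_k$) and curves ($\bbS_k'$) per order, a short cell of order $k$ has an entire short component of $T^k\SSS_0$ in its boundary, at most $2$-to-$1$ (Lemmas~\ref{lem:noncrossing}, \ref{lem:continuation}, Proposition~\ref{prop:bdry-comp}).
\item The number of such components is at most $b^{-1}|T^k\SSS_0|\le b^{-1}C_2e^{-\lambda(n+1-k)}|T^{n+1}\SSS_0|$ (Proposition~\ref{prop:average-length}, Lemma~\ref{lem:s-contr}).
\item $|T^{n+1}\SSS_0|$ is at most a constant times the number of long components of $T^{n+1}\SSS_0$ (Proposition~\ref{prop:control-bad}). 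Each such component bounds long cells, with at most $2L/\delta$ per cell (Lemma~\ref{lem:cells-curves}).
\end{itemize}

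Alternatively, your framework can be repaired by comparing with \emph{long} cells, which provably multiply at rate $e^{\lambda t}$, rather than with an assumed rate $e^{ht}$. By Proposition~\ref{prop:control-bad} and Lemma~\ref{lem:s-contr}, $\#\GGG^{u,\delta}_{j+t}\ge c\,e^{\lambda t}\,\#\GGG^{u,\delta}_j$ for $t\ge n_1$. Decomposing each $w\in\LLL_n$ at its last long prefix, and using as you do that short cells have few descendants, gives $\#\LLL_n\le Ce^{\alpha n}+C\sum_{j\le n}\#\GGG^{u,\delta}_j e^{\alpha(n-j)}$. With $\alpha<\min(h,\lambda)$, this yields $\#\GGG^{u,\delta}_{n+n_1}\ge c'\,\#\LLL_n\ge c'\,\#\LLL_{n+n_1}/\#\LLL_{n_1}$ for large $n$. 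One of these two arguments has to be supplied to close Step 1.
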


\begin{proposition}[Local lower bound]\label{prop:loc-lcb}
For every $\delta>0$ there exists
$q := q_\delta > 0$ such that if $x\in X$ is such that $|\Wul(\bx)| \geq \delta$, then
\[
\# \{w \in \LLL_n : \cyl{}{w} \cap \Wul(x) \neq \emptyset \} \geq q e^{nh}
\quad\text{for all }
n\in \NN.
\]
\end{proposition}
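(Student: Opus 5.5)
Here $\cyl{}{w} \cap \Wul(x) \neq \emptyset$ means that $\pi(\Wul(x))$ meets the cell $\overline{Y_w^+}$. So the quantity to bound from below is the number of cells of $\MMM_{-1}^n$ that the unstable curve $V := \Wul(\bx)$ meets. Up to a bounded number of boundary effects, this is the number of connected components of $T^n(V \setminus \SSS_n)$. The plan is to compare this count with $\#\LLL_n$, following the argument of \cite[\S5]{BD20}. The key ingredients are the growth rate of the total length $|T^n V|$ and the sufficient rectangles of Proposition \ref{prop:suff-rect}.

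\emph{Step 1: reduce to a finite family of model curves.} Take the rectangles $R_0,\dots,R_\ell$ and the time $N$ that are $\delta$-sufficient, as in Proposition \ref{prop:suff-rect}. For $V \in \Vud$, there is a subcurve $V_i \subset V$ such that $T^N V_i$ crosses $R_i$, for each $i$. The stable holonomy inside a Cantor rectangle sends unstable curves crossing $R_i$ to one another. Two crossing curves whose points share stable manifolds are coded by the same forward cylinders, because of \eqref{eqn:cW} and the fact that $\Wsl(\bill{z})$ lies inside a single cylinder $\cyl{}{w}$ for every $n$. Hence the number of words of length $n$ seen by a crossing curve of $R_i$ is at least the number seen by the Cantor set $R_i \cap W$ for any fixed crossing curve $W$. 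So it suffices to show that
\[
c_n(R_i) := \#\{ w \in \LLL_n : \cyl{}{w} \cap c(R_i) \cap \Wul(z_i) \neq \emptyset \} \geq q' e^{nh}
\]
for fixed reference points $z_i$. We then obtain the bound for $V$ with $n$ replaced by $n-N$, which only costs the factor $e^{-Nh}$.

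\emph{Step 2: lower bound for the model counts.} Every word $w \in \LLL_n$ corresponds to a cell $Y_w^+$, which is foliated by stable curves. By the rectangle cover, a definite proportion of cells must contain a stable curve that fully crosses one of the $R_i$. I would establish this using the same sufficiency property applied to $T^{-1}$ and stable curves (the symmetric version of Proposition \ref{prop:suff-rect}), together with the fact that short cells are controlled by linear complexity (Lemma \ref{lem:lin-comp}). Each such crossing produces a point of $c(R_i)$ in $\cyl{}{w}$, and its holonomy image lies on $\Wul(z_i)$. This would give $\sum_i c_n(R_i) \geq c\,\#\LLL_{n-n_0} \geq c\, e^{-n_0 h} e^{nh}$ using \eqref{eqn:lcb}. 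To pass from the sum to each individual $i$, I would use item (2) of Proposition \ref{prop:suff-rect}: every $R_i$ maps across every $R_j$ in time $N$, so $c_{n+N}(R_i) \geq \max_j c_n(R_j)$.

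\emph{Main obstacle.} The hard step is the claim in Step 2 that a positive proportion of the $n$-cells contain long stable curves, i.e.\ that the fragmentation of $T^{-n}$-images does not produce mostly short pieces. Here I would follow the Baladi--Demers growth lemma. Choose $n_0$ so that linear complexity $Kn_0$ is beaten by the expansion $e^{\lambda n_0}/C_2$ from Lemma \ref{lem:s-contr}. Then, iterating in blocks of length $n_0$, the number of short pieces is at most a fixed fraction of the long ones, plus a subexponential error. Combining this with the upper bound of Proposition \ref{prop:ucb} ($\#\LLL_n \leq C_4 e^{nh}$) shows that the long pieces number at least a constant times $e^{nh}$. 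I expect all the difficulty to lie in making this comparison uniform in $n$ without any sparse recurrence hypothesis.
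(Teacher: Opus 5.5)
Your skeleton matches the paper's proof (Proposition \ref{prop:LV-geq-L} via Proposition \ref{prop:Bn-Gn}): show a positive proportion of all cells are long, use sufficient rectangles to transfer this count to an arbitrary long unstable curve, and finish with \eqref{eqn:lcb}. The gap is the step you flag yourself: showing that a positive proportion of \emph{all} $\#\LLL_n$ cells contain a curve of length $\geq\delta$.

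\textbf{The block growth lemma does not give this.} Expansion beating linear complexity over blocks of length $n_0$ controls the pieces of the iterates of one fixed curve $V$. Its output is $\#\GGG_n^\delta(V)\geq b_3\#\LLL_n^V$ (Proposition \ref{prop:control-bad}). That compares long pieces to the words seen by $V$, not to $\#\LLL_n$. Cells of $\MMM_{-1}^n$ are not the pieces of a single curve, and there is no additive length functional on them to track. Lemma \ref{lem:lin-comp} only bounds how many cells meet a small ball, not how many short cells there are globally. Passing from $\#\LLL_n^V$ to $\#\LLL_n$ is exactly what you are trying to prove.

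\textbf{The missing idea is the boundary-component argument} of \S\ref{sec:bdry-comp}--\S\ref{sec:long-cells}.
By the noncrossing and continuation lemmas, every short cell, apart from at most $C_7(n+1)$ exceptions, has an entire short component curve of some $T^k\SSS_0$ in its boundary (Proposition \ref{prop:bdry-comp}), and this assignment is at most $2$-to-$1$.
Hence $\#\BBB_n^{u,\delta}$ is bounded by the number of pieces of images of the finitely many curves in $\bbS_1^1$. That number is at most a constant times $|T^{n+1}\SSS_0|$, by Proposition \ref{prop:average-length} and Lemma \ref{lem:s-contr}.
Conversely, the growth lemma applied to these finitely many curves makes $|T^{n+1}\SSS_0|$ comparable to the number of long pieces of $T^{n+1}\SSS_0$. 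Up to $C_7$ exceptions, each such piece lies wholly in the boundary of a long cell, with at most $2L/\delta$ of them per cell.
This yields $\#\GGG_n^{u,\delta}\geq b_4\#\LLL_n$, and only then can the rectangle argument be run.

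\textbf{Invoking Proposition \ref{prop:ucb} is circular here.} The paper derives the upper bound from Proposition \ref{prop:qsm}, which uses Proposition \ref{prop:LV-geq-L}, the stronger form of the statement you are proving. It is also unnecessary: once short cells are at most a constant multiple of long ones plus a subexponential error, \eqref{eqn:lcb} alone gives at least $c\,e^{nh}$ long cells.

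\textbf{Your Cantor-set reduction is also flawed.} A cell $Y_w^+$ containing a stable curve that crosses $D$ need not contain any point of the Cantor rectangle. It can sit in a gap of $R_i\cap W$ cut out by higher-order singularity curves. So ``each such crossing produces a point of $c(R_i)$ in $\cyl{}{w}$'' fails, and the inequality $c_{n+N}(R_i)\geq\max_j c_n(R_j)$ has the same problem.
This is easy to repair by dropping the Cantor set. Any unstable curve crossing $D$ meets every stable curve crossing $D$. So you can directly count the long cells crossed by $T^N V_i$, choosing $i=i_n$ by pigeonhole, as the paper does.
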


The following is an immediate consequence of Propositions \ref{prop:ucb} and \ref{prop:nondegen}:

\begin{corollary}\label{cor:finite}
The measures $\mf$, $\mb$, and $\mu$ are all finite.
\end{corollary}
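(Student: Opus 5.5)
Corollary \ref{cor:finite} asks that $\mf$, $\mb$, and $\mu$ all be finite, and this should follow by checking the hypothesis of Proposition \ref{prop:nondegen} and then feeding it through Proposition \ref{prop:inv-MME}. The one point needing care is which language the counting bound refers to. Proposition \ref{prop:ucb} bounds $\#\LLL_n$ for the sublanguage $\LLL$ from \eqref{eqn:bill-lang}. The Hausdorff measures $m^\pm$ on $X^\pm$, however, are defined using covers by cylinders of words in the full language $\hat\LLL$ of $X=\overline{c(M)}$. So the first step is to pass from $\LLL$ to $\hat\LLL$.

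By Remark \ref{rmk:X-lang}, citing \cite[Lemma 3.2]{BD20}, $\#(\hat\LLL_n\setminus\LLL_n)$ grows at most linearly, say $\#(\hat\LLL_n\setminus\LLL_n)\le an+b$. Combining this with Proposition \ref{prop:ucb} gives
\[
\#\hat\LLL_n \le C_4e^{nh}+an+b .
\]
Since $h>0$, the term $(an+b)e^{-nh}$ is bounded uniformly in $n$. Hence there is a constant $C_1$ with $\#\hat\LLL_n\le C_1e^{nh}$ for all $n$, which is exactly \eqref{eqn:ucb}. Here $h>0$ holds because $h_{\mathrm{top}}(T)>0$ for dispersing billiards, as in \cite{BD20}. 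It can also be seen directly: the Liouville measure has positive entropy, and Corollary \ref{cor:ent-pres} transfers this to $X$.

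If instead one reads Proposition \ref{prop:ucb} as already stated for the full language, this first step is unnecessary.

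With \eqref{eqn:ucb} established, Proposition \ref{prop:nondegen} gives $\mf(X^+)\le C_1$ and $\mb(X^-)\le C_1$. Then Proposition \ref{prop:inv-MME}(1) gives $\mu(X)\le\mb(X^-)\,\mf(X^+)\le C_1^2<\infty$.

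The only real obstacle is the language mismatch handled in the first step. Everything after that is direct citation of earlier results.
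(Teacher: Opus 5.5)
Your proposal is correct and follows the paper's route: the paper treats this corollary as an immediate consequence of Proposition \ref{prop:ucb} and Proposition \ref{prop:nondegen}, with $\mu(X)\le \mb(X^-)\,\mf(X^+)$ as in the proof of Proposition \ref{prop:inv-MME}. Your extra step, which passes from the sublanguage $\LLL$ to the full language of $X$ using the linear bound from \cite[Lemma 3.2]{BD20}, is the same argument the paper gives later in Corollary \ref{cor:counting-big-language}, so it usefully makes explicit a point the paper's one-line justification leaves implicit.
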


The bulk of the work in \S\S\ref{sec:good-bad-ugly}--\ref{sec:long-cells} goes into proving the following result, which we use in \S\ref{sec:qsm} to prove Propositions \ref{prop:ucb} and \ref{prop:loc-lcb}, as well as Theorem \ref{thm:length-growth}.

\begin{proposition}\label{prop:LV-geq-L}
For every sufficiently small $\delta>0$, there exists $b_0 > 0$ such that 
given any $u$-curve $V$ with $|V| \geq \delta$, and any $n\in \NN$, we have
\begin{equation}\label{eqn:LV-geq-L}
\# \{ w\in \LLL_n : V \cap T^{-1} Y_w^+ \neq \emptyset \}
\geq b_0 \#\LLL_n.
\end{equation}
\end{proposition}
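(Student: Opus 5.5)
The plan is to reduce \eqref{eqn:LV-geq-L} to two ingredients. The first is a purely combinatorial statement that a definite proportion of the cells $Y_w^+$, $w\in\LLL_n$, are \emph{long in the stable direction}. The second is the geometric mixing provided by Proposition \ref{prop:suff-rect}, which lets a long $u$-curve reach every long stable cell after a bounded number of iterates.

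Fix $\delta$ small and let $\RRR$, $R_0$, and the $\delta$-sufficient family $R_1,\dots,R_\ell$ with time $N$ be as in Proposition \ref{prop:suff-rect}. Call $w\in\LLL_n$ \emph{$s$-long} if $Y_w^+$ contains a stable curve that crosses one of the rectangles $R_0,\dots,R_\ell$. Using the time-reversed version of Proposition \ref{prop:suff-rect}, it suffices that $Y_w^+$ contain an $s$-curve of length at least some $\delta'$, because long $s$-curves cross one of finitely many rectangles.

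The geometric step goes as follows. Let $V\in\Vud$ and let $w\in\LLL_n$ be $s$-long, with a stable curve in $Y_w^+$ crossing $R_i$. Proposition \ref{prop:suff-rect} gives a subcurve $V_i\subset V$ such that $T^N V_i$ crosses $R_i$. By the product structure of the solid rectangle, the $u$-curve $T^NV_i$ meets that stable curve, so $T^NV$ meets $Y_w^+$. Reading off the itinerary of the intersection point over times $1,\dots,N$ produces a word $uw$ with $V\cap T^{-1}Y^+_{uw}\neq\emptyset$, where $|u|=N$ and the point is chosen in an open set so that it avoids the singularities. Distinct $w$ give distinct $uw$. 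Hence
\[
\#\{w'\in\LLL_{n+N}: V\cap T^{-1}Y_{w'}^+\neq\emptyset\}\geq \#\{w\in\LLL_n : w \text{ is $s$-long}\}.
\]
By submultiplicativity, $\#\LLL_{n+N}\leq \#\LLL_N\,\#\LLL_n$, so a lower bound $\#\{s\text{-long}\}\geq c\,\#\LLL_n$ yields \eqref{eqn:LV-geq-L} for lengths at least $N$ with $b_0=c/\#\LLL_N$. The finitely many lengths below $N$ are handled by shrinking $b_0$. This uses that $V$ meets at least one cell, which follows from density of $M_{-1}^{n}$.

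The main obstacle is the combinatorial claim that the $s$-long words make up a fixed proportion $c>0$ of $\LLL_n$ for every $n$, and this is where I would follow \cite{BD20} without the sparse recurrence condition. I would look at the forward cells in reverse: cut $Y_w^+$ backwards and record the last time $k\le n$ at which the relevant piece was long. The part of the word after time $k$ then consists of pieces that were repeatedly cut short. Using Lemma \ref{lem:lin-comp} and Corollary \ref{cor:local-lin}, a short piece can be subdivided into at most $Kn_0+1$ cells over a block of $n_0$ steps. Uniform expansion, Lemma \ref{lem:s-contr}, forces a piece of length below $\delta$ to become long again within a bounded number of blocks, unless it is cut.

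Summing over $k$ would give an estimate of the form $\#\LLL_n\le \sum_k \#\{s\text{-long in }\LLL_k\}\cdot C(Kn_0+1)^{(n-k)/n_0}$. Since $(Kn_0+1)^{1/n_0}\to1<e^{h}$ and $\#\LLL_k\le\#\LLL_n e^{-(n-k)h}$ holds up to constants, the geometric series converges. This should show that the $s$-long words are at least $c\,\#\LLL_n$, provided one also has $\#\LLL_k\#\LLL_{n-k}\lesssim\#\LLL_n$, or else an argument that bypasses it.

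Controlling this without circularly assuming the upper counting bound (Proposition \ref{prop:ucb}) is the delicate point. I would first try to compare ratios $\#\{s\text{-long in }\LLL_k\}/\#\LLL_k$ directly, in the style of the ``good/bad'' decomposition of \cite{BD20}, rather than absolute counts.
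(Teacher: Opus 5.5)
\textbf{There is a genuine gap: the combinatorial claim that carries the proof is not established, and your sketch for it is circular.}

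Your geometric reduction is the same as the paper's. Long cells cross one of the finitely many $\delta$-sufficient rectangles, and a subcurve of $V$ reaches each rectangle after $N$ steps. So $\#\LLL_{n+N}^V$ is at least the number of long cells at time $n$, and submultiplicativity converts this into \eqref{eqn:LV-geq-L}. Your injective map $w\mapsto uw$ even avoids the paper's pigeonhole loss of $\ell^{-1}$.

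The gap is the step you flag as the main obstacle: that long cells make up a fixed proportion of $\LLL_n$. After time reversal this is Proposition~\ref{prop:Bn-Gn}, i.e.\ $\#\GGG_n^{u,\delta}\ge b_4\#\LLL_n$, and it is where most of \S\ref{sec:counting} goes.
\begin{itemize}
\item Your sketch uses $\#\LLL_k\,\#\LLL_{n-k}\le C\,\#\LLL_n$. That is Proposition~\ref{prop:qsm}, which the paper deduces from the statement you are proving together with Proposition~\ref{prop:Bn-Gn}, so the argument is circular.
\item Even granting it, your sum only shows that long cells are abundant at \emph{some} time in a window $(n-m,n]$, because the terms with $k$ near $n$ dominate. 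To reach time $n$ itself you must compare numbers of \emph{long} cells at different times, and comparing $\#\LLL_k$ with $\#\LLL_n$ does not do that.
\end{itemize}

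The idea you are missing is to stop comparing word counts and pass through lengths of singularity curves instead. The paper's chain is:
\begin{enumerate}
\item Apart from at most $C_7$ exceptional cells per order (Lemma~\ref{lem:bad-curve-cell}), a cell of $\MMM_{-(n+1)}^0$ of order $k$ has some $V\in\bbS_k$ as a \emph{full} boundary component (Lemma~\ref{lem:bdry-comp}, Proposition~\ref{prop:bdry-comp}). This $V$ is short if the cell is short, and each such $V$ bounds at most two cells; this gives \eqref{eqn:cell-leq-curve}.
\item Since $T\SSS_0$ is a finite union of long $u$-curves, Proposition~\ref{prop:average-length} bounds the number of pieces of $T^k\SSS_0$ by a constant times $|T^k\SSS_0|$.
\item Lemma~\ref{lem:s-contr} sums these over $k\le n+1$ to a constant times $|T^{n+1}\SSS_0|$.
\item Proposition~\ref{prop:control-bad} bounds $|T^{n+1}\SSS_0|$ by a constant times the number of long pieces.
\item Each long piece outside $\bbS_{n+1}'$ is a full boundary component of a long cell, with at most $2L/\delta$ of them per cell; this gives \eqref{eqn:curve-leq-cell}.
\end{enumerate}
Only the expansion rate $\lambda$ and the subexponential fragmentation from Corollary~\ref{cor:local-lin} enter, never $h$, which is how circularity is avoided.

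Your own route could plausibly be repaired in the same spirit. In the $Y_w^-$ picture, each long cell at time $k$ contains some $V\in\Vuqd$. Proposition~\ref{prop:control-bad} with Lemma~\ref{lem:s-contr} then gives at least $c\,e^{\lambda m}$ distinct long cells at time $k+m$ extending it, for $m\ge n_1$. With fragmentation rate $\alpha<\lambda$, your sum is then at most a constant times the number of long cells at time $n+n_1$, and submultiplicativity finishes. However, this needs a cell version of Lemma~\ref{lem:tempered}: short cells lie in small balls and are cut by $\SSS_{n_0}$ into at most $Kn_0+1$ connected cells. You have not established this, and the paper avoids it by working only with curves.
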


 The local lower bound in Proposition \ref{prop:loc-lcb} follows immediately from Proposition \ref{prop:LV-geq-L} and \eqref{eqn:lcb}. In \S\ref{sec:positive}, we use this bound to show that every open set with no isolated points has positive measures with respect to each of $\mf$, $\mb$, and $\mu$.  

\begin{remark}
The bounds in this section are analogous to those in \cite{BD20}, and our arguments largely follow the approach there.  In particular, one could obtain the global upper bound from \cite[Proposition 4.6]{BD20} and the local lower bound from \cite[Lemma 5.5]{BD20}.  We include full proofs to emphasize that we do not require the sparse recurrence property, and to give simpler arguments in places where 
the use of anisotropic Banach spaces required
\cite{BD20} to prove more complicated bounds.
\end{remark}

\subsection{Notation: the good, the bad, and the ugly}\label{sec:good-bad-ugly}

Recall that $\Vu$ denotes the set of all $u$-curves, and $\Vud$ the set of all $u$-curves with length at least $\delta$. Now we write $\Vuq$ for the set of all $V\in \Vu$ such that $V$ is contained in some $Y_a\in \QQQ$, and $\Vuqd$ for the set of all $V\in \Vuq$ with $|V| \geq \delta$.

Given $w\in \LLL_n$ and $V\in \Vuq$, we consider the set of ``follower'' words
\begin{equation}\label{eqn:LnV}
\LLL_n^V = \{w \in \LLL_n : |V_w| > 0 \},
\quad\text{where }
V_w := V \cap T^{-1}Y_w^+.
\end{equation}
That is, $V_w$ denotes the subcurve of $V$ whose next $n$ iterates are coded by $w$,
in the sense that $T^j V_w \subset Y_{w_j}$ for $1\leq j\leq n$.
We refer to the curves $T^j V_w$ for $1\leq j < n$ as \emph{intermediate images}, and the curve $T^n V_w$ as the \emph{final image}.
Observe that
\[
T^nV_w \subset T^{n-1}Y_w^+ = Y_w^- \subset Y_{w_n} \in \MMM_{-1}^1,
\]
so the final image $T^nV_w$ is contained in the cell $Y_{w_n}$ of $\QQQ$. In particular, $T^nV_w \in \Vuq$. Since $T$ is continuous on this cell, $T^{n+1}V_w$ is still a single curve, but it may no longer be in $\Vuq$, and
for $k>n+1$, the image $T^kV_w$ can be cut into multiple pieces by the singularities of the billiard map.

For a given length threshold $\delta>0$, we will partition $\LLL_n^V$ into \emph{good} words (long final image) and \emph{bad} words (short final image):
\begin{equation}\label{eqn:good-bad}
	\begin{split}
\GGG_n^\delta(V) & := \{ w\in \LLL_n^V  : |T^{n} V_w| \geq \delta \}, \\
\BBB_n^\delta(V) & := \LLL_n^V \setminus \GGG_n^\delta(V)
= \{w\in \LLL_n^V : |T^{n} V_w| < \delta \}.
	\end{split}
\end{equation}
We also consider ``ugly'' or ``ungrowing'' words, with short intermediate images:
\begin{equation}\label{eqn:ugly}
\UUU_n^\delta(V) := \{w\in \LLL_n^V : w_{[1,k]} \in \BBB_k^\delta(V) \text{ for all } 1\leq k < n \}.
\end{equation}

\begin{remark}\label{rmk:notation}
The image curves $T^{n}V_w$ here play the role of the `descendant' curves used in \cite[\S5]{BD20}, although we use u-curves and they use s-curves. Two further  differences are worth noting. First, we do not subdivide artificially: the curves $T^{n} V_w$ could have length much greater than the threshold $\delta$, unlike in \cite{BD20}. Second, our notation differs from theirs, and in fact is nearly reversed: we already used $\LLL_n$ to refer to the shift space's language, so we use $\LLL_n^V$ to refer to the set of all follower words, and use $\GGG_n^\delta(V)$ to refer to the ``good'' curves,\footnote{This also follows the standard notation from the first author's work with Dan Thompson on non-uniform specification properties \cite{CT12,CT21}, some of whose techniques are echoed below.} whereas \cite[\S5]{BD20} uses $L_n(W)$ for ``long'' curves, and $\GGG_n(W)$ for the set of all descendant curves.
\end{remark}

Every follower word is either ugly itself or can be decomposed into a good beginning and an ugly end (which may be empty):

\begin{lemma}[Decomposition]\label{lem:decomp}
Fix $\delta>0$ and $V\in \Vuq$. 
Given $n\geq 0$ and any $w\in \LLL_n^V \setminus\UUU_n^\delta(V)$,
there exists $k \in \{1,\dots, n\}$ such that $w_{[1,k]} \in \GGG_k^\delta(V)$.
For the maximal such $k$, write $u := w_{[1,k]}$ and $v := w_{(k,n]}$. Then $v \in \UUU_{n-k}^\delta(T^kV_u)$.
In particular, we have
\begin{equation}\label{eqn:decomp}
\LLL_n^V = \UUU_n^\delta(V)
\cup \bigcup_{k=1}^n \bigcup_{u\in \GGG_k^\delta(V)} \{uv : v\in \UUU_{n-k}^\delta(T^k V_u) \}.
\end{equation}
\end{lemma}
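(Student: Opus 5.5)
The proof is a direct unwinding of the definitions in \eqref{eqn:good-bad} and \eqref{eqn:ugly}, together with the identity
\[
V_{uv} = V_u \cap T^{-k}Y_v^+
\quad\text{for } u\in\LLL_k,
\]
which says that the subcurve of $V$ coded by $uv$ is the subcurve of $V_u$ whose next $n-k$ iterates after time $k$ are coded by $v$.

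\textbf{Existence of $k$.} Fix $w\in\LLL_n^V\setminus\UUU_n^\delta(V)$. By \eqref{eqn:ugly}, $w$ is not ugly precisely when there is some $1\le k<n$ with $w_{[1,k]}\notin\BBB_k^\delta(V)$.
\begin{itemize}
\item First, $w_{[1,k]}\in\LLL_k^V$, because $V_{w}\subset V_{w_{[1,k]}}$ (factoriality of the cells, Lemma \ref{lem:lang}) and $|V_w|>0$.
\item Since $\LLL_k^V=\GGG_k^\delta(V)\sqcup\BBB_k^\delta(V)$, this gives $w_{[1,k]}\in\GGG_k^\delta(V)$.
\end{itemize}
So the set of admissible $k$ is nonempty, and we take $k$ maximal in $\{1,\dots,n\}$ (allowing $k=n$, in which case $v$ is empty).

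\textbf{The tail is ugly.} Put $u=w_{[1,k]}$, $v=w_{(k,n]}$, and $V'=T^kV_u$. By the observation after \eqref{eqn:LnV}, $V'\subset Y_{w_k}$, so $V'\in\Vuq$ and the follower sets for $V'$ make sense. We check the two conditions for $v\in\UUU_{n-k}^\delta(V')$.
\begin{itemize}
\item \emph{$v$ is a follower word.} Using $Y_{w}^+ = Y_u^+\cap T^{-k}Y_v^+$ (from \eqref{eqn:Yw+}), we get
\[
T^kV_w = V'\cap T^{-1}Y_v^+ = V'_v .
\]
Because $T^k$ is a diffeomorphism on $V_u$, which lies inside a single cell, $|V'_v|>0$. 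Hence $v\in\LLL_{n-k}^{V'}$.
\item \emph{Intermediate images are short.} For $1\le j<n-k$, the same identity gives
\[
T^{j}V'_{v_{[1,j]}} = T^{k+j}V_{w_{[1,k+j]}} .
\]
By maximality of $k$, we have $w_{[1,k+j]}\in\BBB_{k+j}^\delta(V)$, so this image has length $<\delta$. That is exactly $v_{[1,j]}\in\BBB_j^\delta(V')$.
\end{itemize}
This proves $v\in\UUU_{n-k}^\delta(T^kV_u)$, and \eqref{eqn:decomp} follows because the inclusion ``$\supseteq$'' is immediate.

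\textbf{Main obstacle: conventions at the endpoints.} The only delicate point is the edge cases.
\begin{itemize}
\item When $k=n$, we need $\UUU_0^\delta$ to contain the empty word. For $n-k=1$, the ugly condition is vacuous, which is consistent.
\item The definition of $\UUU$ quantifies only over $k<n$, so a word whose only long image is the final one (at time $n$) counts as ugly. It is therefore already covered by the first term of \eqref{eqn:decomp}.
\item The real check is the identity $T^kV_{uv}=(T^kV_u)_v$. It relies on $V_u\subset T^{-1}Y_u^+$, on which $T^k$ is continuous and injective, so that lengths and nonemptiness transfer between $V_{uv}$ and $(T^kV_u)_v$.
\end{itemize}
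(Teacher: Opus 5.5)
Your proof is correct and follows essentially the paper's argument. You get existence of $k$ directly from the definition of $\UUU_n^\delta(V)$, then use the identity $(T^kV_u)_{v_{[1,j]}} = T^kV_{uv_{[1,j]}}$ together with maximality of $k$ to show that the tail's intermediate images are short; you are also somewhat more careful than the paper about membership in $\LLL^V$, the empty-word case $k=n$, and the reverse inclusion. One small index slip: your opening display should read $V_{uv} = V_u \cap T^{-(k+1)}Y_v^+$ rather than $V_u\cap T^{-k}Y_v^+$, although the form you actually use later, $T^kV_{uv} = (T^kV_u)\cap T^{-1}Y_v^+$, is correct.
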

\begin{proof}
Fix $w \in \LLL_n^V \setminus \UUU_n^\delta(V)$.
The claim regarding existence of $k$ follows from the definition of $\UUU_n^\delta(V)$ in \eqref{eqn:ugly}.
For the maximal $k\in \{1,\dots, n\}$ with $|T^{k} V_{w_{[1,k]}}| \geq \delta$, maximality implies that $|T^{k+j} V_{w_{[1,k+j]}}| < \delta$ for all $1 \leq j \leq n-k$.  
Writing $u = w_{[1,k]}$ and $v = w_{(k,n]}$, we have
$(T^k V_u)_{v_{[1,j]}} = T^kV_{uv_{[1,j]}}$, so for  $1\leq j \leq n-k$, we have
\[
|T^j(T^k V_u)_{v_{[1,j]}}| = |T^{k+j}V_{uv_{[1,j]}}| = |T^{k+j}V_{w_{[1,k+j]}}| < \delta,
\]
and we conclude that $v\in \UUU_{n-k}^\delta(T^kV_u)$, as claimed.
\end{proof}

The linear complexity bound in Corollary \ref{cor:local-lin} has the following consequence:

\begin{lemma}[Slow fragmentation]\label{lem:tempered}
For every $\alpha >0$, there exist $\rho>0$ and $C_5 > 0$ such that for every $V\in \Vuq$ and $n\geq 0$, we have $\#\UUU_n^\rho(V) \leq C_5 e^{\alpha n}$.
\end{lemma}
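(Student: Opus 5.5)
Fix $\alpha>0$. I will pick $m\in\NN$ with $\log(Km+1)/m<\alpha/2$, where $K$ is the linear complexity constant from Lemma \ref{lem:lin-comp}. Corollary \ref{cor:local-lin}, applied with this $n=m$, gives $\delta_m>0$ such that every ball of radius $\delta_m$ minus $\SSS_m$ has at most $Km+1$ connected components. I then choose $\rho\le\delta_m$ small enough that any $u$-curve of length $<\rho$ lies in such a ball. Since $T$ has bounded derivative on each cell of $\QQQ$, I can also shrink $\rho$ so that the images $T^jW$, $0\le j\le m$, of such a curve lie in a $\delta_m$-ball, for each subpiece on which $T^j$ is continuous.

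The key step is a one-block estimate. Let $W\in\Vuq$ with $|W|<\rho$. Then the number of words $v\in\LLL_m$ with $|W_v|>0$ is at most $Km+1$. The reason is that distinct $v$ correspond to distinct cells $T^{-1}Y_v^+$, and these are connected components of $M_0^{m+1}$, bounded by curves in $\SSS_{m+1}$. The curve $W$ lies in a small ball, so it can only meet as many of these components as there are components of the ball minus the singularity set. Here I would apply Corollary \ref{cor:local-lin} to $T W$ (or index-shift so that it matches $\SSS_m$), with $m+1$ in place of $m$ if needed; this only changes constants. The same bound holds for fewer than $m$ steps.

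I then iterate along an ugly word. Let $w\in\UUU_n^\rho(V)$ and write $n=qm+r$ with $0\le r<m$. For each $1\le k<n$, the intermediate image $T^kV_{w_{[1,k]}}$ has length $<\rho$. At time $0$, however, $V$ itself may be long. So I treat the first block separately: $V$ has length at most $L$, so it is covered by $\lceil L/\rho\rceil$ pieces of length $<\rho$, and hence the number of choices for the first $m$ symbols is at most $\lceil L/\rho\rceil(Km+1)$. After that, at each block boundary $k=jm$ with $j\ge1$, the current curve $T^{k}V_{w_{[1,k]}}$ is a single $u$-curve in $\Vuq$ of length $<\rho$. By the one-block estimate, it admits at most $Km+1$ continuations by a word of length $m$, and at most $Km+1$ continuations for the final partial block as well. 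Multiplying over the blocks gives
\[
\#\UUU_n^\rho(V)\le \lceil L/\rho\rceil (Km+1)^{q+1}\le C_5 e^{\alpha n},
\]
where $C_5:=\lceil L/\rho\rceil(Km+1)$ absorbs the last factor and uses $(Km+1)^{q}\le e^{\alpha qm/2}$.

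The main obstacle is justifying the one-block estimate carefully. A short curve meets only components of $M\setminus\SSS$ near a point, but the cells $T^{-1}Y_v^+$ are defined through $T^{-1}$ and must be compared with the singularity sets $\SSS_{m}$, $\SSS_{m+1}$. Moreover, distinct words must give distinct components that the small ball actually sees. I would argue this using Proposition \ref{prop:P-gen}, which says the cells are exactly the connected components of $M_{-1}^{m}$, together with the fact that the ball around $W$ also avoids $\SSS_{-1}$ issues because $W\subset Y_a$. Alternatively, I would simply apply Corollary \ref{cor:local-lin} to $\SSS_{-1}\cup\SSS_{m}$, which adds at most a bounded number of extra curves at a point.
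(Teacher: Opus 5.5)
Your proposal is correct and is essentially the paper's argument. You pick $m$ via the linear complexity bound, use Corollary \ref{cor:local-lin} to bound by roughly $Km+1$ the number of $m$-block continuations of a $u$-curve in $\Vuq$ of length $<\rho$, and iterate over blocks using the ugliness condition at block boundaries. Your subdivision of a long $V$ into $\lceil L/\rho\rceil$ short pieces and your separate treatment of the last partial block are harmless variants of the paper's preliminary step and its use of $\SSS_{2m}$.

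One correction: $T$ does \emph{not} have bounded derivative on cells of $\QQQ$, since $\|D_{\bx}T\|$ grows like $1/\cos\ph(T\bx)$ near $\SSS_1$, which bounds those cells. So drop that remark and the option of working with $TW$. Instead, apply Corollary \ref{cor:local-lin} to $W$ itself with $\SSS_{m+1}$, which is the route you offer as an alternative. The cells $T^{-1}Y_v^+$, $v\in\LLL_m$, are the components of $M_0^{m+1}=M\setminus\SSS_{m+1}$, and each component of $B\setminus\SSS_{m+1}$ lies in a single such cell. Hence $W$ meets at most $K(m+1)+1$ cells, and your slack $\alpha/2$ absorbs this after enlarging $m$ if necessary.
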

\begin{proof}
Given any $\alpha > 0$, there exists $m\in \NN$ large enough that $(2Km+1)^{1/m} \leq e^{\alpha}$.  By Corollary \ref{cor:local-lin}, there exists $\rho > 0$ sufficiently small so that for every $\bx \in M$, the set $B(\bx,\rho/2)\setminus \SSS_{2m}$ has at most $2Km+1$ connected components. 
Since $V$ is a $u$-curve and the components of $B(\bx,\rho/2)\setminus \SSS_{2m}$ are separated by $s$-curves, the intersection of $V$ with each component is connected, and we conclude that:
\begin{itemize}
\item if $|V| < \rho$, then for each $0\leq j\leq 2m$, the set $T^j V$ has at most $Kj+1$ connected components.
\end{itemize}
If $n = km + \ell$ with $0\leq \ell < m$, and $w \in \UUU_{jm}^\rho(V)$ for some $1\leq j\leq k$, then
\[
\# \UUU_m^\rho(T^{jm}V_w) \leq Km + 1.
\]
By iterating this bound we get
\[
\# \UUU_{n}^\rho(V) \leq (Km+1)^{k-1}(K(m+\ell)+1) \leq (2Km+1)^{n/m} \leq e^{\alpha n}.
\]
If $|V| > \rho$, then 
\[
\#\UUU_{n}^\rho(V) = \sum_{u \in \UUU_2^\rho(V)} \# \UUU_{n-2}^\rho(T V_u) \leq  \#\LLL_2 e^{\alpha n}.\qedhere
\]
\end{proof}

Recalling that  $\lambda>0$ is the contraction rate from Lemma \ref{lem:s-contr}, we have
the following estimate, which gives a concrete sense in which fragmentation is dominated by growth, provided we choose a small enough scale.

\begin{lemma}[Expansion--fragmentation]\label{lem:grow-cut}
For every $\gamma \in (0,\lambda)$, there exist $\rho>0$ and $C_6>0$ such that for every $V\in \Vuq$ and $n\geq 0$, we have
\begin{equation}\label{eqn:grow-cut}
\#\UUU_n^\rho(V) \leq C_6 e^{-\gamma n} \frac{|T^n V|}{|V|}.
\end{equation}
\end{lemma}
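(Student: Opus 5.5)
The plan is to combine the slow fragmentation estimate of Lemma \ref{lem:tempered} with the uniform expansion estimate of Lemma \ref{lem:s-contr}. The point is that the two lemmas already give bounds of the right exponential shapes. The count of ugly words grows at most like $e^{\alpha n}$ for any $\alpha>0$ we like. Meanwhile, the expansion ratio $|T^nV|/|V|$ is at least of order $e^{\lambda n}$. So we choose $\alpha$ to absorb the gap between $\gamma$ and $\lambda$.

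Concretely, given $\gamma\in(0,\lambda)$, set $\alpha:=\lambda-\gamma>0$. Apply Lemma \ref{lem:tempered} with this $\alpha$ to obtain $\rho>0$ and $C_5>0$ such that $\#\UUU_n^\rho(V)\le C_5e^{\alpha n}$ for every $V\in\Vuq$ and every $n\ge0$. Next, since every $V\in\Vuq$ is in particular a $u$-curve, \eqref{eqn:u-contr} gives
\[
\frac{|T^nV|}{|V|}\ \ge\ C_2^{-1}e^{\lambda n},
\]
where $|T^nV|$ is the total length of all components of $T^nV$, exactly as in the statement. Combining the two bounds yields
\[
\#\UUU_n^\rho(V)\le C_5e^{(\lambda-\gamma)n}
= C_5e^{-\gamma n}e^{\lambda n}
\le C_2C_5\,e^{-\gamma n}\frac{|T^nV|}{|V|}.
\]
This is \eqref{eqn:grow-cut} with $C_6:=C_2C_5$. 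The case $n=0$ is trivially covered, since then both sides are bounded constants.

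I do not expect a genuine obstacle here. The only points to check are the following.
\begin{enumerate}
\item The $\rho$ coming from Lemma \ref{lem:tempered} is uniform in $V$, so it can serve as the threshold in \eqref{eqn:grow-cut}.
\item The convention for $|T^nV|$ as a sum over components matches the one in Lemma \ref{lem:s-contr}.
\item The hypothesis $V\in\Vuq$ is compatible with both lemmas. Lemma \ref{lem:tempered} is stated for $\Vuq$, and Lemma \ref{lem:s-contr} for all of $\Vu\supset\Vuq$.
\end{enumerate}
All the substantive work, namely the linear complexity input, is already contained in Lemma \ref{lem:tempered}.
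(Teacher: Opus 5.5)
Your proof is correct and takes the same approach as the paper. Like the paper, you apply Lemma \ref{lem:tempered} with $\alpha=\lambda-\gamma$, combine it with the expansion bound \eqref{eqn:u-contr} from Lemma \ref{lem:s-contr}, and obtain $C_6=C_2C_5$.
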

\begin{proof}
Let $\rho,C_5$ be given by Lemma \ref{lem:tempered} for $\alpha = \lambda - \gamma$. Then the bounds in Lemmas \ref{lem:s-contr} and \ref{lem:tempered} give
\[
|V| \cdot \#\UUU_n^\rho(V) \leq C_2 e^{-\lambda n} |T^n V| \cdot C_5 e^{\alpha n} = C_2 C_5 e^{-(\lambda - \alpha)n} |T^n V|,
\]
which proves the lemma.
\end{proof}

From now on, we will fix $\gamma \in (0,\lambda)$, and let $\rho>0$ be provided by Lemma \ref{lem:grow-cut}. We will eventually fix a smaller threshold $\delta \in (0,\rho)$ to use in our final estimates; see Proposition \ref{prop:control-bad}.

\subsection{Long curves}

Given $V\in \Vuq$, we will establish a uniform relationship between the quantities $\#\LLL_n^V$, $\#\GGG_n^\delta(V)$, and $|T^n V|$. We start by controlling the average length of final images associated with words in $\LLL_n^V$. Figure \ref{fig:average-length} illustrates the following bound.

\begin{proposition}[Average length]\label{prop:average-length}
There exist $b_1,b_2>0$ such that for every $V\in \Vuq$ and every $n\geq 0$, we have
\begin{equation}\label{eqn:average-length}
\frac{|T^n V|}{\#\LLL_n^V} \geq \min\big(b_1,\, b_2 e^{\gamma n} |V|\big).
\end{equation}
\end{proposition}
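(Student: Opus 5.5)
We will prove \eqref{eqn:average-length} by using the decomposition of Lemma \ref{lem:decomp} with threshold $\rho$ (the scale fixed after Lemma \ref{lem:grow-cut}). The approach is to bound $\#\LLL_n^V$ from above by a multiple of $|T^nV|$. Each follower word is either ugly, or it is a good prefix followed by an ugly suffix. Ugly pieces are controlled by the expansion--fragmentation estimate, while good prefixes are few because each one carries final-image length at least $\rho$.

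\emph{Ugly words.} Lemma \ref{lem:grow-cut} gives
\[
\#\UUU_n^\rho(V) \leq C_6 e^{-\gamma n}\frac{|T^nV|}{|V|}.
\]
\emph{Words with a good prefix.} Fix $k\in\{1,\dots,n\}$ and $u\in\GGG_k^\rho(V)$. Apply Lemma \ref{lem:grow-cut} to the curve $T^kV_u\in\Vuq$, which has $|T^kV_u|\geq\rho$:
\[
\#\UUU_{n-k}^\rho(T^kV_u) \leq C_6 e^{-\gamma(n-k)}\rho^{-1}\,|T^{n-k}(T^kV_u)| = C_6\rho^{-1} e^{-\gamma(n-k)}\,|T^nV_u|.
\]
Different words $u\in\GGG_k^\rho(V)$ index disjoint subcurves $V_u\subset V$. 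Summing over $u$ therefore gives
\[
\sum_{u\in\GGG_k^\rho(V)} \#\UUU_{n-k}^\rho(T^kV_u) \leq C_6\rho^{-1}e^{-\gamma(n-k)}|T^nV|.
\]
Now sum over $k$ using \eqref{eqn:decomp}. The geometric series $\sum_{k}e^{-\gamma(n-k)}$ is bounded by $(1-e^{-\gamma})^{-1}$, so
\[
\#\LLL_n^V \leq |T^nV|\Big( C_6 e^{-\gamma n}|V|^{-1} + C_6\rho^{-1}(1-e^{-\gamma})^{-1}\Big).
\]
Since $(a+b)^{-1}\geq \tfrac12\min(a^{-1},b^{-1})$, inverting this yields \eqref{eqn:average-length} with $b_1 = (1-e^{-\gamma})\rho/(2C_6)$ and $b_2 = 1/(2C_6)$. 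The case $n=0$ is trivial, or is covered by the same estimate.

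\emph{Main obstacle.} The delicate step is the bookkeeping identity $T^{n-k}(T^kV_u)=T^nV_u$ together with the disjointness of the curves $V_u$. A further subtlety is that $T^{n-k}(T^kV_u)$ should mean the full image including all of its fragments, so that the lengths add up to at most $|T^nV|$. Using $V_u\subset V$, the disjoint sets $T^{-1}Y_u^+$, and injectivity of $T$ on $M'$ should suffice.

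We must also check that $T^kV_u$ lies in $\Vuq$, so that Lemma \ref{lem:grow-cut} applies to it. This holds because $T^kV_u\subset Y_{u_k}$, as observed after \eqref{eqn:LnV}.
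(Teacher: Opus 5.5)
Your proposal is correct and follows the paper's proof essentially line by line. Both bound $\#\LLL_n^V$ via the decomposition of Lemma~\ref{lem:decomp} at scale $\rho$, apply Lemma~\ref{lem:grow-cut} to $V$ and to each $T^kV_u$ with $|T^kV_u|\geq\rho$, use disjointness of the $V_u$ to get $\sum_u |T^nV_u|\leq |T^nV|$, and sum the geometric series. This produces the same constants $b_2=(2C_6)^{-1}$ and $b_1=b_2\rho(1-e^{-\gamma})$.
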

\begin{proof}
Using the decomposition and expansion--fragmentation lemmas, we have
\begin{align*}
\#\LLL_n^V &\leq 
\#\UUU_n^\rho(V) + \sum_{k=1}^n \sum_{w\in \GGG_k^\rho(V)} \#\UUU_{n-k}^\rho(T^k V_w) 
&&\text{by Lemma \ref{lem:decomp}} \\
&\leq C_6 e^{-\gamma n} \frac{|T^n V|}{|V|}
+ \sum_{k=1}^n \sum_{w\in \GGG_k^\rho(V)} C_6 e^{-\gamma(n-k)} \frac{|T^n V_w|}{|T^k V_w|}
&&\text{by Lemma \ref{lem:grow-cut}}.
\end{align*}
For every $w \in \GGG_k^\rho(V)$, we have $|T^k V_w| \geq \rho$ by definition. Moreover, since the curves $\{V_w : w \in \GGG_k^\rho(V)\}$ have disjoint interiors, we have $\sum_{w\in \GGG_k^\rho(V)} |T^n V_w| \leq |T^n V|$, and we conclude that
\begin{align*}
\#\LLL_n^V &\leq C_6 |T^n V| \Big(
\frac{e^{-\gamma n}}{|V|} + \frac 1\rho \sum_{k=1}^n e^{-\gamma(n-k)} \Big) \\
&\leq 2C_6 |T^n V| \max \big( e^{-\gamma n} |V|^{-1},\, (1-e^{-\gamma})^{-1} \rho^{-1}\big).
\end{align*}
Taking $b_2 = (2C_6)^{-1}$ and $b_1 = b_2 \rho(1-e^{-\gamma})$, this proves Proposition \ref{prop:average-length}.
\end{proof}

\begin{figure}
\centering
\includegraphics{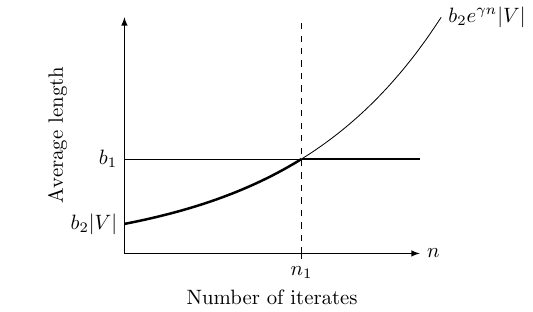}
\caption{Lower bound on the average length of a curve.}
\label{fig:average-length}
\end{figure}

\begin{corollary}\label{cor:ave-length}
Let $b_1,b_2>0$ be as in Proposition \ref{prop:average-length}. Then
given $n_1\in \NN$ large enough that $b_2 e^{\gamma n_1} \delta \geq b_1$ (see Figure \ref{fig:average-length}), we have
\begin{equation}\label{eqn:TnV-geq}
\frac{|T^n V|}{\#\LLL_n^V} \geq b_1
\quad\text{for all } V\in \Vuqd \text{ and } n\geq n_1.
\end{equation}
\end{corollary}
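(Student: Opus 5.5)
This is a direct consequence of Proposition \ref{prop:average-length}. I will show that once $n\geq n_1$ and $|V|\geq\delta$, the minimum on the right-hand side of \eqref{eqn:average-length} equals $b_1$.

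Fix $V\in\Vuqd$ and $n\geq n_1$. Since $V\in\Vuq$, Proposition \ref{prop:average-length} applies and gives
\[
\frac{|T^nV|}{\#\LLL_n^V}\geq \min\big(b_1,\, b_2e^{\gamma n}|V|\big).
\]
Because $\gamma>0$, the map $n\mapsto e^{\gamma n}$ is increasing. Together with $|V|\geq\delta$, this yields
\[
b_2e^{\gamma n}|V|\geq b_2e^{\gamma n_1}\delta\geq b_1,
\]
where the last inequality is the defining choice of $n_1$. Hence the minimum is $b_1$, which gives \eqref{eqn:TnV-geq}.

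A choice of $n_1$ always exists, since $b_2e^{\gamma n}\delta\to\infty$ as $n\to\infty$. Note that $n_1$ depends on $\delta$ but not on $V$.

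It remains to check that the quotient is well defined, i.e.\ that $\#\LLL_n^V>0$. The follower curves $V_w$ for $w\in\LLL_n$ cover $V$ up to the finitely many points $V\cap\SSS_{n+1}$; this set is finite because $V$ is a $u$-curve and $\SSS_{n+1}$ is a finite union of $s$-curves. So some $V_w$ has positive length, and $\LLL_n^V\neq\emptyset$. If one prefers to avoid this, the inequality can be read as $|T^nV|\geq b_1\#\LLL_n^V$, which holds trivially when $\LLL_n^V$ is empty.
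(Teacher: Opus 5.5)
Your proof is correct and is exactly the intended argument. For $|V|\geq\delta$ and $n\geq n_1$, the second term in the minimum from Proposition~\ref{prop:average-length} is at least $b_2e^{\gamma n_1}\delta\geq b_1$, so the bound reduces to $b_1$; your additional check that $\LLL_n^V\neq\emptyset$ is not made in the paper but is sound.
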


We are now in a position to establish a uniform estimate relating the numbers of short and long curves in $T^n V$.

\begin{proposition}[Many good curves]\label{prop:control-bad}
Given $b_1>0$ as in Proposition \ref{prop:average-length} and $\delta \in (0,b_1)$, let $n_1 \in \NN$ be as in Corollary \ref{cor:ave-length}.
Then there exists $b_3>0$ such that for every $V\in \Vuqd$ and $n\geq n_1$, we have
\begin{equation}\label{eqn:control-bad}
\#\GGG_n^\delta(V) \geq b_3 \#\LLL_n^V.
\end{equation}
\end{proposition}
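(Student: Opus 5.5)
The plan is to split the total length $|T^nV|$ into the contributions of good and bad words, and to compare it with the average-length bound from Corollary \ref{cor:ave-length}. Since the curves $V_w$ for $w\in\LLL_n^V$ have disjoint interiors and cover $V$ up to finitely many points, we have
\[
|T^nV| = \sum_{w\in \GGG_n^\delta(V)} |T^nV_w| + \sum_{w\in \BBB_n^\delta(V)} |T^nV_w|.
\]
The bad sum is at most $\delta\,\#\BBB_n^\delta(V)\le \delta\,\#\LLL_n^V$ by definition of $\BBB_n^\delta$. By Corollary \ref{cor:ave-length}, $|T^nV|\ge b_1\#\LLL_n^V$ for $n\ge n_1$. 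Therefore
\[
\sum_{w\in \GGG_n^\delta(V)} |T^nV_w| \ge (b_1-\delta)\,\#\LLL_n^V,
\]
and this is a positive multiple of $\#\LLL_n^V$ because $\delta<b_1$.

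It remains to turn this lower bound on the total length of the good final images into a lower bound on their number. For this we use a uniform upper bound on the length of each single final image. For every $w$, the curve $T^nV_w$ lies in a single cell $Y_{w_n}\in\QQQ$. It is a connected $u$-curve (connected because $T^n$ is continuous on the connected set $V_w$; I expect this to follow from Proposition \ref{prop:P-gen} and the fact that a $u$-curve meets each cell of $\MMM_{-1}^n$ in a connected set). Every $u$-curve has length at most $L=\pi/\Kmin$, as noted in \S\ref{sec:sing-sets}. Hence
\[
\sum_{w\in \GGG_n^\delta(V)} |T^nV_w| \le L\,\#\GGG_n^\delta(V),
\]
which gives $\#\GGG_n^\delta(V)\ge \frac{b_1-\delta}{L}\,\#\LLL_n^V$. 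So we may take $b_3=(b_1-\delta)/L$.

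The main obstacle is the justification of the first identity. One must check that each $V_w$ is a single curve, or at least that $T^nV_w$ is a union of $u$-curves whose total length is controlled. If $V_w$ could be disconnected, the bound $|T^nV_w|\le L$ would fail, and the argument would need to count components instead. I would handle this with the ``regions between graphs'' description in the proof of Proposition \ref{prop:P-gen}. Since $T^{-1}Y_w^+$ is bounded by graphs of nonincreasing functions (pieces of $s$-curves), a $u$-curve, being the graph of an increasing function, crosses each boundary at most once, so its intersection with $T^{-1}Y_w^+$ is connected. Everything else is the two-line computation above.
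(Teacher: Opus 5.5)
Your argument is correct and is essentially the paper's proof: both compare Corollary \ref{cor:ave-length} with the bound $|T^nV|\le L\,\#\GGG_n^\delta(V)+\delta\,\#\BBB_n^\delta(V)$. The paper keeps $\#\BBB_n^\delta(V)=\#\LLL_n^V-\#\GGG_n^\delta(V)$ exactly and so gets the slightly better constant $b_3=(b_1-\delta)/(L-\delta)$ rather than your $(b_1-\delta)/L$. The connectedness of $V_w$, which is needed for $|T^nV_w|\le L$, is left implicit in the paper. Your monotonicity argument supplies it once you note that $V_w=V\cap Y_{aw}^+$ with $V\subset Y_a$: all the relevant $r$-projections then lie in the arc $\partial B_i\setminus\{r_0\}$ from the proof of Proposition \ref{prop:P-gen}, so their intersections are genuine intervals.
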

\begin{proof}
Recall from \S\ref{sec:background} that $L$ denotes the maximum length of a $u$-curve, so given $V\in \Vuqd$, we have
\begin{equation}\label{eqn:TnV-leq}
|T^n V| = \sum_{w\in \GGG_n^\delta(V)} |T^n V_w|
+ \sum_{u\in \BBB_n^\delta(V)} |T^n V_u|
\leq L \#\GGG_n^\delta(V) + \delta \#\BBB_n^\delta(V).
\end{equation}
Writing $r_n := \#\GGG_n^\delta(V) / \#\LLL_n^V$, we see that
\[
\#\GGG_n^\delta(V) = r_n \#\LLL_n^V
\quad\text{and}\quad
\#\BBB_n^\delta(V) = (1-r_n) \#\LLL_n^V,
\]
so we can combine \eqref{eqn:TnV-geq} and \eqref{eqn:TnV-leq} to obtain
\[
b_1 \leq \frac{|T^n V|}{\#\LLL_n^V} \leq L r_n + \delta(1-r_n)
\quad\Rightarrow\quad
(b_1 - \delta) \leq (L-\delta) r_n.
\]
This proves Proposition \ref{prop:control-bad} with $b_3 = (b_1 - \delta)/(L-\delta)$.
\end{proof}

\begin{remark}
The appearance of some threshold time $n_1$ in Proposition \ref{prop:control-bad} is unavoidable: a curve $V\in \Vuqd$ may immediately be cut into shorter pieces so that every component of $TV$ has length $<\delta$, in which case $\#\GGG_1^\delta(V) = 0$, and \eqref{eqn:control-bad} cannot hold until $n$ is large enough that expansion is dominant over fragmentation.
\end{remark}

\begin{remark}\label{rem:many-good}
Proposition \ref{prop:control-bad} continues to hold if we let $\eps>0$ be arbitrarily small and then allow any $V\in \Vuq_\eps$ (instead of only $V\in \Vuqd$): 
one then chooses $n_1$ such that $b_2 e^{\gamma n} \eps \geq b_1$ and proceeds as before, obtaining the result with the same $b_3$ (depending on $\delta$, but independent of $\eps$) but a larger $n_1$ (depending on $\eps$). 
The case $\eps=\delta$ will suffice for our needs.
\end{remark}

\subsection{Boundary components}\label{sec:bdry-comp}

Given $0\leq k\leq n$, we say that a component curve $V \in \bbS_k$ \emph{bounds} a cell $P \in \MMM_{-n}^0$ if $V \cap \partial P$ has positive length.
When $V$ bounds $P$, we can either have $V \subset \partial P$ or $V \not\subset \partial P$. We will need to control when the second of these can happen.

Say that the \emph{order} of a cell $P \in \MMM_{-n}^0$ is the maximal value of $k \in \{1,2,\dots,n\}$ such that there is a curve $V \in \bbS_k$ that bounds $P$.

\begin{lemma}[Full boundary components]\label{lem:bdry-comp}
Let $P \in \MMM_{-n}^0$ be a cell with order $k$.  Suppose that $V\in \bbS_k$ bounds $P$ and that $V \cap \partial P$ does not intersect any other $W\in \bbS_k$ that bounds $P$. Then $V \subset \partial P$.
\end{lemma}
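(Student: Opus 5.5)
We argue by contradiction, tracing the closed subarc $\partial P\cap V$ along $V$. Suppose $V\not\subset\partial P$. Since $V\cap\partial P$ has positive length and $V$ is a connected $u$-curve, there is a point $\bx\in V$ that is an endpoint (in $V$) of a maximal arc $J\subset V\cap\partial P$ of positive length, with $\bx\in V^\circ$. Near $\bx$, $V$ continues past $\bx$ but the continuation no longer lies in $\partial P$. So some other singularity curve must separate $P$ from this continuation near $\bx$. The cell $P$ is a connected component of $M\setminus\SSS_{-n}$ (plus the boundary pieces from $\SSS_0$). Its boundary near $\bx$ therefore consists of pieces of components of $\SSS_{-n}$ passing through $\bx$. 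Hence $\bx$ is a multiple singularity point: it lies in $V$ and in some other component $W$ of $\SSS_{-n}$, and $W$ bounds $P$ near $\bx$, meaning $W\cap\partial P$ contains an arc emanating from $\bx$.

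Next we identify the order of $W$. By \eqref{eqn:Sn*}, $\bx$ is an endpoint of one of the curves meeting there. Since $\bx\in V^\circ$, Lemma \ref{lem:noncrossing}(b) applied to $V$ shows that $\bx$ cannot lie on a lower-order curve in $T^j\SSS_0$ with $j<k$ unless $\bx\in\partial V$. That contradicts $\bx\in V^\circ$, so $W$ cannot have order $<k$. On the other hand, $W$ bounds $P$, and $P$ has order $k$, so $W$ cannot have order $>k$. (The boundary pieces from $\SSS_0$ are horizontal and lie in $T^0\SSS_0$, so they fall under the same lower-order exclusion unless $k=0$, which is excluded since the order is at least $1$.) Therefore $W\in\bbS_k$, $W\ne V$, and $W$ bounds $P$. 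Moreover $\bx\in V\cap\partial P\cap W$, contradicting the hypothesis that $V\cap\partial P$ meets no other element of $\bbS_k$ bounding $P$. Hence $V\subset\partial P$.

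The main obstacle is the first step: rigorously justifying that leaving $\partial P$ at an interior point $\bx$ of $V$ forces another component of $\SSS_{-n}$ through $\bx$ that actually bounds $P$ with positive length. This needs a local picture. By Corollary \ref{cor:local-lin} and the finiteness of $\SSS_n^*$, a small ball around $\bx$ meets only finitely many curves through $\bx$, and these cut it into sectors. $P$ contains a sector adjacent to $J$, and that sector's other boundary ray is an arc of some curve $W$ through $\bx$, so $W\cap\partial P$ has positive length. We must also exclude $W$ being a curve of order $>k$ that merely touches, which is handled by the definition of order applied to this boundary arc. A further minor point is that $W$ itself might have $\bx$ as an endpoint; this is harmless, since we only need $\bx\in W$.
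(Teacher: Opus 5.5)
Your proof is correct and is essentially the paper's argument. At an end of the boundary arc of $V\cap\partial P$, some other curve bounding $P$ must pass; its order is at most $k$ because $P$ has order $k$, and not equal to $k$ by hypothesis, so Lemma~\ref{lem:noncrossing}(b) forces that point to be an endpoint of $V$. You phrase this as a contradiction and are somewhat more careful than the paper, which tacitly treats $V\cap\partial P$ as a single arc and takes the existence of the terminating curve $W$ for granted, but the ingredients are identical.
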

\begin{proof}
Let $\bx_1$ and $\bx_2$ be the endpoints of  $V \cap \partial P$.
Then for each $i\in \{1,2\}$, there exists $j \in \{0,\dots, k-1\}$ and $W_i \in \bbS_j$ such that $\bx_i \in V \cap W_i$, by hypothesis.  By the Noncrossing Lemma \ref{lem:noncrossing}, this implies that  $\bx_i \in \partial V$. It follows that $V \subset \partial P$.
\end{proof}

Given $k\in \NN$, consider the following collection of cells:
\begin{multline}\label{eqn:bbI}
\bbI_k := \{ P \in \MMM_{-k}^0 : P \text{ is a cell of order $k$ and there exist $V,W\in \bbS_k$} \\
\text{such that $V,W$ both bound $P$ and $V\cap W \neq \emptyset$} \}.
\end{multline}
Let $\bbI_k^c$ denote the collection of cells in $\MMM_{-k}^0$ that are not in $\bbI_k$. We also write
\begin{equation}\label{eqn:bbS}
\bbS_k' := \{ V \in \bbS_k : \text{there exists $W\in \bbS_k \setminus \{V\}$ such that $V\cap W \neq \emptyset$} \}.
\end{equation}
Lemma \ref{lem:bdry-comp} has the following consequence.

\begin{proposition}\label{prop:bdry-comp}
Fix $k \in \NN$ and $V \in \bbS_k \setminus \bbS_k'$. Then $V \subset \partial P$ for every cell $P$ of order $k$ that it bounds. Similarly, given any $P \in \bbI_k^c$, every $V\in \bbS_k$ that bounds $P$ must be contained in $\partial P$.
\end{proposition}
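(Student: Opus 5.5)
Both claims follow from Lemma \ref{lem:bdry-comp}, once we check that its hypothesis holds in each situation. That hypothesis asks that $P$ have order $k$, that $V\in\bbS_k$ bound $P$, and that $V\cap\partial P$ meet no other $W\in\bbS_k$ bounding $P$.

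For the first claim, fix $V\in\bbS_k\setminus\bbS_k'$ and a cell $P$ of order $k$ that $V$ bounds. By the definition \eqref{eqn:bbS} of $\bbS_k'$, the curve $V$ meets no $W\in\bbS_k\setminus\{V\}$ at all. In particular $V\cap\partial P$ is disjoint from every other $W\in\bbS_k$ that bounds $P$. Lemma \ref{lem:bdry-comp} then applies directly and gives $V\subset\partial P$.

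For the second claim, fix $P\in\bbI_k^c$ and $V\in\bbS_k$ bounding $P$. Since such a $V$ exists and $P\in\MMM_{-k}^0$, the order of $P$ is at least $k$. Orders of cells in $\MMM_{-k}^0$ are at most $k$, so $P$ has order exactly $k$. The definition \eqref{eqn:bbI} of $\bbI_k$ now says: since $P\notin\bbI_k$ and $P$ has order $k$, no two curves $V,W\in\bbS_k$ that both bound $P$ can intersect. I read the definition's ``$V,W$'' as distinct curves, since otherwise every order-$k$ cell would lie in $\bbI_k$ and the statement would be vacuous. Hence $V$ meets no other bounding $W\in\bbS_k$, so a fortiori $V\cap\partial P$ meets none. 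Lemma \ref{lem:bdry-comp} again gives $V\subset\partial P$.

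The only delicate point is the order bookkeeping in the second case: checking that a bounding curve from $\bbS_k$ forces the order to be exactly $k$, and that the ``$V\ne W$'' reading of \eqref{eqn:bbI} is the intended one. Everything else is direct verification of the lemma's hypothesis.
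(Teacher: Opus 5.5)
Your proof is correct and takes the same approach as the paper, which presents the proposition as an immediate consequence of Lemma \ref{lem:bdry-comp} without writing out a proof. Your checks that a bounding curve from $\bbS_k$ forces $P\in\MMM_{-k}^0$ to have order exactly $k$, and that the two curves in \eqref{eqn:bbI} must be read as distinct, spell out what the paper leaves implicit.
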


Moreover, we have the following bound.

\begin{lemma}\label{lem:bad-curve-cell}
There exists $C_7>0$ such that for every $k\in \NN$, we have $\#\bbS_{k}' \leq C_7$ and $\#\bbI_k \leq C_7$.
\end{lemma}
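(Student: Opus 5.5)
The plan is to bound both quantities by counting intersection points among curves of $\bbS_k$, and to control those intersections uniformly in $k$ via the Continuation Lemma \ref{lem:continuation} and the Noncrossing Lemma \ref{lem:noncrossing}.

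\emph{Step 1: locating the intersections.} Let $V,W\in\bbS_k$ be distinct with $V\cap W\neq\emptyset$. By Lemma \ref{lem:noncrossing}(a), any point $\bx\in V\cap W$ is an endpoint of $V$ or of $W$. Apply $T^{-k}$, which is continuous on $T^k(\SSS_0\setminus\SSS_k')$ and extends to the closures. The curves $T^{-k}V^\circ$ and $T^{-k}W^\circ$ are disjoint open arcs of $\SSS_0$, because distinct components are disjoint in the interior. Their closures can only meet at a common endpoint lying in $\SSS_0\cap\SSS_k'$. So $T^{-k}V$ and $T^{-k}W$ are adjacent arcs of $\SSS_0\setminus\SSS_k'$, separated by a point $\bill{z}$ with $T^j\bill{z}\in\SSS_0$ for some $1\le j\le k$. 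Moreover, the images $T^k$ of the two sides must re-meet at $\bx$, so $T^k$ is continuous at $\bill{z}$ when approached along $\SSS_0$ from both sides.

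\emph{Step 2: reducing to low-order grazings.} Continuity of $T^k$ across $\bill{z}$ fails whenever a grazing collision genuinely splits the trajectory. The only way the two one-sided limits agree is that the grazing at time $j$ is a tangency whose two one-sided continuations coincide up to time $k$. I would show this forces the configuration of Lemma \ref{lem:continuation} and Figure \ref{fig:multiple-singularity} near time $0$: a double tangency involving $\SSS_0$ and $T^{-1}\SSS_0$, or at most $T^{-j}\SSS_0$ for $j$ bounded by a constant depending only on the table.

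The reasoning for this reduction is finite horizon together with uniform hyperbolicity. By Lemma \ref{lem:s-contr}, if the trajectories from the two sides of $\bill{z}$ separated at a grazing at time $j$, they would move apart at a uniform rate thereafter and could not coincide at time $k$ unless $j$ is uniformly close to $k$. Symmetrically, the behavior near $\bx$ is controlled by $T^{-(k-j)}$. Consequently the points $\bx$ lie in the finite set $\SSS^*_{N_0}$, or in its image under a bounded number of iterates, for some $N_0$ independent of $k$. This bounds the number of pairs $(V,W)$ and gives $\#\bbS_k'\le C_7$.

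\emph{Step 3: counting $\bbI_k$.} Each $P\in\bbI_k$ has two bounding curves $V,W\in\bbS_k$ with $V\cap W\neq\emptyset$, so both lie in $\bbS_k'$. By Step 1 they meet at a single point $\bx$. By Corollary \ref{cor:local-lin} together with the bounded-multiplicity structure at such points, only boundedly many cells have $\bx$ on their boundary. Since the number of such $\bx$ is bounded by Step 2, this gives $\#\bbI_k\le C_7$ after enlarging the constant.

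The main obstacle is Step 2: showing that intersections between distinct order-$k$ components cannot be created afresh at every order. Lemma \ref{lem:lin-comp} alone only gives growth linear in $k$. I would first try to prove that an intersection point $\bx\in V\cap W$ satisfies $T^{-(k-1)}\bx\in\SSS^*_1$, by pulling back one step at a time and using convexity of the scatterers in the argument of Lemma \ref{lem:continuation}. This is the geometric input that ties everything to the finite set $\SSS^*_1$.
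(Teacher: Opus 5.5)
There is a genuine gap, and it starts in Step 1. From $V\cap W\neq\emptyset$ you infer that the closures of $T^{-k}V^\circ$ and $T^{-k}W^\circ$ meet, so that the two arcs are adjacent in $\SSS_0$. That inference is unjustified. $T^{-k}$ extends continuously only to each closed curve separately, and at $\bx\in\SSS_{-k}$ the one-sided limits along $V$ and along $W$ are in general different points.

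Already for $k=1$, intersections of distinct components of $\bbS_1$ come from the blocking configuration of Lemma \ref{lem:continuation} (Figure \ref{fig:multiple-singularity}). There $T^{-1}V$ and $T^{-1}W$ lie in $\SSS_0$ over two \emph{different} scatterers, so they are not adjacent arcs. Conversely, across a cut point $\bill{z}\in\SSS_0\cap\SSS_k'$ the two one-sided continuations are related by an index shift (Remark \ref{rmk:not-conj}). So Steps 1--2 analyze a configuration that is not how intersections arise.

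Step 2 also does not hold up on its own terms. Lemma \ref{lem:s-contr} controls the growth of lengths of $u$-curves and says nothing about when one-sided continuations of a grazing orbit coincide. Its conclusion is also false as stated: the intersection points of order-$k$ curves are not confined to a finite set independent of $k$, since they move with $k$. Only their \emph{number} is bounded.

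Moreover, bounding the number of such points does not bound the number of pairs $(V,W)$. For that you also need a uniform bound on the number of order-$k$ curves through a point, and Lemma \ref{lem:lin-comp} only gives $Kk$. Step 3 has the same defect: Corollary \ref{cor:local-lin} gives $Kk+1$ local components, which grows with $k$.

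The missing idea is the one you list only as something to try in your last paragraph, and it contradicts your Step 1. Intersections are created once, at order one, and then transported forward. Let $F$ be the finite set of points where two distinct components of $\bbS_1$ meet.

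Each component of $\bbS_k$ is the $T^{k-1}$-image of a subarc of a component of $\bbS_1$. Pulling back shows that an intersection point of two distinct components of $\bbS_k$ lies in $T^{k-1}F$. For instance, if $T^{-(k-1)}$ is continuous near $\bx$, the pulled-back arcs meet at $T^{-(k-1)}\bx\notin\SSS_{k-1}$. That point is not a cut point, so the two arcs must lie in distinct components of $\bbS_1$.

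The same pull-back shows that at most $K$ curves of $\bbS_k$ pass through such a point, which gives $\#\bbS_k'\le (K+1)\#F$ in the paper. For $\bbI_k$, the paper likewise pulls a small neighborhood of the intersection point back to a neighborhood of a point of $F$, where at most $K+1$ cells meet. This pull-back, not Corollary \ref{cor:local-lin}, is what makes the bound independent of $k$.
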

\begin{proof}
Consider the following (finite) set of points:
\begin{equation}\label{eqn:S1-int}
F := \bigcup\{ V \cap W : V,W \in \bbS_1,\ V\neq W \}.
\end{equation}
Let $K$ be as in Lemma \ref{lem:lin-comp}, so that at most $K$ components of $\bbS_1$ meet at any single point in $F$, and let $C_7 := (K+1)(\#F)$. 

Observe that if $V\in \bbS_{k}$ intersects some other $W\in \bbS_{k}$ at a point $\bx$, then $\bx \in T^{k}F$.  Each point in $T^{k}F$ lies in at most $K$ elements of $\bbS_{k+1}$, so $\# \bbS'_{k+1} \leq (K+1)(\# F)$.  See Figure \ref{fig:small-cells}.

For the second bound, observe that if $P \in \bbI_k$, then it is bounded by two curves $V,W \in \bbS_{k}$ intersecting at $\bx \in \overline{P}$.  For sufficiently small $\delta > 0$, we have that the set $T^{-k}(B(\bx,\delta) \cap P)$ is a connected component of $U \setminus T^{-1}\SSS_0$, where $U$ is a small neighborhood of a point of $F$. Since there can be at most $K+1$ adjacent cells around each point in $F$, we obtain the second inequality.
\end{proof}

\begin{figure}
\centering
\includegraphics{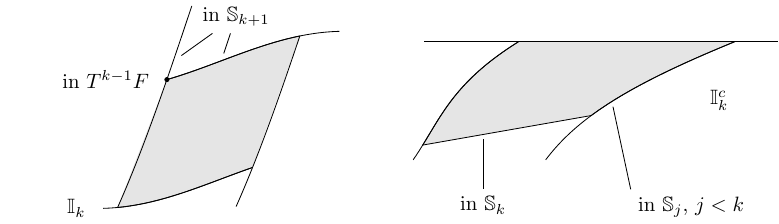}
\caption{Cells in $\bbI_k$ and $\bbI_k^c$.}
\label{fig:small-cells}
\end{figure}

\subsection{Long cells}\label{sec:long-cells}
Following \cite{BD20}, we distinguish between ``long cells'' (which contain some $V\in \Vuq$ that has a large image) and ``short cells'' (which contain no such $V$). In our setting we think of these as good and bad words in the language $\LLL_n$, writing
\begin{align*}
	\GGG_n^{u,\delta} & := \bigcup_{V\in \Vuq} \GGG_n^\delta(V) 
	= \{ w \in \LLL_n : \text{there exists } V \in \Vuq \text{ such that } |T^n V_w| \geq \delta \} \\ 
	& = \{ w \in \LLL_n : Y_w^- \text{ contains a $u$-curve with length at least $\delta$} \},
\end{align*}
and then putting
\[
\BBB_n^{u,\delta} := \LLL_n \setminus \GGG_n^{u,\delta}.
\]
All of the above definitions can also be given with $u$ replaced by $s$, mutatis mutandis.

\begin{remark}
In \S\ref{sec:good-bad-ugly}, we defined 
sets $\GGG_n^{\delta}(V)$ and $\BBB_n^{\delta}(V)$ of ``good'' and ``bad'' words for a $u$-curve $V$.  That partition took place at the level of a curve, rather than at the level of the language $\LLL_n$, which is where we work now. Additionally, the words in $\LLL_n^V$ partition $V$ via the cells $T^{-1}Y_w^+$, which correspond to cylinders $\sigma^{-1}\cyl{}{w}$, whereas for $\GGG_n^{u,\delta}$ and $\BBB_n^{u,\delta}$ we are working with the cells $Y_w^-$, corresponding to the cylinders $\cyl{w}{}$.
\end{remark}

The rest of \S\ref{sec:long-cells} is devoted to proving the following:

\begin{proposition}\label{prop:Bn-Gn}
For every sufficiently small $\delta>0$, there exists $C_8>0$ such that for every $n\geq 0$, we have
\begin{equation}\label{eqn:Bn-Gn}
\#\BBB_n^{u,\delta} \leq C_8 \#\GGG_n^{u,\delta}.
\end{equation}
Consequently, writing $b_4 := (1+C_8)^{-1}$, we have
\begin{equation}\label{eqn:Gn-Ln}
\#\GGG_n^{u,\delta} \geq b_4 \#\LLL_n.
\end{equation}
\end{proposition}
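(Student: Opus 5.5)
Since $\LLL_n = \GGG_n^{u,\delta} \sqcup \BBB_n^{u,\delta}$, the second inequality \eqref{eqn:Gn-Ln} follows at once from \eqref{eqn:Bn-Gn}: $\#\LLL_n \leq (1+C_8)\#\GGG_n^{u,\delta}$. So everything reduces to bounding short cells by long cells. The approach follows \cite[Lemma 5.2/Prop.~5.?]{BD20} in spirit. We inject short cells into long cells with bounded multiplicity, using the structure of boundaries of cells in $\MMM_{-n}^0$ developed in \S\ref{sec:bdry-comp}. Recall that $w\in\LLL_n$ corresponds to $Y_w^-$, a cell of $\MMM_{-n}^1$. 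Its boundary consists of pieces of $\SSS_1$ ($s$-curves, bounded in number) and of components of $\SSS_{-n}$ ($u$-curves). A short cell has no $u$-curve of length $\geq\delta$ inside. Since the cell is bounded ``from above and below'' by $u$-curves of $\SSS_{-n}$, the short cells are thin in the unstable direction.

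The key step is to pass backwards. Take $w\in\BBB_n^{u,\delta}$ and let $k\leq n$ be maximal with $w_{(n-k,n]}\in\GGG_k^{u,\delta}$ (with $k=0$ if there is none, handled separately by finiteness at small times). The suffix of length $k$ is long, and the remaining $n-k$ symbols form an ``ugly'' stretch in the reversed-time sense. This is the cell-level analogue of the Decomposition Lemma \ref{lem:decomp}. We then need a counting statement: for a long cell $P=Y_v^-$ with $v\in\GGG_k^{u,\delta}$, the number of extensions $uv\in\LLL_n$ with all intermediate suffixes short is at most $C e^{\alpha(n-k)}$. Applying Lemma \ref{lem:tempered} to a $u$-curve of length $\geq\delta$ inside $P$ does \emph{not} directly give this, because short cells need not meet a given long $u$-curve.

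Here Lemma \ref{lem:bad-curve-cell} and Proposition \ref{prop:bdry-comp} enter. A cell in $\bbI_k^c$ has each bounding $u$-component $V\in\bbS_k$ fully contained in its boundary. Consequently, a short cell of order $k$ either lies in $\bbI_k$ (at most $C_7$ of these per $k$), or its bounding components of maximal order are complete curves of $\bbS_k$. The neighbouring cell across such a complete curve is then controlled by the curve's preimage, which is a piece of $\SSS_0$. Using Lemma \ref{lem:continuation}, that curve extends to a $u$-curve with endpoints on $\SSS_0$, and pushing the shortness back $m$ steps forces the ``short'' history to be produced by fragmentation near finitely many multiple singularity points. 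Corollary \ref{cor:local-lin} then bounds the number of cells created near each point by $Km+1$ per block of length $m$. Combining this with Lemma \ref{lem:s-contr} (short cells must shrink exponentially), we get
\[
\#\BBB_n^{u,\delta}\leq \sum_{k=0}^{n} C e^{\alpha(n-k)}\#\GGG_k^{u,\delta}+C_7 n.
\]
Finally we must compare $\#\GGG_k^{u,\delta}$ with $\#\GGG_n^{u,\delta}$. Taking a long $u$-curve in $Y_v^-$ and applying Proposition \ref{prop:control-bad} together with Corollary \ref{cor:ave-length} and Lemma \ref{lem:s-contr}, each long cell at time $k$ produces at least $b_3 b_1^{-1}|T^{n-k}V|\gtrsim e^{\lambda'(n-k)}$ long cells at time $n$, with distinct $v$ giving distinct descendants. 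This yields $\#\GGG_k^{u,\delta}\leq C e^{-\lambda'(n-k)}\#\GGG_n^{u,\delta}$. Choosing $\alpha<\lambda'$ (shrinking $\delta$ as required by Lemma \ref{lem:grow-cut}) makes the geometric sum converge, and the linear term is absorbed because $\#\GGG_n^{u,\delta}$ grows exponentially. The main obstacle is the middle step: making rigorous that short cells are counted by tempered fragmentation of a long ancestor cell, rather than only along a single curve. I would handle this by choosing, in each short cell, a maximal $u$-curve and showing its backward image lies in a fixed long cell's unstable foliation up to the bounded exceptional families $\bbS_k'$ and $\bbI_k$.
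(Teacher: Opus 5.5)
\paragraph{The gap: the middle inequality is asserted, not derived.} Your reduction of \eqref{eqn:Gn-Ln} to \eqref{eqn:Bn-Gn} is correct. Your final growth estimate $\#\GGG_k^{u,\delta}\le Ce^{-\lambda(n-k)}\#\GGG_n^{u,\delta}$, obtained by appending follower words of a long $u$-curve $V\subset Y_v^-$, is also essentially correct, with one slip. The lower bound on $\#\LLL_{n-k}^V$ must come from $|T^{n-k}V|\le L\,\#\LLL_{n-k}^V$. Corollary \ref{cor:ave-length} bounds $\#\LLL_{n-k}^V$ from above, not below.

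The real gap is the inequality $\#\BBB_n^{u,\delta}\le\sum_k Ce^{\alpha(n-k)}\#\GGG_k^{u,\delta}+C_7n$. Splitting a short cell word at its longest long suffix $v$ leaves you counting \emph{prefix} extensions $uv$ that stay short. That is, you must control how a fixed long cell $Y_v^-$ is refined by higher-order $u$-curves of $\SSS_{-n}$. This is a backward-time fragmentation problem for cells, measured in $u$-length, and none of the tools in \S\ref{sec:good-bad-ugly} address it. Lemmas \ref{lem:decomp} and \ref{lem:tempered} concern forward images of a single $u$-curve, and, as you yourself note, short subcells of $Y_v^-$ need not meet any chosen $u$-curve in it. Nothing you state forces the number of such extensions to grow only at rate $\alpha<\lambda$ rather than at rate $h$. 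The paragraph that follows (neighbouring cells, ``pushing the shortness back $m$ steps'', Corollary \ref{cor:local-lin}) gives no map from short cells to counted objects and no multiplicity bound. Nor does it explain why the count should come out indexed by long \emph{cells} $\GGG_k^{u,\delta}$.

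\paragraph{The missing idea: use the singularity set as the reference $u$-curve.} You already state the first step: a short cell $TY_w^-$ of order $k$ not in $\bbI_k$ has a complete component $V(w)\in\bbS_k$ in its boundary (Proposition \ref{prop:bdry-comp}), and $V(w)$ must be short. The paper turns this into an at most $2$-to-$1$ map onto short pieces $T^kS^+_{r(w)}$ of $T^k\SSS_0$. This gives $\#\BBB_n^{u,\delta}\le C_7(n+1)+2\sum_{k\le n+1}\#\BBB_k^\delta(\SSS_0)$.

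These pieces are forward images of the finitely many curves in $\bbS_1^1\subset\Vuqd$, so the forward-time machinery applies:
\begin{enumerate}
\item Proposition \ref{prop:average-length} gives $\#\LLL_k^{\SSS_0}\le b^{-1}|T^k\SSS_0|$.
\item Lemma \ref{lem:s-contr} bounds $\sum_{k\le n+1}|T^k\SSS_0|$ by a constant times $|T^{n+1}\SSS_0|$.
\item Proposition \ref{prop:control-bad} bounds $|T^{n+1}\SSS_0|$ by a constant times $\#\GGG_{n+1}^\delta(\SSS_0)$.
\end{enumerate}
The last step, also absent from your proposal, converts long singularity curves back into long cells. Each long $V\in\bbS_{n+1}\setminus\bbS_{n+1}'$ lies entirely in the boundary of the cells it bounds, and a cell's boundary contains at most $2L/\delta$ such curves. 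Hence $\#\GGG_{n+1}^\delta(\SSS_0)\le C_7+L\delta^{-1}\#\GGG_n^{u,\delta}$.

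\paragraph{Salvaging your decomposition.} Your decomposition idea works if you apply it to curves rather than cells. Apply Lemmas \ref{lem:decomp} and \ref{lem:tempered} to the curves of $\bbS_1^1$, with $\delta$ below the scale of Lemma \ref{lem:tempered}. This bounds $\#\BBB_k^\delta(\SSS_0)$ by $Ce^{\alpha k}+\sum_{i<k}Ce^{\alpha(k-i)}\#\GGG_i^\delta(\SSS_0)$. Then use the curve-to-cell conversion above together with your growth estimate, taking $\alpha<\lambda$. Either way, passing through $\SSS_0$ is what makes the count possible.
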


To prove Proposition \ref{prop:Bn-Gn}, we first need some consequences of \S\ref{sec:bdry-comp} concerning the boundary components of cells.  We will need the following notation, which is similar to \eqref{eqn:good-bad}.  Given $w\in \LLL_n$, let
\[
S_w^+ := (\overline{Y_w^+} \cap \SSS_0)^\circ,
\]
and let $\LLL_n^{\SSS_0}$ denote the set of all $w\in \LLL_n$ such that $S_w^+$ is nonempty. Observe that we can now describe $\bbS_n$ as
\[
\bbS_n = \{ \overline{T^n S_w^+} : w\in \LLL_n^{\SSS_0} \}.
\]
As before, we define
\begin{align*}
\GGG_n^\delta(\SSS_0) &= \{w \in\LLL_n^{\SSS_0} : |T^n S_w^+| >\delta\}, \\
\BBB_n^\delta(\SSS_0) &= \LLL_n^{\SSS_0} \setminus \GGG_n^{\delta}(\SSS_0).
\end{align*}
Using the constant $C_7$ from Lemma \ref{lem:bad-curve-cell}, we have the following bounds.

\begin{lemma}\label{lem:cells-curves}
Given $\delta \in (0,b_1)$, we have
\begin{align}
\label{eqn:cell-leq-curve}
\#\BBB_n^{u,\delta} &\leq C_7 n + 2\sum_{k=1}^{n+1} \#\BBB_k^\delta(\SSS_0), \\
\label{eqn:curve-leq-cell}
\#\GGG_{n+1}^\delta(\SSS_0) &\leq C_7 + L\delta^{-1} \#\GGG_n^{u,\delta}.
\end{align}
\end{lemma}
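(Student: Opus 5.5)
We prove the two bounds separately; both rest on the observation that a cell $Y_w^-$ is short precisely when its unstable width is small, and this can only happen near a component of $\SSS_{-n}$ of short length or near a multiple singularity point.

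For \eqref{eqn:cell-leq-curve}, fix $w\in \BBB_n^{u,\delta}$ and consider the cell $P = Y_w^- \in \MMM_{-n}^1$. Its boundary is made of $s$-curves from $\SSS_1$ and $u$-curves from $\SSS_{-n}$, that is, components in $\bbS_k$ for $0\le k\le n$ (after one shift of index, $k\le n+1$). The $u$-sides of $P$ cannot both be long and far apart, since otherwise $P$ would contain a $u$-curve of length $\ge\delta$. Here we use that $\delta<b_1$ is small compared to the size of $\QQQ$-cells, so the $s$-boundary coming from $\SSS_1$ does not by itself force shortness; this can be arranged by shrinking $\delta$. Hence some $u$-side of $P$ has length $<\delta$. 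Let $k$ be the order of $P$, and split into two cases.
\begin{itemize}
\item If $P\in\bbI_k$, there are at most $C_7$ such cells for each $k$ by Lemma \ref{lem:bad-curve-cell}. Summing over $k\le n$ gives the term $C_7 n$.
\item If $P\in\bbI_k^c$, Proposition \ref{prop:bdry-comp} says every bounding $V\in\bbS_k$ lies entirely in $\partial P$. So a short $u$-side of top order is a whole component $V=\overline{T^kS_v^+}$ with $|V|<\delta$, i.e.\ $v\in\BBB_k^\delta(\SSS_0)$.
\end{itemize}
Each such curve is contained in the boundary of at most two cells, one on each side, which gives the factor $2$. The step I expect to be the main obstacle is the case where the short side has lower order than $P$. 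I would handle it by induction on the order: move to the cell of $\MMM_{-k}^0$ containing $P$ and use the Continuation Lemma \ref{lem:continuation}, so that the short side is charged to some $\BBB_j^\delta(\SSS_0)$ with $j\le n+1$ without double counting. The range $k\le n+1$ in the sum accommodates this.

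For \eqref{eqn:curve-leq-cell}, take $v\in \GGG_{n+1}^\delta(\SSS_0)$, so $V=\overline{T^{n+1}S_v^+}$ is a $u$-curve of length $>\delta$. Write $V=T(T^nS_v^+)$, where $T^nS_v^+$ is a $u$-curve of $\SSS_{-n}$ lying in the boundary of some cell $Y_w^-$, $w\in\LLL_n$, for which it is a full boundary component. By Proposition \ref{prop:bdry-comp} this holds except for the at most $C_7$ curves in $\bbS_n'$, which gives the additive $C_7$.

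It remains to bound, for a fixed $w$, the number of such $v$ attached to $Y_w^-$. If $Y_w^-$ has a boundary component $U$ whose image has length $>\delta$, then $Y_w^-$ contains $u$-curves approximating $U$; after one more iterate they give a good word, and we regard $w$ as good in the sense of $\GGG_n^{u,\delta}$ up to the harmless index shift permitted by the $n$ vs.\ $n+1$ in the statement. The curves attached to $Y_w^-$ are disjoint pieces of its $u$-boundary, whose total length is at most $L$ since boundary $u$-curves lie within one $\QQQ$-cell. Since each has length $>\delta$ and the images are long, there are at most $L\delta^{-1}$ of them per cell. Summing over $w\in\GGG_n^{u,\delta}$ gives $L\delta^{-1}\#\GGG_n^{u,\delta}$.
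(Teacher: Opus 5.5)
Your proof of \eqref{eqn:cell-leq-curve} has a genuine gap, and it is the one you flag yourself. Once you have found ``some short $u$-side,'' you cannot charge it to $\BBB_j^\delta(\SSS_0)$ when it has lower order than the cell, and the proposed induction via the Continuation Lemma is not an argument.

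The missing idea is that you should not look for a short side at all. First, pass, as the paper does, from $Y_w^-\in\MMM_{-n}^1$ to $TY_w^-\in\MMM_{-(n+1)}^0$. Order and Proposition~\ref{prop:bdry-comp} only make sense for cells of $\MMM_{-k}^0$, whose boundaries consist of components of $\SSS_{-k}$ and pieces of $\SSS_0$. A cell of $\MMM_{-n}^1$, by contrast, can be short merely because an $\SSS_1$ curve cuts a long strip near its end. Its $u$-sides are then short \emph{pieces} of long components, so your claim that a short top-order side is a whole component fails. Shrinking $\delta$ does not prevent this, since the position of the cut along the strip is arbitrary; and $\delta$ is given in the statement anyway.

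Second, suppose $TY_w^-$ has order $k\le n+1$ and is not in $\bbI_k$. The definition of order supplies some $V(w)\in\bbS_k$ bounding it, and Proposition~\ref{prop:bdry-comp} puts all of $V(w)$ in $\partial(TY_w^-)$. The cell therefore contains $u$-curves approximating $V(w)$, which is how the paper reads off $|V(w)|<\delta$ from $w\in\BBB_n^{u,\delta}$. With the reduced word $r(w)=w_{[n-k+1,n]}$ one has $V(w)=\overline{T^kS^+_{r(w)}}$, so $r(w)\in\BBB_k^\delta(\SSS_0)$. The map $w\mapsto V(w)$ is at most two-to-one, and the cells in $\bbI_k$ contribute at most $C_7$ for each $k\le n+1$. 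Lower-order sides never enter. (Passing between $Y_w^-$ and $TY_w^-$ preserves shortness only up to a change of $\delta$, since $T$ expands $u$-curves unboundedly near $\SSS_1$. This is harmless: Proposition~\ref{prop:Bn-Gn} only uses the cruder bound by $\#\LLL_k^{\SSS_0}$.)

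For \eqref{eqn:curve-leq-cell}, your outline is the paper's: discard at most $C_7$ exceptional curves, use that the others are full boundary components, and pack by length. However, you apply the packing to the wrong curves. The curves $T^nS_v^+$ that you place in $\partial Y_w^-$ are preimages of the long curves $V$. Because $T$ expands $u$-curves by an unbounded factor near $\SSS_1$, they need not have length $>\delta$, so the bound of $L\delta^{-1}$ per cell does not follow. Moreover, $T^nS_v^+$ is in general only a piece (cut by $\SSS_1$) of an element of $\bbS_n$. Proposition~\ref{prop:bdry-comp} with exceptional set $\bbS_n'$ therefore says nothing about it; the proposition applies to $V$ itself, with exceptional set $\bbS_{n+1}'$.

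The paper packs $V$ directly into the cells of $\MMM_{-(n+1)}^0$. Each $V\in\bbS_{n+1}\setminus\bbS_{n+1}'$ lies entirely in the boundaries of exactly two such cells. Each cell has an upper and a lower $u$-boundary, each of length at most $L$, so it carries at most $2L/\delta$ such $V$. Counting each $V$ twice then gives $\#\GGG_{n+1}^\delta(\SSS_0)-C_7\le L\delta^{-1}\#\GGG_n^{u,\delta}$. Your assertion that the whole $u$-boundary of a cell has length at most $L$ is not justified. The constant $L\delta^{-1}$ comes from the two-sidedness of each $V$.
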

\begin{proof}
Given $w\in \LLL_n$, the set $TY_w^-$ is a cell in $\MMM_{-(n+1)}^0$, and its order (the largest $k$ such that some curve in $\bbS_k$ is a boundary component of the cell) is an element of $\{1,\dots, n+1\}$. Let
\[
\OOO_n^k := \{ w\in \LLL_n : TY_w^- \text{ has order } k \}.
\] 
Given $w\in \OOO_n^k$, the set $TY_w^-$ is also a cell in $\MMM_{-k}^0$. Recall that $\bbI_k$ denotes the set of cells in $\MMM_{-k}^0$ that are bounded by two intersecting elements of $\bbS_k$. Let
\[
\III_n^k := \{ w\in \OOO_n^k : TY_w^- \in \bbI_k \}
\quad\text{and}\quad
\III_n := \bigcup_{k=1}^{n+1} \III_n^k.
\]
For each $k\in \{1,\dots, n+1\}$, we have $\#\III_n^k \leq \#\bbI_k \leq C_7$, and thus
\begin{equation}\label{eqn:In-leq}
\#\III_n \leq (n+1) C_7.
\end{equation}
Given any $w\in \BBB_n^{u,\delta}$, we either have $w\in \III_n$ or $w\notin \III_n$. In the latter case, we have $w\in \OOO_n^k \setminus \III_n^k$ for some $k \in \{1,\dots, n+1\}$, so the cell $TY_w^-$ lies in $\bbI_k^c$, and Proposition \ref{prop:bdry-comp} implies that its boundary contains some $V = V(w) \in \bbS_k$. Since $w\in \BBB_n^{u,\delta}$, we must have $|V(w)| < \delta$.

With $w$ and $V(w)$ as in the previous paragraph, let $r(w) = w_{[n-k+1,n]}$ be the ``reduced'' word that codes $TY_w^- = TY_{r(w)}^-$ as a cell in $\MMM_{-k}^0$. Then $V(w) = T^k S_{r(w)}^+$, and we conclude that $r(w) \in \BBB_k^\delta(\SSS_0)$. Moreover, the map $w \mapsto V(w)$ is at most 2-to-1 on $\BBB_n^{u,\delta} \cap \OOO_n^k \setminus \III_n^k$, so we conclude that
\[
\#\BBB_n^{u,\delta} \cap \OOO_n^k \setminus \III_n^k \leq 2 \#\BBB_k^\delta(\SSS_0).
\]
Summing over $k$ and using \eqref{eqn:In-leq} implies \eqref{eqn:cell-leq-curve}. 

For \eqref{eqn:curve-leq-cell}, 
observe that given $w\in \GGG_{n+1}^\delta(\SSS_0)$, the curve $V = \overline{T^{n+1}S_w^+}$ is an element of $\bbS_{n+1}$ such that $|V| \geq \delta$. We would like to conclude that the long $u$-curve $V$ forces some cells in $\MMM_{-(n+1)}^0$ to contain long $u$-curves. If $V \in \bbS_{n+1}'$ then we cannot draw any such information, but 
by Lemma \ref{lem:bad-curve-cell}, the number of such curves $V$ is at most $C_7$.
If $V \notin \bbS_{n+1}'$, then Proposition \ref{prop:bdry-comp} implies that $V\subset \partial P$ for every cell in $\MMM_{-(n+1)}^0$ that it bounds; thus it bounds exactly two such cells, and is completely contained in both of their boundaries.
At the same time, every cell in $\MMM_{-(n+1)}^0$ has upper and lower boundaries given by $u$-curves together with some (possibly empty) subset of $\SSS_0$. Each such boundary $u$-curve has length $\leq L$, so it can contain at most $L/\delta$ curves $V$ with $|V|\geq \delta$. Thus each cell in $\MMM_{-(n+1)}^0$ can be bounded by at most $2L/\delta$ curves $V\in \bbS_{n+1} \setminus \bbS_{n+1}'$ that are of the form $V = \overline{T^{n+1}S_w^+}$ for some  $w\in \GGG_{n+1}^\delta(\SSS_0)$.
The number of such curves is at least $\#\GGG_{n+1}^\delta(\SSS_0) - C_7$, and \eqref{eqn:curve-leq-cell} follows.
\end{proof}

\begin{proof}[Proof of Proposition \ref{prop:Bn-Gn}]
Let $\bbS_1^1 := \{ V \cap Y_a : a\in A, V \in \bbS_1\}$. This set is finite, so choosing $\delta\in (0,b_1)$ smaller than $\min\{ |V| : V\in \bbS_1^1\}$, we see that every $V\in \bbS_1^1$ lies in $\Vuqd$.

Let $b := \min(b_1,b_2 \delta)$, and observe that by Proposition \ref{prop:average-length}, for every $V\in \bbS_1^1$ and every $k\in \NN$, we have
\[
|T^kV| \geq \min(b_1, b_2 e^{\gamma k} \delta) \#\LLL_k^V \geq b \#\LLL_k^V.
\]
Summing over all $V\in \bbS_1^1$, we get
\begin{equation}\label{eqn:TkS0}
|T^{k+1}\SSS_0| = \sum_{V \in \bbS_1^1} |T^k V| \geq  \sum_{V \in \bbS_1^1} b \#\LLL_k^V = b \#\LLL_{k+1}^{\SSS_0}.
\end{equation}
Using this together with \eqref{eqn:cell-leq-curve} from Lemma \ref{lem:cells-curves} gives
\[
\#\BBB_n^{u,\delta} \leq C_7 n + 2 \sum_{k=1}^{n+1} \#\LLL_k^{\SSS_0}
\leq C_7 n + 2\sum_{k=1}^{n+1} b^{-1} |T^k \SSS_0|.
\]
Applying Lemma \ref{lem:s-contr}, we obtain
\begin{align*}
\#\BBB_n^{u,\delta}
&\leq C_7 n + 2b^{-1} \sum_{k=1}^{n+1} C_2 e^{-\lambda(n+1-k)} |T^{n+1} \SSS_0| \\
&= C_7 n + 2b^{-1} C_2 (1-e^{-\lambda})^{-1} |T^{n+1}\SSS_0|.
\end{align*}
Since $|T^n\SSS_0|$ grows exponentially fast, this implies that there exists $C_9>0$, depending only on $b$, $C_2$, $C_7$, $\lambda$, and $|\SSS_0|$, such that
\begin{equation}\label{eqn:Bnud-2}
\#\BBB_n^{u,\delta} \leq C_9 |T^{n+1}\SSS_0|
\quad\text{for all } n\in \NN.
\end{equation}
Because $\bbS_1^1 \subset \Vuqd$, we can apply Proposition \ref{prop:control-bad} to deduce that for every $n\geq n_1$, we have
\begin{align*}
|T^{n+1}\SSS_0| & = \sum_{V\in \bbS_{1}^1} |T^n V| \leq \sum_{V\in \bbS_{1}^1} L \# \LLL_{n}^V 
\leq \sum_{V\in \bbS_{1}^1} L b_3^{-1} \#\GGG_{n}^\delta(V)\\
& \leq \#\bbS_1^1  L b_3^{-1} \#\GGG_{n+1}^\delta(\SSS_0).
\end{align*}
We can combine this with \eqref{eqn:Bnud-2} and \eqref{eqn:curve-leq-cell} to obtain
\begin{equation}\label{eqn:Bnud}
\#\BBB_n^{u,\delta} \leq C_9 \#\bbS_1 L b_3^{-1} \#\GGG_n^\delta(\SSS_0)
\leq  C_9 \#\bbS_1 L b_3^{-1} \big( C_7 + L\delta^{-1} \#\GGG_n^{u,\delta} \big).
\end{equation}
Take $C_8$ sufficiently large that \eqref{eqn:Bn-Gn} holds for all $n\leq n_1$, and such that 
\[
C_9 \#\bbS_1 L b_3^{-1}\big( C_7 + L\delta^{-1} N \big) \leq C_8 N
\text{ for all } N \geq 1.
\]
Since $\#\GGG_n^{u,\delta} \geq 1$ for all $n$, combining this with \eqref{eqn:Bnud} proves \eqref{eqn:Bn-Gn} for all $n\geq n_1$, and hence for all $n\in \NN$. To derive \eqref{eqn:Gn-Ln} from this, it suffices to observe that
\[
\#\LLL_n = \#\GGG_n^{u,\delta} + \#\BBB_n^{u,\delta}
\leq (1 + C_8) \#\GGG_n^{u,\delta}.\qedhere
\]
\end{proof}

\subsection{Quasisupermultiplicativity and Fekete}\label{sec:qsm}

Using the family of sufficient Cantor rectangles from Proposition \ref{prop:suff-rect}, we can now prove Proposition \ref{prop:LV-geq-L}: for every $V \in \Vuqd$ and every $n\in \NN$, we have $\#\LLL_n^V \geq b_0 \#\LLL_n$, where $b_0>0$ is independent of $V$ and $n$.

\begin{proof}[Proof of Proposition \ref{prop:LV-geq-L}]
Using Proposition \ref{prop:suff-rect}, there exists a $\delta$-sufficient family of $\ell$ Cantor rectangles. 
Given $n\geq 0$, every cell in $\MMM_{0}^{n+1}$ that has a stable diameter of at least $\delta$ fully crosses some rectangle $R_i$.  Observe that the number of such cells is equal to $\#\GGG_n^{u,\delta}$ by symmetry of the singularity curves.  Thus, there exists $i_n$ such that $R_{i_n}$ is fully crossed by $\ell^{-1} \#\GGG_n^{u,\delta}$ cells in $\MMM_{0}^{n+1}$.

Moreover, by Proposition \ref{prop:suff-rect}, for each $i_n$, 
there exists $w \in \LLL_N^V$ such that $T^{N-1} V_w$ fully crosses $R_{i_n}$, and intersecting this image with the cells of $\MMM_{0}^{n+1}$ from the previous paragraph proves that
\begin{equation}\label{eqn:LnV-geq}
\#\LLL_{n+N}^V \geq \ell^{-1} \#\GGG_n^{u,\delta}.
\end{equation}
Combining this with Proposition \ref{prop:Bn-Gn}, we have
\[
\#\LLL_n \leq b_4^{-1} \#\GGG_n^{u,\delta}
\leq \ell b_4^{-1} \#\LLL_{n+N}^V
\leq \ell b_4^{-1} \#\LLL_n^V \#\LLL_N.
\]
This proves Proposition \ref{prop:LV-geq-L} with $b_0 := b_4 \ell^{-1} (\#\LLL_N)^{-1}$.
\end{proof}

As described in \S\ref{sec:overview}, Proposition \ref{prop:LV-geq-L} implies the local lower counting bound in Proposition \ref{prop:loc-lcb} by using \eqref{eqn:lcb}. 
We can also obtain the global upper counting bound in Proposition \ref{prop:ucb}. First we show:

\begin{proposition}[Quasisupermultiplicativity]\label{prop:qsm}
There exists $b_5>0$ such that for every $k,n\in \NN$, we have $\#\LLL_{n+k} \geq b_5 (\#\LLL_k)(\#\LLL_n)$.
\end{proposition}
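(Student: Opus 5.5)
We combine Proposition \ref{prop:Bn-Gn}, applied to the first block of length $k$, with Proposition \ref{prop:LV-geq-L}, applied to the next block of length $n$. Fix $\delta>0$ small enough that both propositions apply, and let $b_4$ and $b_0$ be the constants they provide.

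First, by \eqref{eqn:Gn-Ln} there are at least $b_4\#\LLL_k$ words $u\in\GGG_k^{u,\delta}$. For each such $u$ we choose a $u$-curve $V^{(u)}\subset Y_u^-$ with $|V^{(u)}|\geq\delta$. Since $Y_u^-\subset Y_{u_k}$, this curve lies in $\Vuqd$. Proposition \ref{prop:LV-geq-L} then gives at least $b_0\#\LLL_n$ words $w\in\LLL_n$ with $V^{(u)}\cap T^{-1}Y_w^+\neq\emptyset$.

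Second, we check that each such pair $(u,w)$ yields the concatenation $uw\in\LLL_{n+k}$. Take a point
\[
\bar y\in V^{(u)}\cap T^{-1}Y_w^+\subset Y_u^-\cap T^{-1}Y_w^+ .
\]
Pulling back by $T^{k-1}$, the point $T^{-(k-1)}\bar y$ lies in $Y_u^+\cap T^{-k}Y_w^+$, which is exactly $Y_{uw}^+$ by \eqref{eqn:Yw+}. Hence $Y_{uw}^+\neq\emptyset$.

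Some care is needed at the endpoints. If the point of intersection lies on a singularity curve, we use openness of the cells (Lemma \ref{lem:Y-open}), or positivity of $|V_w|$ from the definition of $\LLL_n^V$, to move to an interior point where all the relevant maps are continuous. This step is routine.

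Third, the map $(u,w)\mapsto uw$ is injective, since $u$ and $w$ have fixed lengths. Therefore
\[
\#\LLL_{n+k}\geq b_4\,b_0\,(\#\LLL_k)(\#\LLL_n),
\]
and the proposition holds with $b_5:=b_4b_0$.

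The main obstacle is the inputs themselves: the many-good-cells bound \eqref{eqn:Gn-Ln} and the uniform follower bound \eqref{eqn:LV-geq-L}. Both are already established, so the remaining work is only the index bookkeeping. The point to watch is that $Y_u^-$ codes the times $-(k-1),\dots,0$, while $T^{-1}Y_w^+$ codes the times $1,\dots,n$ with no overlap, so the two codings concatenate to $uw$ and not to a word with a shared symbol. The convention in \eqref{eqn:LnV}, which uses $T^{-1}Y_w^+$ rather than $Y_w^+$, is exactly what makes this concatenation work.
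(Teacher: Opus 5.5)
Your proof is correct and follows the paper's argument exactly: for each $u\in\GGG_k^{u,\delta}$ you pick a $u$-curve of length at least $\delta$ in $Y_u^-$, apply Proposition \ref{prop:LV-geq-L} to it, and combine with \eqref{eqn:Gn-Ln} to get $b_5=b_0b_4$. Your check that a point of $Y_u^-\cap T^{-1}Y_w^+$ lies in $T^{k-1}Y_{uw}^+$ spells out the concatenation step that the paper leaves implicit in writing $\#\LLL_{n+k}\geq\sum_{u}\#\LLL_n^{V(u)}$; the endpoint caveat is not needed, since any such point already lies in the open cells.
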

\begin{proof}
Fix $k,n\in \NN$. Given $w\in \GGG_k^{u,\delta}$, there exists $V(w) \in \Vuqd$ such that $V(w) \subset Y_w^-$, so Proposition \ref{prop:LV-geq-L} gives
\begin{equation}\label{eqn:LVw}
\#\LLL_n^{V(w)} \geq b_0 \#\LLL_n,
\end{equation}
and using \eqref{eqn:Gn-Ln} from Proposition \ref{prop:Bn-Gn}, we conclude that
\[
\#\LLL_{n+k} \geq \sum_{w\in \GGG_k^{u,\delta}} \#\LLL_n^{V(w)}
\geq \sum_{w\in \GGG_k^{u,\delta}} b_0 \#\LLL_n
\geq b_0 b_4 \#\LLL_k \#\LLL_n.
\]
Taking $b_5 := b_0 b_4$ completes the proof.
\end{proof}

\begin{proof}[Proof of Proposition \ref{prop:ucb}]
To prove the global upper counting bound, let $a_n := b_5 \#\LLL_n$ and observe that Proposition \ref{prop:qsm} gives
\[
a_{n+k} = b_5 \#\LLL_{n+k} \geq b_5^2 \#\LLL_n \#\LLL_k = a_n a_k.
\]
Applying Fekete's Lemma to $-\log a_n$ gives $-\log a_n \geq -nh$ for all $n$, so $a_n \leq e^{nh}$, which suffices.
\end{proof}

\begin{corollary}\label{cor:counting-big-language}
If $\hat\LLL$ is the complete language of $X$, then there exists a $C_{10}$ such that for all $n$
\begin{equation}\label{eqn:counting-big-language}
e^{nh} \leq \#\hat\LLL_n \leq C_{10} e^{nh}.
\end{equation}
\end{corollary}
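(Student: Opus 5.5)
The plan is to compare the complete language $\hat\LLL$ of $X=\overline{c(M)}$ with the sublanguage $\LLL$ from \eqref{eqn:bill-lang}, and then transfer the bounds already proved for $\LLL$.

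\emph{Upper bound.} By Remark \ref{rmk:X-lang}, which cites \cite[Lemma 3.2]{BD20}, there is a constant $C'$ with $\#(\hat\LLL_n\setminus\LLL_n)\le C'n$ for all $n$. Since $\LLL_n\subset\hat\LLL_n$, Proposition \ref{prop:ucb} gives
\[
\#\hat\LLL_n \le \#\LLL_n + C'n \le C_4 e^{nh} + C'n.
\]
We have $h>0$: for instance $\#\LLL_n\ge b_5^{-1}\cdot(b_5\#\LLL_1)^n$, or more simply $h=\htop(T)>0$ by uniform hyperbolicity. Hence $n e^{-nh}$ is bounded, and we may take $C_{10}:=C_4+C'\sup_n n e^{-nh}$.

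\emph{Lower bound.} Here $h$ denotes $\htop(X,\sigma)$, which by \eqref{eqn:htop} is $\lim\frac1n\log\#\hat\LLL_n$. Remark \ref{rmk:X-lang} shows this equals $\lim\frac1n\log\#\LLL_n$, so there is no ambiguity. The sequence $\#\hat\LLL_n$ is submultiplicative, since $\hat\LLL$ is the factorial language of a shift space. Fekete's lemma, exactly as in \eqref{eqn:lcb}, then gives $\#\hat\LLL_n\ge e^{nh}$. Alternatively, $\#\hat\LLL_n\ge\#\LLL_n\ge e^{nh}$, using that $\LLL$ is factorial and extendable (Lemma \ref{lem:lang}) and therefore also submultiplicative.

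\emph{Main obstacle.} The only nontrivial input is the linear bound on $\#(\hat\LLL_n\setminus\LLL_n)$. I take it from \cite[Lemma 3.2]{BD20} as quoted in Remark \ref{rmk:X-lang}.

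If a self-contained argument were wanted, I would argue that extra words arise only from codings of points whose orbit passes through a multiple singularity point within the time window. I would then bound these using the linear complexity in Lemma \ref{lem:lin-comp}, giving $O(n)$ words per point of the finite set $\SSS_n^*$ counted along the orbit. This step is where care is needed, but for the corollary it suffices to cite it.
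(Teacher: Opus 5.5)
Your proof is correct and is essentially the paper's argument: the lower bound comes from $\#\hat\LLL_n\ge\#\LLL_n\ge e^{nh}$ (or Fekete), and the upper bound combines the linear bound $\#\hat\LLL_n\le\#\LLL_n+Cn$ from \cite[Lemma 3.2]{BD20} with Proposition \ref{prop:ucb} and $h>0$. Your explicit constant $C_4+C'\sup_n ne^{-nh}$ simply replaces the paper's ``correct finitely many terms'' step.

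One small caveat: the bound $\#\LLL_n\ge b_5^{-1}(b_5\#\LLL_1)^n$ gives $h>0$ only if $b_5\#\LLL_1>1$, and that is not established. You should rely instead on the standard fact $h=\htop(T)>0$ (e.g.\ $h\ge h_{\mu_L}(T)>0$ for the Liouville measure), which the paper's proof also uses implicitly when it asserts $Cn\le e^{nh}$ for large $n$.
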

\begin{proof}
Clearly $e^{nh} \leq \#\LLL_n \leq \#\hat\LLL_n$.  By Lemma 3.2 of \cite{BD20}, there exists a $C>0$ that depends only on the table such that 
\[\#\hat\LLL_n \leq \#\LLL_n + Cn.\]
There exists an $N$ such that for $n\geq N$, we have $Cn \leq e^{nh}$. Thus,
\[\#\hat\LLL_n \leq \#\LLL_n + Cn \leq \#\LLL_n +e^{nh} \leq 2\#\LLL_n\leq 2C_4 e^{nh}.\]
Since there are only finitely many terms to correct for, there must exist a $C_{10}$ satisfying \eqref{eqn:counting-big-language} for all $n$.
\end{proof}

\begin{proof}[Proof of Theorem \ref{thm:length-growth}]
The lower bound in \eqref{eqn:TnV} follows from the local lower counting bound in Proposition \ref{prop:loc-lcb} together with Corollary \ref{cor:ave-length}.
For the upper bound in \eqref{eqn:TnV}, observe that the upper bound in Proposition \ref{prop:ucb} together with the upper bound on length of $u$-curves gives
$
|T^n V| \leq L \# \LLL_n^V \leq L C_4 e^{nh}.
$
\end{proof}

\section{The billiard MME}\label{sec:bill-construction}

Let $X \subset A^\ZZ$ be the coding space for a finite horizon Sinai billiard, as in \S\ref{sec:coding-bill}.
Consider the associated one-sided shifts $X^{\pm}$, and the measures $m^{\pm}$ constructed in \S\ref{sec:one-sided}.
Let $\mu$ be the invariant measure on $X$ constructed in \S\ref{sec:exterior-product}. 
We have shown in Corollary \ref{cor:finite} that the measures $m^{\pm}$ and $\mu$ are finite.
In this section, we use the counting bounds from \S\ref{sec:counting} to deduce 
the following.
\begin{itemize}
\item $\mu$ gives positive weight to $c(U)$ for every open $U \subset M$ (\S\ref{sec:positive}). In particular, it is nonzero, verifying Theorem \ref{thm:strategy}\ref{mu-fin-pos}; this in turn implies that $\bar\mu = \mu/\mu(X)$ is an MME 
(Proposition \ref{prop:inv-MME}) and has local product structure (Remark \ref{rmk:tools-for-lps}), verifying Theorem \ref{thm:strategy}\ref{mu-lps-supp}.
\item $\bar\mu$ is multiply mixing (\S\ref{sec:ergodic}), verifying Theorem \ref{thm:strategy}\ref{mu-erg}.
\item $\bar\mu$ is the unique MME (\S\ref{sec:unique}), verifying Theorem \ref{thm:strategy}\ref{mu-uniq}.
\end{itemize}

\subsection{Almost all local stable and unstable sets are non-trivial}

Consider the sets
\begin{equation}\label{eqn:X0}
	\begin{aligned}
		X_0^u &:= \{x\in X : |\Wul(\bx)| = 0 \}, \\
		X_0^s &:= \{x\in X : |\Wsl(\bx)| = 0 \}.
	\end{aligned}
\end{equation}
Recalling \eqref{eqn:X*su}, observe that $X_0^u \subset X_*^u$ and $X_0^s \subset X_*^s$ since $\mf$ and $\mb$ are nonatomic by Corollary \ref{cor:Gibbs}. Thus \eqref{eqn:m-X*} implies that
\begin{equation}\label{eqn:m-X0}
	\mf_x(X_0^u) = \mb_x(X_0^s) = 0
	\quad\text{for all } x\in X
\end{equation}
by Lemma \ref{lem:nonzero-mu-ae}.

In order to prove that $\mu$ gives positive weight to all open sets, we will need the following variant of \eqref{eqn:m-X0}, which requires more work.

\begin{lemma}\label{lem:m-X0-again}
	With $X_0^{u,s}$ as in \eqref{eqn:X0}, we have
	\begin{equation}\label{eqn:m-X0-again}
		\mb(\pim X_0^u) = \mf(\pip X_0^s) = 0.
	\end{equation}
\end{lemma}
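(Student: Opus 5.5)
The plan is to cover $\pim X_0^u$ by backward cylinders of ``bad'' words and to show that these covers have arbitrarily small $\mb$-weight. The statement for $\mf(\pip X_0^s)$ then follows by the time-reversal symmetry of the billiard, exchanging $u$ and $s$.

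\emph{Step 1: geometric reduction.} I first show that whether $x \in X_0^u$ depends only on $\pim(x)$. I also show that for every $\delta>0$ and every $x\in X_0^u$ there is $N$ such that
\[
x_{(-n,0]} \in \BBB_n^{u,\delta} \quad\text{for all } n\ge N.
\]
The argument is by contradiction. Suppose that for infinitely many $n$ the cell $Y_{x_{(-n,0]}}^-$ contains a $u$-curve of length $\ge\delta$. By compactness of $u$-curves (Arzel\`a--Ascoli, using the uniform cone bounds), these curves subconverge to a $u$-curve of length $\ge\delta$. Since the closures $\overline{Y^-_{x_{(-n,0]}}}$ are nested, the limit curve lies in $\bigcap_n \overline{Y^-_{x_{(-n,0]}}}$, whose relevant part is $\overline{\Wul(\bx)}$ by the description of unstable manifolds in \S\ref{sec:rectangles}. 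This contradicts $|\Wul(\bx)|=0$. (Some care is needed with the isolated points of $X$ and with words of $\hat\LLL\setminus\LLL$; these contribute only $O(n)$ words by \cite[Lemma 3.2]{BD20}, which is negligible against $e^{nh}$.) Consequently
\[
\pim X_0^u \subset \bigcup_N \bigcap_{n\ge N} \bigcup_{w\in \BBB_n^{u,\delta}\cup(\hat\LLL_n\setminus\LLL_n)} \mbl w\mbr ,
\]
and so, by the upper Gibbs bound \eqref{eqn:up-Gibbs},
\[
\mb(\pim X_0^u) \le \liminf_{n\to\infty} C_1 e^{-nh}\big(\#\BBB_n^{u,\delta} + Cn\big).
\]

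\emph{Step 2: the bad fraction vanishes as $\delta\to0$.} It then suffices to find $\eps(\delta)\to0$ with $\limsup_n e^{-nh}\#\BBB_n^{u,\delta}\le \eps(\delta)$. I would start from \eqref{eqn:cell-leq-curve}:
\[
\#\BBB_n^{u,\delta} \le C_7 n + 2\sum_{k\le n+1}\#\BBB_k^\delta(\SSS_0).
\]
The task is to show that $\#\BBB_k^\delta(\SSS_0)\le \eps(\delta)\, e^{kh}\, e^{-\gamma'(n-k)}$-summably, i.e.\ that short pieces of $T^k\SSS_0$ are a vanishing proportion when $\delta\to 0$. For this I apply the decomposition Lemma \ref{lem:decomp} at the fixed scale $\rho$ to each $V\in\bbS_1^1$. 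Each short final piece consists of a $\rho$-good prefix of length $j$ followed by a $\rho$-ugly tail of length $k-j$.

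The intended estimate is a pigeonhole on length. Fix a $\rho$-good piece of length $\ge\rho$ at time $j$. Its tail pieces at time $k$ number at most $C_6e^{-\gamma(k-j)}|T^k(\cdot)|/\rho$ by Lemma \ref{lem:grow-cut}. Among these, the pieces shorter than $\delta$ should be controlled by the number of cuts made by singularity curves of order $\le k-j$ inside a $\delta$-neighbourhood. Using the linear complexity Corollary \ref{cor:local-lin}, I expect a bound of roughly $\min\big(e^{\alpha(k-j)},\ \delta\cdot(\text{total count})\big)$. Summing over $j$, together with the global upper bound (Proposition \ref{prop:ucb}) and Theorem \ref{thm:length-growth}, should give $\eps(\delta)\lesssim \delta^{c}$ for some $c>0$.

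\emph{Main obstacle.} The hard step is Step 2: getting a bound on short pieces that actually tends to $0$ with $\delta$. The estimates in \S\ref{sec:counting} only give a bad fraction bounded away from $1$ (the constant $b_3$ does not tend to $1$). If the direct length/complexity count fails, my fallback avoids needing $\eps(\delta)\to0$ and uses a density argument instead. By Proposition \ref{prop:Bn-Gn} together with Proposition \ref{prop:LV-geq-L} and the scaling \eqref{eqn:-scale}, every cylinder $\mbl w\mbr$ carries a definite proportion $\ge c>0$ of $\mb$-mass on pasts that become $\delta$-good again within a bounded number of additional steps. Iterating this gives $\mb\big(\bigcap_{n\ge N}\{\text{bad at }n\}\big)\le (1-c)^m$ for every $m$, and hence $0$. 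Establishing this uniform proportion on the good pasts is where I expect the real difficulty.
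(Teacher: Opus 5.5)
\textbf{The gap is Step 2, and it carries the whole argument.} Step 1 is essentially fine, and in fact simpler than you make it. Since $Y^-_{vw}\subset Y^-_w$, every backward extension of a word in $\BBB_N^{u,\delta}$ is again $\delta$-bad, so $\bigcap_{n\ge N}\bigcup_{w\in\BBB_n^{u,\delta}}\mbl w\mbr$ reduces to the single level $N$.

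Step 2 needs $\limsup_n e^{-nh}\#\BBB_n^{u,\delta}\to0$ as $\delta\to0$. Via \eqref{eqn:cell-leq-curve}, this says that $\delta$-short components of $T^k\SSS_0$ form a vanishing \emph{proportion} of all pieces, uniformly in $k$. Nothing in the paper gives this. Proposition \ref{prop:control-bad} and Proposition \ref{prop:Bn-Gn} only bound the good fraction below by $b_3=(b_1-\delta)/(L-\delta)$ or $b_4$, and neither tends to $1$.

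Your decomposition does handle short pieces with long $\rho$-ugly tails: they make up a fraction $\lesssim e^{-(h-\alpha)J}$ by Lemma \ref{lem:tempered} and Proposition \ref{prop:ucb}. The problem is the pieces that were $\rho$-long within the last $J$ steps. For these you would need that only a small fraction of long pieces at time $j$ either pass within $o_\delta(1)$ of the finitely many intersection points of $\SSS_J$, or have an endpoint that close to a curve of $\SSS_J$. That is a quantitative non-concentration estimate of cells near the singularity set, in \emph{counting} form. Your heuristic bound $\min(e^{\alpha(k-j)},\delta\cdot\text{total})$ simply assumes it. The rate $\delta^c$ you hope for is of adaptedness type, and the paper explicitly says its methods give no such information; in \cite{BD20} estimates of this kind come from the sparse recurrence condition. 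Note also that counting bad words is strictly harder than bounding their $\mb$-mass, because bad cylinders have no lower Gibbs bound.

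\textbf{The fallback fails as stated.} By the monotonicity above, a past that is $\delta$-bad at level $n$ stays bad at every deeper level. So the $\mb$-proportion of pasts that ``become good again'' inside a bad cylinder is $0$, not $\ge c$. Moreover, any uniform positive lower bound on good mass inside arbitrary cylinders is exactly the missing lower Gibbs bound.

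\textbf{The idea you are missing avoids the scale $\delta$ entirely.} Take $x\in\pip X_0^s$, so the stable manifold of $\bx$ is a single point. By linear complexity (Corollary \ref{cor:local-lin}, as in \eqref{eqn:Kn}), $x$ then has at most $Kn+1$ admissible $n$-step extensions $P_n(x)=\{w: wx\in X^+\}$. Decompose $\pip X_0^s=\bigsqcup_w \sigma^n|_{\pbl w\pbr}^{-1}Z_w$ by its first $n$ symbols. Then the scaling of Proposition \ref{prop:scaling} and Fubini give
\[
\mf(\pip X_0^s)=e^{-nh}\int_{\pip X_0^s}\#P_n\,d\mf\le (Kn+1)e^{-nh}\,\mf(\pip X_0^s).
\]
Since $\mf$ is finite, this forces $\mf(\pip X_0^s)=0$, and $\mb(\pim X_0^u)=0$ follows the same way. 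The set of trivial-leaf codings is self-similar with only subexponentially many extensions, and that alone beats the $e^{nh}$ scaling. No proportion estimate on short cells is needed.
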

\begin{proof}
We prove that $\mf(\pip X_0^s) = 0$; the proof for $\mb(\pim X_0^u)$ is similar. 
Given $x\in X^+$ and $n\in \NN$, let
\[
P_n(x) := \{ w\in \LLL_n : wx\in X^+ \}.
\]
Thus $\#P_n(x)$ is the number of cylinders of length $n$ required to cover all possible codings of points in $T^{-n}\Wsl(\bx)$. If $x\in \pip(X_0^s)$, then $\Wsl(\bx)$ is a single point, so as in Lemma \ref{lem:nonatomic-0} and \eqref{eqn:Kn}, we have 
	\begin{equation}\label{eqn:Kn-pre}
		\#P_n(x) \leq Kn+1
		\quad\text{for all } n\in \NN.
	\end{equation}	
	Given $w\in \LLL_n$, consider the set
	\[
	Z_w := \{ x\in \pip X_0^s \subset X^+ : w\in P_n(x) \}
	\]
	of all forward-infinite sequences that code points with trivial stables and have a preimage of order $n$ coded by $w$. Observe that
	\[
	\pip(X_0^s) = \bigsqcup_{w\in \LLL_n} \pip (X_0^s \cap \sigma\mbl w\mbr)
	= \bigsqcup_{w\in \LLL_n} \sigma^n|_{\pbl w \pbr}^{-1}Z_w,
	\]
	and that by Proposition \ref{prop:scaling}, we have
	\[
	\mf(\sigma^n|_{\pbl w \pbr}^{-1}Z_w)
	= e^{-nh} \mf(Z_w).
	\]
	Combining these and applying Fubini's theorem to the measure $\#\times \mf$ on $\LLL_n\times X^+$, we can use \eqref{eqn:Kn-pre} to obtain
	\begin{align*}
		\mf(\pip X_0^s) &= e^{-nh} \sum_{w\in \LLL_n} \mf(Z_w)
		= e^{-nh} \int_{\pip X_0^s} \#P_n(x) \,d\mf(x) \\
		&\leq (Kn+1) e^{-nh} \mf(\pip X_0^s).
	\end{align*}
	For large $n$, we have $(Kn+1) e^{-nh} < 1$, so $\mf(\pip X_0^s) = 0$.
\end{proof}

\subsection{Positivity on open sets}\label{sec:positive}

\begin{lemma}\label{lem:positive-leaf}
For all $\delta \in (0,b_1)$ there exists $c_\delta>0$ such that
\begin{equation}\label{eqn:mfbVus}
\begin{gathered}
\text{for every $V\in \Vud$, we have $\mf(\pip(\pi^{-1}V)) \geq c_\delta$}, \\
\text{for every $V\in \Vsd$, we have $\mb(\pim(\pi^{-1}V)) \geq c_\delta$}.
\end{gathered}
\end{equation}
In particular, for every $x\in X$, we have:
\begin{equation}\label{eqn:positive-leaf}
\begin{gathered}
\text{if $|\Wul(\bx)| \geq \delta$, then } \mf_x(\Wul(x)) \geq c_\delta, \\
\text{if $|\Wsl(\bx)| \geq \delta$, then } \mb_x(\Wsl(x)) \geq c_\delta.
\end{gathered}
\end{equation}
Moreover, both $\mf$ and $\mb$ are fully supported: 
\begin{equation}\label{eqn:mfb-full}
\text{given any $w\in \LLL$, we have $\mf(\pbl w \pbr)>0$ and $\mb(\mbl w \mbr) > 0$.}
\end{equation}
\end{lemma}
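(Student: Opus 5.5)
The plan is to obtain \eqref{eqn:mfbVus} from Proposition \ref{prop:Z-compact-lower}, with the counting hypothesis \eqref{eqn:Z-lcb} supplied by Proposition \ref{prop:LV-geq-L}. I treat the unstable case; the stable case is symmetric.

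Fix $V\in\Vud$. I may assume $V$ lies in a single cell $Y_a$, that is, $V\in\Vuqd$. This is harmless after shrinking $\delta$: since $\QQQ$ is finite, and the boundaries of its cells are made of $s$-curves together with finitely many $u$-curves of $\SSS_{-1}$, a $u$-curve of length $\delta$ contains a subcurve of length comparable to $\delta$ inside some cell (or lies along a $u$-boundary curve, which is handled by adjacency). Set $Z:=\pip(\pi^{-1}V)\subset X^+$. Since $\pi$ is continuous and $V$ is closed, $\pi^{-1}V$ is compact, and so $Z$ is compact.

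The key step is the counting bound. For every $w\in\LLL_n$ with $V\cap T^{-1}Y_w^+\neq\emptyset$, the set $V\cap T^{-1}Y_w^+$ is a nonempty open subcurve of $V$. Its points $\bx$ that lie in $M'$ have codings $c(\bx)$ with $c(\bx)_{[1,n]}=w$; such points exist because $V\setminus M'$ is countable, by the argument of Lemma \ref{lem:ctbl-sing-0}. Hence $\sigma^{-1}$-cylinders of $w$ meet $Z$. To match the format of Proposition \ref{prop:Z-compact-lower}, I use words $aw$ of length $n+1$, where $a$ is the symbol of the cell containing $V$. This gives
\[
\#\{u\in\LLL_{n+1}:\pbl u\pbr\cap Z\neq\emptyset\}\ \geq\ b_0\#\LLL_n\ \geq\ b_0e^{-h}e^{(n+1)h},
\]
using Proposition \ref{prop:LV-geq-L} and \eqref{eqn:lcb}. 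Proposition \ref{prop:Z-compact-lower}, with $C_1=C_4$ from Proposition \ref{prop:ucb}, then yields $\mf(Z)\ge c_\delta:=b_0e^{-h}/C_4$, a constant independent of $V$.

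For \eqref{eqn:positive-leaf}, take $x$ with $|\Wul(\bx)|\ge\delta$ and let $V=\overline{\Wul(\bx)}$, which is a $u$-curve. I would argue that $\pip(\pi^{-1}V)$ agrees with $\pip(\Wul(x))$ up to countably many points. Points of $V\cap M'$ have codings whose past equals $\pim(x)$, so these codings lie in $\Wul(x)$. Points outside $M'$ form a countable set by Lemma \ref{lem:ctbl-sing-0}, and each has at most finitely many codings of any given length (compare \eqref{eqn:Kn}). Because $\mf$ is nonatomic by Corollary \ref{cor:Gibbs}, these countable discrepancies do not matter. A cleaner route avoids them entirely: run the counting argument directly on $Z'=\pip(\Wul(x))$, which is compact, using Proposition \ref{prop:loc-lcb}, which gives exactly \eqref{eqn:Z-lcb} for $Z'$ with $b_{Z'}=q_\delta$. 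Either way $\mf_x(\Wul(x))=\mf(Z')\ge c_\delta$.

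For full support \eqref{eqn:mfb-full}, take $w\in\LLL_n$. The cell $Y_w^-$ is open and nonempty by Lemma \ref{lem:Y-open}, so it contains some $u$-curve $V$ of positive length $\eps$. By Remark \ref{rem:many-good}, or simply by applying the first part with $\eps$ in place of $\delta$, we get $\mf(\pip(\pi^{-1}V))\ge c_\eps>0$.

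To transfer this to $\pbl w\pbr$, I use the scaling in Proposition \ref{prop:scaling}. The set $T^{-(n-1)}V$ is a subcurve of $Y_w^+$, and every coding of a point of $T^{-(n-1)}V\cap M'$ starts with $w$. Choose $V$ so that $T^{-(n-1)}V$ is a $u$-curve in $Y_w^+$; this is possible because $Y_w^+$ is also open, so I can take the $u$-curve directly inside $Y_w^+$. Then $\pbl w\pbr\supset\pip(\pi^{-1}V')$ up to a null set, where $V'\subset Y_w^+$ is a short $u$-curve, and positivity follows. The main obstacle I expect is this bookkeeping between the symbolic set $\pip(\pi^{-1}V)$ and cylinders: handling non-uniqueness of codings off $M'$, and ensuring that $V'$ lies in a single $\QQQ$-cell so that Proposition \ref{prop:LV-geq-L} applies. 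Both are controlled by countability together with nonatomicity.
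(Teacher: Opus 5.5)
Your overall strategy is the paper's. You apply Proposition \ref{prop:Z-compact-lower} with $C_1=C_4=b_5^{-1}$, fed by Proposition \ref{prop:LV-geq-L} and \eqref{eqn:lcb}, then specialize to unstable manifolds. For full support you use a short $u$-curve $V'\subset Y_w^+$; in fact $\pip(\pi^{-1}V')\subset\pbl w\pbr$ holds exactly since $Y_w^+$ is open, and the detour through $Y_w^-$ and scaling is not needed. You are more careful than the paper in two places: you take $Z=\pip(\pi^{-1}V)$ rather than ``$Z=V$'', and you handle the index shift with words $aw$, at the harmless cost of a factor $e^{-h}$. Your second route to \eqref{eqn:positive-leaf} applies Proposition \ref{prop:Z-compact-lower} directly to the compact set $\pip(\Wul(x))$ using Proposition \ref{prop:loc-lcb}. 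That is cleaner than the paper's ``take $V=\Wul(\bx)$''.

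The gap is in your justification that $\pbl aw\pbr\cap Z\neq\emptyset$. It is false that $V\setminus M'$ is countable for a general $u$-curve $V$. Lemma \ref{lem:ctbl-sing-0} relies on $T^{-n}\Wul(\bx)$ being again a $u$-curve, and this fails for arbitrary $V$. Indeed, a $u$-curve (even one inside a single cell $Y_a$) can be a subarc of a component of $T^k\SSS_0$ for some $k\ge 2$, and then $V\cap M'=\emptyset$.

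The claim itself survives by approximation:
\begin{enumerate}
\item Given $\bx\in V\cap T^{-1}Y_w^+$, choose $\bx_k\in M'\cap T^{-1}Y_w^+$ with $\bx_k\to\bx$. This is possible because $M'$ is dense by Baire and $T^{-1}Y_w^+$ is open.
\item Pass to a subsequence with $c(\bx_k)_0=a$ fixed and $c(\bx_k)\to y\in X$.
\item Then $aw\in\LLL_{n+1}$ and $y_{[0,n]}=aw$, and $\pi(y)=\bx\in V$ by continuity of $\pi$.
\end{enumerate}
Since $w\mapsto aw$ is injective even when $a$ depends on $w$, this also removes the need to reduce to $V\subset Y_a$, and Proposition \ref{prop:LV-geq-L} is stated for arbitrary $u$-curves anyway. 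That is just as well, because your ``shrink $\delta$'' justification for the reduction is not valid. A $u$-curve can cross the $u$-curves of $\SSS_{-1}$ arbitrarily often, so there is no uniform lower bound on its longest piece inside a single cell.

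Similarly, in your first route to \eqref{eqn:positive-leaf}, nonatomicity alone does not make $\pip(\pi^{-1}\bill{y})$ null, since this set can be uncountable. What does make it null is the cover by at most $Kn+1$ cylinders of length $n$, as in \eqref{eqn:Kn}, combined with \eqref{eqn:up-Gibbs}.
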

\begin{proof}
It suffices, to prove \eqref{eqn:mfbVus} for $\mf$. Indeed:
\begin{itemize}
\item \eqref{eqn:positive-leaf} follows from \eqref{eqn:mfbVus} by taking $V = \Wul(\bx)$;
\item \eqref{eqn:mfb-full} follows by observing that given any $w\in \LLL$, the open cell $Y_w^+ \subset M$ is nonempty and thus contains some curve $V\in \Vu$ with $|V|>0$;
\item the results for $\mb$ are proved similarly.
\end{itemize}
Given $V\in \Vud$, 
we will apply Proposition \ref{prop:Z-compact-lower} to the compact set $Z=V$.
Observe that \eqref{eqn:ucb} is satisfied with $C_1 = b_5^{-1}$ by Proposition \ref{prop:ucb}, so it suffices to verify \eqref{eqn:Z-lcb}.  
This follows immediately from Proposition \ref{prop:LV-geq-L}, which gives
\[
\#\{w \in \LLL_n : V \cap T^{-1} Y_w^+ \neq \emptyset\}
= \#\LLL_n^V \geq b_0 \#\LLL_n \geq b_0 e^{nh},
\]
and applying Proposition \ref{prop:Z-compact-lower} completes the proof with $c_\delta = b_0 b_5$.
\end{proof}

\begin{corollary}\label{cor:pos-rectangle}
	If $|\Wul(\bx)| > 0$, then there exists a symbolic rectangle $R \in c(\RRR)$ such that $\mf_x(R) > 0$.  
\end{corollary}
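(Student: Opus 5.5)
Suppose $x\in X$ with $|\Wul(\bx)|>0$. The plan is to use the leaf measure $\mf_x$ together with the countable cover of $\Mreg$ by Cantor rectangles from Proposition \ref{prop:suff-rect}. First I would show that $\mf_x(\Wul(x))>0$. Put $\delta:=|\Wul(\bx)|$, or any smaller positive number below $b_1$. Then the first part of \eqref{eqn:positive-leaf} in Lemma \ref{lem:positive-leaf} gives $\mf_x(\Wul(x))\geq c_\delta>0$. One point needs checking here: $\Wul(\bx)$ must be an honest $u$-curve in $\Vud$ whose preimage under $\pi$, intersected with $\Wul(x)$, projects onto the set covered by Lemma \ref{lem:positive-leaf}. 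This is what that lemma asserts via $V=\Wul(\bx)$.

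Next I would show that $\mf_x$ gives full weight to $\Xreg\cap \Wul(x)$. The points of $\Wul(x)$ that are not regular fall into three sets: points outside $X'$, points whose unstable manifold they terminate, and points whose stable manifold they terminate. For the first set, Lemma \ref{lem:ctbl-sing-0} shows that $\Wul(\bx)\setminus M'$ is countable. Each of its points has at most $Kn+1$ codings at level $n$, by the argument of Lemma \ref{lem:nonatomic-0}. Combining this with the upper Gibbs bound (Corollary \ref{cor:Gibbs}) for $\mf$ shows that each fibre $\pi^{-1}(\bill y)\cap\Wul(x)$ is $\mf_x$-null, since $(Kn+1)C_1e^{-nh}\to0$. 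A countable union of null sets is null. On $X'$ the map $\pi$ is injective, so endpoints of $\Wul(\bx)$ contribute at most two points, and these are null because $\mf$ is nonatomic.

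The main obstacle is the third set: points $y\in\Wul(x)$ with $\bill y$ an endpoint of its stable manifold, including the case $|\Wsl(\bill y)|=0$. Lemma \ref{lem:m-X0-again} states $\mf(\pip X_0^s)=0$, which handles the trivial stable manifolds directly after projecting by $\pip$. For a nontrivial stable manifold with $\bill y$ as an endpoint, the manifold must terminate on a singularity curve in $\SSS_\infty$, so $\bill y$ lies on the boundary of some forward cell $Y_w^+$. I would try to show that the set of such $y$ is countable within a single unstable curve. A $u$-curve meets each $s$-curve component of $\SSS_n$ at most once, and the relevant stable endpoints lie either on such components or on accumulations of them. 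If that argument fails, I would instead adapt the covering argument of Lemma \ref{lem:m-X0-again}: count cylinders meeting stable-endpoint codings with a linear bound, and use scaling to show the $\mf$-measure of $\pip X_\partial^s$ vanishes.

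Once $\mf_x(\Wul(x)\cap\Xreg)=\mf_x(\Wul(x))>0$, the proof finishes quickly. The symbolic rectangles $c(R)$, $R\in\RRR$, cover $\Xreg=c(\Mreg)$ and form a countable family. By countable subadditivity, some $R\in c(\RRR)$ has $\mf_x(R)>0$.
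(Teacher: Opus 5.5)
Your first step, $\mf_x(\Wul(x))\ge c_\delta$ from Lemma~\ref{lem:positive-leaf}, is the same as the paper's. Your treatment of $\Wul(x)\setminus X'$ is sound: Lemma~\ref{lem:ctbl-sing-0} gives countably many base points, and each fibre has $\mf_x$-measure at most $(Kn+1)C_1e^{-nh}$. Your treatment of unstable endpoints is also sound.

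The gap is the step you yourself call the main obstacle. You never show that $\mf_x$ gives zero weight to points $y\in\Wul(x)\cap X'$ for which $\bill y$ is an endpoint of a \emph{nontrivial} stable manifold. This step cannot be skipped. Such a point lies in no Cantor rectangle, because a stable manifold ending at $\bill y\in D$ cannot have interior meeting both unstable sides of $D$. So the cover $c(\RRR)$ says nothing about these points. Neither of your routes works as stated:
\begin{itemize}
\item The intersection argument (a $u$-curve meets each component of $\SSS_n$ at most once) only controls points lying on singularity curves. Those are outside $X'$ and already handled. An endpoint in $X'$ lies on no singularity curve; it is only a limit of curves of increasing order, and you give no reason why such limits form a countable set.
\item The fallback ``linear bound'' fails outright. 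The estimate $\#\{w\in\LLL_n : wu\in X^+\}\le Kn+1$ in Lemma~\ref{lem:m-X0-again} relies on $\Wsl$ being a single point. For a nontrivial stable manifold, $T^{-n}\Wsl$ is an expanding $s$-curve and this count grows exponentially, by the stable analogue of Proposition~\ref{prop:LV-geq-L}. The scaling inequality then becomes vacuous.
\end{itemize}

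The scaling idea can be rescued if you count only backward extensions that stay in the bad set. Put $E:=\pip(X_\partial^s\cap X')$.
\begin{itemize}
\item Near points of $M'$, the maps $T^{\pm1}$ are local homeomorphisms that respect the partition into local stable manifolds. Hence $y\in X_\partial^s\cap X'$ implies $\sigma y\in X_\partial^s\cap X'$.
\item Consequently $\sigma^n(E\cap\pbl w\pbr)\subset\{u\in E : wu\in E\}$. Proposition~\ref{prop:scaling} then gives $\mf(E)\le e^{-nh}\int_E N_n(u)\,d\mf(u)$, where $N_n(u):=\#\{w\in\LLL_n : wu\in E\}$.
\item All points of $X'$ with forward code $u$ lie on one local stable manifold. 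Distinct $w$ counted in $N_n(u)$ yield distinct endpoints of it, via $T^n$ and injectivity of $\pi$ on $X'$. So $N_n(u)\le 2$.
\item Thus $\mf(E)\le 2e^{-nh}\mf(E)$ for all $n$, so $\mf(E)=0$, and $\mf_x(X_\partial^s\cap X')=0$ for every $x$.
\end{itemize}

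For comparison, the paper's proof removes only $X_0^s$ via Lemma~\ref{lem:m-X0-again}. It then asserts that $\Wul(x)\setminus X_0^s$ is countably covered by $c(\RRR)$. Proposition~\ref{prop:suff-rect} only provides a cover of $\Mreg$, so that assertion tacitly needs the same nullity. The issue you flagged is genuine, but as written your proposal does not resolve it.
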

\begin{proof}
	If $|\Wul(\bx)| > 0$, then $\mf_x(\Wul(x)) > 0$ by Lemma \ref{lem:positive-leaf}.  Moreover, if $X_0^s = \{x \in X : |\Wsl(\bx)| = 0\}$, then $\mf_x(X_0^s) = 0$ by Lemma \ref{lem:m-X0-again}.  There is a countable cover of $\Wul(x)\setminus X_0^s$ by rectangles in $c(\RRR)$, so there must be at least one rectangle $R$ such that $\mf_x(R) > 0$.
\end{proof}

A similar statement holds for stable manifolds.

\begin{corollary}\label{cor:nonuniform-lower-gibbs-leaf}
For all $\delta \in (0,b_1)$, if $w \in \GGG_n^\delta(\Wul(\bx))$, then 
\[\mf_x(\cyl{}{w}) \geq c_\delta e^{-nh}.\]
\end{corollary}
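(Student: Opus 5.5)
The plan is to combine the scaling property of $\mf$ (Proposition \ref{prop:scaling}) with the lower bound \eqref{eqn:mfbVus} from Lemma \ref{lem:positive-leaf}, applied to the long final image curve.

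Fix $x$ and put $V := \Wul(\bx)$. Since $\GGG_n^\delta(V)$ is only defined for curves in $\Vuq$, I first replace $V$ by its subcurve lying in the cell $Y_{x_0}$, which is harmless. Now take $w\in\GGG_n^\delta(V)$. By definition, $V_w = V\cap T^{-1}Y_w^+$ is the subcurve whose next $n$ iterates are coded by $w$, and the final image $T^nV_w$ is a $u$-curve with $|T^nV_w|\ge\delta$. That is, $T^nV_w\in\Vud$.

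The symbolic set corresponding to $\cyl{}{w}$ on the leaf is $Z:=\Wul(x)\cap\sigma^{-1}\cyl{}{w}$. With the convention of the statement, the cylinder starts at position $1$, and I read $\mf_x(\cyl{}{w})$ as $\mf_x$ of this set. Points of $V_w$ coded by $X'$ lie in $\pi(Z)$. The key step is to compare $\pip(Z)$ with $\pip(\pi^{-1}(T^nV_w))$.

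\begin{itemize}
\item The map $\sigma^n$ sends $\pip(Z)\subset\pbl x_0w_{[1,n)}\pbr$ into $X^+$. Applying \eqref{eqn:w+scale} with the word $x_0w_{[1,n)}$ of length $n$ gives $\mf(\pip Z)=e^{-nh}\,\mf(\sigma^n\pip Z)$.
\item The image $\sigma^n\pip Z$ should contain $\pip(\pi^{-1}(T^nV_w))$, at least up to an $\mf$-null set. This holds because each point $\bill y\in T^nV_w\cap M'$ has a unique coding $y$. Since $T^{-n}\bill y\in V_w\subset\Wul(\bx)$, the sequence $\sigma^{-n}y$ agrees with $x$ in nonpositive coordinates, and its coordinates $1,\dots,n$ are $w$. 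Hence $\sigma^{-n}y\in Z$.
\end{itemize}

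Combining these with \eqref{eqn:mfbVus} applied to $T^nV_w\in\Vud$ yields
\[
\mf_x(\sigma^{-1}\cyl{}{w}) = \mf(\pip Z) \ge e^{-nh}\,\mf(\pip\pi^{-1}(T^nV_w)) \ge c_\delta e^{-nh}.
\]

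The main obstacle is the containment in the second bullet for points of $\pi^{-1}(T^nV_w)$ outside $X'$, where codings are not unique and $\pi$ does not semiconjugate. I plan to handle this by approximation using closedness. A point $y\in\pi^{-1}(T^nV_w)$ is a limit of codings $y^k$ of points $\bill y_k\in T^nV_w\cap M'$. Each $\sigma^{-n}y^k$ lies in $Z\cap X'$, and $Z$ is closed, since $\Wul(x)$ and cylinders are closed. Hence $\sigma^{-n}y=\lim_k\sigma^{-n}y^k\in Z$. For this approximation to be available, the points of $T^nV_w$ that are not limits of $M'$-points along the curve must be negligible. Since $T^nV_w\setminus M'$ is countable by Lemma \ref{lem:ctbl-sing-0} and $\mf$ is nonatomic (Corollary \ref{cor:Gibbs}), such exceptional points contribute zero to $\mf$. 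So the inequality survives even in the worst case.
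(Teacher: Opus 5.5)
Your route is the paper's. You rescale by $e^{-nh}$ via Proposition \ref{prop:scaling} and then apply Lemma \ref{lem:positive-leaf} to a long unstable piece. The paper scales a leaf representative inside the cylinder onto $\Wul(\sigma^n x)$ and quotes \eqref{eqn:positive-leaf}, whose geometric leaf contains $T^nV_w$. You instead apply \eqref{eqn:mfbVus} to $T^nV_w$ directly and prove a containment, which makes explicit the symbolic-versus-geometric comparison the paper leaves implicit. Your reading of the cylinder as starting at position $1$ matches the later use through $\mtf_x=\sigma_*\mf_x$. Like the paper, you tacitly need $x=c(\bx)$ for \eqref{eqn:cW}; this holds in \S\ref{sec:unique}, where $x\in\Xreg$.

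The one real gap is in how you dispose of the singular fibers.

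\emph{The approximation claim fails.} It is not true in general that every $y\in\pi^{-1}(T^nV_w)$ is a limit of codings of points of $T^nV_w\cap M'$. Suppose $\bill y$ is a multiple singularity point on the curve, for instance where a component of $\SSS_m$ terminates. Then some cells adjacent to $\bill y$ are not entered by the $u$-curve. Codings of $\bill y$ obtained as limits from those cells are not limits along the curve. So the issue is which \emph{codings} are approximable, not which points: every point of the curve is a limit of $M'$-points on it.

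\emph{The fallback is right, but its justification is not.} Discarding $\pi^{-1}(T^nV_w\setminus M')$ entirely and using the direct $M'$ containment for the rest is the correct move. However, countability of $T^nV_w\setminus M'$ plus nonatomicity of $\mf$ does not show the discarded set is null. That set is $\bigcup_{\bill y\in T^nV_w\setminus M'}\pip\pi^{-1}(\bill y)$. Nonatomicity only controls countable subsets of $X^+$, and a single fiber $\pip\pi^{-1}(\bill y)$ is a compact set you have not shown to be countable; linear growth of its $n$-prefixes does not force that.

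\emph{The fix.} What you need is that each fiber is $\mf$-null. This comes from the covering argument of \eqref{eqn:Kn} and Lemma \ref{lem:m-X0-again}. By Corollary \ref{cor:local-lin}, $\pip\pi^{-1}(\bill y)$ meets only linearly many cylinders of length $n$; words outside $\LLL$ add only linearly many more, by Remark \ref{rmk:X-lang}. The definition of $\mf$ then gives $\mf(\pip\pi^{-1}(\bill y))\le\lim_n (K'n+1)e^{-nh}=0$ for some constant $K'$. Countable subadditivity yields $\mf(\pip\pi^{-1}(T^nV_w\setminus M'))=0$, and your chain of inequalities goes through.

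The same fact covers a point you skipped. $T^nV_w$ need not be closed, while \eqref{eqn:mfbVus} rests on Proposition \ref{prop:Z-compact-lower}, which requires compactness. You should apply it to $\overline{T^nV_w}$ and discard the fibers over the endpoints in the same way.
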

\begin{proof}
This is an immediate consequence of the scaling property and \eqref{eqn:positive-leaf}.  For convenience, suppose $x \in \cyl{}{w}$.  Since $|T^{n}V_w| \geq \delta$,
\[\mf_x(\cyl{}{w}) = e^{-(n-1)h} \mf_{\sigma^{n-1}x} (\Wul(\sigma^{n} x)) \geq c_\delta e^{-(n-1)h} \geq c_\delta e^{-nh}.\qedhere\]
\end{proof}

\begin{proposition}\label{prop:full-support}
For every $w\in \LLL$, we have $\mu(\cyl{}{w})>0$. In particular, $\mu(X)>0$, and $\mu$ has full support on $c(M)$.
\end{proposition}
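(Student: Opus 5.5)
The plan is to use the unstable disintegration \eqref{eqn:u-cond}, $\mu(\cyl{}{w}) = \int_{X^-} \mf_x(\cyl{}{w})\,d\mb(x)$. It then suffices to find a set $E \subset X^-$ with $\mb(E)>0$ and a constant $c>0$ such that $\mf_x(\cyl{}{w}) \geq c$ for every $x\in E$. Once $\mu(\cyl{}{w})>0$ holds for every $w\in\LLL$, the remaining claims follow. Since $\cyl{}{w}\subset X$, we get $\mu(X)>0$. Every nonempty open subset of $c(M)$ contains some $c(M)\cap\cyl{v}{w}$, and the same argument applied to two-sided cells $Y_{v.w}$ (Lemma~\ref{lem:Y-open}) handles full support.

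Fix $w\in\LLL_n$. By Lemma~\ref{lem:Y-open}, $Y_w^+$ is open and nonempty, so it contains a ball $B = B(\bill{p},r)$. Choose an $s$-curve $V^s\subset B$ through $\bill{p}$ with $|V^s| = \delta_1$, where $\delta_1<\min(b_1,r)$. By Lemma~\ref{lem:positive-leaf}, $\mb(\pim(\pi^{-1}V^s)) \geq c_{\delta_1} > 0$.

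For $\delta'\in(0,r/2)$, let $G_{\delta'}$ be the set of $\bill{y}\in V^s$ such that $\Wul(\bill{y})$ contains a $u$-curve of length at least $\delta'$ having $\bill{y}$ as an endpoint. Every such curve starts within $r/2$ of the center, so for $\bill{y}$ close to $\bill{p}$ (shrinking $V^s$ if necessary) this curve lies in $B\subset Y_w^+$. As $\delta'\downarrow 0$, the sets $G_{\delta'}$ increase to $V^s\setminus\pi(X_0^u)$. By Lemma~\ref{lem:m-X0-again}, $\mb(\pim X_0^u)=0$. Continuity from below then gives some $\delta'>0$ with $E := \pim(\pi^{-1}G_{\delta'})$ satisfying $\mb(E)>0$. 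I will also discard the $\mb$-null set of codings of points outside $M'$, using the countability and nonatomicity arguments of Lemmas~\ref{lem:ctbl-sing-0} and~\ref{lem:non-atomic} in the stable direction. After this, each $x\in E$ is the backward coding of a unique $\bill{y}\in G_{\delta'}$.

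For $x\in E$, let $V\subset\Wul(\bill{y})\cap Y_w^+$ be the $u$-curve of length $\geq\delta'$ given by the definition of $G_{\delta'}$. Points of $V$ have past coding $x$, because $V\subset\Wul(\bill{y})$ and \eqref{eqn:cW} holds. Their forward coding begins with $w$, because $V\subset Y_w^+$; at boundary points of $V$ we pass to limits using the continuity of $\pi$. Hence $\pip(\pi^{-1}V)\subset \pip(\Wul(x)\cap\cyl{}{w})$. Applying \eqref{eqn:mfbVus} with $\delta'<b_1$ gives
\[
\mf_x(\cyl{}{w}) \geq \mf(\pip(\pi^{-1}V)) \geq c_{\delta'} .
\]
Integrating over $E$ yields $\mu(\cyl{}{w}) \geq c_{\delta'}\,\mb(E) > 0$.

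I expect the main obstacle to be the bookkeeping in the last two steps. One issue is the measurability of $E$: I would replace $G_{\delta'}$ by a closed subset, using the uniform continuity of $\pi$ and the fact that long local unstable manifolds form a closed family. The other is checking that $\pi^{-1}V$ really lies in $\Wul(x)$ when $V$ meets singular points. Both should only require removing null sets via Lemmas~\ref{lem:m-X0-again} and~\ref{lem:ctbl-sing-0}.
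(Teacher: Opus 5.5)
Your argument works in outline, but it takes a genuinely different and harder route than the paper.

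The paper uses the \emph{stable} disintegration \eqref{eqn:s-cond} rather than \eqref{eqn:u-cond}. Because $\cyl{}{w}$ is a forward cylinder, each $\Wsl(x)$ is either contained in it or disjoint from it, so $\mu(\cyl{}{w}) = \int_{\pbl w\pbr}\mb_x(\Wsl(x))\,d\mf(x)$. Positivity then follows from three facts: $\mf(\pbl w\pbr)>0$ by \eqref{eqn:mfb-full}, $\mb_x(\Wsl(x))>0$ off $X_0^s$ by Lemma~\ref{lem:positive-leaf}, and $\mf(\pip X_0^s)=0$ by Lemma~\ref{lem:m-X0-again}. No geometry inside $Y_w^+$ and no singular-point bookkeeping is needed.

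In the unstable disintegration, by contrast, $\cyl{}{w}$ cuts every unstable leaf. You therefore have to build a piece of local product structure inside $Y_w^+$ by hand: a transversal $s$-curve, a positive-$\mb$-measure set of pasts whose unstable leaves run a definite length inside $Y_w^+$, and a check that those leaves are really coded by those pasts. What you gain is an explicit lower bound $c_{\delta'}\,\mb(E)$ tied to the geometry of $Y_w^+$; the cost is considerably more work. Two-sided cylinders also need no separate argument: by invariance, $\mu(\cyl{v}{w}) = \mu(\sigma^{-(|v|-1)}\cyl{v}{w}) = \mu(\cyl{}{v'w})$.

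Two steps of your bookkeeping must be done differently.

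\textbf{Discarding non-$M'$ points.} For an arbitrary $s$-curve $V^s\subset Y_w^+$, the set $V^s\setminus M'$ need not be countable. Lemma~\ref{lem:ctbl-sing-0} concerns genuine local manifolds, not arbitrary $s$-curves, and $Y_w^+$ contains arcs of $T^{-k}\SSS_0$ for $k>n$ along which $V^s$ could run. You only need to discard $V^s\cap\SSS_{-\infty}$, the points with singular past. This set is countable for \emph{any} short $s$-curve, since each component of $T^k\SSS_0$ meets $V^s$ in at most one point and each $\bbS_k$ is finite. The fact that $\mb(\pim\pi^{-1}\{\bill{z}\})=0$ for a single point comes from the $Kn+1$ bound of Corollary~\ref{cor:local-lin} together with the upper Gibbs bound. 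It does not come from Lemma~\ref{lem:non-atomic}, which concerns conditionals of an ergodic $\nu$.

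\textbf{Codings of points on $V$.} \eqref{eqn:cW} describes $c(\Wul(\bill{y}))$, not all of $\pi^{-1}V$. The right argument is as follows. The point $\bill{y}$ and the interior points of $V$ lie in the open cell $Y_{w_1}$ and off $\SSS_{-\infty}$, since unstable manifolds contain no points of $\SSS_{-\infty}$ in their interior. Hence every $\pi$-preimage of such a point has past $x$ and forward coding beginning with $w$. The far endpoint of $V$ may be singular and have several pasts, so continuity of $\pi$ does not settle it. Either discard it as an $\mf$-null set, or replace $V$ by its closed subcurve of length $\delta'/2$ starting at $\bill{y}$.

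With these repairs the proof goes through.
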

\begin{proof}
Given $w\in \LLL$, 
we recall the disintegration formula for $\mu$ from \eqref{eqn:s-cond} and obtain
\begin{equation}\label{eqn:mu-w}
\mu(\cyl{}{w}) = \int_{X^+} \mb_x(\cyl{}{w}) \,d\mf(x)
= \int_{\pbl w\pbr} \mb_x(\Wsl(x)) \,d\mf(x),
\end{equation}
where the last equality holds because
$
\mb_x(\cyl{}{w}) = \mb(\pim(\cyl{}{w} \cap \Wsl(x)))
$
and
\[
\cyl{}{w} \cap \Wsl(x) = \begin{cases} 
\Wsl(x) &\text{if } x\in \pbl w\pbr, \\
\emptyset &\text{if } x\notin \pbl w\pbr.
\end{cases}
\]
By Lemma \ref{lem:positive-leaf}, we have $\mb_x(\Wsl(x)) > 0$ for every $x\in \pip(\cyl{}{w} \setminus X_0^s)$, and by Lemma \ref{lem:m-X0-again}, we have
$\mf(\pip X_0^s) = 0$, so the function $x \mapsto \mb_x(\Wsl(x))$ is positive $\mf$-a.e.

Moreover, \eqref{eqn:mfb-full} from Lemma \ref{lem:positive-leaf} gives $\mf(\pbl w \pbr)>0$, so the last expression in \eqref{eqn:mu-w} is the integral of a nonnegative function that is strictly positive on a set of positive measure. It follows that the integral is positive, which completes the proof.
\end{proof}

As discussed as the start of \S\ref{sec:bill-construction}, Proposition  \ref{prop:inv-MME}) and Remark \ref{rmk:tools-for-lps} now imply that $\bar\mu$ is an MME with local product structure, proving Theorem \ref{thm:strategy}\ref{mu-fin-pos}--\ref{mu-lps-supp}.

\subsection{Multiple mixing}\label{sec:ergodic}

In this section, we prove Theorem \ref{thm:strategy}\ref{mu-erg} by showing that $\bar\mu$ is multiply mixing.
Given $\nu \in \MMM_\sigma(X)$, recall that:
\begin{itemize}
\item $\nu$ is \emph{totally ergodic} if the measure-preserving transformation $(X, \sigma^n, \nu)$ is ergodic for every $n\in \NN$;
\item $\nu$ is \emph{mixing} if for all Borel sets $E_1, E_2 \subset X$, we have
\[
\nu(E_1 \cap \sigma^{-n}E_2) \to \nu(E_1)\nu(E_2)
\quad\text{as }n\to\infty;
\]
\item $\nu$ is \emph{multiply mixing}, or \emph{mixing of all orders}, if for every $\ell\in \NN$ and all Borel sets $E_1,\dots, E_\ell \subset X$, we have 
\[
\lim_{\min(n_1,\dots, n_{\ell-1}) \to \infty}
\nu \Big( \bigcap_{j=1}^\ell \sigma^{-N_j}E_j \Big)
= \prod_{j=1}^\ell \nu(E_j),
\quad\text{where } N_j = \sum_{i=1}^{j-1} n_i.
\]
\end{itemize}
These satisfy the following implications:
\[
\text{multiply mixing}
\quad\Rightarrow\quad
\text{mixing}
\quad\Rightarrow\quad
\text{totally ergodic}
\quad\Rightarrow\quad
\text{ergodic}.
\]
In \cite{CHT16}, Coud\`ene, Hasselblatt, and Troubetzkoy 
described a rather general setting in which the Hopf argument can be used to establish some or all of these properties. We recall the relevant definitions and results from their work, modifying their notation slightly to fit our billiard shift space $X$.

Given $x\in X$, consider the \emph{global stable and unstable sets}
\begin{align*}
\Ws(x) &:= \{ y\in X : d(\sigma^n x, \sigma^n y) \to 0 \text{ as } n\to\infty \}, \\
\Wu(x) &:= \{ y\in X : d(\sigma^n x, \sigma^n y) \to 0 \text{ as } n\to -\infty \}.
\end{align*}
Observe that $\Ws(x) = \bigcup_{k=0}^\infty \sigma^{-k} \Wsl(x)$ is the set of all $y\in X$ for which there exists $k=k(y)$ such that $x_n = y_n$ for all $n\geq k$, and $\Wu(x)$ admits a similar characterization. The sets $\Ws$ form a partition of $X$, as do the sets $\Wu$. 

Now fix a $\sigma$-invariant Borel probability measure $\nu \in \MMM_\sigma(X)$.
With respect to $\nu$, a function $f \colon X\to \RR$ is \emph{subordinate to $\Ws$}, or \emph{$\Ws$-saturated}, if there exists $G\subset X$ with $\nu(G) = 1$ such that for every $x,y\in G$ satisfying $y\in \Ws(x)$, we have $f(x) = f(y)$. A similar definition applies for $\Wu$.

\begin{definition}\label{def:erg-joint}
The partition $\Ws$ is \emph{ergodic} for $\nu \in \MMM_\sigma(X)$ if every $\Ws$-saturated function $f\in L^2(\nu)$ is constant $\nu$-a.e., meaning that there exists a set $Z \subset X$ such that $\nu(Z) = 1$ and $f|_Z$ is constant.

The partitions $\Ws$ and $\Wu$ are \emph{jointly ergodic} for $\nu \in \MMM_\sigma(X)$ if for every function $f\in L^2(\nu)$ that is both $\Ws$-saturated and $\Wu$-saturated, $f$ is constant $\nu$-a.e.
\end{definition}

\begin{theorem}\label{thm:Hopf}
Let $X$ be a shift space on a finite alphabet, and fix $\nu \in \MMM_\sigma(X)$. 
\begin{enumerate}[label=\upshape{(\alph{*})}]
\item\label{get-mix}
If $\Ws$ and $\Wu$ are jointly ergodic for $\nu$, then $\nu$ is mixing.
\item\label{get-mult}
If $\Ws$ is ergodic for $\nu$, then $\nu$ is multiply mixing.
\item\label{get-Ws-erg}
If $\nu$ is totally ergodic and $\nu$ has local product structure in the sense of Definition \ref{def:lps}, then $\Ws$ is ergodic for $\nu$.
\end{enumerate}
\end{theorem}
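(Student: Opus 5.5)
This theorem is essentially a restatement of results from \cite{CHT16}, specialized to a shift space. The plan is therefore mainly a translation: match the hypotheses of their abstract theorems to our setting, and check that the symbolic setting supplies what they need.

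\emph{Part \ref{get-mix}.} We run the Hopf argument on a weak-$*$ limit. Suppose $\nu$ is not mixing. Then there are $f,g\in L^2(\nu)$ and $n_k\to\infty$ such that $f\circ\sigma^{n_k}$ converges weakly to some $\bar f$ with $\int \bar f g\,d\nu \neq \int f\,d\nu\int g\,d\nu$.
\begin{enumerate}[label=\upshape{(\roman{*})}]
\item Take $f$ uniformly continuous; since $X$ is compact, such $f$ (e.g.\ locally constant functions) are dense in $L^2$.
\item By \eqref{eqn:d-to-0}, $|f(\sigma^n x)-f(\sigma^n y)|\to 0$ for $y\in\Ws(x)$. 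A Banach--Saks/Ces\`aro averaging step then shows that $\bar f$ is $\Ws$-saturated.
\item The harder direction is showing $\bar f$ is also $\Wu$-saturated. Following CHT, use that weak limits of $f\circ\sigma^{n_k}$ coincide with weak limits of $f\circ\sigma^{-n_k}$ applied suitably: the limit operator is self-adjoint up to passing to a subsequence, via von Neumann-type arguments.
\end{enumerate}
Joint ergodicity then makes $\bar f$ constant, equal to $\int f\,d\nu$, which is a contradiction. I would cite \cite[Theorem 3.1 or its analogue]{CHT16} rather than redo this.

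\emph{Part \ref{get-mult}.} Apply the same scheme to multiple correlations, as in CHT's multiple-mixing theorem. Induct on $\ell$. Weak limits of products $\prod_j f_j\circ\sigma^{N_j}$ reduce, using uniform continuity and stable-asymptotics, to functions measurable with respect to the $\Ws$-saturated $\sigma$-algebra. Ergodicity of $\Ws$ makes that $\sigma$-algebra trivial modulo $\nu$, which yields the product formula. The main obstacle is the bookkeeping of the limit over $\min(n_i)\to\infty$; I would quote CHT directly for this.

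\emph{Part \ref{get-Ws-erg}.} This is the local-product Hopf argument.
\begin{enumerate}[label=\upshape{(\roman{*})}]
\item Let $f\in L^2(\nu)$ be $\Ws$-saturated. Approximate $f$ by Birkhoff averages: the forward averages $f^+$ are $\Ws$-saturated and the backward averages $f^-$ are $\Wu$-saturated. Both equal $E(f\mid\text{invariant }\sigma\text{-algebra})$ a.e., and this is constant because $\nu$ is ergodic.
\item The content of the argument is that a $\Ws$-saturated $f$ is actually invariant-measurable. On each rectangle $R_x$, equivalence of $\nu|_{R_x}$ with a product measure $\Pi_*(m_1\otimes m_2)$ means Fubini applies. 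A function that is a.e.\ constant along stable pieces (it depends only on the $X^+$ coordinate) and agrees a.e.\ with a function depending only on the $X^-$ coordinate is a.e.\ constant on $R_x$.
\item To produce the second such function, compare $f$ with the $\Wu$-saturated function $\lim$ of $f\circ\sigma^{-n}$-type averages.
\item Total ergodicity enters when gluing constants across the overlapping rectangles and across the period classes. This is how CHT pass from ergodicity to ergodicity of the partition; it excludes the cyclic permutation of components that can occur in the merely ergodic case.
\end{enumerate}
I expect this gluing step to be the main obstacle. The other points to check are that our notion of local product structure (a.e.\ point lies in a rectangle with equivalent product measure) matches CHT's hypothesis, and that symbolic local stable and unstable sets are honestly contained in $\Ws,\Wu$.
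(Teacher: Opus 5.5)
Your plan to reduce everything to \cite{CHT16} is exactly the paper's proof. Parts (a), (b), (c) are quoted from \cite[Theorems~3.3, 2.2 and 5.1]{CHT16} respectively. The only verification the paper makes is that local product structure in the sense of Definition~\ref{def:lps} implies the absolute continuity of $\Ws$ required in \cite[Definition~4.2]{CHT16}. The arguments you sketch alongside the citations are not sound as written, however, and if you intend to carry out (c) yourself rather than cite it, there is a genuine gap.

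\textbf{Part (c).} The entire content of (c) is your step (iii), and the mechanism you offer does not deliver it.
\begin{itemize}
\item Ces\`aro averages of $f\circ\sigma^{-n}$ converge to $E(f\mid\mathcal{I})$, which is constant by ergodicity.
\item Weak limits of $f\circ\sigma^{-n_k}$ are $\Wu$-saturated, but nothing makes them coincide with $f$. Once the theorem is known, they all equal $\int f\,d\nu$.
\end{itemize}
So ``$f$ agrees $\nu$-a.e.\ with a $\Wu$-saturated function'' is assumed, not proved. Step (ii) also overreaches. For $\nu$ equidistributed on a period-$2$ orbit, every function is $\Ws$-saturated and local product structure holds trivially, so invariance cannot be obtained before total ergodicity is used.

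By contrast, the gluing you single out as the main obstacle is routine. A function that is both $\Ws$- and $\Wu$-saturated is a.e.\ constant on each rectangle by Fubini. Given a countable cover by rectangles, the $\sigma$-algebra of such sets is therefore atomic mod $\nu$. By ergodicity, $\sigma$ permutes its atoms in a single finite cycle, and total ergodicity leaves one atom. That argument yields joint ergodicity, not ergodicity of $\Ws$.

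The missing passage from $\Ws$-saturated to $\Wu$-saturated is what \cite[Theorem~5.1]{CHT16} supplies. For a shift space you could instead get it from entropy theory:
\begin{itemize}
\item Disintegrating over $\sigma(x_n:n\ge k)$ and applying the reverse martingale theorem shows that a $\Ws$-saturated $f$ is measurable mod $\nu$ with respect to the future tail $\bigcap_k\sigma(x_n:n\ge k)$.
\item For a finite generating partition, the future and past tails both coincide with the Pinsker $\sigma$-algebra.
\item Hence $f$ is also $\Wu$-saturated, and your gluing argument then finishes the proof.
\end{itemize}

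\textbf{Part (a).} In step (iii), self-adjointness of a weak-operator limit $P$ of $U^{n_k}$ (with $Uf=f\circ\sigma$) is not a general fact, so no von Neumann-type argument independent of the hypothesis can give it. For a counterexample, take an eigenfunction with eigenvalue $e^{2\pi i\alpha}$, $\alpha$ irrational, and $n_k\alpha\to\beta\notin\{0,1/2\}$ modulo $1$; passing to subsequences does not help. Nor does one show directly that $\bar f$ is $\Wu$-saturated. The device that works is $P^*P$:
\begin{itemize}
\item $P^*Pf$ is the weak limit of $U^{-n_k}(Pf)$, hence $\Ws$-saturated because $Pf$ is.
\item It lies in the range of $P^*$, hence is $\Wu$-saturated.
\item Joint ergodicity therefore makes it the constant $\int f\,d\nu$.
\item Then $\|Pf\|^2=\langle P^*Pf,f\rangle=(\int f\,d\nu)^2=\langle Pf,1\rangle^2$, which forces $Pf=\int f\,d\nu$.
\end{itemize}
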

\begin{proof}
Statement \ref{get-mix} follows from \cite[Theorem 3.3]{CHT16}. Statement \ref{get-mult} follows from \cite[Theorem 2.2]{CHT16}. Statement \ref{get-Ws-erg} follows from \cite[Theorem 5.1]{CHT16}, together with the observation that local product structure in the sense of Definition \ref{def:lps} implies the absolute continuity property of $\Ws$ formulated in \cite[Definition 4.2]{CHT16}.
\end{proof}

Since mixing measures are totally ergodic, we can combine the three conclusions of Theorem \ref{thm:Hopf} to obtain:

\begin{corollary}\label{cor:Hopf}
Let $X$ be a shift space on a finite alphabet. Suppose that  $\nu \in \MMM_\sigma(X)$ has local product structure in the sense of Definition \ref{def:lps}, and that $\Ws$ and $\Wu$ are jointly ergodic with respect to $\nu$. Then $\nu$ is multiply mixing.
\end{corollary}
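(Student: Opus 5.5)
The plan is to chain the three parts of Theorem \ref{thm:Hopf}. The hypotheses of Corollary \ref{cor:Hopf} are local product structure and joint ergodicity of $\Ws$ and $\Wu$, and the target conclusion is multiple mixing.

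\textbf{Step 1.} Joint ergodicity, via part \ref{get-mix} of Theorem \ref{thm:Hopf}, gives that $\nu$ is mixing. Concretely, \cite[Theorem 3.3]{CHT16} says that if every $L^2$ function that is simultaneously $\Ws$- and $\Wu$-saturated is a.e.\ constant, then $\nu$ is mixing. The hypothesis is exactly that, so $\nu$ is mixing.

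\textbf{Step 2.} Mixing implies total ergodicity. Suppose $E$ is $\sigma^n$-invariant for some $n\in\NN$. Mixing gives
\[
\nu(E)=\nu(E\cap\sigma^{-kn}E)\to\nu(E)^2
\quad\text{as } k\to\infty,
\]
so $\nu(E)\in\{0,1\}$. This is the implication already recorded in the chain of implications before Definition \ref{def:erg-joint}.

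\textbf{Step 3.} Total ergodicity together with local product structure (Definition \ref{def:lps}), via part \ref{get-Ws-erg}, makes the partition $\Ws$ ergodic for $\nu$. Part \ref{get-mult} then gives multiple mixing.

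\textbf{Main obstacle.} There is little real difficulty here; the substantive content sits inside Theorem \ref{thm:Hopf}. The one point to check is in part \ref{get-Ws-erg}: that local product structure in the sense of Definition \ref{def:lps} really supplies the absolute continuity of $\Ws$ required by \cite[Definition 4.2]{CHT16}.

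To verify this, take a rectangle $R_x$ on which $\nu|_{R_x}$ is equivalent to $\Pi_*(m_1\otimes m_2)$. Holonomies along $\Ws$ inside $R_x$ are the maps $(a,b)\mapsto(a',b)$. So, up to the equivalence, the conditional measures on unstable slices are all equivalent to $m_2$, and they are preserved under stable holonomy. This is the absolute continuity needed.

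The paper's proof of Theorem \ref{thm:Hopf} already asserts this implication, so the corollary reduces to combining Steps 1–3.
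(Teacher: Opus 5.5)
Your proposal is correct and is exactly the paper's argument. Part (a) of Theorem \ref{thm:Hopf} gives mixing, mixing gives total ergodicity, and then parts (c) and (b) give ergodicity of $\Ws$ and hence multiple mixing. Your extra check that local product structure yields absolute continuity of $\Ws$ is the same observation the paper builds into the proof of Theorem \ref{thm:Hopf}(c), so it is consistent with the paper but not needed at the level of the corollary.
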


We remark that \cite[Theorem 5.5]{CHT16} uses this result to provide a proof of multiple mixing for the Liouville measure on a Sinai billiard. Our task now is to show that it can be applied to the MME $\bar\mu$ as well.
Since we have already proved in \S\ref{sec:positive} that $\bar\mu$ has local product structure in the sense of Definition \ref{def:lps}, it suffices to show that $\Ws$ and $\Wu$ are jointly ergodic with respect to $\bar\mu$.

To this end, let $f\in L^2(\bar\mu)$ be both $\Ws$-saturated and $\Wu$-saturated; then there exists $G_0 \subset X$ such that
\begin{gather}
\label{eqn:G0a}
\bar\mu(G_0) = 1,\text{ and} \\
\label{eqn:G0b}
\text{for all $x,y\in G_0$ such that $y\in \Ws(x) \cup \Wu(x)$, we have $f(x) = f(y)$.}
\end{gather} 
Let $G = \Xreg \cap \bigcap_{n\in \ZZ} \sigma^{-n}G_0$; then  $\sigma^{-1}G = G$, and \eqref{eqn:G0a}--\eqref{eqn:G0b} continue to hold with $G_0$ replaced by $G$.
Consider the set 
\begin{equation}\label{eqn:AAA}
\AAA = \{x \in G : \mf_x(\Wul(x) \setminus G) = 0\}.
\end{equation}
Recalling \eqref{eqn:u-cond}, we have
\[
0 = \mu(X\setminus G) = \int_{X^-} \mf_x(X\setminus G) \,d\mb(x),
\]
from which we conclude that $\mf_x(X\setminus G)=0$ for $\mb$-a.e.\ $x$, and thus $\bar\mu(\AAA)=1$. To show that $\Ws$ and $\Wu$ are jointly ergodic, it now suffices to show that $f|_\AAA$ is constant. We will need the following lemma.

\begin{lemma}\label{lem:A-inv}
	If $x \in \AAA$, then $\sigma^n x \in \AAA$ for all $n \geq 0$.
\end{lemma}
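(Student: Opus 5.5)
By induction it suffices to treat $n=1$: we show that $x\in\AAA$ implies $\sigma x\in\AAA$. There are two things to check, namely that $\sigma x\in G$ and that $\mf_{\sigma x}(\Wul(\sigma x)\setminus G)=0$. The first is immediate, since $G$ was built to satisfy $\sigma^{-1}G=G$.

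For the second, we relate the leaf of $\sigma x$ to the leaf of $x$. Since $\Wul(\sigma x)$ consists of the $y$ with $y_k=x_{k+1}$ for all $k\le 0$, we have
\[
\sigma^{-1}\Wul(\sigma x)=\{z\in X : z_k=x_k \text{ for } k\le 1\}=\Wul(x)\cap\cyl{}{x_0x_1}\subset\Wul(x).
\]
Set $Z:=\Wul(\sigma x)\setminus G$. Because $G$ is $\sigma$-invariant, $\sigma^{-1}Z=\sigma^{-1}\Wul(\sigma x)\setminus G$, and this is a subset of $\Wul(x)\setminus G$. By the hypothesis $x\in\AAA$, it follows that $\mf_x(\sigma^{-1}Z)=0$.

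Next we transfer this to $\mf_{\sigma x}(Z)$ using the scaling identity \eqref{eqn:mfy3} from the proof of Proposition~\ref{prop:inv-MME}. With $y=\pim(x)$ and $a=x_1$, that identity gives $\mf_y(\sigma^{-1}Z)=e^{-h}\mf_{ya}(Z)$ whenever $Z\subset[a]$. Here $ya=\pim(\sigma x)$, and $Z\subset[x_1]$ holds automatically because every point of $\Wul(\sigma x)$ has $0$-th coordinate $x_1$. Alternatively, one can apply Proposition~\ref{prop:scaling} directly: $\pip(\sigma^{-1}Z)\subset\pbl x_0\pbr$ and $\sigma\pip(\sigma^{-1}Z)=\pip(Z)$, so
\[
\mf_{\sigma x}(Z)=\mf(\pip Z)=e^{h}\mf(\pip\sigma^{-1}Z)=e^{h}\mf_x(\sigma^{-1}Z)=0.
\]
This gives $\sigma x\in\AAA$.

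The only point requiring care is that last equality: we must check that $\pip$ commutes with $\sigma$ on $\sigma^{-1}Z$, and that $\pip(\sigma^{-1}Z)$ is exactly $\pip(\sigma^{-1}Z\cap\Wul(x))$. Both hold because $\sigma^{-1}Z\subset\Wul(x)$. Note also that the direction matters. For $n\ge 0$, $\sigma^n$ sends the leaf of $x$ to a subset of the leaf of $\sigma^n x$ (equivalently, $\sigma^{-n}$ of the new leaf sits inside the old one), so measure zero is inherited in the right direction. The analogous claim for negative $n$ would fail, which is why the lemma is stated only for $n\ge 0$.
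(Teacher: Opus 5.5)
Your proof is correct and takes the same approach as the paper. Both arguments pull the leaf of $\sigma^n x$ back into $\Wul(x)\cap\cyl{}{x_{[0,n]}}$, use the $\sigma$-invariance of $G$, and transfer the null set with the scaling property of $\mf$ from Proposition~\ref{prop:scaling}. The only difference is that you prove the case $n=1$ and induct, whereas the paper applies the $n$-fold scaling factor $e^{-nh}$ in a single step.
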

\begin{proof}
	First observe that if $x \in G$, then $\sigma^n x \in G$. Suppose $x \in \AAA$.  Let $u \in \LLL_{n+1}$ with $\cyl{}{u}\cap\Wul(x)\neq\emptyset$.  Observe that if $y \in \cyl{}{u} \cap \Wul(x)$, then $\sigma^{n}(\Wul(x)\cap\cyl{}{u}) = \Wul(y)$.  By the scaling property from Proposition \ref{prop:scaling}, we have that
	\[0 = \mf_x(\cyl{}{u} \setminus G) = e^{-nh} \mf_y(\Wul(y) \setminus G). \qedhere\]
\end{proof}

\begin{remark}
The proof of Lemma \ref{lem:A-inv} fails for negative $n$, so it is important that we go forward, not backward, when we use it in the arguments below.
\end{remark}

Now we prove that $f|_\AAA$ is constant. First, fix $z\in \Xreg$, so that $|\Wul(\bill{z})|>0$. By Corollary \ref{cor:pos-rectangle}, there exists a symbolic rectangle $R\in c(\mathcal{R})$ such that $\mf_z(R) > 0$. Without loss of generality, assume that $z\in R$.

Given $x,y\in \AAA$, we have  $\delta := \min(|\Wul(x)|, |\Wul(y)|) > 0$.
By Proposition \ref{prop:suff-rect}, there exists $N\in \NN$ and subsets $V_1 \subset \Wul(x)$, $V_2 \subset \Wul(y)$ such that $T^N(\pi(V_i))$ crosses $\pi(R)$ for $i=1,2$, as shown in Figure \ref{fig:Hopf}.

\begin{figure}[htbp]
\centering
\includegraphics{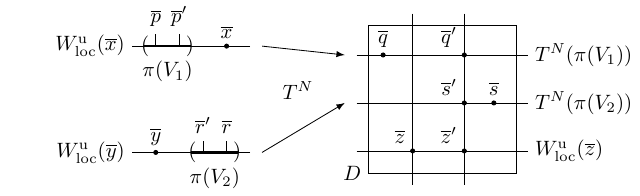}
\caption{Proving that $\Ws$ and $\Wu$ are jointly ergodic.}
\label{fig:Hopf}
\end{figure}

Since $x \in \AAA$, we have $\mf_x(V_1 \cap \AAA) = \mf_x(V_1)>0$, where the inequality uses \eqref{eqn:positive-leaf} from Lemma \ref{lem:positive-leaf}.  Thus there exists a point $p \in V \cap \AAA \subset \Wul(x)$, and by Lemma \ref{lem:A-inv}, we have $q := \sigma^Np \in \sigma^N (V_1) \cap \AAA$. A similar argument allows us to fix a point $r \in V_2 \cap \AAA \subset \Wul(y)$, for which we have $s := \sigma^N r \in \sigma^N (V_2) \cap \AAA$. Note that $q$ and $s$ need not lie in $R$.

Let $B_i := (\sigma^N (V_i) \cap R) \setminus G$ for $i=1,2$.
Since $\mf_x(V_1\setminus G) = 0$, we can use the invariance of $G$ and the scaling properties of $\mf$ from Proposition \ref{prop:scaling} to conclude that
\[
\mf_q(B_1) = \mf_q(\sigma^N(V_1 \setminus G)) = 0,
\]
and similarly, $\mf_s(B_2) = 0$. By Lemma \ref{lem:holonomy}, this implies that
\[
\mf_z(\pi_{q,z}(B_1) \cup \pi_{s,z}(B_2)) = 0.
\]
Since $\mf_z(R)>0$, there exists $z' \in (\Wul(z) \cap R) \setminus (\pi_{q,z}(B_1) \cup \pi_{s,z}(B_2))$. 
Since $z' \in R$, the s-curve $\Wsl(\bill{z'})$ crosses the solid rectangle $D$ that generates the Cantor rectangle $\pi(R)$, and thus this curve intersects $\pi(\sigma^NV_i)$ for $i=1,2$. This implies that the points
\[
q' := \pi_{z,p}^s(z')
\qquad\text{and}\qquad
s' := \pi_{z,q}^s(z')
\]
are both defined, and both lie in $R \cap G$.
Moreover, $q', s' \in \Wsl(z')$, so $s' \in \Wsl(q')$.

Let $p' := \sigma^{-N}q'$ and $r' := \sigma^{-N}s'$. Then we must have $p',r' \in G$ with $r' \in \sigma^{-N}\Wsl(\sigma^N r') \subset \Ws(p')$.

We now have four points $x,p',r',y \in G$ such that
\[
p' \in \Wul(x),
\qquad r' \in \Ws(p'),
\qquad y \in \Wul(r').
\]
By \eqref{eqn:G0b}, this implies that
\[
f(x) = f(p') = f(r') = f(y).
\]
Since $x,y\in \AAA$ were arbitrary, this shows that $f$ is constant on $\AAA$. 
This completes the proof that $\Ws$ and $\Wu$ are jointly ergodic; by Corollary \ref{cor:Hopf}, we conclude that $\bar\mu$ is multiply mixing, which proves Theorem \ref{thm:strategy}\ref{mu-erg}.

\subsection{Uniqueness}\label{sec:unique}

To prove that the measure of maximal entropy is unique, we will continue to follow the leafwise approach taken in \cite{CPZ2}, which naturally adapts to our setting since we construct $\bar\mu$ via its leaf measures. However, \cite{CPZ2}, which was written in the context of partially hyperbolic diffeomorphisms, assumes that there are open sets with product structure and a uniform lower Gibbs bound, which is not true in the billiard setting.  To overcome this issue, we will truncate words according to the leaf-wise decomposition of the language from Lemma \ref{lem:decomp}, using an argument analogous to one from \cite{BD20}; similar estimates also appeared in \cite{CT12}. In the terminology of \S\ref{sec:good-bad-ugly}, the idea will be that while there may be many subcurves whose final image is short, ``nearly all'' subcurves of order $2n$ have a long intermediate image that occurs between the times $n$ and $2n$.

Given a continuous function $f\colon X \to \RR$ we denote the forward and backward Birkhoff averages by
\[
f^-(x)  = \lim_{n \to \infty} \frac{1}{n} \sum_{j=0}^{n-1} f(\sigma^{-j} x)
\quad\text{and}\quad
f^+(x)  = \lim_{n \to \infty} \frac{1}{n} \sum_{j=0}^{n-1} f(\sigma^j x)
\]
wherever the limits exist.
A point $x \in X$ is \emph{Birkhoff regular for $f$} if these limits exist and agree, in which case we write $\uf(x) := f^+(x) = f^-(x)$.  
We say that $x$ is \emph{Birkhoff regular} if it is Birkhoff regular for every continuous $f$, and we denote the set of such points by $\BBB$. By the Birkhoff Ergodic Theorem and separability of $C(X)$, we have $\nu(X\setminus \BBB) = 0$ for every $\nu \in \MMM_\sigma(X)$.

Define an equivalence relation on $\BBB$ by
\begin{equation}\label{eqn:sim}
x\sim y \quad\Leftrightarrow\quad
\uf(x) = \uf(y)
\text{ for all } f\in C(X).
\end{equation}
By \eqref{eqn:d-to-0}, we see that $x\sim y$ whenever $y\in \Wul(x)$ or $y\in \Wsl(x)$. Similarly, $x\sim \sigma^n x$ for every $n\in \ZZ$.

Let $\Breg = \BBB \cap \Xreg$ be the set of Birkhoff regular points that correspond to interior points of their stable and unstable manifolds. Given an ergodic invariant measure $\nu$ on $X$ with $h(\nu) > 0$, it follows from Proposition \ref{prop:reg-meas}
that $\nu(\Breg) = 1$, and thus by the same argument as for \eqref{eqn:AAA} in \S\ref{sec:ergodic}, the set
\begin{equation}\label{eqn:Anu}
\AAA_{\nu} := \{x \in \Breg : \nu_x(\Wul(x) \setminus \Breg) = 0\}	
\end{equation}
has full $\nu$-measure. 
Similarly, by the Birkhoff Ergodic Theorem and separability of $C(X)$, the set
\begin{equation}\label{eqn:generic}
G_\nu := \Big\{ x\in \Breg : \uf(x) = \int f \,d\nu \text{ for all } f\in C(X) \Big \}
\end{equation}
has full measure, and thus
\begin{equation}\label{eqn:AG-full}
\nu(\AAA_\nu \cap G_\nu) = 1.
\end{equation}

\begin{lemma}\label{lem:singular-leaf}
Let $\nu$ be an ergodic measure on $X$ with positive entropy, and let $\{ \nu_x : x\in \CCC_\nu \}$ be its system of conditional measure from Lemma \ref{lem:disint} in \S\ref{sec:cond-ent}.
If $\nu$ and $\mu$ are mutually singular, then for any $x \in \AAA_\nu \cap G_\nu \cap \CCC_\nu$, the measures $\nu_x$ and $\mf_x$ are mutually singular.
\end{lemma}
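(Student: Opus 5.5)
The natural tool here is the equivalence relation $\sim$ from \eqref{eqn:sim}, since both $\nu$ and $\bar\mu$ are ergodic. If $\nu \perp \bar\mu$, then $\nu \neq \bar\mu$, so there is $f \in C(X)$ with $\int f\,d\nu \neq \int f\,d\bar\mu$. Set
\[
G_{\bar\mu} := \Big\{ y \in \Breg : \uf(y) = \int f\,d\bar\mu \text{ for all } f\in C(X)\Big\}.
\]
By the Birkhoff ergodic theorem and ergodicity of $\bar\mu$ (Theorem \ref{thm:strategy}\ref{mu-erg}, proved in \S\ref{sec:ergodic}), we have $\bar\mu(X\setminus G_{\bar\mu}) = 0$. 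The sets $G_\nu$ and $G_{\bar\mu}$ are then disjoint.

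The plan is to exhibit disjoint Borel sets carrying $\nu_x$ and $\mf_x$. Fix $x \in \AAA_\nu \cap G_\nu \cap \CCC_\nu$. Every $y \in \Wul(x)\cap \Breg$ satisfies $y \sim x$, by \eqref{eqn:d-to-0} and the observation following \eqref{eqn:sim}. Hence $\uf(y) = \uf(x) = \int f\,d\nu$ for all $f\in C(X)$, so $\Wul(x)\cap\Breg \subset G_\nu$. Since $x\in \AAA_\nu$, we get $\nu_x(\Wul(x)\setminus G_\nu) \le \nu_x(\Wul(x)\setminus\Breg) = 0$, so $\nu_x$ is carried by $G_\nu$.

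It remains to show that $\mf_x(G_\nu) = 0$. Since $G_\nu \cap G_{\bar\mu} = \emptyset$, it suffices to show $\mf_x(X\setminus G_{\bar\mu}) = 0$. This is the delicate step: we only know $\mf_y(X\setminus G_{\bar\mu}) = 0$ for $\mb$-a.e.\ $y$, by \eqref{eqn:u-cond}, and $x$ is $\nu$-typical, not $\mu$-typical. The resolution is to use the leaf structure again. Take any such $y$ with $\mf_y(\Wul(y)) > 0$. Then $\Wul(y)\cap\Breg$ has positive $\mf_y$-measure inside $G_{\bar\mu}$, and since all points of a single local unstable leaf in $\BBB$ are $\sim$-equivalent, it follows that $\Wul(y)\cap\BBB \subset G_{\bar\mu}$ in that case. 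Consequently, for an arbitrary leaf $\Wul(x)$, the values $\uf$ are constant on $\Wul(x)\cap\BBB$, so either $\Wul(x)\cap \BBB \subset G_{\bar\mu}$ or $\Wul(x)\cap G_{\bar\mu} = \emptyset$.

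In the first alternative, $x$ itself would lie in $G_{\bar\mu}$, since $x\in\Breg$. That is impossible, because $x\in G_\nu$. So $\Wul(x)\cap G_{\bar\mu}=\emptyset$ and $\Wul(x)\cap\BBB\subset G_\nu$.

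It remains to show $\mf_x(\Wul(x)\setminus G_{\bar\mu}) = 0$ would force a contradiction, i.e.\ that in fact $\mf_x(\BBB)=0$ is what we need. I expect this to be the main obstacle, because $x$ is not $\mu$-generic. I would handle it by invoking the Birkhoff ergodic theorem for $\bar\mu$ in the form $\bar\mu(X\setminus\BBB)=0$, together with a leafwise version of the standard Hopf-type fact. Concretely, I would show that $\mf_x(X\setminus \BBB)=0$ fails only on an $\mb$-null set of leaves, and for the remaining leaves the $\mf_x$-typical points lie in $G_{\bar\mu}$. If a leaf is $\mf$-nontrivial and $\mf_x$-a.e.\ Birkhoff regular, it must lie in $G_{\bar\mu}$, which is excluded for our $x$. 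Hence on our leaf $\mf_x$ gives zero mass to $\Breg$, and therefore to $G_\nu$.

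Either way, $\nu_x$ is carried by $G_\nu$ and $\mf_x(G_\nu) = 0$, which is exactly the claimed mutual singularity. If the leafwise Birkhoff step proves awkward, the fallback is to observe that the lemma is only used where both measures are nonzero, and to restrict to $\mf_x$-typical points via \eqref{eqn:u-cond}.
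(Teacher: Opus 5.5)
\textbf{There is a genuine gap, at exactly the step you flagged as the main obstacle.}

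Your first half is fine. Every point of $\Wul(x)\cap\Breg$ is $\sim$-equivalent to $x$, so $\nu_x$ is carried by $\Wul(x)\cap\Breg\subset G_\nu$, and what remains is to show $\mf_x(\Breg)=0$. Your intermediate reduction to $\mf_x(X\setminus G_{\bar\mu})=0$ is not usable. Since $x\in\Xreg$, Lemma \ref{lem:positive-leaf} gives $\mf_x(\Wul(x))>0$, while $\Wul(x)\cap G_{\bar\mu}=\emptyset$, so that quantity is strictly positive.

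The real problem is your argument for $\mf_x(\Breg)=0$. From \eqref{eqn:u-cond} and the Birkhoff theorem for $\bar\mu$ you only get a statement about $\mb$-almost every unstable leaf: $\mf_y(X\setminus G_{\bar\mu})=0$ for $\mb$-a.e.\ $y\in X^-$. The leaf through your $x$ necessarily lies in the exceptional $\mb$-null set, by the observation just made. So no almost-everywhere statement of this kind says anything about it. Your claim that a leaf that is $\mf$-nontrivial and Birkhoff regular must lie in $G_{\bar\mu}$ does not follow from anything you have; it is essentially a restatement of what must be proved. The fallback of restricting to $\mf_x$-typical points via \eqref{eqn:u-cond} has the same defect, since it only reaches $\mb$-typical leaves.

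The missing idea is to use the \emph{stable} direction to transfer positive measure from this single, $\mb$-null, unstable leaf to positive $\bar\mu$-measure.
\begin{enumerate}
\item Suppose $\mf_x(\Breg)>0$, and put $Z:=\bigcup_{y\in\Wul(x)\cap\Breg}\Wsl(y)$.
\item By the stable disintegration \eqref{eqn:s-cond}, $\mu(Z)\geq\int_{\pip(\Wul(x)\cap\Breg)}\mb_y(\Wsl(y))\,d\mf(y)$.
\item Lemma \ref{lem:positive-leaf} gives $\mb_y(\Wsl(y))>0$ for every $y\in\Xreg$, and $\mf(\pip(\Wul(x)\cap\Breg))=\mf_x(\Breg)>0$. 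Hence $\bar\mu(Z)>0$.
\item So $Z$ contains a point $z\in G_{\bar\mu}$, with $z\in\Wsl(y)$ for some $y\in\Wul(x)\cap\Breg$.
\item Then $x\sim y\sim z$, which gives $\int f\,d\nu=\int f\,d\bar\mu$ for all $f\in C(X)$. Thus $\nu=\bar\mu$, contradicting $\nu\perp\bar\mu$.
\end{enumerate}
This is the paper's proof, phrased there as a contrapositive: $\nu_x\not\perp\mf_x$ forces $\mf_x(\Breg)>0$, because $\nu_x$ is carried by $\Breg$. The essential inputs are the product structure of $\mu$, used through \eqref{eqn:s-cond}, and positivity of the stable leaf measures at regular points. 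Neither appears in your proposal, and without them the argument cannot reach the exceptional leaf through $x$.
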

\begin{proof}
It suffices to prove the contrapositive. Let $\nu$ be a positive entropy ergodic measure, and suppose that there exists $x\in \AAA_\nu \cap G_\nu \cap \CCC_\nu$ such that $\nu_x$ and $\mf_x$ are \emph{not} mutually singular. We will show that this implies $\nu = \bar\mu$.

To this end, consider the set
\[
Z = \bigcup_{y \in \Wul(x) \cap \Breg} \Wsl(y).
\]
Since $\Breg$ has full $\nu_x$-measure in $\Wul(x)$ and we assumed that $\nu_x \not\perp \mf_x$, we must have $\mf_x(\Breg) > 0$.
Recalling from Lemma \ref{lem:positive-leaf} that $\mb_y(\Wsl(y)) > 0$ for every $y\in \Breg$, we see that $\bar\mu(Z) > 0$.  Since $\bar\mu(\AAA_{\bar\mu} \cap G_{\bar\mu}) = 1$, there exists a point $z \in Z \cap \AAA_{\bar\mu} \cap G_{\bar\mu}$. 
We have $z \in \Wsl(y)$ for some $y\in \Wul(x) \cap \Breg$, so we see that
$x,y,z \in \BBB$, and $x\sim y \sim z$. At the same time, we have $x\in G_\nu$ and $z\in G_{\bar\mu}$, so for every $f\in C(X)$, we have
\[
\int f \,d\nu = \uf(x) = \uf(y) = \uf(z) = \int f \,d{\bar\mu}.\qedhere
\]
\end{proof}

We need one more lemma before proceeding to the proof of uniqueness and establishing Theorem \ref{thm:strategy}\ref{mu-uniq}.

Let $\alpha = 3h/4$, and fix $\delta_0 > 0$ sufficiently small to satisfy the conditions of Lemma \ref{lem:tempered} for this choice of $\alpha$.  Fix $x\in X$ and let $V = \Wul(x)$, and recall the definitions of $\LLL_n^V$ and $\GGG_n^{\delta_0}(V)$ from \S\ref{sec:good-bad-ugly}.  We consider the following sets of words:
\begin{equation}\label{eqn:LnGB}
\begin{aligned}
\LLL_{2n}^G(x) &:= \{ w\in \LLL_{2n}^V : w_{[1,k]} \in \GGG_k^{\delta_0}(V) \text{ for some } k>n  \}, \\
\LLL_{2n}^B(x) &:= \LLL_{2n}^V \setminus \LLL_{2n}^G.
\end{aligned} 
\end{equation}
Observe that $\LLL_{2n}^B(x)$ corresponds
to subcurves in $\Wul(\bx)$ whose final $n$ iterates are always short.  The following lemma is a leaf-wise analog of Lemma 7.22 of \cite{BD20}; similar estimates also appeared in \cite{CT12}.

\begin{lemma}\label{lem:counting-bad-end}
There exists a constant $C_{11}$ such that for all $x\in X$, we have
	\begin{equation}\label{eqn:L2n}
		\LLL_{2n}^B(x) \leq C_{11} e^{7nh/4}.
	\end{equation}
\end{lemma}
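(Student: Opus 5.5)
The plan is to split each bad word at the midpoint: every $w\in\LLL_{2n}^B(x)$ is $w=uv$ with $u=w_{[1,n]}\in\LLL_n^V$ and $v=w_{(n,2n]}$. We then bound the number of choices of $u$ by the global counting bound, and the number of choices of $v$ given $u$ by the slow fragmentation lemma. Throughout, $V=\Wul(\bx)$ is a $u$-curve; we may first restrict to its pieces in cells of $\QQQ$, which changes counts only by a factor $\#\LLL_2$, exactly as in the proof of Lemma \ref{lem:tempered}. This lets us assume $V\in\Vuq$.

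For the first half, $u\in\LLL_n^V\subset\LLL_n$, so Proposition \ref{prop:ucb} gives at most $C_4e^{nh}$ choices of $u$.

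For the second half, fix $u$ and consider the curve $T^nV_u\in\Vuq$. By definition of $\LLL_{2n}^B(x)$, $w_{[1,k]}\notin\GGG_k^{\delta_0}(V)$ for every $k\in(n,2n]$. Using the identity $(T^nV_u)_{v_{[1,j]}}=T^nV_{uv_{[1,j]}}$ from the proof of Lemma \ref{lem:decomp}, we get $|T^j(T^nV_u)_{v_{[1,j]}}|<\delta_0$ for all $1\le j\le n$. This is exactly the statement that $v\in\UUU_n^{\delta_0}(T^nV_u)$; the case $j=n$ is harmless, since if anything it makes the set only smaller than the definition \eqref{eqn:ugly} requires. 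By Lemma \ref{lem:tempered} with $\alpha=3h/4$ and scale $\delta_0$, we have $\#\UUU_n^{\delta_0}(T^nV_u)\le C_5e^{3nh/4}$, uniformly over curves in $\Vuq$.

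Multiplying the two bounds gives
\[
\#\LLL_{2n}^B(x)\le \sum_{u\in\LLL_n^V}\#\UUU_n^{\delta_0}(T^nV_u)\le C_4e^{nh}\cdot C_5e^{3nh/4},
\]
so $C_{11}=C_4C_5\#\LLL_2$ works (the last factor accounts for the initial reduction to $\Vuq$).

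The only delicate step is the bookkeeping just described: checking that badness for all $k>n$ translates into the ugly-word condition for the subcurve $T^nV_u$, and that Lemma \ref{lem:tempered} applies uniformly even though $T^nV_u$ may be long. Lemma \ref{lem:tempered} already handles long curves through its final case, so I expect no real obstacle.
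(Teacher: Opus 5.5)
Your argument is correct, but it uses a different decomposition from the paper's. The paper follows Lemma \ref{lem:decomp}. For $w\in\LLL_{2n}^B(x)\setminus\UUU_{2n}^{\delta_0}(V)$ it cuts at the \emph{maximal} good time $k$, which is necessarily $k\le n$. It bounds the prefixes by $\#\GGG_k^{\delta_0}(V)\le Qe^{kh}$ and the ugly suffix of length $t=2n-k\ge n$ by $C_5e^{3th/4}$. It treats the fully ugly words $\UUU_{2n}^{\delta_0}(V)$ as a separate term, and then sums a geometric series over $t\in[n,2n]$ that is dominated by $t=n$.

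You cut at the fixed time $n$ instead. This is legitimate because the definition \eqref{eqn:LnGB} of $\LLL_{2n}^B(x)$ already forces the second half to be ugly with respect to $T^nV_u$, whatever happened before time $n$. Moreover, Lemma \ref{lem:tempered} is uniform over all of $\Vuq$, long curves included, so nothing is gained by restarting the count at a long curve. Your route gives the single product bound $C_4C_5e^{7nh/4}$, with no sum over $t$ and no separate fully-ugly case. The paper's version, modelled on \cite[Lemma 7.22]{BD20}, keeps track of where the last good time occurs. That extra information is not needed here, since the prefixes end up being counted only by the global upper bound anyway.

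Your preliminary reduction to $\Vuq$ is unnecessary, and the justification via the factor $\#\LLL_2$ from Lemma \ref{lem:tempered} is not really what that lemma's last case provides. The bound $\#\LLL_n^V\le\#\LLL_n$ holds for any curve. The only curves you feed into Lemma \ref{lem:tempered} are $T^nV_u\subset Y_u^-\subset Y_{u_n}$, which lie in $\Vuq$ automatically; in any case $\Wul(\bx)$ lies in a single element of $\PPP$. So $C_{11}=C_4C_5$ already works.
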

\begin{proof}

Given $w\in \LLL_{2n}^B(x) \setminus \UUU_{2n}^{\delta_0}(V)$, let $k = k(w)$ be maximal such that $v := w_{[1,k]} \in \GGG_k^{\delta_0}(V)$, and let $t = 2n-k$; then Lemma \ref{lem:decomp} gives $w_{(k,2n]} \in \UUU_t^{\delta_0}(T^k V_v)$, and \eqref{eqn:LnGB} gives $t \geq n$, so we have
\[
\LLL_{2n}^B(x) \setminus \UUU_{2n}^{\delta_0}(V) \subset \bigcup_{t=n}^{2n-1} \bigcup_{v \in \GGG_{2n-t}^{\delta_0}(V)} \UUU_t^{\delta_0}(T^{2n-t}V_v).
\]
Combining the upper bounds from Lemma \ref{lem:tempered} and Proposition \ref{prop:ucb}, we obtain
\begin{align*}
\#\LLL_{2n}^B(x) &= \#\UUU_{2n}^{\delta_0}(V) + \sum_{t=n}^{2n-1} \sum_{v \in \GGG_{2n-t}^{\delta_0}(V)} \#\UUU_t^{\delta_0}(T^{2n-t}V_v) \\
&\leq
C_5 e^{2n \cdot 3h/4} + \sum_{t=n}^{2n-1} (\#\GGG_{2n-t}^{\delta_0}(V)) C_5 e^{t \cdot 3h/4} \\
&\leq \sum_{t=n}^{2n} Q e^{(2n-t) h} C_5 e^{t \cdot 3h/4}
= Q C_5 e^{2nh} \sum_{t=n}^{2n} e^{-th/4}.
\end{align*}
The sum is bounded above by $e^{-nh/4} (1-e^{-h/4})^{-1}$, so this implies \eqref{eqn:L2n}.
\end{proof}

Now we prove that $\bar\mu$ is the unique MME. Since we already proved that $\bar\mu$ is multiply mixing, and hence ergodic, it suffices to show that there are no ergodic MMEs that are mutually singular with respect to $\bar\mu$. To this end, let $\nu$ be any ergodic positive entropy measure such that $\nu \perp \bar\mu$. 
By Lemma \ref{lem:singular-leaf}, this implies that 
\begin{equation}\label{eqn:nxmx}
\nu_x \perp \mf_x \quad\text{for every } x\in \AAA_\nu \cap G_\nu \cap \CCC_\nu.
\end{equation}
We will use this fact to show that $h_\nu < h$, recalling from Lemma \ref{lem:h-int} that for every $n\in \NN$, we have
\begin{equation}\label{eqn:h-int-sum}
\begin{aligned}
2nh_\nu &= \int_{x\in X^-} \sum_{w\in \LLL_{2n}} \psi(\nu_x(\sigma^{-1}\cyl{}{w})) \,d\nu(x) \\
&= \int_{x\in X^-} \sum_{w\in \LLL_{2n}} \psi(\tnu_x(\cyl{}{w})) \,d\nu(x),
\end{aligned}
\end{equation}
where we write $\tnu_x := \sigma_* \nu_x$, and 
\[
\psi(t) = \begin{cases} -t\log t & \text{if } t>0, \\ 0 & \text{if } t=0. \end{cases}
\]
(The appearance of $\sigma^{-1}$ in \eqref{eqn:h-int-sum} is due to the fact that the cylinder $\cyl{}{w}$ starts indexing at $0$, instead of at $1$.)
We will use the mutual singularity property in \eqref{eqn:nxmx} to
obtain an upper bound for the sum in \eqref{eqn:h-int-sum} by splitting the sum into two pieces: one that carries nearly full weight for $\tnu_x$, and one that carries nearly full weight for $\mtf_x := \sigma_* \mf_x$.

To carry this out, we adopt the following notation:
given $\DDD \subset \LLL$, denote the corresponding union of cylinders by
\[
\cylu{\DDD} := \bigcup_{w \in \DDD} \cyl{}{w}.
\]
Fix $x\in \AAA_\nu \cap G_\nu$. By \eqref{eqn:nxmx}, for each $n\in \NN$ there exists $\HHH_n(x) \subset \LLL_n$ such that 
\begin{equation}\label{eqn:Hn-to-10}
\tnu_x(\cylu{\HHH_n(x)}) \nearrow 1
\qquad \text{and}\qquad 
\mtf_x(\cylu{\HHH_n(x)}) \searrow 0
\qquad \text{as $n\to\infty$.}
\end{equation}
Now we consider the set of words
\begin{equation}\label{eqn:D-2n}
\DDD_{2n}(x) = \{w \in \LLL_{2n} : \cyl{}{w} \subset \cylu{\HHH_n(x)}\},
\end{equation}
and writing $\DDD_{2n}^c(x) := \LLL_{2n} \setminus \DDD_{2n}(x)$, we obtain the following bound:
\begin{align*}
\sum_{w\in \LLL_{2n}} \psi(\tnu_x(\cyl{}{w}))
& = \sum_{w\in \DDD_{2n}(x)} \psi(\tnu_x(\cyl{}{w}))
+ \sum_{w\in \DDD_{2n}^c(x)} \psi(\tnu_x(\cyl{}{w})) \\
&\leq \tnu_x(\cylu{\HHH_n(x)}) \log(\#\DDD_{2n}(x)) + \tnu_x(\cylu{\HHH_n^c(x)}) \log(\#\DDD_{2n}^c(x)) + \frac{2}{e},
\end{align*}
where the last line uses a standard entropy inequality; see \cite[(20.3.5)]{Kat}.  Recalling the upper counting bound from Proposition \ref{prop:ucb}, we have
\[
\tnu_x(\cylu{\HHH_n^c(x)}) \log(\#\DDD_{2n}^c(x))
\leq (1-\tnu_x(\cylu{\HHH_n(x)})) (2nh + \log Q),
\]
and combining this with the previous estimate gives
\begin{equation}\label{eqn:partway}
\sum_{w\in \LLL_{2n}} \psi(\tnu_x(\cyl{}{w}))
\leq \tnu_x(\cylu{\HHH_n(x)})\big(\log (\#\DDD_{2n}(x)) - 2nh - \log Q\big) + 2nh + Q',
\end{equation}
where $Q' = \log Q + \frac 2e$.
To bound $\#\DDD_{2n}(x)$, we write
\[
\DDD_{2n}^B(x) := \DDD_{2n}(x) \cap \LLL_{2n}^B(x),
\qquad
\DDD_{2n}^G(x) := \DDD_{2n}(x) \cap \LLL_{2n}^G(x),
\]
so that $\DDD_{2n}(x) = \DDD_{2n}^B(x) \cup \DDD_{2n}^G(x)$.
By Lemma \ref{lem:counting-bad-end}, we have
\begin{equation}\label{eqn:DB-leq}
\#\DDD_{2n}^B(x) \leq C_{11} e^{7nh/4}.
\end{equation}
To bound $\#\DDD_{2n}^G(x)$, we observe that by \eqref{eqn:LnGB}, for each $w\in \DDD_{2n}^G(x)$, the set
\[
k^G(w) := \{ k\in \{n+1,n+2,\dots, 2n\} : w_{[1,k]} \in \GGG_k^{\delta_0}(V) \}
\]
is nonempty. Let $k(w) := \min k^G(w)$, and let $\trunc(w) := w_{[1,k(w)]}$.
The  truncation map $\trunc \colon \DDD_{2n}^G(x) \to \bigcup_{k=n+1}^{2n} \GGG_k^{\delta_0}(V)$
need not be injective; however, for each 
$u\in \GGG_k^{\delta_0}(V)$, we have
\[
\#\phi^{-1}(u) \leq \#\LLL_{2n-k} \leq Q e^{(2n-k)h} = Q e^{2nh} e^{-|u|h}.
\]
At the same time, Corollary \ref{cor:nonuniform-lower-gibbs-leaf} gives
\[
\mtf_x(\cyl{}{u}) \geq c_{\delta_0} e^{-|u|h},
\]
and we obtain the bound
\begin{equation}\label{eqn:sum-Gibbs}
\#\DDD_{2n}^G(x) \leq 
\sum_{u\in \phi(\DDD_{2n}^G(x))} \#\phi^{-1}(u)
\leq Q c_{\delta_0}^{-1} e^{2nh} \sum_{u\in \phi(\DDD_{2n}^G(x))} \mtf_x(\cyl{}{u}).
\end{equation}
For each $u\in \trunc(\DDD_{2n}^G(x))$, we see from \eqref{eqn:D-2n} that there is some $w\in \LLL_{2n}$ such that $w_{[1,n]} \in \HHH_n(x)$ and $w_{[1,|u|]} = u$. This implies that for \emph{every} $v\in \LLL_{2n}$ with $v_{[1,|u|]} = u$, we have $v_{[1,n]} = w_{[1,n]} \in \HHH_n(x)$, so $v\in \DDD_{2n}(x)$, and we conclude that
\begin{equation}\label{eqn:trunc-cyl}
\cyl{}{u} \subset \bigcup_{w\in \DDD_{2n}(x)} \cyl{}{w} \subset \cylu{\HHH_n(x)}.
\end{equation}
We want to use \eqref{eqn:trunc-cyl} to replace the sum in \eqref{eqn:sum-Gibbs} by $\mtf_x(\cylu{\HHH_n(x)})$. To do this, we must show that distinct elements of $\trunc(\DDD_{2n}^G(x))$ have disjoint cylinders. 

Suppose $v,w \in \DDD_{2n}^G(x)$ are such that $\cyl{}{\trunc(v)}$ and $\cyl{}{\trunc(w)}$ are not disjoint; then these cylinders must be nested. Without loss of generality, suppose $\cyl{}{\trunc(w)} \subset \cyl{}{\trunc(v)}$. This implies that $\trunc(v)$ is a prefix of $\trunc(w)$, and thus $k(v) \leq k(w)$. Moreover,
\[
w_{[1,k(v)]} = (\trunc(w))_{[1,k(v)]} = \trunc(v) \in \GGG_{k(v)}^{\delta_0}(V),
\]
so $k(v) \in k^G(w)$, and since $k(w) = \min k^G(w)$, we conclude that $k(w) \leq k(v)$, so the two are in fact equal, and we have $\trunc(v) = \trunc(w)$.

Having proved that the cylinders $\{ \cyl{}{u} : u\in \trunc(\DDD_{2n}^G(x)) \}$ are disjoint, we can use \eqref{eqn:sum-Gibbs} and \eqref{eqn:trunc-cyl} to conclude that
\begin{equation}\label{eqn:DG-leq}
\#\DDD_{2n}^G(x) \leq Qc_{\delta_0}^{-1} e^{2nh} \mtf_x(\cylu{\trunc(\DDD_{2n}^G(x))})
\leq Q c_{\delta_0}^{-1} e^{2nh} \mtf_x(\cylu{\HHH_n(x)}).
\end{equation}
Using the estimates from \eqref{eqn:DB-leq} and \eqref{eqn:DG-leq}, we get
\[
\#\DDD_{2n}(x) \leq C_{11} e^{7nh/4} + Qc_{\delta_0}^{-1} \mtf_x(\cylu{\HHH_n(x)}) e^{2nh},
\]
and writing $d_n(x) := \#\DDD_{2n}(x) e^{-2nh}$, we see from \eqref{eqn:Hn-to-10} that
\begin{equation}\label{eqn:dnx}
d_n(x) \leq C_{11} e^{-nh/4} + Qc_{\delta_0}^{-1} \mtf_x(\cylu{\HHH_n(x)})
\searrow 0
\quad\text{as } n\to\infty.
\end{equation}
From \eqref{eqn:partway}, we have
\[
\sum_{w\in \LLL_{2n}} \psi(\tnu_x(\cyl{}{w}))
\leq \tnu_x(\cylu{\HHH_n(x)})\big(\log d_n(x) - \log Q\big) + 2nh + Q',
\]
and rearranging gives
\begin{equation}\label{eqn:leafwise-geq}
2nh - \sum_{w\in \LLL_{2n}} \psi(\tnu_x(\cyl{}{w}))
\geq \tnu_x(\cylu{\HHH_n(x)}) (-\log d_n(x) + \log Q) - Q'.
\end{equation}
By \eqref{eqn:Hn-to-10} and \eqref{eqn:dnx}, the right-hand side goes to $\infty$ as $n\to\infty$, and we conclude that
the function
\[
\Delta_n(x) := 2nh - \sum_{w\in \LLL_{2n}} \psi(\tnu_x(\cyl{}{w}))
\]
has the property that
\begin{equation}\label{eqn:Delta-h}
\lim_{n\to\infty} \Delta_n(x) = \infty
\quad\text{for all } x\in \AAA_\nu \cap G_\nu.
\end{equation}
Using \eqref{eqn:h-int-sum} (which was a consequence of Lemma \ref{lem:h-int}), we have
\[
2n(h-h_\nu) = \int_{x\in X^-} \Big( 2nh - \sum_{w\in \LLL_{2n}}
\psi(\tnu_x(\cyl{}{w}))\Big) \,d\nu(x)
= \int_{x\in X^-} \Delta_n(x) \,d\nu(x)
\]
for every $n\in \NN$. Using the upper counting bound in Proposition \ref{prop:ucb}, we have
\[
\Delta_n(x) \geq 2nh - \log\#\LLL_{2n} \geq -Q
\quad\text{for all }n\in \NN,
\]
so we can apply Fatou's lemma and deduce that
\[
2n(h-h_\nu) = \lim_{n\to\infty} \int_{x\in X^-} \Delta_n(x) \,d\nu(x)
\geq \int_{x\in X^-} \lim_{n\to\infty} \Delta_n(x) \,d\nu(x) = \infty.
\]
It follows that $h-h_\nu > 0$, so $\nu$ is not a measure of maximal entropy. This completes the proof of Theorem \ref{thm:strategy}\ref{mu-uniq}, and hence of Theorem \ref{thm:mme-unique}.


\bibliographystyle{amsalpha}
\bibliography{billiard-mme-ref}

	
\end{document}